\documentclass{amsart}

\usepackage{amsmath}
\usepackage{amsfonts}
\usepackage{amssymb}
\usepackage{amsthm}
\usepackage{mathrsfs}
\usepackage{amsaddr}
\usepackage{yfonts}
\usepackage{enumitem}
\usepackage{tikz}
\usepackage{subcaption}
\usepackage{comment}
\usepackage{tikz-cd}
   
\usetikzlibrary{positioning, fit, calc, shapes, arrows, backgrounds}

\newtheorem{theorem}{Theorem}[section]
\newtheorem{claim}[theorem]{Claim}
\newtheorem{lemma}[theorem]{Lemma}
\newtheorem{proposition}[theorem]{Proposition}
\newtheorem{corollary}[theorem]{Corollary}

\newtheorem*{remark}{Remark}

\newtheorem{question}[theorem]{Question}
\newtheorem{definition}[theorem]{Definition}
\newtheorem*{acknowledgements}{Acknowledgements}
\newcommand{\foralmostall}{\forall^\infty}
\newcommand{\existsinfty}{\exists^\infty}

\newcommand{\bbb}{\mathfrak{b}}
\newcommand{\ddd}{\mathfrak{d}}
\newcommand{\sss}{\mathfrak{s}}
\newcommand{\rrr}{\mathfrak{r}}

\newcommand{\continuum}{\mathfrak{c}}
\newcommand{\Borel}{\mathrm{Borel}}

\newcommand{\Meager}{\mathcal{M}}
\newcommand{\Null}{\mathcal{N}}
\newcommand{\E}{\mathcal{E}}
\newcommand{\J}{\mathcal{J}}
\newcommand{\I}{\mathcal{I}}
\newcommand{\HH}{\mathcal{H}}

\newcommand{\Baire}{\omega^{\omega}}
\newcommand{\Cantor}{2^{\omega}}
\newcommand{\baire}{\omega^{<\omega}}
\newcommand{\cantor}{2^{<\omega}}
\newcommand{\PIF}{\Pi F}

\newcommand{\MMM}{\mathbb{M}}

\newcommand{\B}{\mathbb{B}}

\newcommand{\NA}{\mathcal{NA}}
\newcommand{\MA}{\mathcal{MA}}
\newcommand{\EA}{\mathcal{EA}}
\newcommand{\EM}{\langle\mathcal{E},\mathcal{M}\rangle}
\newcommand{\EN}{\langle\mathcal{E},\mathcal{N}\rangle}
\newcommand{\Nstar}{\mathcal{N}^{*}}
\newcommand{\Mstar}{\mathcal{M}^{*}}
\newcommand{\Estar}{\mathcal{E}^{*}}
\newcommand{\SMZ}{\mathcal{SMZ}}

\newcommand{\BC}{\mathrm{BC}}

\newcommand{\forces}{\Vdash}

\newcommand{\cov}{\mathrm{cov}}
\newcommand{\non}{\mathrm{non}}
\newcommand{\add}{\mathrm{add}}
\newcommand{\cof}{\mathrm{cof}}

\title{Combinatorics of translations of meager and closed measure zero sets}
\author{Aleksander Cieślak}
\address{Faculty of Pure and Applied Mathematics, Wrocław University of Science and Technology, Wybrzeże Stanisława Wyspiańskiego 27, 50-370 Wrocław, Poland}
\email{aleksander.cieslak@pwr.edu.pl}
\subjclass[2020]{Primary: 03E17, 03E05, 03E15, 03E35}
\date{}

\makeindex
\keywords{Cardinal invariants, meager-additive sets, null-additive sets, closed sets of measure zero, Miller forcing, Borel conjecture, Luzin set}

\begin{document}
    \begin{abstract}
        We study combinatorial properties of translations of meager and closed sets of measure zero. We discuss constellations of Borel conjectures for related classes of small sets and show that there are no uncountable null-additive sets in the Miller model. We also study cardinal invariants of those classes. In particular, we investigate the cardinal invariants of $\sigma$-ideals $\mathcal{H}_{F}$ related to meager-additive sets. We obtain a new characterization of additivity of meager ideal and answer some questions from \cite{CardonaMA}. We also show that $\non(\mathcal{E}^{*})$, that is equal to the translation version of the covering number of the ideal $\E$, is close to the cardinal invariant related to the Laver property. This strengthens a result of Bartoszyński and Judah from \cite{BartJudahBorelImages} and a result of Elekes and Stepr\=ans from \cite{ElekesSteprans}. Finally, we show that every $\mathcal{E}$-Luzin set is in $\mathcal{E}^{\star}$.
    \end{abstract}
    \maketitle


    \section{Introduction}
    Let $\Meager$ and $\Null$ denote the $\sigma$-ideals on $\Cantor$ of meager set and sets of Lebesgue measure zero, respectively. Also, let $\E$ denote the $\sigma$-ideal generated by closed measure zero subsets of $\Cantor$. It is well-known that $\E\subseteq\Meager\cap \Null$ and that the inclusion is proper. The Cantor space $\Cantor$ equipped with addition ( modulo 2 on coordinates) forms a Polish group and all three ideals are translation invariant. For two sets of reals $X,Y\subseteq\Cantor$ and $z\in\Cantor$ we let $X+z=\{x+z:x\in X\}$ and $X+Y=\{x+y:x\in X$ and $y\in Y\}$. The following definition is central to this article.
    \begin{definition}
        Let $\I$ be a $\sigma$-ideal on $\Cantor$. Define then
        \begin{itemize}
            \item[--] $\mathcal{IA}=\{X\subseteq\Cantor:\forall F\in\I$ $X+F\in\I\}$,
            \item[--] $\I^{*}=\{X\subseteq\Cantor:\forall F\in\I$ $X+F\neq\Cantor\}$
        \end{itemize}
    \end{definition}
    Members of $\mathcal{IA}$ are referred to as $\I$-additive sets. Also, members of $\MA$ or $\NA$ are known as \emph{meager-additive} and \emph{null-additive} sets. Members of $\I^{*}$ are sometimes referred to as $\I$-shiftable (see \cite{WohoPHD} or \cite{BreWoh}). It is clear from the definitions that $\mathcal{IA}\subseteq\I^{*}$ for any $\sigma$-ideal $\I$. Many researchers investigated the null-additive and meager-additive sets (see \cite{BartoszynskiBD}, \cite{Calderon}, \cite{CardonaMA}, \cite{coveringstrongmeasurecanbeaboveeverythingelse}, \cite{MejiaDirectedSums}, \cite{MejiaCardonaMA}, \cite{NowikWeiss}, \cite{KysiakNowikWeiss}, \cite{WeissSzewczak}, \cite{Weiss2013}, \cite{Weiss2018}, \cite{WeissNew}, \cite{ZindulkaMAEA}, \cite{ZindulkaMeagerAdditiveinTopGroups} among many others). Pawlikowski asked, if every null-additive set is meager-additive. This question was answered in positive by Shelah, who gave the following combinatorial characterizations of these two classes.
    \begin{theorem}(Shelah; \cite{ShelahNAMA})\label{ShelahNAMA}
        A set $X\subseteq\Cantor$ is null-additive if and only if for any interval partition $(I_{n})_{n\in\omega}$ there is a slalom $S=(S_{n})_{n\in\omega}$ with $S_{n}\in[2^{I_{n}}]^{n}$ for $n\in\omega$ such that 
        \begin{center}
            $X\subseteq\{x\in\Cantor:\foralmostall n\in\omega$ $x|_{I_{n}}\in S_{n}\}$
        \end{center}
        A set $X\subseteq\Cantor$ is meager-additive if and only if for any interval partition $(I_{n})_{n\in\omega}$ there is a real $y\in\Cantor$ and an interval partition $(J_{n})_{n\in\omega}$ that dominates $(I_{n})_{n\in\omega}$ such that
        \begin{center}
            $X\subseteq\{x\in\Cantor:\foralmostall n\in\omega$ $\exists k\in\omega$ that $I_{k}\subseteq J_{n}$ and $x|_{I_{k}}= y|_{I_{k}}\}$
        \end{center}
        In particular, $\NA\subseteq\MA$ holds.
    \end{theorem}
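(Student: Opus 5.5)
The statement has three parts, which I would prove in the order: the final inclusion, then the null-additive characterization, then the meager-additive one. The inclusion $\NA\subseteq\MA$ follows directly from the two characterizations, so I do it first.

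\emph{The inclusion.} Let $X\in\NA$ and let $(I_k)_{k\in\omega}$ be an interval partition. Coarsen it to an interval partition $(J_n)_{n\in\omega}$ in which each $J_n$ is the union of at least $n$ consecutive blocks $I_k$. Apply the null-additive characterization to $(J_n)$ to get a slalom $S_n\in[2^{J_n}]^{n}$ with $X\subseteq\{x:\foralmostall n\ x|_{J_n}\in S_n\}$. Enumerate $S_n=\{s^n_0,\dots,s^n_{n-1}\}$. Define $y$ so that on the $i$-th block $I_k\subseteq J_n$ (for $i<n$) we have $y|_{I_k}=s^n_i|_{I_k}$. Then every $x\in X$ with $x|_{J_n}\in S_n$ agrees with $y$ on some $I_k\subseteq J_n$. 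Since $(J_n)$ dominates $(I_k)$, the meager-additive condition holds.

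\emph{Null-additive, ($\Leftarrow$).} I would use the standard fact that every null set $G$ is contained in a set $\{z:\existsinfty n\ z|_{K_n}\in A_n\}$, where $(K_n)$ is an interval partition, $A_n\subseteq 2^{K_n}$ and $|A_n|\le 2^{|K_n|}4^{-n}$. Apply the hypothesis to $(K_n)$ to obtain a slalom $(S_n)$. Then
\[
X+G\subseteq\{z:\existsinfty n\ z|_{K_n}\in S_n+A_n\}.
\]
Here $\mu([S_n+A_n])\le n4^{-n}$, which is summable, so by Borel--Cantelli $X+G$ is null.

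\emph{Null-additive, ($\Rightarrow$).} This is the main obstacle. Given $(I_n)$, I would build a null set $H$ adapted to the partition, for instance
\[
H=\{z:\existsinfty n\ z|_{I'_n}\in C_n\},
\]
where $(I'_n)$ is a sparse coarsening of $(I_n)$ and $C_n$ is a large-but-measure-small family. The set $X+H$ is null, so it is covered by some $\{z:\existsinfty m\ z|_{K_m}\in B_m\}$ with $\sum_m\mu(B_m)<\infty$. I would then define $S_n$ as the set of $s\in 2^{I_n}$ such that $s+C_n$ lands in the $B_m$'s with large relative frequency. A Fubini and counting argument should give two things:
\begin{itemize}
\item[--] fewer than $n$ such $s$ exist, because the $B_m$ have small measure;
\item[--] every $x\in X$ must satisfy $x|_{I_n}\in S_n$ for almost all $n$, because otherwise one could construct $h\in H$ with $x+h\notin X+H$'s cover.
\end{itemize}
Tuning the sizes of $I'_n$, $C_n$ and $K_m$ so that both counts work is the delicate part. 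This follows Shelah's original argument as presented in Bartoszy\'nski--Judah.

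\emph{Meager-additive.} I would use the representation of meager sets: every meager set is contained in $\{z:\foralmostall n\ z|_{I_n}\neq y|_{I_n}\}$ for some partition $(I_n)$ and real $y$.

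For ($\Leftarrow$), take such an $M$ given by $(I_n),y_0$. For $x\in X$ and $m\in M$, the sum $x+m$ differs from $y+y_0$ on every block $I_k$ where $x$ agrees with $y$, provided $k$ is large. Hence $X+M$ avoids $y+y_0$ on some $I_k$ inside almost every $J_n$. From this I would exhibit a single partition and real witnessing that $X+M$ is meager, by gluing $y+y_0$ onto appropriate blocks.

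For ($\Rightarrow$), given $(I_n)$, take the meager set $M=\{z:\foralmostall n\ z|_{I_n}\neq 0\}$. Write $X+M\subseteq\{z:\foralmostall n\ z|_{L_n}\neq y|_{L_n}\}$, and let $J_n$ be unions of blocks covering each $L_m$. Since each $x+M$ lies in this set, $x$ must match $y$ on some $I_k$ inside $J_n$ for almost all $n$; otherwise we could choose $m\in M$ with $x+m$ hitting $y$ on infinitely many $L_m$. This last step is a routine diagonal construction.
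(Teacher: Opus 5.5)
The paper does not prove this theorem (it is quoted from Shelah), so I judge your proposal on its own. Your derivation of $\NA\subseteq\MA$ from the two characterizations is correct, and so are both directions of the meager-additive characterization. In $(\Leftarrow)$ no gluing is needed, since you have already shown $X+M\subseteq\{z:\foralmostall n\ z|_{J_n}\neq(y+y_0)|_{J_n}\}$. In $(\Rightarrow)$, take each $J_n$ to be a union of $I$-blocks containing a whole $L_j$; on blocks straddling the chosen $L_j$, put a $1$ outside $L_j$.

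In the null-additive $(\Leftarrow)$ direction, however, the ``standard fact'' you invoke is false. Not every null set is contained in a single set $\{z:\existsinfty n\ z|_{K_n}\in A_n\}$ with $(K_n)$ an interval partition and $\sum_n|A_n|2^{-|K_n|}<\infty$. For example, take the null set of all $z$ with $z|_{[m,m+\ell)}=z|_{[m+\ell,m+2\ell)}$ for infinitely many pairs with $\ell\ge 2\log_2(m+2)$. It lies in no such set: plant the two equal halves across infinitely many far-apart block boundaries while keeping every block outside $A_n$. What is true (Bartoszy\'nski) is that every null set lies in some $\{z:\existsinfty n\ z|_{K_n\cup K_{n+1}}\in B_n\}$ with $\sum_n\mu(B_n)<\infty$, hence is a union of two sets of your form. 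Since $\Null$ is an ideal, applying your argument to each piece (after regrouping blocks to get the rate $4^{-n}$) repairs this direction.

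The genuine gap is the direction from null-additivity to the slalom, which is the real content of Shelah's theorem and which you only outline. It reuses the false covering fact for $X+H$. You need the window form above, so that only finitely many windows meet each block and the cover's mass $\beta_n$ on windows meeting $I_n$ tends to $0$.

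More seriously, your first bullet does not follow from smallness of the cover for an arbitrary small $C_n$. Suppose $C_n$ is invariant under translation by a subgroup $V\le 2^{I_n}$ with $|V|\ge n$. Then $s+C_n$ is the same set for all $s$ in a coset of $V$, so one bad $s$ forces $n$ bad ones, however small $\beta_n$ is. What is needed is a $C_n$ whose translates are nearly independent. For instance, take density $\epsilon_n$ with $\sum_n\epsilon_n<\infty$ and $|C_n\cap(C_n+u)|\le 2\epsilon_n|C_n|$ for all $u\neq0$; a random subset of a long enough block works. Then use a second-moment estimate: if $k\ge 1/\epsilon_n$ distinct $s$ each have conditional cover-probability at least $\gamma$ given $z|_{I_n}\in s+C_n$, Cauchy--Schwarz yields $\beta_n\ge\gamma^2/3$. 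This bounds the bad $s$ by $1/\epsilon_n$ for almost all $n$. The size bound $n$ is then recovered by coarsening the partition and restricting.

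Your second bullet also needs care. Infinitely many blocks get conditioned, each costing up to the threshold in probability of entering the cover, so a naive random $h$ may fail. You must pick the selected blocks sparse enough that no window meets two of them. Then either let $\gamma_n\to0$, chosen after the cover with $\gamma_n^2>3\beta_n$, or fix $h$ on the windows around each selected block and use summability elsewhere. None of these ingredients appears in your sketch, and ``tuning the sizes'' does not supply them.
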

    With regards to the $\sigma$-ideal $\E$, in \cite{NowikWeiss} Nowik and Weiss asked for connection between $\MA$ and $\EA$.
    In \cite{ZindulkaMAEA} and \cite{ZindulkaMeagerAdditiveinTopGroups} Zindulka investigated meager-additive sets in Polish groups and showed that the classes $\MA$ and $\EA$ coincide. 
    \begin{theorem}(Zindulka; \cite{ZindulkaMAEA})
        $\MA=\EA$
    \end{theorem}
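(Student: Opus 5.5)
The plan is to prove the two inclusions separately, using Shelah's combinatorial characterization of $\MA$ from Theorem~\ref{ShelahNAMA} as the bridge.

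\textbf{$\EA\subseteq\MA$.} Let $X\in\EA$ and fix an interval partition $(I_n)_{n\in\omega}$; we must produce $y$ and a coarser partition $(J_n)$ as in Theorem~\ref{ShelahNAMA}. Consider the closed null set
\[
F=\{x\in\Cantor:\forall n\ x|_{I_{n}}\neq 0\}.
\]
Its measure is $\prod_n(1-2^{-|I_n|})$, which we make $0$ by first passing to a coarsening with $|I_n|$ small, for instance by splitting long intervals. Since this only refines the partition, it is harmless. By hypothesis $X+F\in\E$, so $X+F\subseteq\bigcup_k C_k$ with each $C_k$ closed null. Closed null sets are nowhere dense, so a standard diagonalization yields $y\in\Cantor$ and a fast interval partition $(J_n)$ such that every point $z$ agreeing with $y$ on some whole $I_k\subseteq J_n$ for infinitely many $n$ avoids every $C_k$. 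Given $x\in X$, if $x$ failed the matching condition for infinitely many $n$, we would choose $f\in F$ with $x+f$ agreeing with $y$ on the relevant blocks, which places $x+f\in X+F$ outside $\bigcup C_k$, a contradiction. The delicate point is arranging that $f|_{I_k}=x|_{I_k}+y|_{I_k}$ is nonzero. To secure this, we let the diagonalization use blocks $I_k$ on which $y$ is chosen freely, so that we can pick the block $I_k$ where $x+y\neq 0$. This only needs $J_n$ to contain at least two $I_k$'s.

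\textbf{$\MA\subseteq\EA$.} Let $X\in\MA$ and $C$ be closed null; it suffices to treat a single $C$, since $\E$ is a $\sigma$-ideal and $X+\bigcup C_k=\bigcup(X+C_k)$. Since $C$ is closed null, there is a partition $(I_n)$ with $|\{s\in 2^{I_n}: [s]\cap C\neq\emptyset\}|\cdot 2^{-|I_n|}$ summable, or more precisely with the projections $T_n=\{z|_{I_n}:z\in C\}$ satisfying $\sum|T_n|2^{-|I_n|}<\infty$ after grouping. Apply Theorem~\ref{ShelahNAMA} to a suitable coarsening $(I'_n)$ to get $y$ and $(J_n)$. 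Then $X+C\subseteq\bigcup_m D_m$ where
\[
D_m=\{w:\forall n\ge m\ \exists I'_k\subseteq J_n\ w|_{I'_k}\in y|_{I'_k}+T'_k\}.
\]
Each $D_m$ is closed. Its measure is at most $\prod_{n\ge m}\sum_{I'_k\subseteq J_n}|T'_k|2^{-|I'_k|}$, which is $0$ provided the inner sums stay below $1/2$. Choosing the coarsening so that $|T'_k|2^{-|I'_k|}<2^{-k-2}$ ensures this, so $X+C\in\E$.

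\textbf{Main obstacle.} The hard step is the first inclusion. It requires extracting, from a single $\sigma$-compact null cover of $X+F$, a real $y$ whose blocks certify membership, while keeping $f\in F$ attainable. I would first try the argument above and, if the nonzero-block constraint causes trouble, replace $F$ by the set of $x$ with $x|_{I_n}\notin\{s_n\}$ for a cleverly chosen $s_n$.
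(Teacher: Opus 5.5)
The paper gives no proof of this statement; it only quotes it from Zindulka, whose argument goes through his ``sharp measure zero'' (fractal-measure) characterization of meager-additive sets rather than through Shelah's combinatorics. Your inclusion $\MA\subseteq\EA$ is correct, provided the partition is chosen against the decay of $C=[T]$. Take $k_{j+1}$ with $|T\cap 2^{k_{j+1}}|\,2^{-k_{j+1}}<2^{-j-2-k_j}$. Then the projections $T'_j$ of $C$ to $I'_j=[k_j,k_{j+1})$ satisfy $|T'_j|2^{-|I'_j|}<2^{-j-2}$; for an arbitrary partition the projections need not be small. With this choice your $D_m$ are closed, and they are null by independence across the $J_n$.

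The inclusion $\EA\subseteq\MA$ has a genuine gap.
\begin{itemize}
\item Your $F$ has measure $\prod_n(1-2^{-|I_n|})$, which vanishes only when $\sum_n2^{-|I_n|}=\infty$.
\item Splitting long intervals is not harmless. Shelah's condition passes from a coarser partition to a finer one, since agreement on a whole coarse block gives agreement on each of its sub-blocks, but not conversely. So the partitions you actually have to handle are the fast ones.
\item For fast partitions, $F$ has positive measure and is not in $\E$. The same holds for your fallback $\{x:\forall n\ x|_{I_n}\neq s_n\}$.
\end{itemize}

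Your diagonalization claim is also false as stated. A closed null set can project onto all of $2^{I_k}$, so matching $y$ on one short block $I_k$ does not avoid it. The right tool is Proposition~\ref{CharacterizationM}, which gives $X+F\subseteq M(y,J)$ with $J$ aligned to $I$. Failure of the condition on $J_n$ then says $x|_{I_k}\neq y|_{I_k}$ for every $I_k\subseteq J_n$, so $f|_{I_k}=x|_{I_k}+y|_{I_k}\neq 0$ automatically. Your ``delicate point'' is therefore not where the difficulty lies.

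More seriously, the only property of $X+F$ your argument uses is that it is meager. An argument that uses only $X+F\in\Meager$ for $F\in\E$ can at best prove $\EM\subseteq\MA$, and that is consistently false. Under CH there is a Sierpi\'nski set $S$. It is strongly meager by Theorem~\ref{SierpIsStrMeager}, hence in $\EM$ by Rec{\l}aw's theorem. But $S$ is not null, so $S\notin\SMZ\supseteq\MA$.

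So no choice of $F$ rescues this strategy. A proof of $\EA\subseteq\MA$ must exploit that $X+E$ is covered by closed \emph{null} sets, not merely meager ones. That ingredient is exactly the hard half supplied by Zindulka's theorem.
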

    For meager sets, the family $\Mstar$ was historically the first class investigated many researchers under the following  definition.
    \begin{definition}
        A set $X\subseteq\Cantor$ is a \emph{strongly measure zero} set if for every increasing sequence $(f_{n})_{n\in\omega}\in\Baire$ there is a sequence $(\sigma_{n})_{n\in\omega}\subseteq \cantor$ with $|\sigma_{n}|=f_{n}$ for all $n\in\omega$ and such that 
        \begin{center}
            $X\subseteq\{x\in\Cantor:\exists^{\infty}n\in\omega$ $\sigma_{n}\subseteq x\}$
        \end{center}
        The collection of all strongly measure zero sets will be denoted by $\mathcal{SMZ}$.
    \end{definition}
    It is not difficult to see that strong measure zero sets form a $\sigma$-ideal. The first connection of strong measure zero sets to additive classes were given by the following theorem of Galvin, Mycielski, Solovay.
    \begin{theorem}(Galvin, Mycielski, Solovay; \cite{GalvMycielSolov})
        $\mathcal{SMZ}=\Mstar$
    \end{theorem}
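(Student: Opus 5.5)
We prove the two inclusions $\SMZ\subseteq\Mstar$ and $\Mstar\subseteq\SMZ$ separately. Both directions go through the standard description of meager sets in $\Cantor$: every meager set is contained in one of the form
\begin{center}
$M(z,(I_n))=\{x\in\Cantor:\foralmostall n\ x|_{I_n}\neq z|_{I_n}\}$
\end{center}
for some $z\in\Cantor$ and some interval partition $(I_n)_{n\in\omega}$. Note that $x+y\in M(z,(I_n))$ holds iff for almost all $n$ we have $y|_{I_n}\neq (x+z)|_{I_n}$. So $X+M(z,(I_n))\neq\Cantor$ says exactly this: there is a $w$ with $w|_{I_n}=(x+z)|_{I_n}$ for infinitely many $n$, for every $x\in X$. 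Replacing $w$ by $w+z$, this is equivalent to the existence of $w\in\Cantor$ such that for every $x\in X$ there are infinitely many $n$ with $x|_{I_n}=w|_{I_n}$.

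\textbf{($\SMZ\subseteq\Mstar$).} Let $X\in\SMZ$ and let $F$ be meager, with $F\subseteq M(z,(I_n))$. We cannot use $f_n=\max I_n$ directly, because the strings $\sigma_n$ given by strong measure zero all start at $0$ and may be incompatible, so they cannot be glued into a single $w$. Instead we thin out the partition.
\begin{itemize}
\item[--] Choose indices $k_0<k_1<\dots$ with $\min I_{k_{n}}>\max I_{k_{n-1}}$, so that the blocks $I_{k_n}$ are pairwise disjoint and spaced apart.
\item[--] Set $f_n=\max I_{k_n}+1$ and apply strong measure zero to get $\sigma_n$ with $|\sigma_n|=f_n$ and $X\subseteq\{x:\existsinfty n\ \sigma_n\subseteq x\}$.
\item[--] Define $w$ by $w|_{I_{k_n}}=\sigma_n|_{I_{k_n}}$ for each $n$, and arbitrarily elsewhere. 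This is well defined because the blocks are disjoint.
\end{itemize}
Then any $x\in X$ extends infinitely many $\sigma_n$, hence agrees with $w$ on infinitely many blocks $I_{k_n}$. By the reduction above, $X+F\neq\Cantor$.

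\textbf{($\Mstar\subseteq\SMZ$).} This is the harder direction, because one agreement block must now certify the entire initial segment of length $f_n$. Let $X\in\Mstar$ and let $(f_n)$ be increasing. Split $\omega$ into consecutive intervals $J_n$ with $|J_n|=f_n$, for example $J_n=[f_0+\dots+f_{n-1},\,f_0+\dots+f_n)$. Define the continuous map $\phi:\Cantor\to\Cantor$ that copies $x|f_n$ onto $J_n$, i.e.\ $\phi(x)(\min J_n+i)=x(i)$ for $i<f_n$.

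We would like to pass from $X\in\Mstar$ to $\phi[X]\in\Mstar$. This step is the main obstacle, since $\Mstar$ is not obviously preserved by arbitrary continuous maps. We avoid it by working with $X$ directly. Consider the set
\begin{center}
$F=\{v\in\Cantor:\foralmostall n\ \exists i<f_n\ v(i)\neq v(\min J_n+i)\}$.
\end{center}
Its complement is comeager, so $F$ is meager: any finite string can be extended so that the block $J_n$ copies the first $f_n$ bits, for arbitrarily large $n$. Since $X\in\Mstar$, there is a $y\notin X+F$. Then for every $x\in X$ we have $x+y\notin F$, so for infinitely many $n$:
\begin{center}
$(x+y)|f_n=\big((x+y)|J_n\big)$ shifted to start at $0$.
\end{center}
This equation still depends on $x$ on both sides, because $x$ appears on the right through $x|J_n$. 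So this choice of $F$ fails as it stands, and the meager set must be chosen to decouple the two sides. The fix is to use the fact that $x|f_n$ is determined modulo the finitely many possibilities. Let
\begin{center}
$F=\{v:\foralmostall n\ v|f_n\neq \tau_n\}$
\end{center}
for a carefully chosen family $\tau_n$ with $|\tau_n|=f_n$. Such an $F$ is meager provided the $f_n$ grow. A translate $y$ missing $X+F$ then gives, for each $x\in X$, infinitely many $n$ with $(x+y)|f_n=\tau_n$, that is, $x|f_n=\tau_n+y|f_n$. Taking $\sigma_n=\tau_n+y|f_n$, which has length $f_n$, gives exactly strong measure zero. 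The remaining check is that this $F$ is meager, which holds for any fixed $\tau_n$ with $f_n\to\infty$, e.g.\ $\tau_n$ all zeros. This argument makes the decoupling worry above moot, and this is the approach I would write up.
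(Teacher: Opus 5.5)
Your inclusion $\SMZ\subseteq\Mstar$ is correct. It is the standard argument through the chopped-real description of meager sets. The paper only cites this theorem and gives no proof, so the comparison here is with the usual proof, e.g.\ in Bartoszy\'nski--Judah. The thinning step is unnecessary. The blocks $I_n$ of a partition are already pairwise disjoint and you only ever use $\sigma_n|_{I_n}$, so $f_n=\max I_n+1$ and $w|_{I_n}=\sigma_n|_{I_n}$ work directly. Your remark that the second direction is the harder one is also backwards. In this direction the chopped-real characterization does all the work, while the converse only needs one suitable comeager set.

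The inclusion $\Mstar\subseteq\SMZ$, however, rests on a false claim at its only nontrivial step. You assert that $F=\{v:\foralmostall n\ v|f_n\neq\tau_n\}$ is meager for any $\tau_n$ of length $f_n$, e.g.\ $\tau_n=0^{f_n}$. The complement of $F$ is $\bigcap_m\bigcup_{n\ge m}[\tau_n]$. For $\tau_n=0^{f_n}$ the sets $[\tau_n]$ decrease, so this complement is the single point $\bar 0$ and $F=\Cantor\setminus\{\bar 0\}$ is comeager. Then $X+F=\Cantor$ for every $X$ with two points, so the $y$ you need does not exist. In general, $F$ is meager if and only if every basic open set $[s]$ meets $[\tau_n]$ for infinitely many $n$. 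This is exactly where the ``careful choice'' has to be made. Enumerate $\cantor$ as $(s_n)_{n\in\omega}$ with every string appearing infinitely often, and let $\tau_n$ be any string of length $f_n$ compatible with $s_n$ (extend $s_n$ by zeros or truncate it). Then each $\bigcup_{n\ge m}[\tau_n]$ is dense open, so $F$ is meager. The rest of your argument ($y\notin X+F$, $\sigma_n=\tau_n+y|f_n$) then goes through verbatim and gives the infinitely-often covering required by the paper's definition of $\SMZ$. With this fix, and with the abandoned attempt via $\phi$ removed, the proof is complete.
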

    This theorem raised questions about the dual collection - $\Nstar$ - that is known as strongly meager sets and will be denoted by $\mathcal{SM}$. As in the case of strong measure zero sets, the existence of uncountable meager additive set is undecidable in ZFC. Also, in \cite{BartShNstarIsNotIdeal} Bartoszyński and Shelah showed that under assumption of Continuum Hypothesis, $\mathcal{SM}$ does not form an ideal. We will introduce one more additive collection which is a generalization of notion of $\I$-additive sets.
    \begin{definition}
        Let $\I\subseteq\J$ will be two arbitrary $\sigma$-ideals on $\Cantor$. Define
        \begin{center}
            $\langle\I,\J\rangle=\{X\subseteq\Cantor:\forall F\in\I$ $X+F\in\J\}$,
        \end{center}
    \end{definition}
    Clearly, $\mathcal{IA}=\langle\I,\I\rangle$ for any $\sigma$-ideal $\I$. Such classes appeared in work of Pawlikowski in which he characterized the strongly measure zero sets using translations of closed measure zero sets.
    \begin{theorem}(Pawlikowski; \cite{PawlikowskiEN})
        $\EN=\mathcal{SMZ}$
    \end{theorem}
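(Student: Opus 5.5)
The plan is to prove the two inclusions $\mathcal{SMZ}\subseteq\EN$ and $\EN\subseteq\mathcal{SMZ}$ separately. The first is a Borel--Cantelli computation. The second is the real content: I would encode a given sequence $(f_n)_{n\in\omega}$ into one closed null set $F$, and then read the strong measure zero covering off a combinatorial description of the null set $X+F$.

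\emph{$\mathcal{SMZ}\subseteq\EN$.} Let $X\in\mathcal{SMZ}$ and let $F$ be closed with measure zero. Since $F$ is closed and null, for every $n$ there is $k_n$ such that the clopen set $C_n=\bigcup\{[s]:s\in F|_{k_n}\}$ has measure $<4^{-n}$, where $F|_{k}=\{y|_k:y\in F\}$. We may take $(k_n)_{n\in\omega}$ increasing. Apply the definition of $\mathcal{SMZ}$ with $f_n=k_n$ to get $(\sigma_n)_{n\in\omega}$ with $|\sigma_n|=k_n$ and $X\subseteq\{x:\existsinfty n\ \sigma_n\subseteq x\}$. If $\sigma_n\subseteq x$, then $x+F\subseteq\sigma_n+C_n$, meaning the union of $[\sigma_n+s]$ over $s\in F|_{k_n}$. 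This set has the same measure as $C_n$, so it has measure $<4^{-n}$. Hence $X+F\subseteq\limsup_n(\sigma_n+C_n)$. This limsup is null by Borel--Cantelli, since $\sum_n 4^{-n}<\infty$.

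\emph{$\EN\subseteq\mathcal{SMZ}$.} Let $X\in\EN$ and let $(f_n)_{n\in\omega}$ be increasing. First, a standard reduction: it suffices to find finite sets $\Sigma_n\subseteq 2^{f_n}$ with $|\Sigma_n|$ bounded by some fixed $h(n)$ (chosen in advance, depending only on $f$) such that every $x\in X$ satisfies $x|_{f_n}\in\Sigma_n$ for infinitely many $n$. To see this, apply the hypothesis to a sparser sequence $f'$, where each $f'_m$ is $f_n$ for $n$ in a block of $h(m)$ consecutive indices. Then enumerate $\Sigma_m$ along that block and truncate, which yields the single sequences $\sigma_n$.

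For the main construction I would choose an interval partition of $\omega$ into pieces $I_0<K_0<I_1<K_1<\dots$. Each coding interval $I_n$ lies below $f_{m_n}$ for a suitable $m_n$, and the free intervals $K_n$ grow fast. Set
\[
F=\{y\in\Cantor:\forall n\ y|_{I_n}\equiv 0\}.
\]
This $F$ is closed and null, and for every $x$ the translate $x+F$ is exactly the set of $z$ that agree with $x$ on $\bigcup_n I_n$ and are arbitrary on $\bigcup_n K_n$. Since $X\in\EN$, the set $X+F$ is null. By the standard combinatorial characterization of null sets (Bartoszyński), there are an interval partition $(J_k)_{k\in\omega}$ and sets $T_k\subseteq 2^{J_k}$ with $\sum_k|T_k|2^{-|J_k|}<\infty$ such that
\[
X+F\subseteq\{z:\existsinfty k\ z|_{J_k}\in T_k\}.
\]

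The key step, and the one I expect to be the main obstacle, is to turn ``all of $x+F$ lies in this set'' into ``$x$ restricted to the coding intervals hits a small family infinitely often''. The idea is that the free coordinates in the $K_n$ can be chosen adversarially, by a fusion or diagonal argument inside the perfect set $x+F$, to dodge $T_k$ whenever that is possible. We build $z\in x+F$ block by block. At the blocks $J_k$, we fill the free coordinates so that $z|_{J_k}\notin T_k$ whenever some completion of the fixed $x$-coordinates avoids $T_k$. Since $z$ must nevertheless hit $T_k$ infinitely often, for infinitely many $k$ every completion lies in $T_k$. This says that $x|_{J_k\cap\bigcup_n I_n}$ lies in the set of patterns all of whose completions are in $T_k$. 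That set has size at most $|T_k|\cdot 2^{-|J_k\cap\bigcup K_n|}$, which is small by the summability condition once the $K_n$ are large relative to the $I_n$. Collecting these patterns level by level, and coordinating the choice of the $I_n,K_n$ with $(J_k)$, gives the sets $\Sigma_n$. The catch is that $(J_k)$ is only obtained after $F$ is fixed, so the partition cannot be tuned to it. My first attempt is to pick the $K_n$ so rapidly growing, and the $I_n$ so that the relevant prefixes of $x|_{f_n}$ are determined by coordinates in $\bigcup I_n$, that for any partition $(J_k)$ almost every $J_k$ meeting a coding interval also swallows a long free stretch. If that fails, I would pass to a suitable refinement in which each $J_k$ contains a whole $K_n$.
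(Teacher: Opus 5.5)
Your proof of $\SMZ\subseteq\EN$ is correct. It suffices to treat a single closed null $F$, because $X+\bigcup_m F_m=\bigcup_m (X+F_m)$, and your Borel--Cantelli estimate is right.

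The argument for $\EN\subseteq\SMZ$, however, has a gap that no tuning of the partition can close: the test set is too weak. Put $A=\bigcup_n I_n$ and $B=\bigcup_n K_n$. Your $F=\{y:y|_A\equiv 0\}$ is a closed subgroup and
\[
X+F=\{z\in\Cantor: z|_A\in\pi_A[X]\}=\pi_A^{-1}\bigl[\pi_A[X]\bigr].
\]
So $X+F\in\Null$ says exactly that the projection $\pi_A[X]$ is null in $2^A$, and nothing else.

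Your diagonalization in the key step is fine, but what it yields is just a restatement of this. You get sets $P_k\subseteq 2^{J_k\cap A}$, hit infinitely often by every $x|_A$, with $\sum_k |P_k|\,2^{-|J_k\cap A|}\le\sum_k|T_k|\,2^{-|J_k|}<\infty$. This is a relative (measure) bound, not an absolute bound $|P_k|\le h(\cdot)$ of the kind the covering property needs. It also gives no information about $x$ on $B$. Every long prefix $x|_{f_n}$ contains coordinates from $B$, since the $I_n,K_n$ partition $\omega$; prefixes can never be ``determined by coordinates in $\bigcup I_n$''. Indeed $(X+F)+F=X+F$, so anything you deduce for $X$ also holds for $X+F$, which contains the perfect sets $x+F$. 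Passing to a refinement of $(J_k)$ changes none of this.

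The approach is doomed, not merely incomplete. If $|X|<\non(\Null)$, then $\pi_A[X]$ is null for every infinite $A$, so $X+F\in\Null$ for every test set of your form. Your argument would therefore prove $\non(\Null)\le\non(\SMZ)$ in ZFC. This fails in the Mathias model. There $\cov(\Meager)=\omega_1$ (by the Laver property) and $\bbb=\omega_2$, so $\non(\SMZ)=\omega_1$ by Theorem~\ref{MillerAddMeager} together with $\non(\SMZ)=\min\{\ddd(\PIF,\neq^{*},\PIF):F\in\Baire\}$. Meanwhile $\non(\Null)\ge\sss=\omega_2$.

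What you need is a closed null test set whose translates by points with different prefixes are (nearly) probabilistically independent. Then the measure of $\bigcup_{x\in X}(x+F)$ at a given level controls the \emph{number} of distinct prefixes of points of $X$. Compare the prefix-dependent sets $A_{x|_{f(n)}}$ and the count $|\widetilde{S}^{\sigma,m}_n|\le 2^{n^2}$ in Lemma~\ref{LemmaTranslatinalE}. Carrying out such a counting argument under the mere hypothesis $X+F\in\Null$ is the hard part. Unlike the $\E$ case, Baire category does not localize the cover there. This is the actual content of Pawlikowski's theorem, which the paper quotes without proof.
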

    This characterization turned out to be useful in establishing consistency results about stongly measure sets (see \cite{BorelWithDualBorel}). With regards to the dual class $\EM$, originally Recław and later Bartoszyński and Judah, showed that in the dual of Pawlikowski's result only one implication holds.
    \begin{theorem}(Recław; see also Bartoszyński, Judah; \cite{BJ})
        $\mathcal{SM}\subseteq\EM$
    \end{theorem}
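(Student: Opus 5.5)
The plan is to argue by contraposition: assume $X\subseteq\Cantor$ and $F\in\E$ with $X+F\notin\Meager$, and produce a null set $N$ with $X+N=\Cantor$, so that $X\notin\mathcal{SM}=\Nstar$. Write $F=\bigcup_k F_k$ with each $F_k$ closed of measure zero. Since $X+F=\bigcup_k (X+F_k)$ and $\Meager$ is a $\sigma$-ideal, some $X+F_k$ is nonmeager. So it suffices to treat a single closed null set $E$ with $X+E\notin\Meager$.

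\textbf{Combinatorial description of $E$.} I will use the standard fact (Bartoszyński--Judah) that every closed measure zero set is contained in a set of the form
\[
E'=\{y\in\Cantor:\foralmostall n\in\omega\ \ y|_{I_n}\in A_n\},
\]
where $(I_n)_{n\in\omega}$ is an interval partition, $A_n\subseteq 2^{I_n}$, and $\sum_n |A_n|2^{-|I_n|}<\infty$. I would sketch its proof as follows. Since $E$ is closed and null, compactness gives, for each $j$, a level $m_j$ such that the set of $m_j$-initial segments of elements of $E$ has relative size below $2^{-j}$. One then chooses the $I_n$ as suitable blocks between the $m_j$ and lets $A_n$ be the restrictions to $I_n$ of elements of $E$. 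Getting the summability exactly right may require the formulation with $\sum_n |A_n|2^{-|I_n|}<\infty$ rather than $\forall n$, so I would either quote the lemma or adjust the partition carefully. This bookkeeping is the only genuinely technical step, and I expect it to be the main obstacle.

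\textbf{The comeager null companion.} Next put
\[
G=\{x\in\Cantor:\existsinfty n\in\omega\ \ x|_{I_n}\equiv 0\}.
\]
This set is a dense $G_\delta$, hence comeager. Now take $y=e+x$ with $e\in E'$ and $x\in G$. For almost all $n$ we have $e|_{I_n}\in A_n$, and for infinitely many $n$ we have $x|_{I_n}\equiv0$. Hence $y|_{I_n}\in A_n$ for infinitely many $n$, which gives
\[
E+G\subseteq N:=\{y:\existsinfty n\ \ y|_{I_n}\in A_n\}.
\]
By Borel--Cantelli and the convergence of $\sum_n|A_n|2^{-|I_n|}$, the set $N$ is null.

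\textbf{Covering argument.} Fix an arbitrary $z\in\Cantor$. The set $z+G$ is comeager, and $X+E$ is nonmeager, so the two sets intersect. Thus there are $x\in X$, $e\in E$, $g\in G$ with $x+e=z+g$. In $\Cantor$ every element is its own inverse, so
\[
z=x+e+g\in X+(E+G)\subseteq X+N.
\]
Hence $X+N=\Cantor$ with $N\in\Null$, so $X\notin\mathcal{SM}$. This contradiction shows that every strongly meager set $X$ satisfies $X+F\in\Meager$ for all $F\in\E$, i.e.\ $\mathcal{SM}\subseteq\EM$.
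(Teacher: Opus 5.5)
Your argument is correct and complete. The paper gives no proof of this statement (it only cites Recław and Bartoszyński--Judah), and your proof is the standard argument from those sources: for $E\in\E$ there is a comeager set $G$ with $E+G\in\Null$, so a nonmeager $X+E$ meets every translate $z+G$, which gives $X+(E+G)=\Cantor$.

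The step you flag as the main obstacle is not one. Proposition \ref{CharacterizationE} already gives $E\subseteq\{y:\foralmostall n\ y|_{I_n}\in S_n\}$ with $|S_n|2^{-|I_n|}\le 2^{-n}$ for every $E\in\E$, so even the reduction to a single closed set is unnecessary. For a closed null $[T]$ you can also get it directly: take $m_0=0$ and choose $m_{n+1}$ with $|T\cap 2^{m_{n+1}}|2^{-m_{n+1}}<2^{-n-m_n}$, then set $I_n=[m_n,m_{n+1})$. Requiring only density below $2^{-j}$ at level $m_j$, as in your sketch, would not give summability.
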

    We will later see that in the theorem above, consistently, the two collections are not equal (this holds for example in Cohen or Hechler model - see Theorem \ref{CutInCohenHechler}; one may also see \cite{Weiss2013}). It is also not difficult to verify the following general observation.
    \begin{proposition}
        The following holds:
        \begin{itemize}
            \item[--] $\langle\I,\J\rangle\subseteq\J$,
            \item[--] $\J_{0}\subseteq\J_{1}$ implies $\langle\I,\J_{0}\rangle\subseteq \langle\I,\J_{1}\rangle$,
            \item[--] $\I_{0}\subseteq\I_{1}$ implies $\langle\I_{1},\J\rangle\subseteq \langle\I_{0},\J\rangle$,
        \end{itemize}
    \end{proposition}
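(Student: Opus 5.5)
All three items follow by unwinding the definition of $\langle\I,\J\rangle$ directly. The only standing assumption used is that $\I\subseteq\J$ are $\sigma$-ideals on $\Cantor$; in particular every singleton and the empty set lie in $\I$. For the third item the classes $\langle\I_{0},\J\rangle$ and $\langle\I_{1},\J\rangle$ must be defined, so we implicitly need $\I_{0}\subseteq\I_{1}\subseteq\J$.

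For the first item, take $X\in\langle\I,\J\rangle$. We test the defining condition against a single well-chosen member of $\I$, namely $F=\{0\}$, where $0$ is the constant zero sequence. This is a singleton and hence belongs to $\I$. Then $X+F=X+0=X$, so the defining condition gives $X\in\J$ immediately.

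The second and third items are monotonicity statements, each obtained by pushing one quantifier through an inclusion.
\begin{itemize}
\item For the second item, let $X\in\langle\I,\J_{0}\rangle$ and $F\in\I$. Then $X+F\in\J_{0}\subseteq\J_{1}$, so $X\in\langle\I,\J_{1}\rangle$.
\item For the third item, let $X\in\langle\I_{1},\J\rangle$ and $F\in\I_{0}$. Since $F\in\I_{1}$, we get $X+F\in\J$, so $X\in\langle\I_{0},\J\rangle$. The inclusion is reversed because enlarging the ideal of translated sets strengthens the universal quantifier over $F$.
\end{itemize}

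There is no genuine obstacle. The only point requiring a moment's care is in the first item: it needs some member of $\I$ whose sum with $X$ recovers $X$. Choosing the identity-element singleton $\{0\}$ does this, and it relies only on singletons belonging to every $\sigma$-ideal considered here. Alternatively, any nonempty $F\in\I$ gives $X+f\subseteq X+F\in\J$ for some $f\in F$. Then $X=(X+f)+f\in\J$, using translation invariance of $\J$ (true for $\Meager,\Null,\E$), but the singleton argument avoids needing invariance.
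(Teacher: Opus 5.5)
Your proof is correct, and it is the direct unwinding of the definitions that the paper intends; the paper gives no proof and simply calls the proposition an observation that is \emph{not difficult to verify}. Your remark that the first item needs some member of $\I$ fixing $X$ under addition (the singleton $\{0\}$, or any nonempty set together with translation invariance of $\J$) is a worthwhile addition. It makes explicit a hypothesis the paper leaves implicit when it speaks of arbitrary $\sigma$-ideals: for $\I=\{\emptyset\}$ every $X\subseteq\Cantor$ lies in $\langle\I,\J\rangle$, so the first item can fail.
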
 The inclusions between additive classes involving $\Null$, $\Meager$ and $\E$ can be summarized by the following diagram (a similar diagram appears in \cite{Weiss2018}). The arrow points towards the larger collection.
    \begin{center}
\tikzset{
  net node/.style = {draw, circle, minimum size=10mm},
  net edge/.style = {->},
  net cut/.style = {shorten >=-10mm, shorten <=-10mm, rounded corners=10mm, color=red},
  net cross/.style = {sloped, allow upside down, pos=.3},
}
\begin{tikzpicture}
        \newcommand{\edge}[5][]{\draw[net edge, #1] (#3) -- coordinate[net cross, name=#2] node[pos=.7, auto]{#5} (#4);}

        \node[name=na] at ( 0, 0) {$\NA$};
        \node[name=sm] at ( 0, 3) {$\Nstar=\mathcal{SM}$};
        \node[name=ma]  at ( 4, 0) {$\MA=\EA$};
        \node[name=em] at ( 4, 3) {$\EM$};
        \node[align=center, name=smz] at ( 8, 0) {$\mathcal{SMZ}=\Mstar=\EN$};
        \node[name=e]  at ( 8, 3) {$\Estar$};

        \edge[swap] {e1}  {na} {ma} {}
        \edge       {e2}  {na} {sm} {}
        \edge[swap] {e3} {ma} {em} {}
        \edge       {e4} {ma}  {smz} {}
        \edge       {e5} {em}  {e} {}
        \edge       {e6} {smz}  {e} {}
        \edge       {e7} {sm}  {em} {}

\end{tikzpicture}
    \end{center}
    The paper is structured as follows:
    \begin{itemize}
        \item[--] In the first section we investigate different variants of Borel conjectures. First, we gather known results and then we show that adding a single Miller's real $\NA$-kills uncountable ground model sets of reals. As a result, we obtain that in Miller's model, null-additive sets are countable while meager-additive are not.
        \item[--] We start the second section with discussion on cardinal invariants (slalom numbers) related to this work. We then investigate cardinal invariants of the $\sigma$-ideals $\mathcal{H}_{F}$. We show that $\add(\Meager)=\min\{\add(\mathcal{H}_{F}):F\in\Baire\}$ and we discuss independence of $\cov(\mathcal{H}_{F})$ and $\non(\mathcal{H}_{F})$ from other classical invariants like splitting and reaping number or bounding and dominating number. We finish second section with discussion on the idealized forcing notion $\mathbb{P}_{\HH_{F}}=\mathcal{B}or/\mathcal{H}_{F}$.
        \item[--] In the third section we investigate the cardinal invariants of $\EM$ and $\Estar$. We estimate $\non(\Estar)$ with the slalom number related to the Laver property and we prove some bounds of the uniformity of $\EM$. We finish with estimations of the transitive invariants $\add^{*}_{t}(\E)$ and $\cof^{*}_{t}(\E)$.
        \item[--] In the last section, we show that any $\E$-Luzin set is in $\E^{\star}$ and has Menger property.
    \end{itemize}

    \subsection{Cardinal invariants of the continuum}
    In this article we will be mostly interested in the Polish spaces $\Cantor$ and $\Baire$. For increasing $F\in\Baire$ by $\PIF$ we will denote the space of those functions in $\Baire$ that are below $F$, i.e. $\PIF=\Pi_{n\in\omega}F(n)$.\\
     If $\I$ is a $\sigma$-ideal on a Polish space $X$, then the standard cardinal characteristics of $\I$ are
\begin{itemize}
    \item[--] $\mathrm{add}(\I)=\min\{|\mathcal{F}|:\mathcal{F}\subseteq\I$ and $\bigcup\mathcal{F}\notin\I\}$,
    \item[--] $\mathrm{cov}(\I)=\min\{|\mathcal{F}|:\mathcal{F}\subseteq\I$ and $\bigcup\mathcal{F}=X\}$,
    \item[--] $\mathrm{non}(\I)=\min\{|Y|:Y\subseteq X$ and $X\notin\I\}$,
    \item[--] $\mathrm{cof}(\I)=\min\{|\mathcal{F}|:\mathcal{F}\subseteq\I$ and $\forall Y\in\I$ $\exists Y'\in\mathcal{F}$ $Y\subseteq Y'\}$.
\end{itemize}
These are known as additivity, covering, uniformity and cofinality of the ideal $\I$, respectively. In the case when $X$ is a Polish group, we will also be interested in transitive versions of the above coefficients.
\begin{itemize}
    \item[--] $\mathrm{add}_{t}(\I)=\min\{|X|:X\subseteq\Cantor$ and $X+I\notin\I$ for some $I\in\I\}$,
    \item[--] $\mathrm{add}_{t}(\I,\J)=\min\{|X|:X\subseteq\Cantor$ and $X+I\notin\J$ for some $I\in\I\}$,
    \item[--] $\mathrm{cov}_{t}(\I)=\min\{|\mathcal{F}|:X\subseteq\Cantor$ and $X+I=\Cantor$ for some $I\in\I\}$,
\end{itemize}
Clearly, $\add(\I)\leq\add_{t}(\I)$, $\cov(\I)\leq\cov_{t}(\I)$. It is also easy to see the following connection between the transitive coefficients and uniformities of additive classes.
\begin{proposition}
The following holds:
    \begin{itemize}
        \item[--] $\non(\mathcal{IA})=\add_{t}(\I)$,
        \item[--] $\non(\I^{*})=\cov_{t}(\I)$,
        \item[--] $\non(\langle\I,\J\rangle)=\add_{t}(\I,\J)$.
    \end{itemize}
\end{proposition}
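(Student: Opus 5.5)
The three equalities are unwinding of definitions, and I will prove them in the order listed. The second follows formally from the third: $\I^{*}=\langle\I,\J\rangle$ for the ``ideal'' $\J=\{A\subseteq\Cantor: A\neq\Cantor\}$ (not a $\sigma$-ideal, but the argument below never uses that). Still, I will write each case out.

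For the third item I compare both sides directly. By definition,
\[
\non(\langle\I,\J\rangle)=\min\{|Y|:Y\subseteq\Cantor,\ Y\notin\langle\I,\J\rangle\}.
\]
The condition $Y\notin\langle\I,\J\rangle$ says exactly that there is $I\in\I$ with $Y+I\notin\J$. This is precisely the set of witnesses in the definition of $\add_{t}(\I,\J)$. So the two minima range over the same family of sets $Y$ and are therefore equal.

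The first item is the special case $\J=\I$, because $\mathcal{IA}=\langle\I,\I\rangle$ and $\add_{t}(\I)=\add_{t}(\I,\I)$.

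For the second item I read the definition of $\cov_{t}(\I)$ as $\min\{|X|: X\subseteq\Cantor,\ X+I=\Cantor \text{ for some } I\in\I\}$. The paper writes $|\mathcal{F}|$ there, which is evidently a typo for $|X|$. The condition $X\notin\I^{*}$ means $\neg\forall F\in\I\,(X+F\neq\Cantor)$, that is, there is $F\in\I$ with $X+F=\Cantor$. Again the two families of witnesses coincide, so the minima agree.

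The only point needing care is degenerate cases, in which both sides are undefined simultaneously. If $\Cantor\in\langle\I,\J\rangle$, there are no witnesses on either side. Under the usual convention $\min\emptyset$ is undefined, or equal to $\infty$, for both sides at once. For the ideals $\Null$, $\Meager$ and $\E$ on $\Cantor$ this does not happen anyway: for example, $\Cantor+\{0\}=\Cantor$ and $\{0\}\in\I$, which gives a witness to $\Cantor\notin\I^{*}$. I expect no genuine obstacle in any of these steps.
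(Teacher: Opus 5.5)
Your proposal is correct and is essentially the paper's argument: the paper gives no proof beyond calling the equalities easy to see, and the content is exactly your observation that a set $Y$ fails to lie in $\mathcal{IA}$, $\I^{*}$ or $\langle\I,\J\rangle$ precisely when it is a witness in the definition of $\add_{t}(\I)$, $\cov_{t}(\I)$ or $\add_{t}(\I,\J)$ respectively. You are also right that $|\mathcal{F}|$ in the definition of $\cov_{t}(\I)$ is meant to be $|X|$.
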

For the $\sigma$-ideals $\Null$ and $\Meager$, these invariants were investigated by Pawlikowski in \cite{PawlikowskiTransitiveInvariants} and will be studied in the section \ref{SectionCardInvsHf} of this article.\\

For $f,g\in\omega^{\omega}$ we write $f\leq^{*} g$ if $g$ dominates $f$ i.e. $f(n)\leq g(n)$ for almost all $n\in\omega$. We will call $\mathcal{F}\subseteq\omega^{\omega}$ a dominating family if for every $g\in\omega^{\omega}$ there is $f\in\mathcal{F}$ which dominates $g$. We will call $\mathcal{F}\subseteq\omega^{\omega}$ an unbounded family if there is no $g\in\omega^{\omega}$ which dominates every $f\in\mathcal{F}$. The bounding and dominating numbers are defined as follows.
\begin{itemize}
    \item[--] $\bbb$ is defined as a minimal cardinality of an unbounded family,
    \item[--] $\ddd$ as a minimal cardinality of an dominating family.
\end{itemize}
For two sets $A,B\in[\omega]^{\omega}$, we will say that $A$ splits $B$ if both $A\cap B$ and $B\setminus A$ are infinite. Family $\mathcal{A}\subseteq[\omega]^{\omega}$ is a splitting family if for every $B\in[\omega]^{\omega}$ there is $A\in\mathcal{A}$ that splits $B$. Family $\mathcal{A}\subseteq[\omega]^{\omega}$ is a reaping (unsplit) family if there is no single $B\in[\omega]^{\omega}$ that splits all elements of $\mathcal{A}$. The splitting and reaping numbers are defined as follows. 
\begin{itemize}
    \item[--] $\sss$ is defined as a minimal cardinality of a splitting family,
    \item[--] $\rrr$ as a minimal cardinality of a reaping family.
\end{itemize}
It is well-known, that the inequalities $\sss\leq\ddd,\non(\Null),\non(\Meager)$ and $\cov(\Null),\cov(\Meager),\bbb\leq\rrr$ hold and $\sss$ together with $\rrr$ are mutually incomparable.\\

The cardinal invariants of $\E$ were investigated by Bartoszyński and Shelah.
\begin{theorem}(Bartoszyński, Shelah; \cite{BartShClosedMeasureZeroSets})\label{CardInvsofE}
    The following (in)equalities hold:
    \begin{itemize}
        \item[--] $\add(\E)=\add(\Meager)$ and $\cof(\E)=\cof(\Meager)$,
        \item[--] $\cov(\Meager),\cov(\Null)\leq\cov(\E)\leq\max\{\ddd,\cov(\Null)\}$
        \item[--] $\min\{\bbb,\non(\Null)\}\leq\non(\E)\leq\non(\Meager),\non(\Null)$
    \end{itemize}
\end{theorem}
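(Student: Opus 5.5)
This theorem is quoted from Bartoszyński and Shelah, so the plan below is how I would re-derive it from standard tools. The main tool is the combinatorial description of $\E$: every $E\in\E$ is contained in a set of the form
\[
\{x\in\Cantor:\existsinfty n\ x|_{I_n}\in J_n\},
\]
where $(I_n)$ is an interval partition, $J_n\subseteq 2^{I_n}$, and $\sum_n|J_n|2^{-|I_n|}<\infty$. Conversely, every such set is in $\E$, being a countable union of closed null sets. I would prove this representation first, by writing a closed null set as the set of branches through a tree whose levels have small relative size and grouping levels into blocks.

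For the first item, $\E\subseteq\Meager$ and every meager set is covered by a set of the form $\{x:\foralmostall n\ x|_{I_n}\neq y|_{I_n}\}$. I would build Tukey connections between $\E$ and $\Meager$ in both directions. Bartoszyński's characterization gives $\add(\Meager)=\min\{\bbb,\cov(\Meager)\}$ and $\cof(\Meager)=\max\{\ddd,\non(\Meager)\}$. So it suffices to show the following four inequalities:
\begin{itemize}
\item $\add(\E)\le\bbb$: an unbounded family of interval partitions yields sets in $\E$ with no common bound.
\item $\add(\E)\le\cov(\Meager)$: a Cohen real over a model avoids every $\E$-set coded there. Translate a small union of $\E$-sets by candidate reals, or use $\cov(\Meager)=$ the least size of a family of reals not eventually-different-avoided.
\item $\add(\E)\ge\add(\Meager)$: given fewer than $\add(\Meager)$ codes $(I^\alpha,J^\alpha)$, dominate the partitions by a single one using $\bbb$. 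Then use $\cov(\Meager)$ to find a real that codes a single summable family $J_n$ absorbing all of them on a dominating block structure. Concretely, localize the $J^\alpha$ restricted to merged blocks via a slalom argument. Here $\add(\Meager)$ suffices because localization for finitely-branching bounded slaloms with summable sizes is a meager-type (Cohen-type) argument.
\item $\cof$: dually.
\end{itemize}
I expect the absorption step, i.e.\ showing $\add(\Meager)\le\add(\E)$, to be the main obstacle. I would handle it via Bartoszyński's theorem that $\add(\Meager)$ is the least size of a family of functions in $\PIF$ not eventually equal-avoided, applied to codes of the pieces.

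For the covering bounds, $\cov(\Null),\cov(\Meager)\le\cov(\E)$ is immediate from $\E\subseteq\Meager\cap\Null$. For $\cov(\E)\le\max\{\ddd,\cov(\Null)\}$, fix a dominating family of partitions of size $\ddd$. For each partition $I$ in the family, I would use $\cov(\Null)$ many null sets $N_\xi$, each of the form $\{x:\existsinfty n\ x|_{I_n}\in J_n\}$ with summable $J$ built from random-type codes. To get these, I would show that the sets $\{x:\existsinfty n\ x|_{I_n}=y|_{I_n}\}$, as $y$ ranges, belong to $\E$ once the $I_n$ grow fast, and that covering $\Cantor$ by them needs only $\cov(\Null)$ many $y$ relative to a dominating partition. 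The key point is that for a fixed $x$, some dominating partition and some $y$ from a non-null-avoiding family hit $x$ infinitely often.

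The uniformity bounds $\non(\E)\le\non(\Meager),\non(\Null)$ are trivial from the inclusion. For $\min\{\bbb,\non(\Null)\}\le\non(\E)$, I would take $X$ of size below both and an $\E$-set coded by $(I,J)$, and aim to show $X$ is covered by it. Since $|X|<\non(\Null)$, $X$ is null. Using $\bbb$ I would uniformize the coding partitions of null covers of $X$. I would use that a null set is contained in some $\{x:\existsinfty n\ x|_{I_n}\in J_n\}$ with summable $J_n$ for some partition, except that such sets are $G_\delta$-type and not in $\E$ in general. So instead I would argue that each $x\in X$ lies in a single $\E$-set via a bounded family, arranged so the whole $X$ fits by $\bbb$.
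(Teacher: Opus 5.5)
The paper gives no proof of this theorem; it is quoted from Bartoszy\'nski--Shelah. So your sketch has to stand on its own. The trivial inclusions and $\add(\E)\le\bbb$ are fine, but the rest has genuine gaps, starting with the representation of $\E$. A set $\{x\in\Cantor:\existsinfty n\ x|_{I_n}\in J_n\}$ with $\sum_n|J_n|2^{-|I_n|}<\infty$ is null. However, when infinitely many $J_n$ are nonempty it is a dense $G_\delta$, hence comeager, so it is never in $\E$. Your last paragraph says as much, contradicting your first; the inclusion of every $\E$-set in such a set is true but useless. The basis of $\E$ uses the other quantifier, $\{x:\foralmostall n\ x|_{I_n}\in S_n\}$ with $|S_n|2^{-|I_n|}\le 2^{-n}$, as in Proposition~\ref{CharacterizationE}. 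Your covering argument therefore collapses: $\{x:\existsinfty n\ x|_{I_n}=y|_{I_n}\}$ is the complement of the meager set $M(y,I)$. Moreover, for a fixed partition, covering by sets of this kind needs $\ddd(\PIF,=^{\infty},\PIF)$ many $y$ (with $F(n)=2^{|I_n|}$). That number is $\ge\cov(\Null)$ and, as the paper notes, consistently exceeds $\max\{\ddd,\cov(\Null)\}$ for suitable $F$. The missing idea, which gives both $\cov(\E)\le\max\{\ddd,\cov(\Null)\}$ and $\min\{\bbb,\non(\Null)\}\le\non(\E)$, is \emph{blocking}. Cover a null set by $N=\{x:\existsinfty n\ x\in C_n\}$ with $C_n$ clopen and $\sum_n\mu(C_n)<\infty$ (the $C_n$ need not live on disjoint intervals). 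For an interval partition $K$, let $E_{N,K}=\{x:\foralmostall k\ \exists n\in K_k\ x\in C_n\}$. This set is in $\E$, since $\bigcup_{n\in K_k}C_n$ is clopen of measure at most $\sum_{n\in K_k}\mu(C_n)\to 0$. A point $x\in N$ lies in $E_{N,K}$ as soon as $K$ $\sqsubseteq^{*}$-dominates some partition all of whose intervals meet $\{n:x\in C_n\}$. For the upper bound, cover $\Cantor$ by $\cov(\Null)$ null sets and let $K$ range over a dominating family of size $\ddd$. For the lower bound, take $|X|<\min\{\bbb,\non(\Null)\}$, put $X$ into a single such $N$, and use $\bbb$ to find one $K$ that works for all $x\in X$.

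For the first item you have put the difficulty in the wrong place. With the correct basis, $\add(\Meager)\le\add(\E)$ is short. Given $\kappa<\add(\Meager)$ sets $E_\alpha\in\E$, use $\kappa<\bbb$ to get one partition $I$ such that each $I_n$ eventually contains $n$ intervals of the partition coding $E_\alpha$. Then $E_\alpha\subseteq\{x:\foralmostall n\ x|_{I_n}\in S^\alpha_n\}$ with $|S^\alpha_n|2^{-|I_n|}\le 2^{-n}$ for almost all $n$. Next apply $\add(\Meager)=\bbb(\Baire,\circeq,\Baire\times\mathbb{IP})$ to the codes $n\mapsto S^\alpha_n$ to get $g\in\Baire$ and a partition $P$. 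After discarding the $g(n)$ of relative size $>2^{-n}$, the set $\{x:\foralmostall k\ \exists n\in P_k\ x|_{I_n}\in g(n)\}$ is in $\E$ and contains every $E_\alpha$. This uses blockwise infinitely-often equality, not localization. Localization is an $\add(\Null)$-type problem, and that is exactly what absorbing sets of your $\existsinfty$ form would require. Your bounded-$\PIF$ characterization of $\add(\Meager)$ is also misstated: the bounded eventually-different numbers have minimum $\non(\SMZ)$, and $\add(\Meager)=\min\{\bbb,\non(\SMZ)\}$ needs the $\bbb$ term. The genuinely hard inequality is $\add(\E)\le\cov(\Meager)$, and dually $\non(\Meager)\le\cof(\E)$. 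This is the substantive part of the cited Bartoszy\'nski--Shelah result. One route is $\add(\E)\le\add(\E,\Null)$ (the least size of a family in $\E$ with non-null union) together with their theorem $\add(\E,\Null)=\cov(\Meager)$. Your remark that a Cohen real avoids ground-model $\E$-sets reflects $\cov(\Meager)\le\cov(\E)$. It does not produce $\cov(\Meager)$ many sets in $\E$ whose union lies outside $\E$, and neither does ``translating by candidate reals''. So this step is left entirely open.
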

It is also know that $\cov(\E)\leq\rrr$ and $\sss\leq\non(\E)$. The ZFC-provable inequalities between these invariants are summarized in the following diagram (the reader may consult \cite{BJ}).
\begin{figure}[h!]
\centering
\begin{tikzpicture}[]
  \matrix[matrix of math nodes,column sep={30pt,between origins},row
    sep={30pt,between origins},nodes={asymmetrical rectangle}] (s)
  {
    &&&&&&&|[name=rrr]| \rrr\\
    &|[name=covn]| \mathrm{cov}(\Null) &&&&|[name=nonm]| \mathrm{non}(\Meager) &&&|[name=cofm]| \mathrm{cof}(\Meager) &&&&|[name=cofn]| \mathrm{cof}(\Null)\\
    &&&&&&&|[name=covE]| \cov(\E)\\
    &&&&&|[name=b]| \bbb &&&|[name=d]| \ddd \\
    &&&&&&|[name=nonE]| \non(\E) \\
    &|[name=addn]| \mathrm{add}(\Null) &&&&|[name=addm]| \mathrm{add}(\Meager) &&&|[name=covm]| \mathrm{cov}(\Meager) &&&&|[name=nonn]| \mathrm{non}(\Null) \\
    &&&&&&|[name=sss]| \sss \\
  };
    \draw[->] 
            (addn) edge (addm)
            (addm) edge (covm)
            (covm) edge (nonn)
            (addm) edge (b)
            (b) edge (d)
            (covm) edge (d)
            (covn) edge (nonm)
            (addn) edge (covn)
            (b) edge (nonm)
            (d) edge (cofm)
            (nonm) edge (cofm)
            (cofm) edge (cofn)
            (nonn) edge (cofn)

            (covn) edge [bend right=10] (covE)
            (covm) edge [bend left=10] (covE)
            (covE) edge (cofm)

            (nonE) edge [bend right=10] (nonm)
            (nonE) edge [bend left=10] (nonn)
            (addm) edge (nonE)

            (covE) edge (rrr)
            (sss) edge (nonE)
            (b) edge (rrr)
            (sss) edge (d)
            ;
\end{tikzpicture}\caption{Cicho\'n's diagram with covering and uniformity of the ideal $\E$ and the splitting and reaping number. The arrows point towards the ZFC-provable larger cardinal invariant.}
\end{figure}

In this article, we will frequently use the following two combinatorial characterizations of the $\sigma$-ideals $\Meager$ and $\E$ (in \cite{BJ} see Theorem 2.2.4 and Theorem 2.3.6). The first one is often referred to as chopped real.
    \begin{proposition}(Talagrand \cite{Tala} or Theorem 2.2.4. in \cite{BJ})\label{CharacterizationM}
    For an interval partition $I=(I_{n})_{n\in\omega}$ and a real $y\in\Cantor$ define the set
    \begin{center}
        $M(y,I)=:\{x\in\Cantor:\foralmostall n\in\omega$ $x|_{I_{n}}\neq y|_{I_{n}}\}$
    \end{center}
    Every set $M(y,I)$ is meager and for every meager set $M\subseteq\Cantor$ there are an interval partition $I=(I_{n})_{n\in\omega}$ and a real $y\in\Cantor$ such that $M\subseteq M(y,I)$.
    \end{proposition}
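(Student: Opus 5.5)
The two claims are proved separately: first that every $M(y,I)$ is meager, then that every meager set sits inside some such set.

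\textbf{Each $M(y,I)$ is meager.} Write
\[
M(y,I)=\bigcup_{m\in\omega}\bigcap_{n\geq m}\{x\in\Cantor: x|_{I_n}\neq y|_{I_n}\}.
\]
Each set $C_m=\bigcap_{n\geq m}\{x: x|_{I_n}\neq y|_{I_n}\}$ is closed, since it is an intersection of clopen sets. It is also nowhere dense. Given a basic open set $[\sigma]$, pick $n\geq m$ with $\min I_n\geq|\sigma|$. Extend $\sigma$ to some $\tau$ that agrees with $y$ on $I_n$. Then $[\tau]\subseteq[\sigma]$ and $[\tau]\cap C_m=\emptyset$. So $M(y,I)$ is a countable union of nowhere dense closed sets, hence meager.

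\textbf{Every meager set is covered.} Let $M$ be meager. Then $M\subseteq\bigcup_k F_k$ with each $F_k$ closed nowhere dense, and we may assume $F_k\subseteq F_{k+1}$. The key fact is the following. For every $k$ and every $n\in\omega$ there is $m>n$ and a string $s\in 2^{[n,m)}$ such that for all $t\in 2^{n}$ we have $[t^\frown s]\cap F_k=\emptyset$. To see this, enumerate the finitely many $t\in 2^n$. Successively extend a common tail: at each step use nowhere density to lengthen the current tail so that $t$ followed by it leaves $F_k$. Tails only get longer, and extending a string that already avoids $F_k$ keeps it avoiding $F_k$.

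Now build $I$ and $y$ recursively. Set $n_0=0$. Given $n_j$, apply the fact with $k=j$ and $n=n_j$. This gives $n_{j+1}=m$ and $y|_{[n_j,n_{j+1})}=s$. Let $I_j=[n_j,n_{j+1})$.

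Take $x\in M$, say $x\in F_k$. If $j\geq k$ and $x|_{I_j}=y|_{I_j}$, then $x$ lies in $[x|_{n_j}{}^\frown y|_{I_j}]$. This basic set is disjoint from $F_j\supseteq F_k$, which is a contradiction. Hence $x|_{I_j}\neq y|_{I_j}$ for all $j\geq k$, so $x\in M(y,I)$.

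\textbf{Main obstacle.} The only delicate step is the key fact. Its point is uniformity over all $2^{n}$ prefixes, and that is handled by the finite iteration described above.
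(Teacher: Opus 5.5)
Your proof is correct. It is essentially the standard argument behind Theorem 2.2.4 of Bartoszyński--Judah, which the paper cites without reproving: each $M(y,I)$ is a countable union of closed nowhere dense sets, and for the converse you build $I$ and $y$ recursively against an increasing sequence $F_0\subseteq F_1\subseteq\cdots$ of closed nowhere dense sets, choosing $y|_{[n_j,n_{j+1})}$ so that $[t^\frown y|_{[n_j,n_{j+1})}]\cap F_j=\emptyset$ holds simultaneously for all $t\in 2^{n_j}$.
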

The second one is provides a basis of the ideal $\E$ using interval partitons and slaloms.
\begin{proposition}(Bartoszyński and Shelah; \cite{BartShClosedMeasureZeroSets})\label{CharacterizationE}
    For an interval partition $I=(I_{n})_{n\in\omega}$ and a slalom $S=(S_{n})_{n\in\omega}$ where $S_{n}\subseteq 2^{I_{n}}$ and $\frac{|S_{n}|}{2^{|I_{n}|}}\leq\frac{1}{2^{n}}$ define the set 
    \begin{center}
        $E(S,I)=:\{x\in\Cantor:\foralmostall n\in\omega$ $x|_{I_{n}}\in S_{n}\}$
    \end{center}
    Every set $E(S,I)$ is an $F_{\sigma}$-set of measure zero and for every $E\in\E$ there are an interval partition $I=(I_{n})_{n\in\omega}$ and a slalom $S=(S_{n})_{n\in\omega}$ such that $E\subseteq E(S,I)$.
\end{proposition}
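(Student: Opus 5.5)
We need to prove two things for the statement: each set $E(S,I)$ is an $F_\sigma$ null set, and every $E\in\E$ is contained in some $E(S,I)$.

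\emph{The easy direction.} Write $E(S,I)=\bigcup_{m}C_m$ with
\[
C_m=\{x:\forall n\ge m\ x|_{I_n}\in S_n\}.
\]
Each condition $x|_{I_n}\in S_n$ defines a clopen set, so each $C_m$ is closed and $E(S,I)$ is $F_\sigma$. The coordinates in distinct $I_n$ are independent under Lebesgue measure, so
\[
\mu(C_m)\le\prod_{n\ge m}|S_n|2^{-|I_n|}\le\prod_{n\ge m}2^{-n}=0 .
\]
Hence $E(S,I)$ is a countable union of closed null sets and lies in $\E$.

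\emph{Reduction for the converse.} Let $E\in\E$, say $E\subseteq\bigcup_k F_k$ with each $F_k$ closed and null. Replacing $F_k$ by $F_0\cup\dots\cup F_k$, we may assume the sequence is increasing. It is enough to find one pair $(S,I)$ with $F_k\subseteq E(S,I)$ for every $k$.

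\emph{Approximating each closed set.} Fix $k$. Since $F_k$ is closed and null, compactness lets us choose, for every $j$, a level $\ell$ such that the set $T_k(\ell)$ of strings $s\in 2^\ell$ with $[s]\cap F_k\neq\emptyset$ satisfies $|T_k(\ell)|2^{-\ell}<2^{-j}$. The key observation is this. If $a<b$, then every $x\in F_k$ has $x|_{[a,b)}$ in the projection of $T_k(b)$ to $[a,b)$. That projection has at most $|T_k(b)|$ elements. So its relative size in $2^{[a,b)}$ is at most
\[
|T_k(b)|\,2^{-(b-a)}=2^{a}\,|T_k(b)|\,2^{-b}.
\]
Given $a$, we can therefore pick $b$ so large that this is below any prescribed $\varepsilon$. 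We only need $|T_k(b)|2^{-b}\to0$, which holds because $F_k$ is null.

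\emph{Building $I$ and $S$ (the main step).} Define the partition recursively: $I_n=[a_n,a_{n+1})$, where $a_{n+1}$ is chosen so that
\[
2^{a_n}\,|T_k(a_{n+1})|\,2^{-a_{n+1}}\le 2^{-2n}\quad\text{for all }k\le n.
\]
Let $S_n$ be the union over $k\le n$ of the projections of $T_k(a_{n+1})$ onto $I_n$. Then
\[
|S_n|2^{-|I_n|}\le (n+1)2^{-2n}.
\]
This is at most $2^{-n}$ for $n\ge1$; at $n=0$ we simply let $S_0$ be as needed and treat it as a finite exception, or start the indexing with a larger $n$. Now take $x\in F_k$. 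For every $n\ge k$ we have $x|_{I_n}\in S_n$, so $x\in E(S,I)$.

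The only real obstacle is making the density bound uniform over the finitely many $k\le n$ at stage $n$, and this is handled by the $2^{-2n}$ margin. The small exception at the initial indices is harmless, because membership in $E(S,I)$ only concerns almost all $n$.
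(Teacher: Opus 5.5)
Your proof is correct. The paper gives no proof of this proposition; it quotes it from Bartoszy\'nski and Shelah. Your argument is essentially the standard one from that source and from Bartoszy\'nski--Judah. For the easy half you use the closed sets $C_m$, whose measure is $\prod_{n\ge m}|S_n|2^{-|I_n|}=0$. For the converse you choose $a_{n+1}$ so large that projecting the level-$a_{n+1}$ approximations of the closed null sets onto $[a_n,a_{n+1})$ leaves only a tiny fraction of $2^{a_{n+1}-a_n}$.

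Two minor remarks. First, you already arranged the $F_k$ to be increasing, so $T_k(\ell)\subseteq T_n(\ell)$ whenever $k\le n$. You could therefore take $S_n$ to be the projection of $T_n(a_{n+1})$ alone and require only $2^{a_n}|T_n(a_{n+1})|2^{-a_{n+1}}\le 2^{-n}$. The union over $k\le n$ and the $2^{-2n}$ margin are then unnecessary.

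Second, there is no exceptional index at $n=0$. The required bound there is $2^{0}=1$, and $(n+1)2^{-2n}\le 2^{-n}$ in fact holds for every $n\ge 0$. This is worth getting right, because the statement demands the density bound $|S_n|/2^{|I_n|}\le 2^{-n}$ for every $n$, not just for almost every $n$. "Treating it as a finite exception" would therefore not have been allowed had it actually been needed.
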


    \section{Distinguishing different Borel conjectures}
    This section is devoted to distinguishing 'Borel conjectures' for our additive classes i.e. showing that consistently some classes contains only countable sets while others do not.
\begin{definition}
    Let $\mathcal{X}$ be a collection of subsets of reals. We will write $\BC(\mathcal{X})$, if the class $\mathcal{X}$ consists only of countable sets.
\end{definition}
Thus $\BC(\mathcal{SMZ})$ is the classical Borel conjecture and $\BC(\mathcal{SM})$ is the dual Borel conjecture. We will be interested in finding different constellations of the diagram of our additive classes. The cuts in the following diagrams mean that the classes of the south-west part consist only of countable sets, while the classes of the north-east part contain uncountable sets.\\

The following is celebrated result of Laver, saying that the Borel conjecture is consistent with ZFC.
\begin{theorem}(Laver; \cite{LaverBorelConj})
    It is consistent that $\BC(\mathcal{SMZ})$ holds.
\end{theorem}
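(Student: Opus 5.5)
We prove Laver's theorem by a countable support iteration of length $\omega_{2}$ of Laver forcing $\mathbb{L}$ over a model of CH. Let $V\models\mathrm{CH}$ and let $\mathbb{P}_{\omega_2}$ be the countable support iteration of $\mathbb{L}$. We show that in $V^{\mathbb{P}_{\omega_2}}$ every strong measure zero set is countable. We use the definition of $\mathcal{SMZ}$ from the paper, via sequences $(f_n)$ and strings $\sigma_n$ of length $f_n$; by Galvin--Mycielski--Solovay this is the same as $\Mstar$, but the direct definition is the one the argument uses.

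\emph{Step 1: preservation facts.} Laver forcing is proper, adds a dominating real, and has the \emph{Laver property}. The Laver property says: for every $h\in\Baire$ in the ground model and every name $\dot g$ for a function bounded by $h$, there is a ground model slalom $(T_n)_{n\in\omega}$ with $|T_n|\le 2^n$ and $\dot g(n)\in T_n$ for all $n$. Properness and the Laver property are preserved by countable support iterations (Shelah). Together with CH in the ground model and the $\aleph_2$-c.c., a standard reflection argument shows the following. If $X\subseteq\Cantor$ has size $\aleph_1$ in the final model, then $X\in V^{\mathbb{P}_\alpha}$ for some $\alpha<\omega_2$. Moreover, it suffices to show that no uncountable set $X\in V^{\mathbb{P}_\alpha}$ is strong measure zero in $V^{\mathbb{P}_{\omega_2}}$, because any uncountable SMZ set contains one of size $\aleph_1$.

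\emph{Step 2: the key lemma.} Let $X\in V$ be uncountable and let $\ell\in\Baire$ be the Laver real added by $\mathbb{L}$ over $V$. Put $f_n=\ell(n)$, or a fast-growing function computed from $\ell$. We claim that in $V^{\mathbb{P}_{\omega_2}}$ there is no sequence $(\sigma_n)$ with $|\sigma_n|=f_n$ such that $X\subseteq\{x:\existsinfty n\ \sigma_n\subseteq x\}$.

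The proof goes by contradiction. Suppose $p$ forces that $\dot\sigma$ works. The tail of the iteration, $\mathbb{P}_{\omega_2}/\mathbb{L}$, has the Laver property over $V[\ell]$. So we may capture $\dot\sigma_n$ inside a slalom in $V[\ell]$ of size $2^n$, reducing to names in $V^{\mathbb{L}}$ for a small set $T_n$ of strings of length $\ell(n)$. We then work with a Laver tree $p$ and fusion. At a node $s$ of length $n$, the possible successors $\ell(n)$ are infinitely many. For each such successor $k$, strengthen to decide as much as possible of $T_n$, which consists of at most $2^n$ strings of length $k$. For large $k$, the strings decided along a fixed finite pattern can cover only finitely many ``branches''. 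Since $X$ is uncountable, there is a condensation point $x\in X$, which we pick in advance from a countable elementary submodel. Using pure decision and the fact that infinitely many successors give essentially different lengths, we thin the tree so that every $\sigma\in T_n$ is incompatible with $x$ for all $n$ beyond the stem. More precisely, we run a rank / pure-decision argument: $\mathbb{L}$ has pure decision for finitely many possibilities, and by choosing $x$ generic over a countable model containing the relevant names we ensure that $x$ avoids the $\le 2^n$ decided strings for all sufficiently large successors. The outcome is a condition forcing $x\notin\{y:\existsinfty n\ \exists\sigma\in T_n\ \sigma\subseteq y\}$, which is a contradiction.

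\emph{Step 3: conclusion.} Any uncountable $X$ in the final model has a subset of size $\aleph_1$ lying in some $V^{\mathbb{P}_\alpha}$. The $\alpha$-th Laver real witnesses, via Step 2 applied over $V^{\mathbb{P}_\alpha}$ with the remaining tail, that this subset is not SMZ. Since SMZ is hereditary, $X$ is not SMZ, and so $\BC(\mathcal{SMZ})$ holds.

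The main obstacle is Step 2. There we must combine pure decision in Laver trees with the choice of a point of $X$ that escapes the $\le 2^n$ candidate strings at each level, and make the fusion work uniformly across infinitely many successors. Preservation of the Laver property in Step 1 is a standard iteration theorem.
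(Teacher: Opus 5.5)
The paper does not prove this theorem; it only cites Laver, so there is no in-paper argument to compare against. Your overall architecture is the standard modern route and is sound: a countable support iteration of $\mathbb{L}$ over CH, preservation of the Laver property, trapping $\dot\sigma_n$ in a slalom $T_n\subseteq 2^{\ell(n)}$ with $|T_n|\le 2^n$ lying in $V^{\mathbb{P}_{\alpha+1}}$, and then a one-step lemma for a single Laver real over $V^{\mathbb{P}_\alpha}$.

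The gap is the one-step lemma itself, which you rightly call the main obstacle but do not prove. After pure decision and fusion you have a tree $q$ with the following property: for each node $s$ of length $n$ above the stem and each $k\in succ_q(s)$, $q_{s^{\frown}k}$ decides $T_n$ as some $T^{s,k}\subseteq 2^k$ with $|T^{s,k}|\le 2^n$. You then claim that taking $x\in X$ ``generic over a countable model'' gives $x|_{k}\notin T^{s,k}$ for all sufficiently large successors $k$. This is false in general.

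\emph{Counterexample.} Let $k_0<k_1<\dots$ list $succ_q(s)$, and let $T^{s,k_j}$ be a single extension of $\rho_j$, where $(\rho_j)_j$ enumerates $\cantor$ with every string occurring infinitely often. Then the set of $x$ caught by infinitely many successors of $s$ is a dense (null) $G_\delta$. Consequences:
\begin{itemize}
\item[--] Cohen-generic points lie in this set.
\item[--] Random-generic points need not exist in $X$ at all, e.g.\ if $X$ lies inside a closed null set.
\item[--] If $X$ is a Luzin set, all but countably many of its points lie in this set.
\end{itemize}
Yet the lemma has to work for every uncountable $X$. The condensation point ``picked in advance'' plays no role.

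What is missing is the observation that, after suitable thinning, the set of bad points is countable, so that $x\in X$ can be chosen \emph{after} the tree carrying the decided values is fixed. There are two ways to get this.

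\emph{First way (limits).} At each node, thin $succ_q(s)$ to an infinite set along which each of the $\le 2^n$ decided strings converges to a limit $y^s_j\in\Cantor$. Any $x\in X$ outside the countable set $\{y^s_j\}$ then avoids $T^{s,k}$ for all but finitely many remaining $k$, and you prune those finitely many successors. This is exactly the mechanism of the Claim in the paper's proof that $\MMM\forces X\notin\NA$.

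\emph{Second way (counting).} For fixed $s$ and $m$, at most $2^n$ points $x$ satisfy $x|_{k}\in T^{s,k}$ for all $k\in succ_q(s)$ with $k\ge m$. Hence only countably many $x$ are caught by cofinitely many successors of some node. Pick $x\in X$ outside this countable set and keep, at each node, the infinitely many successors $k$ with $x|_{k}\notin T^{s,k}$.

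In either version the resulting tree forces $x|_{\ell(n)}\notin T_n$ for all $n$ beyond the stem, which is what your Step 2 needs.
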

Galvin asked for the dual Borel conjecture i.e. is it consistent that $\mathcal{SM}$ is made only of countable sets. This question was answered in positive by Carlson, who proved that the dual Borel conjecture holds in Cohen model.
\begin{theorem}(Carlson; \cite{Carlson})\label{CarlsonDualBC}
        It is consistent that $\BC(\mathcal{SM})$.
\end{theorem}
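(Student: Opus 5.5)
We will show that $\BC(\mathcal{SM})$ holds in the Cohen model, i.e.\ after adding $\omega_{2}$ Cohen reals to a model of CH. The plan has two parts. First we reduce to a statement about a single ground-model set and a Cohen real. Then we exploit the smallness of $\E$-sets against a Cohen generic.

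\textbf{Reduction.} Let $X$ be an uncountable set of reals in $V[G]$; shrinking it, we may take $|X|=\aleph_{1}$. Since Cohen forcing is ccc and $\mathbb{C}_{\omega_{2}}$ factors as $\mathbb{C}_{\omega_{2}}\cong\mathbb{C}_{\omega_{2}}\ast\mathbb{C}$, there is an intermediate model $V'$ containing $X$ such that $V[G]=V'[c]$ with $c$ Cohen over $V'$. It therefore suffices to prove the following.

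\emph{Key lemma.} If $X\in V'$ is an uncountable set of reals and $c$ is Cohen over $V'$, then in $V'[c]$ there is a null set $N$ with $X+N=\Cantor$.

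Moreover, the witness should be robust: it should be a Borel null set $N$ coded in $V'[c]$ such that $X+N=\Cantor$ continues to hold in $V[G]$. To arrange this we work in the further extension by the remaining Cohen reals. That extension is again a Cohen extension of $V'[c]$, so we should formulate the lemma as: in every Cohen extension of $V'[c]$, $X+N=\Cantor$.

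\textbf{Construction of $N$ (Carlson's idea).} We use Cohen reals to code a null set via a sequence of clopen sets $C_{n}$ with $\mu(C_{n})\le 2^{-n}$, chosen generically, and put $N=\bigcap_{m}\bigcup_{n\ge m}C_{n}$. Given any real $z$ in the final model, we need $x\in X$ with $z+x\in N$, i.e.\ $z+x\in C_{n}$ for infinitely many $n$.

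The argument is a density argument. Since $X$ is uncountable in $V'$ and $z$ depends on only countably many coordinates of the generic, we pick countably many distinct points $x_{k}\in X$ outside the support information of $z$. By genericity, the $C_{n}$ can be forced to cover $z+x_{k}$ for some $k$, infinitely often. The point of uncountability is that $z$ is determined by a name $\dot z$, and we need a single $x$ that works infinitely often. For this we use a $\Delta$-system/pigeonhole argument over the $\aleph_{1}$ many $x$'s, together with the fact that the positions where $x$'s differ give enough freedom to adjust $C_{n}$ so as to contain a translate.

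\textbf{Main obstacle.} The hardest step is exactly this last one: obtaining one $x\in X$ with $z+x\in C_{n}$ for infinitely many $n$, rather than a different $x$ at each $n$. The first thing to try is a fusion argument. We build $x$ as a limit of ground-model conditions on $X$: pass to a perfect-like tree of conditions, use that $X$ has no isolated points after removing a countable set, and intertwine these with the Cohen conditions deciding $\dot z|_{n}$. If that fails, the fallback is to cite Carlson's theorem directly, since it is already in the literature (\cite{Carlson}), and give the sketch above only as motivation.
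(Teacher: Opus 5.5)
The paper gives no proof of this statement; it only cites \cite{Carlson}. Your proposal therefore has to stand on its own, and it does not. The reduction is fine in substance: more precisely $V[G]=V'[c][G'']$ with $G''$ generic for $\mathbb{C}_{\omega_2}$ over $V'[c]$, so the witness must survive a further forcing. The trouble is the step you call the main obstacle. It is the whole content of the theorem, you leave it unresolved, and each mechanism you suggest for it would fail.

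\emph{Homogeneous $C_n$.} ``$C_n$ chosen generically'' cannot mean a choice that is homogeneous in $c$. Take $C_n=\{y: y|_{I_n}=c|_{I_n}\}$ and the real $z=c+w$ with $w\in V'$. Then $z+x\in N$ iff $x|_{I_n}=w|_{I_n}$ for infinitely many $n$. So any uncountable $X\subseteq\{x:\forall n\ x|_{I_n}\neq w|_{I_n}\}$ defeats this choice.

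\emph{The density argument.} ``The $C_n$ can be forced to cover $z+x_k$'' fails for the same reason. The name $\dot z$ may be computed from $c$ itself, with look-ahead. So $z$ reacts to $C_n$ after it is decided and simply avoids $C_n+x_k$. Genericity of $c$ gives you nothing against such a $z$, and choosing the $x_k$ outside a countable model containing $\dot z$ does not change that.

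\emph{Fusion.} Building $x$ by fusion cannot work, because $x$ must be an element of $X$. A set of size $\aleph_1<\mathfrak{c}$ contains no perfect set, and a fusion limit is only guaranteed to be a point of $\overline{X}$.

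The missing idea is not to construct $x$ at all but to argue by contradiction, using that the tail forcing has precaliber $\aleph_1$. Here is one way to do it.

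\begin{enumerate}
\item Let $c$ choose pairs $(\ell_n,S_n)$ with $S_n\subseteq 2^{\ell_n}$ and $|S_n|\le 2^{\ell_n-n}$. Put $C_n=\{y:y|_{\ell_n}\in S_n\}$ and $N=\limsup_n C_n$, which is null by Borel--Cantelli.
\item Write $Y|_{\ell}=\{y|_{\ell}:y\in Y\}$. For every infinite $Y\subseteq\Cantor$ in $V'$ and every $m$, it is dense to arrange $S_n+Y|_{\ell_n}=2^{\ell_n}$ for some $n\ge m$. Take $\ell_n$ so large that the size $a$ of $Y|_{\ell_n}$ satisfies $(1+\ln a)/a\le 2^{-n}$, and use the greedy covering bound $|S|\le 2^{\ell}(1+\ln a)/a$.
\item Such an $n$ makes $C_n$ unavoidable: every real $z$ has $z+y\in C_n$ for some $y\in Y$.
\item Now suppose $q\forces$ ``$\dot z+x\notin N$ for all $x\in X$''. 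For each $x$ pick $q_x\le q$ and $m_x$ with $q_x\forces \dot z+x\notin\bigcup_{n\ge m_x}C_n$.
\item Precaliber $\aleph_1$ plus pigeonhole gives an uncountable $Y'\subseteq X$ with $\{q_x:x\in Y'\}$ centered and $m_x=m$ constant. Since $\mathbb{C}$ is countable, $Y'$ contains an uncountable $Y\in V'$.
\item For finite $F\subseteq Y$, a common extension of $\{q_x:x\in F\}$ forces $\dot z$ into the closed set $\{u:\forall x\in F\,\forall n\ge m\ u+x\notin C_n\}$. That set is therefore nonempty in $V'[c]$.
\item By compactness there is a single real $z^*$ with $z^*+y\notin C_n$ for all $y\in Y$ and $n\ge m$. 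This contradicts the unavoidability of the $C_n$, $n\ge m$, supplied by genericity.
\end{enumerate}

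Uncountability of $X$ enters exactly in the precaliber step: it yields the common tail $m$ and the centered family. This contradiction argument replaces your ``one $x$ infinitely often'' step. The same argument shows the witness survives any tail forcing of precaliber $\aleph_1$, which settles your robustness concern. Falling back on citing \cite{Carlson} is legitimate, but then the sketch should not be presented as a proof.
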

    Later on, Pawlikowski strengthened this result by showing the following.
    \begin{theorem}(Pawlikowski; \cite{PawlikowskiDualBorel})\label{PawlikowskiDualBC}
If $\mathbb{P}$ is a forcing notion with precaliber equal $\omega_{1}$, then the finite support iteration of length $\omega_{2}$ of $\mathbb{P}$ forces the dual Borel conjecture.
\end{theorem}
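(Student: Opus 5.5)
Fix a forcing $\mathbb{P}$ with precaliber $\omega_{1}$, let $\mathbb{P}_{\omega_{2}}$ be its finite support iteration of length $\omega_{2}$ over a ground model of $\mathrm{CH}$, and let $X\subseteq\Cantor$ be uncountable in $V^{\mathbb{P}_{\omega_{2}}}$. I want a null set $N$ with $X+N=\Cantor$, which shows $X\notin\Nstar$. I would build such an $N$ from the Cohen reals added at limit stages of cofinality $\omega$, in the spirit of Carlson's argument from Theorem \ref{CarlsonDualBC}.

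\textbf{Step 1: reflection.} Shrinking $X$, assume $|X|=\omega_{1}$. By ccc and a standard closing-off argument, there is $\alpha<\omega_{2}$ of cofinality $\omega_{1}$ with $X\in V^{\mathbb{P}_{\alpha}}$; more useful is that each $x\in X$ has a countable-support name.

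\textbf{Step 2: the null set.} A finite support iteration adds a Cohen real $c$ over $V^{\mathbb{P}_{\alpha}}$ at stage $\alpha+\omega$. From $c$ I define a null $G_\delta$ set $N_{c}$, coded by $c$ and generic for the measure-zero structure. The goal is that every real $z$ in the final model lies in $X+N_{c}$, that is, for every $z$ some $x\in X$ has $x+z\in N_{c}$.

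\textbf{Step 3: density.} Fix a name $\dot z$ and a condition $p$, and suppose toward a contradiction that $p$ forces $(\dot z+X)\cap N_{c}=\emptyset$. Pick $\omega_{1}$ many distinct $x_{\xi}\in X$. For each $\xi$, strengthen $p$ to $p_{\xi}$ deciding enough of $\dot z$ relative to $x_{\xi}$. Here precaliber $\omega_{1}$ enters: since $\mathbb{P}_{\omega_{2}}$ is a finite support iteration of precaliber-$\omega_{1}$ posets, it has precaliber $\omega_{1}$, by a $\Delta$-system argument with centeredness at each coordinate. So uncountably many $p_{\xi}$ are centered. A common extension of finitely many of them then forces $\dot z+x_{\xi}\in N_{c}$ for some $\xi$. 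This works because, by Cohen genericity, $N_{c}$ can be extended on a finite piece to swallow any finitely many prescribed reals. Reals decided by finitely many compatible conditions form a finite set, while distinct $x_\xi$ give distinct $z+x_\xi$.

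\textbf{Main obstacle.} The hardest part is Step 3: making the "finite approximations of $\dot z$" interact correctly with the Cohen-generic null set. The values of $\dot z$ are not decided by the $p_{\xi}$; only finite information is. I would handle this as Pawlikowski did. Write $N_{c}=\bigcap_{m}\bigcup_{n>m}[c_{n}]$, where the $c_{n}$ are Cohen-chosen basic clopen sets of measure $2^{-n}$. For a countable set $A\subseteq\Cantor$ in an intermediate model, the finite support iteration adds a Cohen real over that model, and $A+c$ is meager-avoiding in the dual sense.

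The key lemma I expect to need is the following. In $V^{\mathbb{P}_{\omega_{2}}}$, for every $z$ there are uncountably many $\xi$ whose $x_{\xi}$ "look Cohen" over parts of $z$. Precaliber $\omega_{1}$ then lets the $\omega_{1}$ many conditions be amalgamated finitely. This gives a centered family forcing infinitely many $n$ with $z+x_{\xi}\in[c_{n}]$ for a single $\xi$, which yields the contradiction and hence $X\notin\Nstar$.
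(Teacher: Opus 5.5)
The decisive step is missing, and the concrete null set you propose cannot work. Take $N_c=\bigcap_m\bigcup_{n>m}[c_n]$ with Cohen-chosen $c_n\in 2^{n}$. Let $S=\{2^{j}:j\in\omega\}$ and $K=\{x\in\Cantor: x(i)=0 \text{ for all } i\notin S\}$; this is a perfect set with $|K|_{n}|\leq 2n+1$. Fix $m$ with $\sum_{n>m}|K|_{n}|\,2^{-n}<1$. The closed set $Z=\{z\in\Cantor:\forall n>m\ z|_{n}\notin c_n+K|_{n}\}$ is coded from $c$ and has positive measure, so it has a point $z\in V[c]$. For this $z$ we get $(z+x)|_{n}\neq c_n$ for all $x\in K$ and all $n>m$. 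Hence $z\notin K+N_c$ in every further extension, and every uncountable $X\subseteq K$ defeats your $N_c$.

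The faulty principle is that $N_c$ ``can be extended on a finite piece to swallow finitely many prescribed reals''. The reals $\dot z+x_\xi$ are not prescribed: $\dot z$ may depend on exactly the bits of $c$ you fix, and a later condition deciding $\dot z$ can move it off the piece you chose. Also, deciding finitely many pieces of $N_c$ never forces $\dot z+x_\xi\in N_c$. The contradiction has to be with a witness $m_\xi$ such that $p_\xi\forces\forall n>m_\xi\ \dot z+x_\xi\notin[c_n]$.

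What you need is a covering statement. The $k$-th piece should be $\{y: y|_{\ell_k}\in P_k\}$ for a Cohen-chosen set of many strings $P_k\subseteq 2^{\ell_k}$ with $|P_k|\le 2^{\ell_k-k}$. You then want $P_k+\{x_\xi|_{\ell_k}:\xi\in F\}=2^{\ell_k}$ for a finite set $F$ of centered indices. Then whatever value $u$ a common extension gives to $\dot z|_{\ell_k}$, some $u+x_\xi|_{\ell_k}$ lands in $P_k$, which contradicts $p_\xi$ once $k>m_\xi$.

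Even with such pieces, the order of quantifiers in your Step 3 is circular. From $|P_k|\cdot|F|\geq 2^{\ell_k}$ you get $|F|\geq 2^{k}$, so $F$ must be chosen after $k$. Yet the common extension of $\{p_\xi:\xi\in F\}$ supplied by precaliber may already decide $c$ at block $k$. Your proposed key lemma (that the $x_\xi$ ``look Cohen over parts of $z$'') is not a precise statement and does not address this.

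One way to break the circle is to use the chain condition twice, separately.

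First, read $c(k)$ off the generic at stage $\alpha+k$, and show in $V^{\mathbb{P}_{\alpha+\omega}}$ that every uncountable $Y\subseteq X$ has $P_k+Y|_{\ell_k}=2^{\ell_k}$ for infinitely many $k$. Conditions forcing $x_\xi\in\dot Y$ can be thinned to lie in a fixed $\mathbb{P}_{\alpha+n}$ and to be centered there. A common extension inside $\mathbb{P}_{\alpha+n}$ leaves $c|_{[n,\omega)}$ free. So you may then set $c(k)$, for some $k>n$, to a pattern $P$ with $P+D=2^{\ell}$ for a large finite $D\subseteq Y|_{\ell}$; the greedy covering bound gives such a $P$ with $|P|\le 2^{\ell}(1+\ln|D|)/|D|$.

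Second, work over $V^{\mathbb{P}_{\alpha+\omega}}$, where $N_c$ is now a ground-model set, and run your centered-family argument for the tail iteration. No bits of $c$ need to be chosen any more, and the first step supplies $k>m$ and the finite $F$.
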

It follows that the dual Borel conjecture holds not only in Cohen model but also in Hechler model. For some time it was an open problem whether Borel conjecture and dual Borel conjecture can hold simultaneously. It was settled with the following result.
\begin{theorem}(Goldstern, Kellner, Shelah, Wohofsky; \cite{BorelWithDualBorel})\label{BorelWithDualBorel}
    It is consistent that $\BC(\mathcal{SMZ})$ and $\BC(\mathcal{SM})$ hold simultaneously.
\end{theorem}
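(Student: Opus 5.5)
The plan is to start from a ground model of CH and force with an iteration $(\mathbb{P}_{\alpha},\dot{\mathbb{Q}}_{\alpha}:\alpha<\omega_{2})$ of proper forcings. Each iterand has size $\aleph_{1}$, so every uncountable set of reals in the final model already appears, together with enough of its structure, at some intermediate stage $\alpha<\omega_{2}$. A standard bookkeeping argument then reduces both conjectures to a stagewise task. Every uncountable $X\subseteq\Cantor$ appearing at an intermediate stage must be made non-strong-measure-zero at some later stage. It must also be made non-strongly-meager at some later stage. Finally, these two properties must be preserved through the rest of the iteration.

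For $\BC(\mathcal{SMZ})$, I would follow Laver's idea but use Mathias forcing $\mathbb{M}_{\mathcal{U}}$ relative to an ultrafilter $\mathcal{U}$ in place of Laver forcing. Like Laver reals, the generic real $r$ is dominating. Given a ground model uncountable $X$, the sequence $(\sigma_{n})$ must witness smallness for the lengths $f_{n}$ read off from $r$. I would show, via a fusion argument, that this cannot hold: the sequence $(\sigma_{n})$ is determined by finitely many guesses at each level, so it covers only countably many ground model reals. The property ``$X$ is not $\mathcal{SMZ}$'' should then be preserved by a Laver-property-style preservation theorem, which keeps the old witness working.

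For $\BC(\mathcal{SM})$, it suffices to find, for each uncountable ground model $X$, a null set $N$ with $X+N=\Cantor$. Carlson and Pawlikowski (Theorems \ref{CarlsonDualBC}, \ref{PawlikowskiDualBC}) obtain this from finite support iterations, where Cohen reals do the job, but such iterations add Cohen reals and so destroy $\BC(\mathcal{SMZ})$. I would therefore introduce a ``Janus'' forcing: a countable, $\sigma$-centered-like poset adding a null set $\dot N$ built from interval partitions and slaloms, as in Proposition \ref{CharacterizationE}. Over the ground model it behaves like Cohen forcing, so $X+\dot N=\Cantor$ for ground model $X$ is forced. Over suitable countable models it behaves like a random/measure-preserving poset, so that later reals cannot escape the covering and the strong-measure-zero-killing is not undone. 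To combine the two kinds of iterands, I would replace plain countable support by an ``almost countable support'' iteration. In it, the Mathias ultrafilters $\mathcal{U}_{\alpha}$ are chosen coherently along the iteration, extending ultrafilters from earlier stages. This lets one take limits of partial iterations over elementary submodels and argue that the translations $X+N$ still cover $\Cantor$.

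The main obstacle is the preservation step for the dual conjecture. Once an uncountable $X$ and a null $N$ with $X+N=\Cantor$ exist, later forcings add new reals $z$, and we must ensure that $z\in X+N$; in the Borel-code reinterpretation, this means $z+N$ still meets $X$. I would first try to prove that the Janus forcings and the $\mathbb{M}_{\mathcal{U}}$'s preserve a suitable ``positive outer measure/ultrafilter-limit'' property of countable models. I would then show that this property is inherited at limit stages by the ultrafilter-coherent construction, using the Janus poset's random-like behaviour to push the covering through.
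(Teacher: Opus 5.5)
The paper only cites this theorem and gives no proof, so the benchmark is the Goldstern--Kellner--Shelah--Wohofsky construction. Your outline has the same shape as theirs: an almost countable support iteration of length $\omega_2$ over CH, alternating a Laver-type forcing built from ultrafilters with Janus forcings, and choosing ultrafilters that extend those of countable models. (They use ultralaver forcing, i.e. Laver trees whose successor sets at each node $s$ lie in an ultrafilter $D_s$, rather than $\mathbb{M}_{\mathcal U}$.) However, the Janus iterand as you specify it makes $\BC(\SMZ)$ fail outright, so no preservation theorem can rescue the plan.

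A countable atomless poset is forcing equivalent to Cohen forcing. So each Janus stage $\beta$ adds a real that is Cohen over $V^{P_\beta}$, and Janus stages must be cofinal in $\omega_2$, since new uncountable sets keep appearing and each needs a later Janus stage. If $X,f\in V^{P_\beta}$, such a real codes $(\sigma_n)_{n\in\omega}$ with $|\sigma_n|=f(n)$ such that every $x\in X$ extends infinitely many $\sigma_n$. Every $X$ of size $\omega_1$ and every $f\in\Baire$ of the final model already lie in some $V^{P_\gamma}$ with $\gamma<\omega_2$. Hence every set of size $\omega_1$ is strong measure zero at the end; equivalently, $\cov(\Meager)=\omega_2\le\non(\SMZ)$.

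More generally, the paper observes that $\BC(\J^{\star})$ implies $\cov(\J)=\omega_1$. Applied to $\J=\Meager$ and $\J=\Null$, this shows that in any such iteration yielding both conjectures there is $\gamma<\omega_2$ such that no later stage adds a real that is Cohen or random over $V^{P_\gamma}$. So your assignment of the two faces (Cohen over the ground model, random-like over countable models) cannot work. Whatever Cohen-like behaviour produces $X+\dot N=\Cantor$ can only be a property relative to countable submodels. Over the model in which it is actually forced, the Janus iterand must add no Cohen reals.

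For the same reason, the ``Laver-property-style preservation theorem'' you invoke for the non-$\SMZ$ witnesses is unavailable. An iteration containing your countable iterands adds Cohen reals and so lacks the Laver property. The survival of the witnesses produced by the Laver-type reals must be argued through a different preservation property, checked separately at Janus stages and at limits of the almost countable support iteration. Your proposal offers nothing in its place.

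A smaller point concerns $\mathbb{M}_{\mathcal U}$. The fusion you appeal to needs a single $A\in\mathcal U$ to handle all finite stems at once, and even pure decision for $\mathbb{M}_{\mathcal U}$ holds only for selective $\mathcal U$. To see this, take a partition witnessing that $\mathcal U$ is not a P-point, or not a Q-point. Then no pure extension decides ``the first two elements of the generic lie in the same piece''. Selectivity would also have to be maintained while extending ultrafilters from countable models. Ultralaver forcing avoids this particular problem: pure decision and Laver's killing argument run node by node with $D_s$-limits for arbitrary ultrafilters, just like the limits $y^{\sigma}_{k}$ in the paper's lemma on Miller forcing.

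Finally, the preservation of $X+\dot N=\Cantor$, which you rightly call the main obstacle, is left as a plan rather than an argument.
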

In case of the existence of uncountable sets in our additive classes, several results should be mentioned. Clearly, if $\add_{t}(\J)>\omega_{1}$ (in particular if $\add(\J)>\omega_{1}$) then every set $X\subseteq\Cantor$ of size $\omega_{1}$ is in $\mathcal{JA}$. Similarly, if $\cov_{t}(\J)>\omega_{1}$ (in particular if  $\cov(\J)>\omega_{1}$) then any $X\subseteq\Cantor$ of size $\omega_{1}$ is in $\J^{*}$. It follows that $\BC(\mathcal{JA})$ implies $\add(\J)=\add_{t}(\J)=\omega_{1}$ and $\BC(\mathcal{J}^{\star})$ implies $\cov(\J)=\cov_{t}(\J)=\omega_{1}$.
Shelah also showed that under the Continuum Hypothesis, there exist an uncountable null-additive set. Bartoszyński strengthened this with the following result saying that $\ddd=\omega_{1}$ is enough.
    \begin{theorem}(Bartoszyński; \cite{BartoszynskiBD})\label{BartoszynskiMANA}
        \begin{itemize}
            \item[--] $\bbb=\omega_{1}$ implies existence of an uncountable meager-additive set,
            \item[--] $\ddd=\omega_{1}$ implies existence of an uncountable null-additive set.
        \end{itemize}
    \end{theorem}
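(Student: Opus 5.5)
We prove the two items separately, each by a direct diagonal construction of length $\omega_{1}$ checked against the combinatorial characterizations of Theorem \ref{ShelahNAMA}.

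\emph{First item.} Fix an unbounded family $\{f_{\alpha}:\alpha<\omega_{1}\}$ that is $\leq^{*}$-increasing and consists of strictly increasing functions. Such a family exists because $\bbb=\omega_{1}$: we build it by recursion, since any countable family is bounded. Let $x_{\alpha}\in\Cantor$ be the characteristic function of $\mathrm{ran}(f_{\alpha})$, and put $X=\{x_{\alpha}:\alpha<\omega_{1}\}$, which is uncountable. We verify the meager-additive criterion of Theorem \ref{ShelahNAMA} and take $y$ to be the constant zero real. Given an interval partition $(I_{n})$, unboundedness gives $\beta$ with $f_{\beta}$ not dominated by the function $g(n)=\min I_{n+1}$ (or a suitable speed-up of it). Every $\alpha\geq\beta$ then has $x_{\alpha}$ sparse on infinitely many intervals, so $x_{\alpha}$ vanishes on whole intervals $I_{k}$.

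The main obstacle is that Theorem \ref{ShelahNAMA} requires a \emph{single} coarser partition $(J_{n})$ such that \emph{almost every} $J_{n}$ contains such a zero block, for all but countably many $x_{\alpha}$ at once. The countably many $x_{\alpha}$ with $\alpha<\beta$ can be absorbed using the fact that $\MA$ is a $\sigma$-ideal, since countable sets are meager-additive. For the remaining ones I would follow Bartoszyński's approach and use the equivalent characterization of $\MA$ by bounding, choosing the sequence so that $x_{\alpha}$ has zeros on long stretches. Concretely, I would make $f_{\alpha}$ grow so fast that for every $h$ only countably many $\alpha$ fail ``$\foralmostall n$ the interval $[n,h(n))$ contains a gap of $x_{\alpha}$ of length $\geq n$''. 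I would then pick $(J_{n})$ from $h$ dominating $(I_{n})$, so that a gap of length at least $\max_{k}|I_{k}\cap J_{n}|$ forces a zero block $I_{k}\subseteq J_{n}$. Arranging this uniformly is the hard part. I would try to build the $x_{\alpha}$ as a $\leq^{*}$-increasing sequence of gap lengths indexed by an unbounded scale, so that for every $h$ only an initial segment of $\omega_{1}$ fails.

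\emph{Second item.} Fix a dominating scale $\{g_{\alpha}:\alpha<\omega_{1}\}$, which exists since $\ddd=\omega_{1}$, and choose $x_{\alpha}$ recursively so that for every $\gamma<\alpha$, $x_{\alpha}$ follows a slalom prescribed by $g_{\gamma}$. At stage $\alpha$, the countably many constraints $(g_{\gamma})_{\gamma<\alpha}$ are diagonalized into one by a finite-modification argument, which is possible since we only need $n$-sized slaloms with finitely many exceptions. Given an interval partition $(I_{n})$, it is dominated by some partition coded by $g_{\gamma}$. All $x_{\alpha}$ with $\alpha>\gamma$ are then captured by a slalom with $|S_{n}|\leq n$ after regrouping intervals, while the countably many $x_{\alpha}$ with $\alpha\leq\gamma$ are handled by the $\sigma$-ideal property of $\NA$. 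The expected difficulty is again the uniform bookkeeping that turns ``$x_{\alpha}$ constrained by $g_{\gamma}$'' into an $n$-slalom for an arbitrary partition dominated by $g_{\gamma}$.
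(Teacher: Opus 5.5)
\textbf{Verdict: the first item has a genuine gap, and the second item is incomplete in the same way.} The paper gives no proof of this statement; it is quoted from Bartoszyński. So I judge your proposal on its own.

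\textbf{First item.} The gap is exactly at the step you flag as the hard part. Unboundedness of $\{f_\alpha\}$ only yields, for each $\alpha\geq\beta$, an infinite set of places where $x_\alpha$ vanishes on a whole $I_k$. That set depends on $\alpha$. A single partition $(J_n)$ in which \emph{almost every} $J_n$ contains such a block needs these places to be shared by all $\alpha\geq\beta$. Growth of the individual $f_\alpha$, even along a $\leq^*$-increasing scale, does not provide that.

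Your concrete reduction does not close the gap either. A zero-run of length $\geq n$ inside $[n,h(n))$ need not contain any whole $I_k$, because the intervals $I_k$ beyond $n$ may all be much longer than $n$ (take $I_k=[k!,(k+1)!)$). In fact, with $h(n)=3n+3$ your gap condition holds for every $n$ at every point of the perfect closed set of those $x\in\Cantor$ whose consecutive ones $p<p'$ satisfy $p'\geq 4p+4$, since each window $[n,3n+3)$ contains at most one $1$. But a perfect set is never meager-additive, since $\MA\subseteq\SMZ$ and strong measure zero sets contain no perfect set. So the condition cannot imply meager-additivity.

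\textbf{The missing idea is coherence of the supports.} Let $A_\alpha=\mathrm{ran}(f_\alpha)$ form a $\subseteq^*$-decreasing tower whose increasing enumerations form an unbounded family. At stage $\alpha$, take a pseudo-intersection of the countably many earlier $A_\gamma$ and thin it so that its enumeration lies above the $\alpha$-th member of a fixed unbounded family of size $\omega_1$; this also makes $X$ uncountable. Given $(I_n)$, some enumeration is not $\leq^*$-below $i\mapsto\min I_{2i}$. Hence some $A_\beta$ is disjoint from $I_k$ for all $k$ in an infinite set $K$. Every $A_\alpha$ with $\alpha\geq\beta$ is almost contained in $A_\beta$, so it misses almost all of these $I_k$. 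Now let each $J_n$ be a block of consecutive $I_k$'s containing an index from $K$. Then $y=0$ works for all $\alpha\geq\beta$, and the countably many $\alpha<\beta$ are absorbed as you intended.

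\textbf{Second item.} This is on the right track (dominating scale, recursion, countably many exceptions), but it has the same hole in milder form. The ``finite-modification argument'' does not show that countably many arbitrary slalom constraints can be met at once. Moreover, one $n$-slalom must capture uncountably many $x_\alpha$, so whatever is imposed at stage $\gamma$ must be inherited by every later point. The same tower device works. Choose $A_\gamma\subseteq^* A_\delta$ for all $\delta<\gamma$ with $|A_\gamma\cap[0,g_\gamma(n))|\leq\log_2 n$ for almost all $n$. If $g_\gamma$ dominates $n\mapsto\min I_{n+1}$, then $S_n=\{\chi_B|_{I_n}:B\subseteq A_\gamma\cap I_n\}$ has at most $n$ elements. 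It contains $\chi_{A_\alpha}|_{I_n}$ for almost all $n$, for every $\alpha\geq\gamma$.
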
    
As $\MA\subseteq\SMZ$, this result strengthens the previously known result of Rothberger saying that $\bbb=\omega_{1}$ implies the existence of an uncountable strong measure zero set. Bartoszyński's result gives us the following, the simplest, constellation of Borel Conjectures in which all classes contain uncountable sets.
\begin{theorem}
        It is consistent that $\neg \BC(\NA)$. In particular, the following constellation of Borel Conjectures holds:
        \begin{center}
		\begin{tikzpicture}[]
			\tikzset{
				textnode/.style={text=black}, 
			}
			\tikzset{
				edge/.style={color=black, thick, opacity=0.8}, 
			}
			\newcommand{\w}{3.3}
			\newcommand{\h}{2.0}
			
			\node[textnode] (NA) at (0,  0) {$\NA$};
            \node[textnode] (N) at (0,  1.5*\h) {$\Nstar$};
			\node[textnode] (MA) at (\w,  0) {$\MA$};
			\node[textnode] (EM) at (\w,  1.5*\h) {$\EM$};
			\node[textnode] (M) at (\w*2,  0) {$\Mstar$};
			\node[textnode] (E) at (\w*2,  1.5*\h) {$\Estar$};
			
			\draw[->, edge] (NA) to (MA);
			\draw[->, edge] (MA) to (EM);
			\draw[->, edge] (MA) to (M);
			\draw[->, edge] (EM) to (E);
            \draw[->, edge] (M) to (E);
            \draw[->, edge] (NA) to (N);
            \draw[->, edge] (N) to (EM);

			\draw[blue,thick] (-0.2*\w,-0.1*\h)--(-0.2*\w,1.6*\h);

		\end{tikzpicture}
\end{center}

    \end{theorem}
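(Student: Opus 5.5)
The plan is to derive the statement directly from Theorem \ref{BartoszynskiMANA} together with the inclusions among the additive classes shown in the introductory diagram. By Theorem \ref{BartoszynskiMANA}, $\ddd=\omega_{1}$ implies that there is an uncountable null-additive set. So it is enough to fix a model where $\ddd=\omega_{1}$. The simplest choice is any model of the Continuum Hypothesis, such as $L$, or the ground model after adding $\omega_{1}$ Cohen reals over a model of CH. In such a model $\ddd\leq\continuum=\omega_{1}$, so Bartoszyński's theorem yields an uncountable $X\in\NA$. This already gives $\neg\BC(\NA)$.

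To get the full constellation, i.e.\ that every class in the diagram contains an uncountable set, I will use the fact that $\NA$ is the smallest class in the diagram. Every other class contains $\NA$, so the same $X$ lies in each of them. The required inclusions are as follows.

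First, $\NA\subseteq\MA$ holds by Theorem \ref{ShelahNAMA}. Second, $\MA=\EA\subseteq\Mstar=\SMZ$ uses Zindulka's theorem and the general fact $\mathcal{IA}\subseteq\I^{*}$. Third, $\MA\subseteq\EM$ holds because $\EA\subseteq\EM$ by monotonicity of $\langle\E,\cdot\rangle$ in the second coordinate, since $\E\subseteq\Meager$. Fourth, $\NA\subseteq\Nstar$ is again $\mathcal{IA}\subseteq\I^{*}$. Fifth, $\EM\subseteq\Estar$: if $X+F\in\Meager$ for all $F\in\E$, then $X+F\neq\Cantor$. Sixth, $\Mstar\subseteq\Estar$ holds because $\E\subseteq\Meager$.

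The proof involves no substantial step. The only points needing care are checking that each arrow of the diagram is valid and that the chosen model really satisfies $\ddd=\omega_{1}$. Alternatively, one could use $\bbb=\omega_{1}$ for the classes above $\MA$, but the $\ddd=\omega_{1}$ route handles all classes at once.
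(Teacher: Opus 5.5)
Your proposal is correct and follows the paper's proof, which is the one-line observation that any model of $\ddd=\omega_{1}$ works by Theorem \ref{BartoszynskiMANA}; the constellation then follows because $\NA$ is contained in every other class of the diagram. One minor remark: Zindulka's theorem is not needed here, since $\MA\subseteq\Mstar$ is just $\mathcal{IA}\subseteq\I^{*}$ for $\I=\Meager$, and $\MA\subseteq\EM$ follows from monotonicity of $\langle\cdot,\Meager\rangle$ in the first coordinate because $\E\subseteq\Meager$.
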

    \begin{proof}
        By Theorem \ref{BartoszynskiMANA}, any model satisfying $\ddd=\omega_{1}$ gives the constellation.
    \end{proof}
The following constellation, which is a consequence of results of Carlson, Pawlikowski and Bartoszyński mentioned before, distinguishes the versions of Borel conjectures for the classes $\Nstar$ and $\MA$. We have that.
    \begin{theorem}\label{CutInCohenHechler}
        It is consistent that $\BC(\Nstar)$, but $\neg \BC(\MA)$. In particular, the following constellation of Borel Conjectures holds:
        \begin{center}
		\begin{tikzpicture}[]
			\tikzset{
				textnode/.style={text=black}, 
			}
			\tikzset{
				edge/.style={color=black, thick, opacity=0.8}, 
			}
			\newcommand{\w}{3.3}
			\newcommand{\h}{2.0}
			
			\node[textnode] (NA) at (0,  0) {$\NA$};
            \node[textnode] (N) at (0,  1.5*\h) {$\Nstar$};
			\node[textnode] (MA) at (\w,  0) {$\MA$};
			\node[textnode] (EM) at (\w,  1.5*\h) {$\EM$};
			\node[textnode] (M) at (\w*2,  0) {$\Mstar$};
			\node[textnode] (E) at (\w*2,  1.5*\h) {$\Estar$};
			
			\draw[->, edge] (NA) to (MA);
			\draw[->, edge] (MA) to (EM);
			\draw[->, edge] (MA) to (M);
			\draw[->, edge] (EM) to (E);
            \draw[->, edge] (M) to (E);
            \draw[->, edge] (NA) to (N);
            \draw[->, edge] (N) to (EM);

			\draw[blue,thick] (0.5*\w,-0.1*\h)--(0.5*\w,1.6*\h);

		\end{tikzpicture}
\end{center}
    \end{theorem}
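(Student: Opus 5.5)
We take the Cohen model: add $\omega_{2}$ Cohen reals with finite support iteration over a model of CH (equivalently, the finite support iteration of length $\omega_{2}$ of Cohen forcing). The plan has two halves, one per side of the cut.

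For the south-west side, we need that every set in $\Nstar$ (and hence in $\NA$) is countable. This is exactly the dual Borel conjecture, which holds in the Cohen model by Theorem \ref{CarlsonDualBC}. Alternatively, Cohen forcing is countable, so it has precaliber $\omega_{1}$, and Theorem \ref{PawlikowskiDualBC} applies; that route also covers the Hechler model, since Hechler forcing is $\sigma$-centered and thus has precaliber $\omega_{1}$. Because $\NA\subseteq\Nstar$ (see the diagram), $\BC(\NA)$ follows immediately.

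For the north-east side, we need an uncountable meager-additive set. By Theorem \ref{BartoszynskiMANA}, it suffices to show $\bbb=\omega_{1}$ in the model. In the Cohen model this is standard. Cohen forcing adds no dominating reals, and every real appears at some stage of cofinality below $\omega_{2}$. Hence the $\omega_{1}$ Cohen reals added at the first $\omega_{1}$ stages, or equivalently the ground model reals $\Baire\cap V$ of size $\omega_{1}$ under CH, form an unbounded family: any $g$ in the extension lies in an intermediate extension, and the next Cohen real over that extension is not dominated by $g$. More simply, $\non(\Meager)=\omega_{1}$ there, since the ground model reals are nonmeager, and $\bbb\le\non(\Meager)$. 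This gives $\neg\BC(\MA)$. Since $\MA\subseteq\EM,\Mstar\subseteq\Estar$, every class to the right of the cut also contains an uncountable set, so the stated constellation holds. In the Hechler alternative, $\bbb$ would be large, so there we would instead use $\add(\Meager)=\omega_{2}>\omega_{1}$: every set of size $\omega_{1}$ is in $\MA$, as remarked before Theorem \ref{BartoszynskiMANA}.

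The main step is merely checking the hypotheses of the quoted results. The only point needing care is the justification of $\bbb=\omega_{1}$ (or $\non(\Meager)=\omega_{1}$) in the Cohen model, which is a well-known fact about the Cohen model.
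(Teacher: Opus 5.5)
Your proposal is correct and follows the paper's proof: $\BC(\Nstar)$, and hence $\BC(\NA)$, comes from Carlson's theorem in the Cohen model (or Pawlikowski's in the Hechler model), and the uncountable meager-additive set comes from $\bbb=\omega_{1}$ plus Bartoszy\'nski's theorem in the Cohen model (or from $\add(\Meager)=\omega_{2}$ in the Hechler model). One small quibble is your first justification of $\bbb=\omega_{1}$ (``the next Cohen real over that extension''): it only shows that the family of all $\omega_{2}$ Cohen reals is unbounded, whereas the arguments via $\bbb\leq\non(\Meager)=\omega_{1}$, or via Cohen forcing adding no dominating reals so that $\Baire\cap V$ stays unbounded, are correct and suffice.
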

    \begin{proof}
        In Cohen and Hechler models, the $\Nstar$-sets are countable. For Cohen model this follows from Theorem \ref{CarlsonDualBC}, and for Hechler from Theorem \ref{PawlikowskiDualBC}. In Cohen model $\bbb=\omega_{1}$, thus by the Theorem \ref{BartoszynskiMANA} there exists uncountable meager-additive sets in that model. In Hechler model $\add(\Meager)=\omega_{2}$, so any set of size $\omega_{1}$ is meager-additive.
    \end{proof}
    Another constellation of Borel conjecture - which is a consequence of result of Judah, Shelah and Woodin - is the following.
    \begin{theorem}
        It is consistent that $\BC(\mathcal{SMZ})$, but $\neg \BC(\mathcal{SM})$. In particular, the following constellation of Borel Conjectures holds:
        \begin{center}
		\begin{tikzpicture}[]
			\tikzset{
				textnode/.style={text=black}, 
			}
			\tikzset{
				edge/.style={color=black, thick, opacity=0.8}, 
			}
			\newcommand{\w}{3.3}
			\newcommand{\h}{2.0}
			
			\node[textnode] (NA) at (0,  0) {$\NA$};
            \node[textnode] (N) at (0,  1.5*\h) {$\Nstar$};
			\node[textnode] (MA) at (\w,  0) {$\MA$};
			\node[textnode] (EM) at (\w,  1.5*\h) {$\EM$};
			\node[textnode] (M) at (\w*2,  0) {$\Mstar$};
			\node[textnode] (E) at (\w*2,  1.5*\h) {$\Estar$};
			
			\draw[->, edge] (NA) to (MA);
			\draw[->, edge] (MA) to (EM);
			\draw[->, edge] (MA) to (M);
			\draw[->, edge] (EM) to (E);
            \draw[->, edge] (M) to (E);
            \draw[->, edge] (NA) to (N);
            \draw[->, edge] (N) to (EM);

			\draw[blue,thick] (-0.1*\w,0.8*\h)--(2.1*\w,0.8*\h);
            
	\end{tikzpicture}
\end{center}
    \end{theorem}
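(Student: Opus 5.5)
The model I would use is the one of Judah, Shelah and Woodin, in which the Borel conjecture holds and the continuum is large. The plan is to read the constellation directly off that model and then use the inclusions of the diagram to fill in the remaining classes.

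The cut runs horizontally, so the bottom row $\NA\subseteq\MA\subseteq\Mstar=\mathcal{SMZ}$ consists of countable sets, while the top row $\Nstar$, $\EM$, $\Estar$ contains uncountable sets. By the arrows $\NA\to\Nstar\to\EM\to\Estar$, it suffices to prove two things:
\begin{itemize}
\item[--] $\BC(\mathcal{SMZ})$ holds;
\item[--] there is an uncountable set in $\Nstar=\mathcal{SM}$.
\end{itemize}
For the bottom row, $\BC(\mathcal{SMZ})$ gives $\BC(\MA)$ and $\BC(\NA)$ at once, since $\NA\subseteq\MA\subseteq\mathcal{SMZ}$. For the top row, an uncountable $\mathcal{SM}$-set is also in $\EM$ by Recław's theorem, and $\EM\subseteq\Estar$ is a diagram arrow. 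So $\neg\BC$ propagates along the whole row.

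The statement is therefore exactly the consistency of ``Borel conjecture and $\neg\BC(\mathcal{SM})$''. I would invoke the theorem of Judah, Shelah and Woodin: the Borel conjecture is consistent with $\continuum>\omega_{2}$. The model is built as a countable support iteration of Laver-type forcing of length $\omega_{2}$ over a ground model where $\continuum$ has been enlarged, say by many random reals, in a way that preserves the Borel conjecture.

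Then I would produce an uncountable strongly meager set in that model, for instance from a $\cov(\Null)$-type argument. If $\cov(\Null)>\omega_{1}$, every set of size $\omega_{1}$ is in $\Nstar$, because the union of $\omega_{1}$ translates of a null set is then not all of $\Cantor$. Since the Borel conjecture forces $\cov(\Meager)=\omega_{1}$ but places no such restriction on $\cov(\Null)$, the Judah--Shelah--Woodin model with many random reals is the natural candidate to have $\cov(\Null)=\continuum>\omega_{1}$.

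The main obstacle is exactly this last point. I must confirm, citing the relevant part of the Judah--Shelah--Woodin analysis or the known fact that $\mathcal{SM}$ contains uncountable sets there, that $\cov(\Null)>\omega_{1}$ or at least $\neg\BC(\mathcal{SM})$ holds in their model. If $\cov(\Null)$ turned out to be $\omega_{1}$ there, I would instead look for a random-real (Sierpiński-type) construction of an uncountable $\mathcal{SM}$ set that survives the Laver iteration. Once that fact is in place, the remaining steps are the monotonicity observations above.
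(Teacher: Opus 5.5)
You have the same reduction as the paper: it suffices to get $\BC(\mathcal{SMZ})$ together with $\cov(\Null)>\omega_{1}$. Then $\NA\subseteq\MA\subseteq\mathcal{SMZ}$ handles the bottom row and $\mathcal{SM}\subseteq\EM\subseteq\Estar$ handles the top row. The gap is that you never establish $\cov(\Null)>\omega_{1}$, and the model as you describe it would not give it. You add the random reals first and then run a countable support Laver iteration on top. The Judah--Shelah--Woodin preservation theorem goes the other way: random reals added by a measure algebra \emph{over} the Laver model preserve the Borel conjecture. A Laver iteration performed last has the Laver property, so it adds no random reals, and nothing in your construction keeps $\cov(\Null)$ above $\omega_{1}$ in the final model. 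The result you quote, ``$\BC$ is consistent with $\continuum>\omega_{2}$'', is also not what you need, since a large continuum says nothing about $\cov(\Null)$. Your fallback does not work either. There are no Sierpi\'nski sets in the Laver model (Judah--Shelah), and whether the Laver model has any uncountable strongly meager set is open; the paper itself asks this. So a ``Sierpi\'nski-type set surviving the Laver iteration'' cannot come from a routine argument.

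Once the order is fixed, the missing step is easy, and this is the paper's proof. Let $V$ be the Laver model, the countable support iteration of Laver forcing of length $\omega_{2}$ over a model of CH. Let $V'$ be obtained by adding $\omega_{2}$ random reals to $V$ with the measure algebra $\mathbb{B}_{\omega_{2}}$. By Judah--Shelah--Woodin, $\BC(\mathcal{SMZ})$ holds in $V'$. Moreover $\cov(\Null)=\omega_{2}$ in $V'$. By the c.c.c., any $\omega_{1}$ Borel null sets of $V'$ are coded in the extension by the random reals indexed by some $A\subseteq\omega_{2}$ with $|A|=\omega_{1}$. The random real at a coordinate outside $A$ is random over that intermediate model, so it avoids all of them. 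Hence every set of size $\omega_{1}$ is in $\Nstar$, and the rest of your argument goes through unchanged.
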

    \begin{proof}
        Judah, Shelah and Woodin showed (\cite{JudShWoodRandomVsBorelConj}) that adding any number of random reals by measure algebra to the model of Borel conjecture preserves Borel conjecture, so let $\mathbb{V}'$ be a model obtained by adding $\omega_{2}$-many random reals to the Laver model of Borel Conjecture. On the one hand, by the Judah, Shelah and Woodin result, $\Mstar=\mathcal{C}tbl$, on the other, $\cov(\Null)=\omega_{2}$ holds in $\mathbb{V}'$, so any set of size $\omega_{1}$ is strongly meager.
    \end{proof}
    In the rest of this section we will be interested in how adding Miller reals affects our additive classes. Recall the definition of Miller (super-perfect) forcing 
    \begin{definition}
    Let $\MMM$ denote the Miller forcing, i.e. collection of all trees $T\subseteq\baire$ such that every $\sigma\in T$ can be extended to a $\tau\in T$ such that the set of immediate successors $succ_{T}(\tau)=\{i\in\omega:\tau^{\frown}i\in T\}$ is infinite. The order is inclusion.
\end{definition}
It is well-known that Miller forcing is proper and adds a real in $\Baire$ which is not bounded by any real from the ground model. On the other hand, Miller forcing has the Laver property and preserves both outer measure and Baire category. It follows that the countable support iteration of Miller forcing over a model of the Continuum Hypothesis produces a model of $\non(\Null)=\non(\Meager)<\ddd$. For more applications of Miller forcing to the small sets of reals, the reader may also consult \cite{LyubomyrSzewczak}, \cite{ConcentratedGammaSetsInMillersModel}, \cite{ScalesProductsScheepersDiagram}, \cite{RepovsZdomskyy}, \cite{SelectionPrinciplesLaverMillerSacksModels}, \cite{ProductsOfMengerInMillerModel} or \cite{PreservationGammaSpaces}\\

The following lemma, which states that Miller forcing $\NA$-kills all uncountable ground model sets will be crucial for our distinction of Borel conjectures.
    \begin{lemma}
            For any uncountable $X\subseteq\Cantor$ we have $\MMM\forces"X\notin\NA"$.
        \end{lemma}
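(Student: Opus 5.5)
We use the first part of Theorem \ref{ShelahNAMA}. Fix an uncountable $X\subseteq\Cantor$ in the ground model and let $\dot m$ be the $\MMM$-name for the generic Miller real. In $V[G]$ we define an interval partition $\dot I=(\dot I_{n})_{n\in\omega}$ from $\dot m$. Put $\dot I_{n}=[a_{n},a_{n+1})$ with $a_{n+1}=a_{n}+\dot m(n)+n+1$; any sufficiently fast function of $\dot m$ would do. The point of this choice is that whenever the generic branch passes through an infinitely branching node at level $n$, the length of $\dot I_{n}$ can be made arbitrarily large by choosing the successor. We then show that no slalom $S\in V[G]$ with $S_{n}\in[2^{\dot I_{n}}]^{n}$ covers $X$ on almost every interval.

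So suppose towards a contradiction that $T\in\MMM$, $\dot S$ is a name for such a slalom, and $T\forces X\subseteq\{x:\foralmostall n\ x|_{\dot I_{n}}\in\dot S_{n}\}$. For each $x\in X$ choose $T_{x}\le T$ and $n_{x}\in\omega$ with $T_{x}\forces \forall n\ge n_{x}\ x|_{\dot I_{n}}\in\dot S_{n}$. Since $X$ is uncountable, there are only countably many stems, and $n_{x}\in\omega$, a pigeonhole argument gives an uncountable $X'\subseteq X$, a common stem $s$ and a common $n^{*}$. We may also arrange that $|s|\ge n^{*}$ and that $s$ is a splitting node of every $T_{x}$, $x\in X'$. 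Let $k=|s|$. Then $\dot I_{j}$ for $j<k$ is decided by $s$, and $\dot I_{k}$ is decided by the successor $s^{\frown}i$ taken in the generic branch.

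The core step is as follows. Pick $k+1$ distinct points $x_{0},\dots,x_{k}\in X'$, and let $N$ be such that their restrictions to $[0,N)$ are pairwise distinct. For each $j$ the set $succ_{T_{x_{j}}}(s)$ is infinite. We want a single successor $i$ with $s^{\frown}i$ in all $T_{x_{j}}$ and with $\dot I_{k}$ long enough to contain a window where the $x_{j}$'s differ. A ground-model point of $\Cantor$ restricted to a long interval beyond $N$ need not separate the $x_{j}$'s, so the intervals must instead be designed so that $\dot I_{k}$ starts at $a_{k}$, decided by $s$, and extends past $N$; this holds as soon as $i$ is large. If such a common $i$ exists, then the condition $\bigcap_{j}(T_{x_{j}})_{s^{\frown}i}$ (after pruning to a Miller tree) forces all $k+1$ distinct restrictions $x_{j}|_{\dot I_{k}}$ into $\dot S_{k}$, which has only $k$ elements. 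This is the desired contradiction.

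The main obstacle is compatibility: the successor sets of the various $T_{x}$ at $s$ may be pairwise almost disjoint, so the $T_{x}$ need not be compatible. I would attack this first by a further thinning before choosing the $x_{j}$. Fix in advance an enumeration and, using uncountability together with the countability of $\omega$, replace each $T_{x}$ by a condition chosen by a fusion argument \emph{inside} a single fixed condition $T'\le T$. To get $T'$, I would first apply the Laver property of $\MMM$ along a fusion sequence. At each splitting node on level $k$ and each of its first few successors, this decides $\dot S_{k}$ up to a ground-model finite family of at most $2^{k}$ candidate slaloms. Then the set of $x$ captured by some branch of $T'$ is covered by a ground-model $\E$-type set built from countably many finite families. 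This set should be shown to omit a point of $X$, or else the pigeonhole above can be run with all $T_{x}$ being pure extensions of $T'$ sharing the same finite-level data, which restores compatibility. If this fusion bookkeeping goes through, the counting contradiction of the previous paragraph finishes the proof.
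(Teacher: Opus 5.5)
There is a genuine gap, located exactly at what you call the main obstacle, and the proposed repair does not close it.

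Your counting contradiction needs a single value of $\dot S_k$ into which $k+1$ points with distinct restrictions to $\dot I_k$ are simultaneously forced. The conditions $T_{x_0},\dots,T_{x_k}$ given by the pigeonhole are in general pairwise incompatible. Even with a common successor $i$, the intersection of the trees $T_{x_j}|_{s^\frown i}$ need not contain any Miller tree. So each $x_j$ is put into $\dot S_k$ by a different generic, and nothing is contradicted.

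Suppose you first fuse to a single $T'$ in which $T'|_{\sigma^\frown i}$ decides $\dot S_{|\sigma|}$ for every splitnode $\sigma$ and every successor $i$. This is an ordinary fusion, and the Laver property plays no role, since once $\sigma^\frown i$ is fixed, $\dot S_{|\sigma|}$ ranges over a finite set. Compatibility then disappears, but the data you extract at a single stem $s$ still do not yield a contradiction. The decided values can contain strings $\rho_i$ chosen so that $\{x\in\Cantor:\exists^\infty i\ x|_{I^{(s,i)}_k}=\rho_i\}$ is comeager. Then uncountably many $x$ are admissible for infinitely many successors of $s$, which is all your pigeonhole extracts. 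This is fully consistent with $|\Sigma^{(s,i)}|\le k$, so no configuration of $k+1$ points with distinct restrictions at a common successor need exist.

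Your fallback of covering the captured points by an $\E$-type set uses the wrong kind of smallness. Membership in $\E$ gives no reason to miss a point of an arbitrary uncountable $X$, which may itself lie in a closed null set; you need the exceptional set to be countable.

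A minor point: distinctness of the $x_j$ on $[0,N)$ does not give distinctness on $\dot I_k=[a_k,a_{k+1})$. You would need distinct tails beyond $a_k$.

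The missing idea, which is how the paper argues, is to exploit the lengthening of $\dot I_{|\sigma|}$ along the successors of a splitnode through compactness rather than pigeonhole.
\begin{enumerate}
\item Fuse so that $\dot S_{|\sigma|}=\{\tau^{(\sigma,i)}_1,\dots,\tau^{(\sigma,i)}_{|\sigma|}\}$ is decided below each $\sigma^\frown i$.
\item Thin the successors of every splitnode, again by fusion, so that for each $k\le|\sigma|$ the strings $\tau^{(\sigma,i)}_k$ converge to some $y^\sigma_k$. This makes sense because these strings live on intervals with a fixed left endpoint and length tending to infinity with $i$.
\item The countably many limits, together with their finite modifications, form a countable set. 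Uncountability of $X$ gives some $x\in X$ outside it.
\item For each splitnode pick $N_\sigma$ such that $x$ differs from every $y^\sigma_k$ below $N_\sigma$ on the relevant interval. Then delete finitely many successors of $\sigma$ so that each surviving $\tau^{(\sigma,i)}_k$ agrees with $y^\sigma_k$ up to $N_\sigma$.
\end{enumerate}
The resulting Miller tree forces $x|_{\dot I_n}\notin\dot S_n$ at every $n$ where the generic real passes through a splitnode, hence for infinitely many $n$. Uncountability is used only to avoid a countable set, and no two points of $X$ ever need to be compared.
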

        \begin{proof}
        According to Shelah characterization of null-additive sets (see Theorem \ref{ShelahNAMA}), we have to show that after adding single Miller real there is an interval partition $I=(I_{n})_{n\in\omega}$ such that for every slalom $(S_{n})_{n\in\omega}$ with $S_{n}\in[2^{I_{n}}]^{n}$ for all $n\in\omega$ there is $x\in X$ such that $x|_{I_{n}}\notin S_{n}$ for infinitely many $n\in\omega$. The desired partition will be the one added by the Miller generic real: if $\dot{r}_{gen}\in\Baire$ is the generic real, then $\dot{I}=(\dot{I}_{n})_{n\in\omega}$ is such that $|\dot{I}_{n}|=\dot{r}_{gen}(n)+1$. Now, let $T_{0}\in\MMM$ be such that 
        \begin{center}
            $T_{0}\forces"\dot{S}_{n}=\{\dot{\sigma}^{n}_{1},\dot{\sigma}^{n}_{2},...,\dot{\sigma}^{n}_{n}\}\subseteq 2^{\dot{I}_{n}}$ for every $n\in\omega"$
        \end{center}
        Using standard fusion argument, it is possible to construct a Miller tree $T_{1}\leq T_{0}$ such that for every $\sigma\in split(T_{1})$ and $i\in succ_{T_{1}}(\sigma)$ there is a collection $\{\tau^{(\sigma,i)}_{1},\tau^{(\sigma,i)}_{2},...,\tau^{(\sigma,i)}_{|\sigma|}\}$ such that we have 
        \begin{center}
            $T_{1}|_{\sigma^{\frown}i}\forces"\forall k\leq |\sigma|$ $\dot{\sigma}^{|\sigma|}_{k}=\tau^{(\sigma,i)}_{k}"$
        \end{center}
        We need the following claim
        \begin{claim}
            There is $T_{2}\leq T_{1}$ such that for each $\sigma\in split(T_{2})$ there is $\{y^{\sigma}_{k}:k\leq|\sigma|\}\subseteq\Cantor$ such that the following condition holds
        \begin{center}
            $\forall N\in\omega$ $\foralmostall i\in succ_{T_{2}}(\sigma)$ we have $\forall k\leq|\sigma|$ $y^{\sigma}_{k}|_{N}\subseteq \tau^{(\sigma,i)}_{k}$
        \end{center}
        \end{claim}
        \begin{proof}
            We will show how to ensure the required condition on the stem of $T_{1}$. Propagating this construction over all splitnodes by the standard fusion argument produces the requierd tree $T_{2}$.\\
            Let $\sigma^{*}$ be the stem of $T_{1}$. By induction on $k\leq|\sigma^{*}|$ we construct $\{y^{\sigma^{*}}_{k}:k\leq |\sigma^{*}|\}\subseteq\Cantor$ and decreasing sequence of trees $T_{1}^{k}\leq T_{1}$, $k\leq|\sigma^{*}|$ as follows: assume that $\{y^{\sigma^{*}}_{l}:l< k\}$ and $T_{1}^{l}$ with $stem(T^{l}_{1})=\sigma^{*}$ have been constructed. We construct $T_{1}^{k}\leq T_{1}^{l}$ with the same stem and $y^{\sigma^{*}}_{k}$ such that for all $N\in\omega$ for almost all $i\in succ_{T^{k}_{1}}(\sigma^{*})$ we have  that $y^{\sigma^{*}}_{k}|_{N}\subseteq \tau^{(\sigma^{*},i)}_{k}$. To do this, find $A_{k}\in [succ_{T^{l}_{1}}(\sigma^{*})]^{\omega}$ such that for some $y\in\Cantor$ we have
            \begin{center}
            $\forall N\in\omega$ $\foralmostall i\in succ_{T^{k}_{1}}(\sigma^{*})$ $y|_{N}\subseteq \tau^{(\sigma^{*},i)}_{k}$
            \end{center}
            Let then $y^{\sigma^{*}}_{k}$ be such $y$ and let $T^{k}_{1}\leq T^{l}_{1}$ be such that $stem(T^{k}_{1})=\sigma^{*}$ and $succ_{T^{k}_{1}}(\sigma^{*})=A_{k}$. This finishes the induction step. When the induction is over, the tree $T^{|\sigma^{*}|}_{1}$ is the required one.
        \end{proof}
        Once the claim is proven, let $x\in X\setminus\{y^{\sigma}_{k}: \sigma\in split(T_{2})$ 
        and $k\leq|\sigma|\}$. We will now construct a tree $T_{3}\leq T_{2}$ such that $T_{3}\forces"\exists^{\infty}_{n\in\omega}$ $x|_{\dot{I}_{n}}\notin \dot{S}_{n}"$. For every $\sigma\in split(T_{2})$ let $N_{\sigma}\in\omega$ be such that $y^{\sigma}_{k}|_{N_{\sigma}}\neq x|_{N_{\sigma}}$ for every $k\leq|\sigma|$. Let $T_{3}\leq T_{2}$ be a Miller tree obtained from $T_{2}$ by removing finitely many successors at each splitnode $\sigma$ of $T_{2}$ according to the $N=N_{\sigma}$ in the condition above. This means, that the tree $T_{3}$ has the property that for every $\sigma\in split(T_{3})$ we have
    \begin{center}
        $\forall i\in succ_{T_{3}}(\sigma)$ $T_{3}\forces"$ for every $ k\leq |\sigma|$ we have $y^{\sigma}_{k}|_{N_{\sigma}}\subseteq \tau^{(\sigma,i)}_{k}$ and $y^{\sigma}_{k}|_{N_{\sigma}}\neq x|_{N_{\sigma}}"$
    \end{center}
    It follows that
    \begin{center}
        $T_{3}\forces"\forall n\in\{m\in\omega:\dot{r}_{gen}|_m\in split(T_{3})\}$ we have $x|_{\dot{I}_{n}}\notin \dot{S}_{n}"$
    \end{center}
    This finishes the proof of the lemma.
    \end{proof}
As a corollary from preceding lemma, we obtain that the following distinction of Borel conjectures holds for the classes $\NA$, $\MA$ and $\Nstar$.
    \begin{theorem}
        It is consistent that $\BC(\NA)$, while $\neg \BC(\MA)$ and $\neg \BC(\Nstar)$. In particular, the following constellation of Borel Conjectures holds:
        \begin{center}
		\begin{tikzpicture}[]
			\tikzset{
				textnode/.style={text=black}, 
			}
			\tikzset{
				edge/.style={color=black, thick, opacity=0.8}, 
			}
			\newcommand{\w}{3.3}
			\newcommand{\h}{2.0}
			
			\node[textnode] (NA) at (0,  0) {$\NA$};
            \node[textnode] (N) at (0,  1.5*\h) {$\Nstar$};
			\node[textnode] (MA) at (\w,  0) {$\MA$};
			\node[textnode] (EM) at (\w,  1.5*\h) {$\EM$};
			\node[textnode] (M) at (\w*2,  0) {$\Mstar$};
			\node[textnode] (E) at (\w*2,  1.5*\h) {$\Estar$};
			
			\draw[->, edge] (NA) to (MA);
			\draw[->, edge] (MA) to (EM);
			\draw[->, edge] (MA) to (M);
			\draw[->, edge] (EM) to (E);
            \draw[->, edge] (M) to (E);
            \draw[->, edge] (NA) to (N);
            \draw[->, edge] (N) to (EM);

			\draw[blue,thick] (-0.1*\w,0.8*\h)--(0.5*\w,0.8*\h);

			\draw[blue,thick] (0.5*\w,-0.10*\h)--(0.5*\w,0.8*\h);

		\end{tikzpicture}
\end{center}
    \end{theorem}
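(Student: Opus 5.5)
The model will be the Miller model: the countable support iteration $\MMM_{\omega_{2}}$ of length $\omega_{2}$ of Miller forcing over a model of CH. The argument has three parts: $\BC(\NA)$, an uncountable meager-additive set, and an uncountable strongly meager set.

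\textbf{$\BC(\NA)$.} Let $X\subseteq\Cantor$ be uncountable in the final model $V_{\omega_{2}}$; we may assume $|X|=\omega_{1}$. The iteration is proper, has the $\aleph_{2}$-c.c., and the continuum is $\omega_{2}$ at the end. So a reflection argument gives $\alpha<\omega_{2}$ with $X\in V_{\alpha}$. The step-$\alpha$ Miller real then adds a partition $\dot I$ witnessing $X\notin\NA$ in $V_{\alpha+1}$, by the preceding lemma.

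We must check that this witness survives the tail forcing. By Theorem \ref{ShelahNAMA}, $X\notin\NA$ means that for every slalom $S$ with $S_{n}\in[2^{I_{n}}]^{n}$ there is $x\in X$ with $x|_{I_{n}}\notin S_{n}$ for infinitely many $n$. A slalom added later is the main obstacle, and there are two ways to handle it.
\begin{itemize}
\item[--] Use the Laver property of the remaining iteration, which is preserved by countable support iterations. Any new slalom $S$ with $|S_{n}|\le n$ is then covered by a ground-model slalom $S'$ with $|S'_{n}|\le n\cdot 2^{n}$. Since the partition can be sparsified, we may pass to blocks. Concretely, the lemma's proof works equally for slaloms of size $h(n)$ for any fixed ground-model $h$, because the $\tau$'s are indexed by $k\le h(|\sigma|)$ and finitely many $y$'s are still avoided. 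So in the lemma we will prove the stronger statement for slaloms of size $n2^{n}$, and the Laver property then transfers it.
\item[--] Alternatively, apply the lemma directly inside $V_{\omega_{2}}$. The whole tail from $\alpha$ is again a countable support Miller iteration, and the fusion argument of the lemma adapts to the first coordinate of the iteration.
\end{itemize}
We commit to the Laver property route.

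\textbf{$\neg\BC(\MA)$.} In the Miller model $\bbb=\omega_{1}$, since Miller forcing adds no dominating reals and this is preserved under countable support iteration. Theorem \ref{BartoszynskiMANA} then yields an uncountable meager-additive set.

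\textbf{$\neg\BC(\Nstar)$.} We will show that every set of size $\omega_{1}$ of ground-model reals, for instance $2^{\omega}\cap V$, is strongly meager. One option is the characterization $X\in\mathcal{SM}$ iff $X+N\ne\Cantor$ for every null $N$. Miller forcing preserves outer measure (positive outer measure of ground-model sets), and this is preserved in the iteration. Given a null set $N$ coded at stage $\beta$, consider $V\cap\Cantor+N$. For $z\notin$ this set we need $z+V\cap\Cantor$ disjoint from $N$, i.e. $z\notin N+(V\cap\Cantor)$. This does not follow from outer measure preservation directly, since $N+V$ could be everything; it does not go through.

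The better option uses $\non(\Nstar)$-type reasoning: $\cov(\Null)=\omega_{1}$ in the Miller model, so the cheap argument fails. Instead we will appeal to the known result (Bartoszyński–Judah) that in the Miller model there are uncountable strongly meager sets, e.g. because $\ddd=\omega_{2}$. This step is a serious gap; if no citation is available we would try to show directly that the ground-model reals are in $\Nstar$ via preservation of ``$\sqsubseteq^{\text{random}}$'' (preservation of non-meager/positive outer measure together with the Laver property).

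The main expected difficulty is this last step.
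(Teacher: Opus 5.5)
\textbf{The gap is in the choice of model, and a citation cannot close it.} Your third part cannot be completed in the Miller model. Whether there is an uncountable strongly meager set in the Miller model is an open problem: the paper says so explicitly and lists ``Is $\BC(\Nstar)$ true in the Miller model?'' as a question. So there is no Bartoszy\'nski--Judah theorem to cite. Nor does $\ddd=\omega_{2}$ help: in the Cohen model $\ddd=\omega_{2}$ and $\BC(\Nstar)$ holds.

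Your fallback plan is actually refuted. The countable support Miller iteration has the Laver property. The paper's corollary to its bound $\non(\Estar)\le\sup\{\ddd(\PIF,\in^{*},\mathcal{S}lm(F,b))\}$ says that such a forcing forces $\Cantor\cap\mathbb{V}\notin\Estar$. Since $\Nstar\subseteq\Estar$, the ground-model reals are \emph{not} strongly meager in the Miller model.

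What the third part needs is $\cov(\Null)=\omega_{2}$. Then $\cov(\Null)\le\cov_{t}(\Null)=\non(\Nstar)$ puts every set of size $\omega_{1}$ into $\Nstar$. The paper gets this from a countable support iteration of length $\omega_{2}$ over a model of CH in which random forcing $\B$ and Miller forcing $\MMM$ alternate. There $\bbb=\omega_{1}$ still holds, so your second part is unaffected.

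\textbf{Interleaving random stages breaks your first part as written.} Random forcing lacks the Laver property: a random element of $\PIF$ with $\Sigma_{n}\frac{2^{n}}{F(n)}<\infty$ escapes every ground-model slalom of size $2^{n}$. So a slalom added at a later random stage need not be covered by a slalom of size $n2^{n}$ from $\mathbb{V}^{\alpha+1}$.

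The paper avoids slaloms in the preservation step. After the Miller real at stage $\alpha$, $X\notin\NA$ is witnessed by a null set $N\in\mathbb{V}^{\alpha+1}$ such that $X+N$ has positive outer measure. Both $\B$ and $\MMM$, and their countable support iterations, preserve positive outer measure, so $X+N$ stays non-null in $\mathbb{V}^{\omega_{2}}$.

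Your Laver-property route is a legitimate alternative proof of $\BC(\NA)$ in the pure Miller model, i.e.\ of the paper's subsequent theorem about that model. The strengthening of the lemma to slaloms of size $h(n)$ for a fixed $h\in\mathbb{V}^{\alpha+1}$ is correct: the fusion only uses that the number of $\tau$'s at a splitnode is finite and independent of the successor $i$. But that route cannot be combined with the random stages needed for $\neg\BC(\Nstar)$.
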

    \begin{proof}
        Consider a c.s.i. of length $\omega_{2}$ over a model of CH such that random forcing $\B$ and Miller forcing $\MMM$ alternate as iterands. We claim that in the resulting model $\mathbb{V}^{\omega_{2}}$ the thesis holds. In the final model, on the one hand $\cov(\Null)=\omega_{2}$, which means that any set of size $\omega_{1}$ is in $\Nstar$, on the other, $\bbb=\omega_{1}$ so by Bartoszyński Theorem \ref{BartoszynskiMANA} there are uncountable meager-additive sets. We will finish the proof by showing that any null-additive set is countable. Suppose that $X\subseteq\Cantor$ is an uncountable set in the final model $\mathbb{V}^{\omega_{2}}$. Without loss of generality, we assume that $X$ is of size $\omega_{1}$. Then there is $\alpha\in\omega_{2}$ such that $X\in\mathbb{V}^{\alpha}$. Then, after adding consecutive Miller real, $X$ is not null-additive. Let $N\in\Null\cap \mathbb{V}^{\alpha+1}$ be such that $X+N\notin\Null$ holds in $\mathbb{V}^{\alpha+1}$. It follows that the set $X+N$ has positive outer measure. Recall then that both Miller and random forcing (and theirs countable support iterations) preserve outer measure (in \cite{BJ} see Theorem 7.3.47 and Theorem 6.3.13). However, this means that $X+N\notin\Null$ holds in $\mathbb{V}^{\omega_{2}}$. This completes the proof.
    \end{proof}
    Applying the same argument, we get the following result for Miller model.
    \begin{theorem}\label{NoNAinMillers}
        In Millers model $\NA$-sets are countable while $\MA$-sets are not.
    \end{theorem}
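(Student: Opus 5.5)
The Miller model is the countable support iteration $\langle \mathbb{P}_{\alpha},\dot{\mathbb{Q}}_{\alpha}:\alpha<\omega_{2}\rangle$ of Miller forcing $\MMM$ over a model of CH. The two halves of the statement are handled separately, following the proof of the preceding theorem.

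For the claim that $\MA$-sets are uncountable, I use that Miller forcing, and its countable support iterations, preserve outer measure and Baire category. In particular the ground model reals remain nonmeager, so $\non(\Meager)=\omega_{1}$ in the final model. Since $\bbb\leq\non(\Meager)$, we get $\bbb=\omega_{1}$. Theorem \ref{BartoszynskiMANA} then yields an uncountable meager-additive set. (Alternatively, $\bbb=\omega_{1}$ holds because the ground model reals are unbounded; this follows from the Laver property together with preservation of $\sqsubseteq$-type unboundedness, but the $\non(\Meager)$ route is cleaner.)

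For the claim that $\NA$-sets are countable, let $X$ be an uncountable set of reals in $\mathbb{V}^{\omega_{2}}$, and shrink it to a subset of size $\omega_{1}$. This is harmless, since $\NA$ is closed under subsets: if $Y\subseteq X$ then $Y+N\subseteq X+N$. By properness and the $\aleph_{2}$-c.c. of the iteration (the length is $\omega_{2}$ over CH), $X$ has a name involving only $\omega_{1}$ many countable pieces. Hence $X\in\mathbb{V}^{\alpha}$ for some $\alpha<\omega_{2}$. The $\alpha$-th iterand is a Miller real over $\mathbb{V}^{\alpha}$, and $X$ is uncountable there. So the Lemma gives, in $\mathbb{V}^{\alpha+1}$, a null set $N$ (a code for one) with $X+N\notin\Null$, i.e. $X+N$ has positive outer measure.

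The remaining step is to transfer this to $\mathbb{V}^{\omega_{2}}$. The quotient $\mathbb{P}_{\omega_{2}}/\mathbb{P}_{\alpha+1}$ is again a countable support iteration of Miller forcing. By the preservation of outer measure cited in the previous proof (Theorem 7.3.47 in \cite{BJ} for Miller, plus the iteration theorem for preservation of outer measure under countable support), the set $X+N$, computed from the ground set $X$ and the reinterpreted code of $N$, still has positive outer measure. Moreover $N$ remains null, as being null is absolute for Borel codes. Thus $X\notin\NA$ in the final model.

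The main obstacle is this preservation step. One must make sure that the outer measure preservation theorem applies to the tail iteration over the intermediate model $\mathbb{V}^{\alpha+1}$, and to the set $X+N$. Here $N$ is reinterpreted in the extension, but $X$ does not grow, so $X+N$ computed later contains only additional points, and positivity of outer measure is only helped. I would therefore argue directly that every null set in the final model fails to cover the old set $X+N^{\mathbb{V}^{\alpha+1}}$, which is exactly what preservation of positive outer measure provides.
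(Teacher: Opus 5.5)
Your proposal is correct and follows the paper's argument. The paper also gets uncountable $\MA$-sets from $\bbb=\omega_{1}$ and Theorem \ref{BartoszynskiMANA}. It also gets countability of $\NA$-sets by reflecting an $\omega_{1}$-sized $X$ to some $\mathbb{V}^{\alpha}$, applying the Miller lemma at stage $\alpha+1$, and preserving positive outer measure of $X+N$ through the rest of the iteration. Your explicit treatment of the tail iteration and of the reinterpreted code of $N$ only spells out what the paper leaves implicit.
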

    The status of dual Borel conjecture in Miller model is still open. A natural example of a strongly meager set is a Sierpiński set (see discussion in section \ref{SpecialSmallSetsSection}). However, in \cite{KillingLuzinSierp} Judah and Shelah showed that there are no Sierpiński sets in Miller or Laver model. However, the existence of uncountable strongly meager sets in Miller (or Laver model) is an old open problem (see question at the end of \cite{KillingLuzinSierp}).\\

    The above Theorem has one more consequence on the existence of small sets of reals. In \cite{GalvinMiller}  Galvin and Miller introduced the notion of \emph{strong} $\gamma$-\emph{sets}, a certain strengthening of a notion of $\gamma$-\emph{sets}. They showed that every $\gamma$-set is meager-additive and every strong $\gamma$-set is null-additive. Recently, in \cite{LyubomyrSzewczak} Halber, Szewczak and Zdomskyy showed that in Miller model there are no strongly measure zero (and thus no meager-additive) sets of size $\omega_{2}$. As a corollary from our Theorem \ref{NoNAinMillers} we get the following.
    \begin{corollary}
        There are no uncountable strong $\gamma$-sets in Miller model.
    \end{corollary}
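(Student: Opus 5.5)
The corollary follows by combining Theorem \ref{NoNAinMillers} with the Galvin--Miller result from \cite{GalvinMiller}: every strong $\gamma$-set is null-additive. I do not intend to reprove that inclusion. The plan is to check that it is a ZFC theorem, so that it holds inside the Miller model, and then argue by contradiction.

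Work in the Miller model $\mathbb{V}^{\omega_{2}}$, obtained by the countable support iteration of $\MMM$ of length $\omega_{2}$ over a model of CH. Suppose $X\subseteq\Cantor$ is an uncountable strong $\gamma$-set there. The Galvin--Miller theorem is a ZFC statement about subsets of $\Cantor$ (the reals are taken as $\Cantor$ with coordinatewise addition, as in the rest of the paper). Applying it inside the model gives $X\in\NA$. But Theorem \ref{NoNAinMillers} says every null-additive set in this model is countable, which is a contradiction.

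If one wants a self-contained justification of Theorem \ref{NoNAinMillers} rather than just citing it, I would follow the argument given for the random/Miller iteration. Let $X$ be uncountable; shrinking, assume $|X|=\omega_{1}$. By properness and $\omega_{2}$-c.c.\ of the iteration under CH, $X$ lies in some intermediate model $\mathbb{V}^{\alpha}$ with $\alpha<\omega_{2}$. The preceding lemma applied over $\mathbb{V}^{\alpha}$ gives, in $\mathbb{V}^{\alpha+1}$, a null set $N$ with $X+N$ of positive outer measure. The tail of the iteration is again a countable support iteration of Miller forcing, and it preserves positive outer measure by the cited preservation theorem in \cite{BJ}. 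Hence $N$, which is coded by a Borel null code and so stays null, still witnesses $X+N\notin\Null$ in the final model, so $X\notin\NA$ there.

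The only real subtlety, and the step I expect to be the main obstacle, is making sure the Galvin--Miller inclusion is stated for $\Cantor$ with the group operation used here, and not, say, for $\mathbb{R}$. If it were stated for $\mathbb{R}$, I would transfer it using the standard fact that null-additivity and the strong $\gamma$ property behave well under the usual identification. Given that, the corollary is immediate.
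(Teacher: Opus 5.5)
Your proposal is correct and matches the paper's argument: the corollary comes from combining the Galvin--Miller inclusion of strong $\gamma$-sets into $\NA$ with Theorem \ref{NoNAinMillers}. Your sketch of Theorem \ref{NoNAinMillers} is also the argument the paper means by ``applying the same argument'': reflect a subset of size $\omega_{1}$ to an intermediate stage, apply the Miller lemma there, and use preservation of outer measure along the tail of the iteration.
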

    We will finish this section by showing that certain constellations of Borel Conjectures are not possible. The first instance of such phenomenon is the following result of Calder\'on, which shows that it is not always possible to separate Borel Conjectures for different additive classes.
    \begin{theorem}(Calder\'on; \cite{Calderon})\label{CalderonSMZandMA}
        $\BC(\SMZ)$ and $\BC(\MA)$ are equivalent.
    \end{theorem}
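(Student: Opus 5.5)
The plan is to prove the two implications separately. One direction is immediate from the inclusions in the diagram, and the other splits into cases according to the value of $\bbb$.

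\emph{Easy direction.} Since $\MA\subseteq\Mstar=\SMZ$, every meager-additive set is strongly measure zero. Hence $\BC(\SMZ)$ implies $\BC(\MA)$.

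\emph{Converse: setup.} Assume there is an uncountable strongly measure zero set $X$. We must produce an uncountable meager-additive set. Since $\SMZ$ is hereditary, we may shrink $X$ and assume $|X|=\omega_{1}$. We then split into two cases.

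\emph{Case $\bbb=\omega_{1}$.} Bartoszyński's Theorem \ref{BartoszynskiMANA} directly gives an uncountable meager-additive set, and we are done.

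\emph{Case $\bbb>\omega_{1}$.} Here $|X|<\bbb$, and we aim to show that $X$ itself is meager-additive. We verify Shelah's characterization from Theorem \ref{ShelahNAMA}, so fix an interval partition $I=(I_{n})_{n\in\omega}$.

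\begin{enumerate}
\item First we extract a "block" form of strong measure zero from $X\in\Mstar$. The set $M(0,I)$ is meager by Proposition \ref{CharacterizationM}, so $X+M(0,I)\neq\Cantor$. Pick $y\notin X+M(0,I)$. Then for every $x\in X$ we have $x+y\notin M(0,I)$, which means $x|_{I_{n}}=y|_{I_{n}}$ for infinitely many $n$. This $y$ is the real required by Shelah's characterization.
\item For each $x\in X$, define $h_{x}\in\Baire$ by letting $h_{x}(m)$ be the least $n>m$ with $x|_{I_{n}}=y|_{I_{n}}$. This is well defined by step 1.
\item Since $|X|<\bbb$, there is an increasing $g\in\Baire$ with $h_{x}\leq^{*}g$ for all $x\in X$.
\item Define $J$ by setting $J_{k}$ to be the union of the intervals $I_{n}$ with $n\in[m_{k},m_{k+1})$, where $m_{0}=0$ and $m_{k+1}=g(m_{k})+1$.
\end{enumerate}

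\emph{Checking the conditions.} Each $J_{k}$ is a union of consecutive $I_{n}$'s, so $J$ dominates $I$ in the required sense. Fix $x\in X$. For almost all $k$ we have $h_{x}(m_{k})\leq g(m_{k})<m_{k+1}$. So there is $n\in(m_{k},m_{k+1})$ with $I_{n}\subseteq J_{k}$ and $x|_{I_{n}}=y|_{I_{n}}$. This is exactly the condition in Theorem \ref{ShelahNAMA}, so $X\in\MA$.

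The only step requiring any care is step 1, translating membership in $\Mstar$ into the "infinitely many blocks agree with a single $y$" form. I expect the main obstacle to be this step together with checking that the definition of "$J$ dominates $I$" used in Theorem \ref{ShelahNAMA} is matched by the coarsening in step 4. If that definition is stricter, for example requiring each $J_{k}$ to contain a whole $I_{n}$ in a specific position, I would instead choose the $m_{k}$ growing faster so that the required inclusion holds.
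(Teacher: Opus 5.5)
Your proof is correct. The paper gives no proof of this statement and only cites Calder\'on, but your case split on $\bbb$ follows exactly the template the paper uses for the parallel Theorem \ref{BCofEstarAndEM} together with Proposition \ref{NonEMvsNonEstar}: when $\bbb=\omega_{1}$ you apply Theorem \ref{BartoszynskiMANA}, and otherwise you uniformize below $\bbb$ the block-agreement witnesses obtained from $X+M(0,I)\neq\Cantor$. In effect this establishes $\min\{\bbb,\non(\SMZ)\}\leq\non(\MA)$.

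Your worry about the meaning of ``dominates'' is unfounded. A partition $J$ whose blocks are unions of consecutive $I_{n}$'s contains some $I_{n}$ in every block, and that is all Theorem \ref{ShelahNAMA} requires.
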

    It turns out that similar situation works between the classes $\BC(\Estar)$ and $\BC(\EM)$.
    \begin{theorem}\label{BCofEstarAndEM}
        $\BC(\Estar)$ and $\BC(\EM)$ are equivalent.
    \end{theorem}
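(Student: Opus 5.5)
Since $\EM\subseteq\Estar$, every member of $\EM$ is a member of $\Estar$, so $\BC(\Estar)$ trivially implies $\BC(\EM)$. For the converse I would argue by contraposition, in the spirit of Calder\'on's Theorem \ref{CalderonSMZandMA}, splitting into two cases according to the value of $\bbb$. If $\bbb=\omega_{1}$, Bartoszyński's Theorem \ref{BartoszynskiMANA} gives an uncountable meager-additive set. Since $\MA\subseteq\EM$, this refutes $\BC(\EM)$ regardless of what happens with $\Estar$.

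So assume $\bbb>\omega_{1}$ and that $X\in\Estar$ is uncountable. Shrinking $X$, take $|X|=\omega_{1}<\bbb$ (subsets of members of $\Estar$ are in $\Estar$). I claim that then $X\in\EM$. By Proposition \ref{CharacterizationE}, it suffices to handle sets of the form $E=E(S,I)$. Since $X\in\Estar$, there is $z\notin X+E$. Then for every $x\in X$ we have $x+z\notin E$, so the set
\[
A_{x}=\{n\in\omega:(x+z)|_{I_{n}}\notin S_{n}\}
\]
is infinite.

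The key observation is the following. Let $e\in E$ and $n\in A_{x}$ be large enough that $e|_{I_{n}}\in S_{n}$. Then $(x+e)|_{I_{n}}\neq z|_{I_{n}}$, since otherwise $(x+z)|_{I_{n}}=e|_{I_{n}}\in S_{n}$, contradicting $n\in A_{x}$.

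Next I use $|X|<\bbb$ to capture all the $A_{x}$ at once. Fix $x$ and let $f_{x}(m)$ be the least element of $A_{x}$ that is $\geq m$. The family $\{f_{x}:x\in X\}$ has size $<\bbb$, so by the standard argument there is an interval partition $(K_{k})_{k\in\omega}$ of $\omega$ such that for every $x\in X$, for almost all $k$, the interval $K_{k}$ contains some $n\in A_{x}$. Now define the interval partition $J_{k}=\bigcup_{n\in K_{k}}I_{n}$. For $x\in X$ and $e\in E$, for almost all $k$ there is $n\in A_{x}\cap K_{k}$ that is past the point where $e$ stays in $S$. By the observation above, $(x+e)|_{J_{k}}\neq z|_{J_{k}}$. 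Hence $X+E\subseteq M(z,J)$, which is meager by Proposition \ref{CharacterizationM}. Therefore $X\in\EM$ is uncountable, refuting $\BC(\EM)$.

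I expect the main obstacle to be the bounding step: producing a single partition $(K_{k})$ from fewer than $\bbb$ infinite sets $A_{x}$ so that almost every block meets every $A_{x}$. I would take a $g$ that dominates all $f_{x}$ and define the blocks by iterating $g$, so that each block $[a_{k},a_{k+1})$ satisfies $a_{k+1}>g(a_{k})$. The remaining steps are direct verifications.
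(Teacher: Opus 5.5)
Your proof is correct and has the same skeleton as the paper's. The inclusion $\EM\subseteq\Estar$ gives one direction. For the other you split on $\bbb$, invoke Bartoszy\'nski's theorem when $\bbb=\omega_{1}$, and otherwise show that an $X\in\Estar$ of size $\omega_{1}<\bbb$ already lies in $\EM$. (The paper's opening sentence names the implication backwards, but what it proves is exactly your contrapositive.)

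The difference is in the last step, which the paper only sketches by pointing to Proposition \ref{NonEMvsNonEstar}. That argument is topological. It takes a dense sequence of points outside every translate $E+x_{\alpha}$, and lets $f_{\alpha}(n)$ record how small a neighbourhood of the $n$-th point must be to miss the closed set $E+x_{\alpha}$. It then dominates the $f_{\alpha}$ to trap $X+E$ in a meager set built from the neighbourhoods $[x_{n}|_{g(n)}]$. To run it from the bare hypothesis $X\in\Estar$, one first has to manufacture the dense set of escape points (e.g.\ by applying $X\in\Estar$ to a union of countably many translates of $E$) and reduce to closed $E$.

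Your version needs neither step. Since escaping the slalom is a blockwise condition, a single $z\notin X+E(S,I)$ suffices. The $F_{\sigma}$ issue is absorbed by the threshold after which $e$ stays in $S$, and the output is directly a chopped-real set $M(z,J)$.

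The paper's route is more portable: the same argument works with $\E$ replaced by any translation-invariant $\sigma$-ideal generated by closed sets, with no combinatorial basis needed. Yours is self-contained and explicit. Also, since $x\mapsto f_{x}$ is continuous on $X$, it makes visible that all you use is that continuous images of $X$ in $\Baire$ are bounded, which matches the paper's remark about Borel images.
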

    \begin{proof}
        We will only show that $\BC(\Estar)$ implies $\BC(\EM)$. Suppose that $\BC(\Estar)$ is not true i.e. that there is an uncountable $X\subseteq\Cantor$, $X\in\Estar$. Without loss of generality, we may assume that $|X|=\omega_{1}$. If $\bbb=\omega_{1}$ then by Theorem \ref{BartoszynskiMANA} there is an uncountable meager-additive set, which by  discussion from the introduction is in $\EM$. Assume that $\omega_{1}<\bbb$. Note that any Borel image of $X$ into $\Baire$ is bounded. By the argument similar to the proof of Proposition \ref{NonEMvsNonEstar} (see also Proposition 11 of \cite{Weiss2018}), $X$ is in $\EM$, which finishes the proof.
    \end{proof}
    We finish this section with several questions regarding different Borel Conjectures.\\
    We already know that in Miller's model $\BC(\NA)$ holds while $\neg \BC(\MA)$. It is an old open problem what happens with strongly meager sets is Miller's or Laver's model.
    \begin{question}
        Does $\BC(\mathcal{SM})$, $\BC(\EM)$ or $\BC(\Estar)$ hold in Laver model? Is dual Borel conjecture $\BC(\Nstar)$ true in Miller model?
    \end{question}
    In \cite{BorelWithDualBorel}, Goldstern, Kellner, Shelah and Wohofsky showed that it is consistent that both Borel conjecture and dual Borel conjecture holds. As $\mathcal{SMZ}\cup\mathcal{SM}\subseteq\Estar$, it is natural to ask the following.
    \begin{question}
        Is $\BC(\Estar)$ (or equivalently $\BC(\EM)$) consistent?
    \end{question}
    We already see that several constellations of Borel conjectures are consistent with ZFC while others (by Theorem \ref{CalderonSMZandMA} and Theorem \ref{BCofEstarAndEM}) are impossible. The only remaining constellations for which I do not know the status of consistency are the following two. Note that at least one of these two constellations must hold in the model of Borel Conjecture and dual Borel Conjecture (see Theorem \ref{BorelWithDualBorel}).
    \begin{question}
        Is it possible to consistently obtain the following two constellations of Borel Conjectures presented below?
    \begin{figure}[h!]
    \centering
    \begin{subfigure}[t]{0.5\textwidth}
        \centering
        \begin{center}
		\begin{tikzpicture}[]
			\tikzset{
				textnode/.style={text=black}, 
			}
			\tikzset{
				edge/.style={color=black, thick, opacity=0.8}, 
			}
			\newcommand{\w}{2.9}
			\newcommand{\h}{1.6}
			
			\node[textnode] (NA) at (0,  0) {$\NA$};
            \node[textnode] (N) at (0,  1.5*\h) {$\Nstar$};
			\node[textnode] (MA) at (\w,  0) {$\MA$};
			\node[textnode] (EM) at (\w,  1.5*\h) {$\EM$};
			\node[textnode] (M) at (\w*2,  0) {$\Mstar$};
			\node[textnode] (E) at (\w*2,  1.5*\h) {$\Estar$};
			
			\draw[->, edge] (NA) to (MA);
			\draw[->, edge] (MA) to (EM);
			\draw[->, edge] (MA) to (M);
			\draw[->, edge] (EM) to (E);
            \draw[->, edge] (M) to (E);
            \draw[->, edge] (NA) to (N);
            \draw[->, edge] (N) to (EM);

			\draw[blue,thick] (0.5*\w,1.6*\h)--(0.5*\w,0.8*\h);

			\draw[blue,thick] (0.5*\w,0.8*\h)--(2.1*\w,0.8*\h);

		\end{tikzpicture}
\end{center}
        \label{Fig:SubLeft}
    \end{subfigure}%
    ~ 
    \begin{subfigure}[t]{0.5\textwidth}
        \centering
        
\begin{center}
		\begin{tikzpicture}[]
			\tikzset{
				textnode/.style={text=black}, 
			}
			\tikzset{
				edge/.style={color=black, thick, opacity=0.8}, 
			}
			\newcommand{\w}{2.9}
			\newcommand{\h}{1.6}
			
			\node[textnode] (NA) at (0,  0) {$\NA$};
            \node[textnode] (N) at (0,  1.5*\h) {$\Nstar$};
			\node[textnode] (MA) at (\w,  0) {$\MA$};
			\node[textnode] (EM) at (\w,  1.5*\h) {$\EM$};
			\node[textnode] (M) at (\w*2,  0) {$\Mstar$};
			\node[textnode] (E) at (\w*2,  1.5*\h) {$\Estar$};
			
			\draw[->, edge] (NA) to (MA);
			\draw[->, edge] (MA) to (EM);
			\draw[->, edge] (MA) to (M);
			\draw[->, edge] (EM) to (E);
            \draw[->, edge] (M) to (E);
            \draw[->, edge] (NA) to (N);
            \draw[->, edge] (N) to (EM);

			\draw[blue,thick] (2.2*\w,-0.15*\h)--(2.2*\w,1.6*\h);

		\end{tikzpicture}
\end{center}
        \label{Fig:SubRight}
    \end{subfigure}
    \label{Fig:Main}
\end{figure}
    \end{question}

    \section{Cardinal invariants of {$\sigma$}-ideals {$\mathcal{H}_{F}$}}\label{SectionCardInvsHf}
    We start this section with an introduction of some more specific cardinal invariants (often referred to as slalom numbers) that we are going to use throughout the section. The framework that we are going to use is the one of relational systems (\cite{Blass}).

\begin{definition}
    A relational system is a triple $\mathcal{R}=\langle\mathcal{X},\preccurlyeq,\mathcal{Y}   \rangle$ where $\mathcal{X}$ and $\mathcal{Y}$ are non-empty sets and $\preccurlyeq$ is a relation between $\mathcal{X}$ and $\mathcal{Y}$ i.e. $\preccurlyeq\subseteq\mathcal{X}\times\mathcal{Y}$. The bounding and dominating number of a relational system $\mathcal{R}$ is defined as:
    \begin{itemize}
        \item[--] $\bbb(\mathcal{R})=\min\{|\mathcal{Z}|:\mathcal{Z}\subseteq\mathcal{X}$ $\neg\exists y\in\mathcal{Y}$ $\forall x\in\mathcal{Z}$ $x\preccurlyeq y\}$
        \item[--] $\ddd(\mathcal{R})=\min\{|\mathcal{Z}|:\mathcal{Z}\subseteq\mathcal{Y}$ $\forall x\in\mathcal{X}$ $\exists y\in\mathcal{Z}$ $x\preccurlyeq y\}$
    \end{itemize}
\end{definition}
Many cardinal invariants can be presented as bounding and dominating numbers of appropriate relational systems. For example, for a $\sigma$-ideal $\I$ on $\Cantor$ we have $\add(\I)=\bbb(\I,\subseteq,\I)$, $\cof(\I)=\ddd(\I,\subseteq,\I)$ and $\non(\I)=\bbb(\Cantor,\in,\I)$, $\cov(\I)=\ddd(\Cantor,\in,\I)$. Also $\bbb=\bbb(\Baire,\leq^{*},\Baire)$ and $\ddd=\ddd(\Baire,\leq^{*},\Baire)$.\\

An important tool for comparing bounding and dominating numbers of different relational systems is the notion of Tukey reduction
\begin{definition}
    Let $\mathcal{R}_{0}=\langle\mathcal{X}_{0},\preccurlyeq_{0},\mathcal{Y}_{0}\rangle$ and $\mathcal{R}_{1}=\langle\mathcal{X}_{1},\preccurlyeq_{1},\mathcal{Y}_{1}\rangle$ be two relational systems.
    We will say that a pair of functions $\phi:\mathcal{X}_{0}\rightarrow\mathcal{X}_{1}$ and $\psi:\mathcal{Y}_{1}\rightarrow\mathcal{Y}_{0}$ forms a Tukey reduction if for any $\mathcal{Z}\subseteq\mathcal{X}_{0}$ and $y\in\mathcal{Y}_{1}$, if $y$ $\preccurlyeq_{1}$-dominates all members of $\phi[\mathcal{Z}]$, then $\psi(y)$ $\preccurlyeq_{0}$-dominates all members of $\mathcal{Z}$. If such pair exists we will say that $\mathcal{R}_{0}$ is Tukey reducible to the $\mathcal{R}_{1}$. If $\mathcal{R}_{0}$ and $\mathcal{R}_{1}$ are Tukey reducible to each other we will say that they are Tukey equivalent.
\end{definition}
The existence of such reduction implies inequalities between bounding and dominating numbers of corresponding systems.
\begin{proposition}
    If $\mathcal{R}_{0}$ is Tukey reducible to $\mathcal{R}_{1}$, then 
    \begin{itemize}
        \item[--] $\bbb(\mathcal{R}_{1})\leq\bbb(\mathcal{R}_{0})$,
        \item[--] $\ddd(\mathcal{R}_{0})\leq\ddd(\mathcal{R}_{1})$.
    \end{itemize}
\end{proposition}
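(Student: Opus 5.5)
Both inequalities follow directly from the definitions by transporting witnesses along the Tukey reduction. Fix $\phi:\mathcal{X}_{0}\rightarrow\mathcal{X}_{1}$ and $\psi:\mathcal{Y}_{1}\rightarrow\mathcal{Y}_{0}$ forming a Tukey reduction of $\mathcal{R}_{0}$ to $\mathcal{R}_{1}$. The defining property says that whenever $y\in\mathcal{Y}_{1}$ satisfies $\phi(x)\preccurlyeq_{1}y$ for all $x\in\mathcal{Z}$, then $x\preccurlyeq_{0}\psi(y)$ for all $x\in\mathcal{Z}$. Applied to singletons $\mathcal{Z}=\{x\}$, this is the pointwise statement
\begin{center}
$\phi(x)\preccurlyeq_{1}y$ implies $x\preccurlyeq_{0}\psi(y)$.
\end{center}

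For $\bbb(\mathcal{R}_{1})\leq\bbb(\mathcal{R}_{0})$, let $\mathcal{Z}\subseteq\mathcal{X}_{0}$ be unbounded in $\mathcal{R}_{0}$ with $|\mathcal{Z}|=\bbb(\mathcal{R}_{0})$. The set $\phi[\mathcal{Z}]\subseteq\mathcal{X}_{1}$ has size at most $|\mathcal{Z}|$. We show it is unbounded in $\mathcal{R}_{1}$. Suppose instead some $y\in\mathcal{Y}_{1}$ bounds every member of $\phi[\mathcal{Z}]$. Then by the reduction property $\psi(y)$ bounds every member of $\mathcal{Z}$, which contradicts the choice of $\mathcal{Z}$. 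Therefore $\bbb(\mathcal{R}_{1})\leq|\phi[\mathcal{Z}]|\leq\bbb(\mathcal{R}_{0})$.

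For $\ddd(\mathcal{R}_{0})\leq\ddd(\mathcal{R}_{1})$, let $\mathcal{D}\subseteq\mathcal{Y}_{1}$ be dominating in $\mathcal{R}_{1}$ with $|\mathcal{D}|=\ddd(\mathcal{R}_{1})$. We claim $\psi[\mathcal{D}]$ is dominating in $\mathcal{R}_{0}$. Given $x\in\mathcal{X}_{0}$, pick $y\in\mathcal{D}$ with $\phi(x)\preccurlyeq_{1}y$. The pointwise form gives $x\preccurlyeq_{0}\psi(y)$, and $\psi(y)\in\psi[\mathcal{D}]$. Since $|\psi[\mathcal{D}]|\leq|\mathcal{D}|$, we get $\ddd(\mathcal{R}_{0})\leq\ddd(\mathcal{R}_{1})$.

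There is no real obstacle. The only points needing care are the degenerate cases where the minima are taken over empty families or are undefined (for instance, when $\mathcal{X}_{0}$ itself is bounded). These are handled by the usual convention that such invariants are left undefined or treated as $\infty$, and the implications above remain consistent with that convention.
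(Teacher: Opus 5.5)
Your proof is correct: pushing a minimal unbounded family forward along $\phi$ and a minimal dominating family forward along $\psi$ is the standard argument. The paper gives no proof and cites the statement as a standard fact from the relational-systems framework (Blass), so there is no different route to compare; your remark on the degenerate cases also holds. If $\mathcal{X}_{1}$ is bounded by some $y$, then $\psi(y)$ bounds $\mathcal{X}_{0}$, and if some $x\in\mathcal{X}_{0}$ is dominated by nothing, then neither is $\phi(x)$, so undefined values only ever occur on the larger side of each inequality.
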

We will be interested only in the cardinal invariants themselves but not in the Tukey reduction between the corresponding relational systems. However, most of the proofs are formulated in a way that the underlying Tukey reduction is easy to see.\\

Let $\mathbb{IP}$ denote the collection of all partitions of $\omega$ into finite intervals. For two such partitions $I=(I_{n})_{n\in\omega}$ and $J=(J_{n})_{n\in\omega}$ we will define relation $I\sqsubseteq^{*}J$ if for almost all $n\in\omega$ there is $k\in\omega$ such that $I_{k}\subseteq J_{n}$. Then the relational systems $\langle\mathbb{IP},\sqsubseteq^{*},\mathbb{IP}\rangle$ and $\langle\Baire,\leq^{*},\Baire\rangle$ are Tukey equivalent. In particular $\bbb(\mathbb{IP},\sqsubseteq^{*},\mathbb{IP})=\bbb$ and $\ddd(\mathbb{IP},\sqsubseteq^{*},\mathbb{IP})=\ddd$ (see Blass \cite{Blass}).\\

We will now recall the slalom numbers that we are going to use in this article. The reader may consult extensive surveys \cite{SlalomNumbersSurvey} and \cite{SlalomNumbersOnReals} for more information of slalom numbers and Tukey reductions between related relational systems. 

\subsection{Invariants related to additivity of measure}
Let $b\in\Baire$ be increasing function. By $\mathcal{S}lm(b)$ we denote the set of $b$-slaloms i.e. functions $S:\omega\rightarrow[\omega]^{<\omega}$ such that $|S(n)|\leq b(n)$ for every $n\in\omega$. If $F\in\Baire$ is such that $b(n)\leq F(n)$ for every $n$, then by $\mathcal{S}lm(F,b)$ we mean slaloms that lives below $F$ i.e. $S(n)\in [F(n)]^{b(n)}$ for every $n\in\omega$.\\
For $x\in\Baire$ (or $\PIF$) and a slalom $S\in\mathcal{S}lm(b)$ (or $\mathcal{S}lm(F,b)$) let $x\in^{*}S$ if $x(n)\in S(n)$ for almost all $n\in\omega$. The following is a well-known characterization of additivity and cofinality of measure
    \begin{theorem}(Bartoszyński, Fremlin; see \cite{BJ})
    For any increasing $b\in\Baire$ we have:
    \begin{itemize}
        \item[--] $\add(\Null)=\bbb(\Baire,\in^{*},\mathcal{S}lm(b))$,
        \item[--] $\cof(\Null)=\ddd(\Baire,\in^{*},\mathcal{S}lm(b))$.
    \end{itemize}
    \end{theorem}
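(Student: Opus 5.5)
Both equalities will follow from two Tukey-style reductions per equality. The first compares $\langle\Baire,\in^{*},\mathcal{S}lm(b)\rangle$ with $\langle\Null,\subseteq,\Null\rangle$, which gives $\add(\Null)\le\bbb(\Baire,\in^{*},\mathcal{S}lm(b))$ and $\ddd(\Baire,\in^{*},\mathcal{S}lm(b))\le\cof(\Null)$. The second is the reverse comparison, which gives the opposite inequalities. Along the way I will check that the value does not depend on $b$.

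\emph{Independence of $b$.} First I show that $\bbb(\Baire,\in^{*},\mathcal{S}lm(b))$ and $\ddd(\Baire,\in^{*},\mathcal{S}lm(b))$ are the same for all increasing $b$ (indeed for any $b$ tending to infinity). Given two such functions $b,b'$, choose an interval partition $(J_k)$ with $b'(k)\ge\prod_{n\in J_k}b(n)$; this is possible because $b'\to\infty$. Code $x\in\Baire$ by $\tilde x(k)=x|_{J_k}$, a natural number via a fixed coding of $\baire$. A $b'$-slalom $S'$ capturing all the $\tilde x$ yields a $b$-slalom capturing $x$: let $S(n)$ be the set of $n$-th coordinates of members of $S'(k)$. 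Taking the $J_k$ long enough, $|S(n)|\le|S'(k)|$ is acceptable once $b'$ is replaced by $k\mapsto b(\min J_k)$. The reverse direction is the standard "localization through blocks" argument from \cite{BJ}. So it suffices to handle one convenient $b$, say $b(n)=2^n$ or $b(n)=n$.

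\emph{From slaloms to null sets.} For the reduction $\Baire\to\Null$ I fix a partition of $\omega$ into intervals $I_n$ with $|I_n|$ large. I identify $x\in\Baire$ with a sequence of clopen sets $C^x_n\subseteq\Cantor$ determined on coordinates in a block, with $\mu(C^x_n)\le 2^{-n}/b(n)$, and map $x\mapsto N_x=\{z:\exists^\infty n\ z\in C^x_n\}\in\Null$. The map must be arranged so that every null set lies below some $N_x$: enumerate the clopen sets of small measure so that any null $G_\delta$ is covered by such a limsup. For the converse map, a null set $N\supseteq\bigcup_{x\in Z}N_x$ is sent to the slalom $S(n)=\{x(n): \mu(C^{x(n)}_n\setminus U_n)\text{ small}\}$, where $(U_n)$ are open sets witnessing nullity of $N$. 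The measure bound gives $|S(n)|\le b(n)$. This is Bartoszyński's argument, and I would follow \cite{BJ}, Theorem 2.3.9.

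\emph{From null sets to slaloms.} For the other direction I map a null set $N$ to a real $x_N$ coding a sequence of finite unions of basic clopen sets $(U^N_n)$ with $\mu(U^N_n)\le 4^{-n}$ and $N\subseteq\bigcap_m\bigcup_{n\ge m}U^N_n$. A slalom $S$ with $|S(n)|\le 2^n$ localizing all the $x_N$ is then sent to the set $\bigcap_m\bigcup_{n\ge m}\bigcup S(n)$. Here $\bigcup S(n)$ is the union of at most $2^n$ sets of measure $\le4^{-n}$, so it has measure at most $2^{-n}$ and the limsup is null. It contains every $N$ localized by $S$. This yields $\bbb(\Baire,\in^{*},\mathcal{S}lm(b))\le\add(\Null)$ and $\cof(\Null)\le\ddd(\Baire,\in^{*},\mathcal{S}lm(b))$.

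\emph{Main obstacle.} The hard part is the reduction from slaloms to null sets, namely showing that $\add(\Null)$ is at most the slalom number. The first idea is to send $x$ to a null set such that any null set covering the union of these sets yields a small slalom. The crude limsup coding fails because a single null set can cover many $C^x_n$ only partially. The fix I would try first is the one used by Bartoszyński: pass through the intermediate relational system of summable sequences of finite sets, $\ell^1$-type "small sets", and use the combinatorial lemma that a family of fewer than $\add(\Null)$ functions in a bounded product $\PIF$ is localized. Combining the bounded case with the $\ddd$-style reduction from $\Baire$ to bounded products, using $\add(\Null)\le\bbb$, completes the argument.
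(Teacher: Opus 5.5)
\textbf{Verdict: genuine gap.} Your proposal correctly proves the easy half but does not prove the hard half. The paper gives no proof of its own here; it only quotes Bartoszy\'nski--Judah.

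\textbf{What is correct.} Coding a null set $N$ by clopen sets $U^N_n$ with $\mu(U^N_n)\le 4^{-n}$ and $N\subseteq\bigcap_m\bigcup_{n\ge m}U^N_n$ works. So does sending a slalom $S$ to $\bigcap_m\bigcup_{n\ge m}\bigcup S(n)$, after discarding codes of sets of measure $>4^{-n}$. Together these give $\bbb(\Baire,\in^{*},\mathcal{S}lm(b))\le\add(\Null)$ and $\cof(\Null)\le\ddd(\Baire,\in^{*},\mathcal{S}lm(b))$. The block transfer between different $b$ is also standard. One correction there: the condition you wrote, $b'(k)\ge\prod_{n\in J_k}b(n)$, is the one for the opposite transfer. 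The direction you spell out needs $b(\min J_k)\ge b'(k)$.

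\textbf{Where the hard half fails.} The hard half is $\add(\Null)\le\bbb(\Baire,\in^{*},\mathcal{S}lm(b))$ and $\ddd(\Baire,\in^{*},\mathcal{S}lm(b))\le\cof(\Null)$.
\begin{enumerate}
\item In \emph{From slaloms to null sets}, the map $S(n)=\{x(n):\mu(C^{x(n)}_n\setminus U_n)\text{ small}\}$ is not a function of $N$ alone.
\item Even read as $\{k:\mu(C^k_n\setminus U_n)\text{ small}\}$, the required implication fails. $N_x\subseteq N\subseteq U_n$ says nothing about how much of $C^{x(n)}_n$ lies in $U_n$: the null set $N_x$ has open neighbourhoods of measure far below $\mu(C^{x(n)}_n)$ (take $N=N_x$).
\item The requirement that every null set lie below some $N_x$ plays no role in this reduction; it belongs to the other one.
\item Your fallback appeals to ``the combinatorial lemma that fewer than $\add(\Null)$ functions in $\PIF$ are localized''. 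That is just the bounded form of the inequality being proved, so the hard direction is assumed, not proved.
\item The route through $\add(\Null)\le\bbb$ addresses only the additivity half. It does not touch $\ddd(\Baire,\in^{*},\mathcal{S}lm(b))\le\cof(\Null)$.
\end{enumerate}

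\textbf{The missing idea.} Extract the slalom from the \emph{complement} of $N$, not from open covers of it.
\begin{enumerate}
\item Take stochastically independent clopen sets $C^n_k$ ($n,k\in\omega$), built on disjoint coordinate blocks. Give them measure $\mu(C^n_k)=\epsilon_n$ with $\sum_n\epsilon_n<\infty$, e.g.\ a power of $2$ of order $n^{-2}$. Set $N_x=\{z:\existsinfty n\ z\in C^n_{x(n)}\}$.
\item Given a null $N$, fix a closed $K\subseteq\Cantor\setminus N$ of positive measure all of whose nonempty relatively open subsets have positive measure.
\item If $N_x\subseteq N$, then $K=\bigcup_m\bigl(K\cap\bigcap_{n\ge m}(\Cantor\setminus C^n_{x(n)})\bigr)$. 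By the Baire category theorem in $K$ there are $\sigma\in\cantor$ and $m$ with $K\cap[\sigma]\neq\emptyset$ disjoint from $C^n_{x(n)}$ for all $n\ge m$. Thus $x(n)\in T_\sigma(n)=\{k:K\cap[\sigma]\cap C^n_k=\emptyset\}$ for $n\ge m$.
\item Independence, applied to finite subsets, gives $\mu(K\cap[\sigma])\le(1-\epsilon_n)^{|T_\sigma(n)|}$. Hence $|T_\sigma(n)|\le c_\sigma/\epsilon_n$ with $c_\sigma=\ln(1/\mu(K\cap[\sigma]))$.
\item Take a staggered union over the countably many such $\sigma$: add $T_{\sigma_j}(n)$ only for $n\ge n_j$, where $\sum_{i\le j}c_{\sigma_i}\le n_j$. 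This yields a slalom $\psi(N)$, depending only on $N$, with $|\psi(N)(n)|\le n/\epsilon_n$ and $x\in^{*}\psi(N)$ whenever $N_x\subseteq N$.
\end{enumerate}
This Tukey reduction gives both hard inequalities at once, directly on $\Baire$. It needs neither bounded products nor $\add(\Null)\le\bbb$. Your block transfer then handles arbitrary increasing $b$.

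Note that the slalom bound is inversely proportional to $\epsilon_n$. So your scaling $\mu(C^x_n)\le 2^{-n}/b(n)$ is the wrong one: the coding sets must have measure of order roughly $1/b(n)$, not much smaller.
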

    In case of the slalom number on $\PIF$ it is easy to see that we have $\bbb(\Baire,\in^{*},\mathcal{S}lm(b))\leq\bbb(\PIF,\in^{*},\mathcal{S}lm(F,b))$ holds for any increasing $F\in\Baire$ and $b\leq F$. Also, it is easy to see that if $F_{0}\leq F_{1}$, then $\bbb(\Pi F_{1},\in^{*},\mathcal{S}lm(F_{1},b))\leq \bbb(\Pi F_{0},\in^{*},\mathcal{S}lm(F_{0},b))$. Using the characterization stated above it is also clear that $\add(\Null)\leq\bbb(\PIF,\in^{*},\mathcal{S}lm(F,b))$ and also $\cof(\Null)\geq\ddd(\PIF,\in^{*},\mathcal{S}lm(F,b))$ hold for every $F\in\Baire$. Pawlikowski used the bounded version of the above to characterize the transitive additivity of measure zero sets and thus the uniformity number of null-additive sets.
    \begin{theorem}(Pawlikowski; \cite{PawlikowskiTransitiveInvariants})
        For any increasing $b\in\Baire$ we have:
        \begin{itemize}
            \item[-] $\add_{t}(\Null)=\min\{\bbb(\PIF,\in^{*},\mathcal{S}lm(F,b)):F\in\Baire\}$
        \end{itemize}
        In particular $\add(\Null)=\min\{\bbb,\add_{t}(\Null)\}$.
    \end{theorem}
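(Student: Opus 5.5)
The plan is to reduce both directions to Shelah's characterization of $\NA$ (Theorem \ref{ShelahNAMA}), using the identity $\non(\NA)=\add_{t}(\Null)$ from the introduction. So it suffices to show that
\[
\non(\NA)=\min\{\bbb(\PIF,\in^{*},\mathcal{S}lm(F,b)):F\in\Baire\}.
\]

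\textbf{Preliminary step: independence from $b$.} First I check that the right-hand side does not depend on the increasing function $b$. Let $b,b'$ be increasing. Given $F$, split $\omega$ into finite blocks $B_{k}$ that are long enough relative to $b'$. Replace $F$ by the function $F'$ with $F'(k)=\prod_{n\in B_{k}}F(n)$, coding the restriction $x|_{B_{k}}$ as a single value $x'(k)<F'(k)$.
\begin{itemize}
\item A $b'$-slalom for the coded reals gives a slalom for the original reals whose width on each coordinate $n\in B_{k}$ is at most $b'(k)$. Choosing the blocks so that $b'(k)\le b(n)$ for all $n\in B_{k}$ is not possible in general, since $b'$ may grow faster. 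In that case I use the reverse coding, grouping on the side of $b$.
\item The precise mechanics are the standard Bartoszyński argument, which shows that $\add(\Null)$ does not depend on the slalom width. I expect it to transfer verbatim to the bounded setting, because both directions of the coding keep the functions bounded (with $F'$ again in $\Baire$).
\end{itemize}
The same grouping shows that in Shelah's characterization the width $n$ can be replaced by any increasing $h$. I therefore fix $b(n)=n$ throughout.

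\textbf{Direction $\ge$.} Let $|X|<\min_{F}\bbb(\PIF,\in^{*},\mathcal{S}lm(F,b))$, and let an interval partition $(I_{n})$ be given.
\begin{itemize}
\item Put $F(n)=2^{|I_{n}|}$ and fix bijections $2^{I_{n}}\cong F(n)$.
\item Map each $x\in X$ to the real $\hat x\in\PIF$ with $\hat x(n)=$ the code of $x|_{I_{n}}$.
\item The family $\{\hat x:x\in X\}$ has size below $\bbb(\PIF,\in^{*},\mathcal{S}lm(F,b))$, so a single slalom $S$ with $|S(n)|\le n$ captures all of it.
\item Decoding $S$ gives $S_{n}\in[2^{I_{n}}]^{\le n}$ with $x|_{I_{n}}\in S_{n}$ for almost all $n$, for every $x\in X$.
\end{itemize}
By Theorem \ref{ShelahNAMA}, $X\in\NA$.

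\textbf{Direction $\le$.} Fix $F$ and a family $Z\subseteq\PIF$ that no $b$-slalom captures. Without loss of generality $F(n)\le 2^{m_{n}}$ for some $m_{n}$.
\begin{itemize}
\item Take the interval partition with $|I_{n}|=m_{n}$, and inject $F(n)$ into $2^{I_{n}}$.
\item Let $X=\{x_{z}:z\in Z\}$, where $x_{z}|_{I_{n}}$ encodes $z(n)$.
\item If $X$ were null-additive, Theorem \ref{ShelahNAMA} would give $S_{n}\in[2^{I_{n}}]^{n}$ capturing every $x_{z}$.
\item Pulling $S$ back along the injection yields a $b$-slalom capturing $Z$, a contradiction.
\end{itemize}
Hence $\non(\NA)\le|Z|$.

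\textbf{The ``in particular''.} The inequalities $\add(\Null)\le\bbb$ and $\add(\Null)\le\add_{t}(\Null)$ are standard. For the converse, use $\add(\Null)=\bbb(\Baire,\in^{*},\mathcal{S}lm(b))$ by the Bartoszyński–Fremlin theorem.
\begin{itemize}
\item Take $Z\subseteq\Baire$ with $|Z|<\min\{\bbb,\add_{t}(\Null)\}$.
\item Since $|Z|<\bbb$, some $g$ dominates every $z\in Z$. Changing finitely many values of each $z$, which does not affect $\in^{*}$, we get $Z\subseteq\Pi(g+1)$.
\item By the main equality, $|Z|<\bbb(\Pi(g+1),\in^{*},\mathcal{S}lm(g+1,b))$, so $Z$ is captured by one slalom.
\end{itemize}
This gives $\add(\Null)\ge\min\{\bbb,\add_{t}(\Null)\}$.

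The main obstacle is the preliminary step, namely independence of the bounded slalom number from $b$. The two coding directions themselves are routine.
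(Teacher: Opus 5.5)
The paper gives no proof of this theorem. It quotes Pawlikowski and then uses the result in the opposite direction from you: it deduces $\non(\NA)=\min\{\bbb(\PIF,\in^{*},\mathcal{S}lm(F,b)):F\in\Baire\}$ from it via $\non(\NA)=\add_{t}(\Null)$. You instead obtain Pawlikowski's equality as a corollary of Shelah's characterization (Theorem \ref{ShelahNAMA}), and that is legitimate. Your two codings are correct. So is the derivation of $\add(\Null)=\min\{\bbb,\add_{t}(\Null)\}$ from the Bartoszy\'nski--Fremlin characterization, which in fact only needs $\add_{t}(\Null)\le\bbb(\Pi(g+1),\in^{*},\mathcal{S}lm(g+1,b))$. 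Your route makes both inequalities one-line recodings. The price is that all the real content is outsourced to Shelah's much later theorem. The inequality $\add_{t}(\Null)\le\bbb(\PIF,\in^{*},\mathcal{S}lm(F,b))$ lives entirely in its hard direction ``null-additive $\Rightarrow$ slalom''. Pawlikowski's 1985 argument predates that theorem, so it has to construct the witnessing null set and do the measure estimates directly.

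Your diagnosis of the preliminary step is wrong, however. The obstruction you describe does not exist. Because $b$ is unbounded, you can always start the blocks $B_k$ late enough that $b(n)\ge b'(k)$ for all $n\in B_k$, however fast $b'$ grows. For example, with $b(n)=n$ and $b'(k)=2^{k}$, take $B_k=[2^{k},2^{k+1})$. What matters is where a block starts, not its length. Applied in both directions, this grouping proves independence from $b$ outright, with no ``reverse coding'' and no appeal to Bartoszy\'nski's argument.

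In fact you can drop the step altogether. In the direction $\le$, a strictly increasing $b$ satisfies $b(n)\ge n$, so the width-$n$ slaloms of Theorem \ref{ShelahNAMA} pull back to $b$-slaloms directly. (If $b$ is only nondecreasing and unbounded, encode a block of coordinates of $z$ into each $I_n$ here too.) In the direction $\ge$, given $(I_n)$, put $J_k=\bigcup_{n\in B_k}I_n$ with $\min B_k\ge b(k)$ and apply the hypothesis to $F(k)=2^{|J_k|}$. Then project each $S(k)$ onto the $I_n$ with $n\in B_k$, which gives width at most $b(k)\le n$. So what you call the main obstacle is a routine grouping, and the depth of your argument lies entirely in Theorem \ref{ShelahNAMA}.
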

    It follows that $\non(\NA)=\min\{\bbb(\PIF,\in^{*},\mathcal{S}lm(F,b)):F\in\Baire\}$. Recently, Cardona, Mejía and Rivera-Madrid calculated the additivity of null-additive sets to be the same invariant.
    \begin{theorem}(Cardona, Mejía and Rivera-Madrid; \cite{MejiaCardonaMA})
    \begin{itemize}
        \item[--] $\add(\NA)=\non(\NA)$
    \end{itemize}
    \end{theorem}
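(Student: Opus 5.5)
The inequality $\add(\NA)\leq\non(\NA)$ is immediate: $\NA$ contains all singletons, so a non-null-additive set of size $\non(\NA)$ is a union of $\non(\NA)$ many members of $\NA$. For the reverse inequality I would use Pawlikowski's characterization $\non(\NA)=\add_{t}(\Null)=\min\{\bbb(\PIF,\in^{*},\mathcal{S}lm(F,b)):F\in\Baire\}$, which holds for \emph{any} increasing $b$. Put $\kappa=\non(\NA)$ and fix a family $\{X_{\alpha}:\alpha<\lambda\}\subseteq\NA$ with $\lambda<\kappa$. The goal is to show that $X=\bigcup_{\alpha}X_{\alpha}$ satisfies Shelah's criterion from Theorem \ref{ShelahNAMA}. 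So fix an interval partition $I=(I_{n})_{n\in\omega}$; I need one slalom $S_{n}\in[2^{I_{n}}]^{n}$ that captures every $x\in X$ for almost all $n$.

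\emph{Step 1: a flexible Shelah criterion.} I first show that if $Y\in\NA$ and $h\in\Baire$ is nondecreasing and unbounded, then for every partition $I$ there is a slalom $S$ with $|S_{n}|\leq h(n)$ and $Y\subseteq\{x:\foralmostall n\ x|_{I_{n}}\in S_{n}\}$. To do this, choose consecutive finite blocks $B_{k}\subseteq\omega$ such that $h(n)\geq k$ for all $n\in B_{k}$. Then form the coarser partition $J_{k}=\bigcup_{n\in B_{k}}I_{n}$ and apply Theorem \ref{ShelahNAMA} to $J$. This gives $S'_{k}\in[2^{J_{k}}]^{k}$. For $n\in B_{k}$ set $S_{n}=\{s|_{I_{n}}:s\in S'_{k}\}$, so $|S_{n}|\leq k\leq h(n)$. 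Capture is preserved under restriction, so $S$ works. I expect this to be the main technical point. It is routine, but it is exactly what lets the slalom widths be split multiplicatively in Step 2.

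\emph{Step 2: coding and bounding.} Choose increasing $b$ and unbounded nondecreasing $h$ with $b(n)\cdot h(n)\leq n$ for all large $n$, for example $b(n)\approx h(n)\approx\lfloor\sqrt{n}\rfloor$. Adjust $b$ at finitely many places so that it is increasing; Pawlikowski's theorem permits any increasing $b$. By Step 1, each $X_{\alpha}$ gets a slalom $S^{\alpha}$ with $S^{\alpha}_{n}\in[2^{I_{n}}]^{\leq h(n)}$. The set of such $n$-th entries is finite. So enumerate it and let $F(n)$ be its size; then $S^{\alpha}$ becomes an element $\varphi(S^{\alpha})\in\PIF$. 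Since $\lambda<\kappa\leq\bbb(\PIF,\in^{*},\mathcal{S}lm(F,b))$, there is a slalom $T\in\mathcal{S}lm(F,b)$ with $\varphi(S^{\alpha})\in^{*}T$ for every $\alpha<\lambda$.

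\emph{Step 3: decoding.} Define $S_{n}=\bigcup\{\text{the entry coded by }j:j\in T(n)\}\subseteq 2^{I_{n}}$. Then $|S_{n}|\leq b(n)h(n)\leq n$ for almost all $n$, and I pad or modify finitely many coordinates so that $S_{n}\in[2^{I_{n}}]^{n}$. Now take $x\in X_{\alpha}$. For almost all $n$ we have $x|_{I_{n}}\in S^{\alpha}_{n}$, and also $S^{\alpha}_{n}$ is coded by an element of $T(n)$. Hence $x|_{I_{n}}\in S_{n}$ for almost all $n$. The partition $I$ was arbitrary, so $X\in\NA$ by Theorem \ref{ShelahNAMA}, which gives $\add(\NA)\geq\kappa$.
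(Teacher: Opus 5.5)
The paper gives no proof of this statement; it only quotes it from \cite{MejiaCardonaMA}, so there is nothing internal to compare against. Judged on its own, your argument is correct in substance. The inequality $\add(\NA)\le\non(\NA)$ via singletons is fine. For the reverse inequality you use Shelah's criterion, code the $\lambda$ slaloms as points of $\PIF$, localize them by a single $b$-slalom, and decode; this is the right mechanism. Step~1 and the capture/decoding bookkeeping are also correct.

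\textbf{The choice of $b$ needs repair.} As written this step is inconsistent. If ``increasing'' means strictly increasing, then $b(n)\ge n$ for all $n$. So $b(n)h(n)\le n$ forces $h(n)\le 1$ eventually, which contradicts the unboundedness of $h$. Also, no finite modification makes $\lfloor\sqrt n\rfloor$ strictly increasing. If instead you mean nondecreasing and unbounded, you need the fact that $\min_F\bbb(\PIF,\in^{*},\mathcal{S}lm(F,b))$ does not depend on $b$ as long as $b\to\infty$. This fact is true, by the same block-grouping as in your Step~1, but it has to be stated.

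The cleanest fix is to move the regrouping to the end:
\begin{enumerate}
\item Keep $b$ strictly increasing and use Shelah's width $k$ directly; Step~1 is then not needed.
\item Choose nonempty consecutive blocks $B_k$ with $\min B_k\ge k\,b(k)$.
\item Run Steps~2--3 on the coarser partition $J_k=\bigcup_{n\in B_k}I_n$. This gives a slalom of width $\le k\,b(k)$ on $J$ that captures all of $X$.
\item Restrict it to the $I_n$ with $n\in B_k$; there the width is $\le n$.
\end{enumerate}
Two trivialities: read $[2^{I_n}]^n$ as ``at most $n$ elements'', since $2^{|I_n|}$ may be smaller than $n$. And enlarge the code set if necessary so that $F\ge b$, since otherwise $\mathcal{S}lm(F,b)$ is degenerate.

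\textbf{Pawlikowski's theorem can be dropped.} The family of codes has size $\le\lambda<\non(\NA)$. Code each slalom $S^\alpha$ on $J$ blockwise as a real $z_\alpha\in\Cantor$, along a partition $K$ with $2^{|K_k|}$ at least the number of possible $k$-th entries. Then $\{z_\alpha:\alpha<\lambda\}$ is null-additive. Shelah's criterion applied to $K$ gives the localizing slalom of width $k$ directly. Decoding yields width $\le k^2$ on $J_k$, so with $\min B_k\ge k^2$ the same restriction finishes the argument. Thus the equality follows from Theorem~\ref{ShelahNAMA} alone, and the issue with $b$ never arises.
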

    It is also worth mentioning the following result of Carlson which use this invariant as a lower bound for additivity of strong measure zero sets (Carlson actually proved that $\bbb(\Baire,\in^{*},\mathcal{S}lm(b))\leq\add(\mathcal{SMZ})$ but the proof can be adapted in a straightforward manner).
    \begin{theorem}(Carlson; \cite{Carlson}, also \cite{BJ})
    For any increasing $b\in\Baire$ we have:
        \begin{itemize}
            \item[-] $\min\{\bbb(\PIF,\in^{*},\mathcal{S}lm(F,b)):F\in\Baire\}\leq \add(\mathcal{SMZ})$
        \end{itemize}
    \end{theorem}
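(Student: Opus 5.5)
Let $\kappa=\min\{\bbb(\PIF,\in^{*},\mathcal{S}lm(F,b)):F\in\Baire\}$. I will show that every $X\subseteq\Cantor$ with $|X|<\kappa$ is strongly measure zero, and that a union of fewer than $\kappa$ strongly measure zero sets is again strongly measure zero. The second claim gives the theorem, and the first is the special case of singletons. I follow Carlson's argument, with the unbounded slalom number replaced by the bounded one. The extra point is that every object we must capture by slaloms has to live in a product $\PIF$ for some $F$ fixed in advance.

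Fix $\lambda<\kappa$, a family $\{X_{\alpha}:\alpha<\lambda\}\subseteq\SMZ$, and an increasing $f\in\Baire$ for which we want a covering sequence $(\sigma_{n})$ with $|\sigma_{n}|=f(n)$. Use $b$ to group indices: choose an interval partition $(K_{m})_{m\in\omega}$ of $\omega$ with $|K_{m}|\geq b(m)\cdot m$, or any size that allows $b(m)$ candidates to be listed inside one block. For each $\alpha$, apply the strong measure zero property of $X_{\alpha}$ to the lengths $g(m)=\max\{f(n):n\in K_{m}\}$. This gives strings $\rho^{\alpha}_{m}$ of length $g(m)$ such that every $x\in X_{\alpha}$ extends infinitely many $\rho^{\alpha}_{m}$.

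The obstacle is that we need "infinitely often" rather than "almost always", and $\in^{*}$ only gives almost-all capture. To fix this, code the whole tail of the sequence. Let $h_{\alpha}(m)=(\rho^{\alpha}_{j})_{j\in K'_{m}}$ over a suitable partition $(K'_{m})$. By a preliminary strengthening, which is standard, we may arrange that each $x\in X_{\alpha}$ extends some $\rho^{\alpha}_{j}$ with $j\in K'_{m}$ for infinitely many $m$. The values $h_{\alpha}(m)$ range over a finite set whose size depends only on $g$ and $K'$, so there is $F\in\Baire$, fixed before $\alpha$ is chosen, with $h_{\alpha}\in\PIF$ for all $\alpha$. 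This is why the bounded invariant suffices.

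Since $\lambda<\kappa\leq\bbb(\PIF,\in^{*},\mathcal{S}lm(F,b))$, there is a single slalom $S\in\mathcal{S}lm(F,b)$ with $h_{\alpha}\in^{*}S$ for every $\alpha$. Each $S(m)$ contains at most $b(m)$ candidate tuples. Enumerate all strings appearing in these candidates and assign them to consecutive indices $n$, extending or truncating so that $|\sigma_{n}|=f(n)$. The partition $K$ must be chosen large enough to hold all candidates. This sizing is a bookkeeping step that has to be fixed before $F$ is defined, but it is routine.

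Finally, take $x\in X_{\alpha}$. For almost all $m$ the true tuple $h_{\alpha}(m)$ is among the candidates in $S(m)$, and for infinitely many such $m$ the point $x$ extends one of its strings. Hence $x$ extends infinitely many $\sigma_{n}$, so $\bigcup_{\alpha<\lambda}X_{\alpha}\in\SMZ$.
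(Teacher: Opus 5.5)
Your argument is correct and is the straightforward adaptation of Carlson's proof that the paper alludes to; the paper itself gives no proof beyond citing Carlson and Bartoszy\'nski--Judah. As you say, the key point is that all strong-measure-zero witnesses $(\rho^{\alpha}_{m})_{m}$ for the fixed length function $g$ lie in one bounded product $\prod_{m}2^{g(m)}$, so the bounded slalom number suffices. Two remarks: the $(K'_m)$ tuple-coding is unnecessary, since capturing $h_\alpha(m)=\rho^{\alpha}_{m}$ directly already works (the cofinitely many $m$ with $\rho^{\alpha}_{m}\in S(m)$ meet the infinitely many $m$ with $\rho^{\alpha}_{m}\subseteq x$, so the ``infinitely often versus almost always'' obstacle is not real); and candidate strings may only be truncated, never extended, which your choice $g(m)=\max\{f(n):n\in K_m\}$ with $|K_m|\geq b(m)$ already guarantees.
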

    The dual invariant of $\bbb(\PIF,\in^{*},\mathcal{S}lm(F,b))$ also plays an important role in the study of the reals. The forcing property of keeping $\ddd(\PIF,\in^{*},\mathcal{S}lm(F,b))$'s small, is known as the “Laver property”. A forcing notion $\mathbb{P}$ has Laver property if it does not “increase” the cardinal invariants $\ddd(\PIF,\in^{*},\mathcal{S}lm(F,b))$, i.e. $\mathbb{P}$ has Laver property if for every $g\in \PIF\cap\mathbb{V}^{\mathbb{P}}$, where $F\in \Baire\cap\mathbb{V}$, there exists $S\in\mathbb{V} \cap\mathcal{S}lm(F,b))$ such that $g(n)\in S(n)$ for almost all $n\in\omega$. In \cite{KadaLaverNumber} Kada investigated the invariants $\ddd(\PIF,\in^{*},\mathcal{S}lm(F,b))$. The well-known fact that 'Sacks property' of a forcing notion is equivalent to the 'Laver property' together with being $\Baire$-bounding by the forcing can be formulated as follows.
    \begin{proposition}(Kada; \cite{KadaLaverNumber})\label{Kada}
    For any increasing $b\in\Baire$ we have:
    \begin{itemize}
        \item[--] $\cof(\Null)=\max\{\ddd,\sup\{\ddd(\PIF,\in^{*},\mathcal{S}lm(F,b)): F\in\Baire\}\}$
    \end{itemize}
    \end{proposition}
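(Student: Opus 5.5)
We must show two inequalities: $\cof(\Null)\geq\max\{\ddd,\sup_F \ddd(\PIF,\in^{*},\mathcal{S}lm(F,b))\}$ and the reverse. The first is immediate from facts already recorded. Cicho\'n's diagram gives $\ddd\leq\cof(\Meager)\leq\cof(\Null)$. The remark preceding the statement gives $\ddd(\PIF,\in^{*},\mathcal{S}lm(F,b))\leq\cof(\Null)$ for every $F$. Since the supremum of cardinals each at most $\cof(\Null)$ is still at most $\cof(\Null)$, this direction is done.

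For the reverse inequality I would use the Bartoszy\'nski--Fremlin characterization $\cof(\Null)=\ddd(\Baire,\in^{*},\mathcal{S}lm(b))$. Let $\kappa$ be the right-hand maximum; it suffices to produce a family of $b$-slaloms of size at most $\kappa$ that localizes every $g\in\Baire$. Fix a dominating family $\mathcal{D}$ of size $\ddd$, consisting of increasing functions. For each $F\in\mathcal{D}$ fix a family $\mathcal{S}_F\subseteq\mathcal{S}lm(F,b)$ of size $\ddd(\PIF,\in^{*},\mathcal{S}lm(F,b))\leq\kappa$ that localizes all of $\PIF$. Put $\mathcal{S}=\bigcup_{F\in\mathcal{D}}\mathcal{S}_F$, which has size at most $\ddd\cdot\kappa=\kappa$.

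Given $g\in\Baire$, choose $F\in\mathcal{D}$ with $g\leq^{*}F$. Passing to $F+1$ handles strictness; to avoid this I would take $\mathcal{D}$ closed under $F\mapsto F+1$, which does not change its size. Let $g'$ be equal to $g$ wherever $g(n)<F(n)$ and $0$ elsewhere. Then $g'\in\PIF$, and $g'$ agrees with $g$ at almost every $n$. Some $S\in\mathcal{S}_F$ satisfies $g'\in^{*}S$, hence $g\in^{*}S$, and $S$ is a $b$-slalom. So $\ddd(\Baire,\in^{*},\mathcal{S}lm(b))\leq\kappa$.

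There is one technical point to check, and it is the only real obstacle: whether $\mathcal{S}lm(F,b)$ requires $|S(n)|=b(n)$ exactly (via $[F(n)]^{b(n)}$) and $b\leq F$. I would restrict to $F\in\mathcal{D}$ with $F\geq b$, which is harmless: replace each $F$ by $\max(F,b)$. With that restriction, each $S(n)$ has at most $b(n)$ elements and so lies in $\mathcal{S}lm(b)$. Everything else is routine bookkeeping.
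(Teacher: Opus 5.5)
Your proof is correct. The lower bound follows from $\ddd\le\cof(\Null)$ and the remark preceding the statement. For the upper bound, you first bound $g$ by a member of a dominating family consisting of functions $F\ge b$ with $g<^{*}F$, and then localize it with an $(F,b)$-slalom; combined with the Bartoszy\'nski--Fremlin characterization this gives $\cof(\Null)\le\ddd\cdot\sup_F\ddd(\PIF,\in^{*},\mathcal{S}lm(F,b))$.

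The paper gives no proof of its own (it cites Kada). It does describe the statement as the cardinal form of ``Sacks property $=$ Laver property $+$ $\Baire$-bounding'', and your argument is exactly that decomposition carried out in ZFC.
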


\subsection{Invariants related to additivity of meager sets}
Let $x,y\in\Baire$ (or $\PIF$) and let $I=(I_{n})_{n\in\omega}$ be an interval partition. We will write $x\circeq(y,I)$ if for almost all $n\in\omega$ there is $i\in I_{n}$ such that $x(i)=y(i)$. The following characterization of additivity and cofinality of meager sets is known.
\begin{theorem}(Bartoszyński; \cite{BartoszCombAspects}, see also Bartoszyński and Judah\cite{BJ})
    \begin{itemize}
        \item[--] $\add(\Meager)=\bbb(\Baire,\circeq,\Baire\times\mathbb{IP})$,
        \item[--] $\cof(\Meager)=\ddd(\Baire,\circeq,\Baire\times\mathbb{IP})$.
    \end{itemize}
\end{theorem}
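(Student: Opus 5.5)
This is Bartoszyński's characterization. I will prove the two equalities through Tukey reductions between $\langle\Meager,\subseteq,\Meager\rangle$ and $\mathcal{R}=\langle\Baire,\circeq,\Baire\times\mathbb{IP}\rangle$, together with $\bbb$ and $\ddd$. Since $\add(\Meager)=\min\{\bbb,\cov(\Meager)\}$ and $\cof(\Meager)=\max\{\ddd,\non(\Meager)\}$ (Miller and Fremlin), the cleanest route is to show
\[
\bbb(\mathcal{R})=\min\{\bbb,\cov(\Meager)\},\qquad \ddd(\mathcal{R})=\max\{\ddd,\non(\Meager)\}.
\]
I will first show that $\mathcal{R}$ is Tukey above both $\langle\Baire,\leq^{*},\Baire\rangle$ and the covering system of $\Meager$, which gives $\bbb(\mathcal{R})\leq\min\{\bbb,\cov(\Meager)\}$ and dually for $\ddd$. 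Then I will show that $\mathcal{R}$ is Tukey below the sequential composition $\mathrm{Cov}(\Meager);\mathbb{IP}$, which gives the reverse inequalities.

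\emph{Lower reductions.} For the covering part, send $x\in\Baire$ to itself and a pair $(y,I)$ to the set $\{x:x\circeq(y,I)\}$. This set is meager: its complement contains, for every finite $s$, an extension that disagrees with $y$ on a whole interval $I_n$, so it is comeager. Hence $\non(\Meager)\leq\ddd(\mathcal{R})$ and $\bbb(\mathcal{R})\leq\cov(\Meager)$. For the dominating part, send $f\in\Baire$ to a real $x_f$ coding $f$, for example $x_f(i)=f(i)$. From $(y,I)$ produce $g$ with $g(i)>\max_{j\in I_n}y(j)$ for $i$ in a suitably shifted interval. The idea is that if $f$ is not dominated by $g$ infinitely often, then $f$ escapes $y$ on whole intervals. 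This needs tuning: I would instead let $x_f$ encode $f$ blockwise, so that agreeing with $y$ somewhere in almost every interval forces $f\leq^{*}g$ with $g$ computed from $(y,I)$. This step needs care, and it is where I expect the bookkeeping to sit.

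\emph{Upper reduction, the main obstacle.} Given a family $\mathcal{Z}\subseteq\Baire$ of size $<\min\{\bbb,\cov(\Meager)\}$, I must find a single $(y,I)$ with $x\circeq(y,I)$ for all $x\in\mathcal{Z}$. The plan has three steps. First, since $|\mathcal{Z}|<\cov(\Meager)$, use the standard characterization of $\cov(\Meager)$ (every small family of reals is ``infinitely often equal'' to some $y$, Bartoszyński's theorem) to get $y$ such that for each $x\in\mathcal{Z}$ the set $A_x=\{i:x(i)=y(i)\}$ is infinite. I will need the stronger form that works for countably many shifted copies: for each $x$ and each finite modification, $y$ agrees with $x$ infinitely often, relative to a countable family of interval partitions. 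Second, use $|\mathcal{Z}|<\bbb$ to dominate the functions $n\mapsto\min(A_x\setminus n)$ by one increasing $h$. Third, build $I$ from $h$, so that each interval $[k,h(k))$ with $k=\min I_n$ lies inside $I_n$. Then every $I_n$ meets $A_x$ for almost all $n$, which is exactly $x\circeq(y,I)$. The delicate point is that domination of $n\mapsto\min(A_x\setminus n)$ requires $A_x$ infinite for every $x$, which the first step provides. The dual argument for $\cof$ is symmetric: a set witnessing $\non(\Meager)$-many $y$'s together with a dominating family of partitions of size $\ddd$ gives $\le\max\{\ddd,\non(\Meager)\}$ pairs $(y,I)$ that cover all $x$.
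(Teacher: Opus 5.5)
The paper gives no proof of this statement; it only cites Bartoszyński and Bartoszyński--Judah. So your proposal has to stand on its own. Your overall plan is sound: show $\bbb(\mathcal{R})=\min\{\bbb,\cov(\Meager)\}$ and $\ddd(\mathcal{R})=\max\{\ddd,\non(\Meager)\}$, then invoke Miller's and Fremlin's theorems. Your upper reduction is in fact correct.

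\textbf{The gap: the ``covering part'' of your lower reductions is backwards.} The pair $x\mapsto x$, $(y,I)\mapsto\{x:x\circeq(y,I)\}$ is a Tukey reduction of $\langle\Baire,\in,\Meager\rangle$ to $\mathcal{R}$. Since $\bbb(\Baire,\in,\Meager)=\non(\Meager)$ and $\ddd(\Baire,\in,\Meager)=\cov(\Meager)$, it yields $\bbb(\mathcal{R})\leq\non(\Meager)$ and $\cov(\Meager)\leq\ddd(\mathcal{R})$. In words: a nonmeager family cannot be $\circeq$-bounded by one pair, and a $\circeq$-dominating family of pairs covers $\Baire$ by meager sets.

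These are the wrong pairings. They also add nothing to $\bbb(\mathcal{R})\leq\bbb$, because $\bbb\leq\non(\Meager)$ and $\cov(\Meager)\leq\ddd$. So as written you have not shown $\bbb(\mathcal{R})\leq\cov(\Meager)$ or $\non(\Meager)\leq\ddd(\mathcal{R})$. These are exactly what is needed in models of $\cov(\Meager)<\bbb$ (e.g.\ the Laver model) or of $\ddd<\non(\Meager)$ (e.g.\ the random model). You need $\mathcal{R}$ to be Tukey above the \emph{dual} of the covering system, not the covering system itself.

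\textbf{The fix.} Observe that $x\circeq(y,I)$ implies $x=^{\infty}y$. Hence $\langle\Baire,=^{\infty},\Baire\rangle$ is Tukey reducible to $\mathcal{R}$, via the identity on $\Baire$ and the projection $(y,I)\mapsto y$. Now apply the Bartoszyński--Miller characterizations $\cov(\Meager)=\bbb(\Baire,=^{\infty},\Baire)$ and $\non(\Meager)=\ddd(\Baire,=^{\infty},\Baire)$. You already use these in your upper reduction, and they give exactly $\bbb(\mathcal{R})\leq\cov(\Meager)$ and $\non(\Meager)\leq\ddd(\mathcal{R})$.

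\textbf{Two smaller points.} The dominating part, which you leave unfinished, needs no blockwise coding. For increasing $f$, send $f$ to itself and $(y,I)$ to $g$ with $g(i)=\max\{y(j):j\in I_{n+1}\}$ for $i\in I_n$. If $f(j_n)=y(j_n)$ for some $j_n\in I_{n+1}$, then $f(i)\leq f(j_n)=y(j_n)\leq g(i)$ for all $i\in I_n$.

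Also, the ``stronger form'' of infinitely-often equality you ask for in the upper reduction is unnecessary. Plain infinitude of $A_x$ suffices. Take $I_n=[k_n,h(k_n)]$ with $k_{n+1}=h(k_n)+1$. Then $\min(A_x\setminus k_n)\in I_n$ as soon as $k_n$ is past the point from which $h$ dominates $n\mapsto\min(A_x\setminus n)$.
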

As before, taking the bounded version of the above invariant gives us the transitive additivity of meager sets, and thus, the uniformity of meager additive sets
\begin{theorem}(Pawlikowski; \cite{PawlikowskiTransitiveInvariants})
    \begin{itemize}
        \item[--] $\add_{t}(\Meager)=\min\{\bbb(\PIF,\circeq,\PIF\times\mathbb{IP}): F\in\Baire\}$
    \end{itemize}
\end{theorem}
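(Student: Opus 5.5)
The plan is to prove the two inequalities separately. Both directions rest on the chopped-real basis of Proposition \ref{CharacterizationM} and on the identity $x+M(y,I)=M(x+y,I)$, which holds because translation by $x$ acts coordinatewise. The idea is to code an element of $\PIF$ blockwise as a real in $\Cantor$, so that the relation $\circeq$ on $\PIF\times\mathbb{IP}$ matches inclusion between chopped-real meager sets.

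\emph{Step 1: $\add_{t}(\Meager)\geq\min_F\bbb(\PIF,\circeq,\PIF\times\mathbb{IP})$.} Let $|X|$ be below the minimum and let $M\in\Meager$. By Proposition \ref{CharacterizationM} we may take $M=M(y,I)$, so $X+M=\bigcup_{x\in X}M(x+y,I)$.
\begin{itemize}
\item Put $F(k)=2^{|I_k|}$ and fix bijections $e_k:2^{I_k}\to F(k)$.
\item Define $\phi(x)\in\PIF$ by $\phi(x)(k)=e_k((x+y)|_{I_k})$.
\item By the choice of $|X|$ there is $(z',J')$ with $\phi(x)\circeq(z',J')$ for all $x\in X$.
\item Let $z\in\Cantor$ satisfy $z|_{I_k}=e_k^{-1}(z'(k))$, and let $J_n=\bigcup_{k\in J'_n}I_k$.
\end{itemize}
Now fix $x\in X$. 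For almost every $n$ there is $I_k\subseteq J_n$ with $z|_{I_k}=(x+y)|_{I_k}$. Any $w\in M(x+y,I)$ differs from $x+y$ on almost every $I_k$, hence $w|_{J_n}\neq z|_{J_n}$ for almost all $n$. So $M(x+y,I)\subseteq M(z,J)$, and therefore $X+M\subseteq M(z,J)\in\Meager$.

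\emph{Step 2: $\add_{t}(\Meager)\leq\bbb(\PIF,\circeq,\PIF\times\mathbb{IP})$ for every $F$.} Fix $F$ and a $\circeq$-unbounded family $Z\subseteq\PIF$.
\begin{itemize}
\item Choose an interval partition $I$ with $2^{|I_k|}\geq F(k)$ and injections $c_k:F(k)\to 2^{I_k}$.
\item Let $X=\{x_f:f\in Z\}$, where $x_f|_{I_k}=c_k(f(k))$, and take the meager set $M(0,I)$, so that $X+M(0,I)=\bigcup_f M(x_f,I)$.
\item Suppose toward a contradiction that $X+M(0,I)\subseteq M(z,J)$.
\end{itemize}
The key lemma is: if $M(x,I)\subseteq M(z,J)$, then for almost all $n$ there is $k$ with $I_k\subseteq J_n$ and $x|_{I_k}=z|_{I_k}$.

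Granting the lemma, let $J'$ be the interval partition of $\omega$ obtained by grouping $k$ according to which $J_n$ contains $I_k$. We merge indices whose $I_k$ straddle a boundary, and coarsen if needed so that each $J'_n$ is nonempty. Let $z'(k)=c_k^{-1}(z|_{I_k})$ when this is defined, and $0$ otherwise. Then every $f\in Z$ satisfies $f\circeq(z',J')$, contradicting unboundedness. Hence $X+M(0,I)$ is not contained in any $M(z,J)$, so it is not meager, and $\add_t(\Meager)\leq|X|=|Z|$.

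\emph{Main obstacle: the key lemma.} I would argue by contraposition. Suppose infinitely many $J_n$ are bad, meaning no $I_k\subseteq J_n$ has $x|_{I_k}=z|_{I_k}$. Thin these to an infinite set of bad blocks $J_{n_j}$ that are far apart, so that no single $I_k$ meets two of them and there is room between them. Build $w$ as follows.
\begin{itemize}
\item On each chosen bad $J_{n_j}$, put $w=z$. Every $I_k\subseteq J_{n_j}$ then already has $w|_{I_k}\neq x|_{I_k}$.
\item On the remaining coordinates, choose $w$ so that it differs from $x$ on every other $I_k$.
\end{itemize}
The delicate point is an $I_k$ that straddles the boundary of a chosen bad block: it has coordinates outside that block, or possibly none if it lies inside a union of blocks. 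I would handle this by choosing the thinning so that each straddling $I_k$ still has a free coordinate outside the chosen blocks, where we set $w\neq x$. If that fails, I would replace $J$ by a coarser partition before thinning; this is harmless because $M(z,J)$ only grows under suitable coarsening. The resulting $w$ lies in $M(x,I)$ but satisfies $w|_{J_{n_j}}=z|_{J_{n_j}}$ infinitely often, so $w\notin M(z,J)$.
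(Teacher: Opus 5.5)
The paper does not prove this statement; it only cites Pawlikowski. So there is no internal proof to compare against, and judged on its own your argument is correct.

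Step~1 is fine. If you want $F$ increasing, coarsen $I$ first, which only enlarges $M(y,I)$. Alternatively, use that $\bbb(\PIF,\circeq,\PIF\times\mathbb{IP})$ is non-increasing in $F$.

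In Step~2 the inclusion lemma is true, and your thinning already disposes of the ``delicate point''. Choose each $J_{n_{j+1}}$ to start beyond the last $I_k$ meeting $J_{n_j}$. Then no $I_k$ meets two chosen blocks. Hence every $I_k$ not contained in a chosen block has a coordinate outside all chosen blocks, and setting $w\neq x$ there gives $w|_{I_k}\neq x|_{I_k}$ for \emph{every} $k$. The fallback of coarsening $J$ is therefore unnecessary. As stated it would not work anyway, since merging a bad block with its neighbours can produce a good block.

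For the regrouping, state explicitly that each $J'_m$ contains a whole set $K_n=\{k:I_k\subseteq J_n\}$, with $n\to\infty$ as $m\to\infty$. Then the witnesses the lemma supplies for $x_f$ land in $J'_m$ for almost all $m$.

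As for the route: your two steps are exactly the two directions of the combinatorial characterization of meager-additive sets recorded in Theorem~\ref{ShelahNAMA}. The lemma $M(x,I)\subseteq M(z,J)\iff\foralmostall n\,\exists k\,(I_k\subseteq J_n\wedge x|_{I_k}=z|_{I_k})$ is what drives that characterization. Within this paper the statement therefore follows in a few lines from four ingredients:
\begin{itemize}
\item[--] Theorem~\ref{ShelahNAMA};
\item[--] the identity $\non(\MA)=\add_{t}(\Meager)$;
\item[--] blockwise coding $\Cantor\leftrightarrow\Pi F_I$ with $F_I(k)=2^{|I_k|}$;
\item[--] monotonicity in $F$, which turns the minimum over the $F_I$ into the minimum over all $F$.
\end{itemize}
Your version is self-contained and makes the coding maps explicit, at the cost of reproving that characterization from the chopped-real basis.
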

In the next section we will be interested in the $\sigma$-ideals $\mathcal{H}_{F}$ on $\PIF$ (see definition \ref{DefinicjaHF} below) for which $\non(\mathcal{H}_{F})=\bbb(\PIF,\circeq,\PIF\times\mathbb{IP})$ and $\cov(\mathcal{H}_{F})=\ddd(\PIF,\circeq,\PIF\times\mathbb{IP})$. These two invariants were investigated originally by Cardona, Mejía, Rivera-Madrid in \cite{MejiaCardonaMA} and later by Cardona in \cite{CardonaMA}. In the framework of relational systems, Cardona, Mejía, Rivera-Madrid investigated uniformity and covering of $\HH_{F}$. In their article, the invariants $\non(\HH_{F})$ and $\cov(\HH_{F})$ were denoted by $\bbb^{eq}_{F}$ and $\ddd^{eq}_{F}$ respectively.
They showed the following connections to the uniformity and covering of meager-additive sets.
\begin{theorem}(Cardona, Mejía, Rivera-Madrid; \cite{MejiaCardonaMA})\label{NonCovMAvscovnonHHF}
    The following hold:
    \begin{itemize}
        \item[--] $\non(\MA)=\min\{\bbb(\PIF,\circeq,\PIF\times\mathbb{IP}):F\in\Baire\}$,
        \item[--] $\cov(\MA)\geq\sup\{\ddd(\PIF,\circeq,\PIF\times\mathbb{IP}):F\in\Baire\}$.
    \end{itemize}
\end{theorem}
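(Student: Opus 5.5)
For the first item we can chain results already stated, with nothing new to prove. By the Proposition relating transitive coefficients to uniformities of additive classes (applied with $\I=\Meager$), $\non(\MA)=\add_{t}(\Meager)$. By Pawlikowski's theorem on the transitive additivity of meager sets, $\add_{t}(\Meager)=\min\{\bbb(\PIF,\circeq,\PIF\times\mathbb{IP}):F\in\Baire\}$. Composing the two equalities gives the first item.

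For the second item, fix an increasing $F\in\Baire$ and a family $\{X_{\alpha}:\alpha<\kappa\}\subseteq\MA$ with $\bigcup_{\alpha}X_{\alpha}=\Cantor$, where $\kappa=\cov(\MA)$. We will produce a $\circeq$-dominating subset of $\PIF\times\mathbb{IP}$ of size at most $\kappa$.

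\emph{Coding.} Choose an interval partition $K=(K_{k})_{k\in\omega}$ of $\omega$ with $2^{|K_{k}|}\geq F(k)$. Fix injections $c_{k}:F(k)\to 2^{K_{k}}$, and let $d_{k}:2^{K_{k}}\to F(k)$ be a left inverse of $c_{k}$, defined arbitrarily off the range. Define $\phi:\PIF\to\Cantor$ by $\phi(x)|_{K_{k}}=c_{k}(x(k))$. This map is injective, and $x(k)=d_{k}(\phi(x)|_{K_{k}})$ for every $k$.

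\emph{Transfer.} For each $\alpha$, apply Shelah's characterization of meager-additive sets (Theorem \ref{ShelahNAMA}) to $X_{\alpha}$ with the partition $K$. This gives $y_{\alpha}\in\Cantor$ and a partition $J^{\alpha}$ dominating $K$ such that every $z\in X_{\alpha}$ satisfies the following: for almost all $n$ there is $k$ with $K_{k}\subseteq J^{\alpha}_{n}$ and $z|_{K_{k}}=y_{\alpha}|_{K_{k}}$. Now set $y'_{\alpha}(k)=d_{k}(y_{\alpha}|_{K_{k}})$. Let $\tilde J^{\alpha}_{n}$ be the set of indices $k$ with $K_{k}\subseteq J^{\alpha}_{n}$; by domination these sets are nonempty intervals for almost all $n$. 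The indices $k$ whose $K_{k}$ straddles a boundary of $J^{\alpha}$, together with finitely many initial indices, are absorbed into an adjacent block. This yields a genuine interval partition $J'^{\alpha}\in\mathbb{IP}$ of $\omega$ in which each block contains some $\tilde J^{\alpha}_{n}$ and almost every $\tilde J^{\alpha}_{n}$ lies in a block. Then for $x\in\PIF$ with $\phi(x)\in X_{\alpha}$, for almost every $n$ there is $k\in\tilde J^{\alpha}_{n}$ with $\phi(x)|_{K_{k}}=y_{\alpha}|_{K_{k}}$, hence $x(k)=y'_{\alpha}(k)$. So $x\circeq(y'_{\alpha},J'^{\alpha})$.

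\emph{Conclusion.} Since the sets $X_{\alpha}$ cover $\Cantor$, the preimages $\phi^{-1}[X_{\alpha}]$ cover $\PIF$. Hence $\{(y'_{\alpha},J'^{\alpha}):\alpha<\kappa\}$ is $\circeq$-dominating, so $\ddd(\PIF,\circeq,\PIF\times\mathbb{IP})\leq\kappa$. Taking the supremum over $F$ gives the second item.

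The only delicate point is the bookkeeping in the transfer step. The binary partition $J^{\alpha}$ must be converted into a partition of indices in a way that each new block still contains a full $K_{k}$ at which the agreement occurs. The merging described above handles this, because merging only enlarges blocks and the agreement witnesses survive.
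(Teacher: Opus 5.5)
Your proof is correct. The paper gives no proof of this statement; it is quoted from \cite{MejiaCardonaMA}. Your first item, $\non(\MA)=\add_{t}(\Meager)$ via the Proposition on transitive coefficients followed by Pawlikowski's theorem, is exactly how the surrounding text derives it, so all of its content sits in the cited theorem.

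Your second item is an argument the paper does not contain. The map $\phi$ together with $X\mapsto(y'_{X},J'_{X})$ (Shelah's characterization for the fixed partition $K$, then decoding) is a Tukey reduction of $\langle\PIF,\circeq,\PIF\times\mathbb{IP}\rangle$ to $\langle\Cantor,\in,\MA\rangle$. Your re-blocking is sound: distinct blocks of $J'^{\alpha}$ contain distinct nonempty $\tilde J^{\alpha}_{n}$, so only finitely many blocks can miss an agreement.

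This buys you more than the paper states. The same reduction also gives $\non(\MA)\leq\bbb(\PIF,\circeq,\PIF\times\mathbb{IP})$ for each $F$. The opposite half of the first item follows just as directly by coding the other way:
\begin{itemize}
\item[--] given $I$, take $F(n)=2^{|I_{n}|}$ and map $x\mapsto(x|_{I_{n}})_{n}$;
\item[--] turn a bounding pair $(w,J)$ into $y$ with $y|_{I_{n}}=w(n)$ and the partition with blocks $\bigcup_{n\in J_{m}}I_{n}$;
\item[--] apply the ``if'' direction of Shelah's characterization.
\end{itemize}
So the whole theorem is a reformulation of Theorem~\ref{ShelahNAMA}, and Pawlikowski's theorem is not needed. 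This also explains why the second item is only an inequality: one coding controls a single partition $I$, while membership in $\MA$ quantifies over all of them.
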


\subsection{Invariants related to infinitely equal and eventually different numbers}\label{SectionWithInftyInSlalom}
Next, we define relational systems related to infinitely-equal and eventually-different reals. For $x,y\in\Baire$ (or $\PIF$) let $x=^{\infty}y$ if $x(n)=y(n)$ for infinitely many $n\in\omega$. Dually, $x\neq^{*}y$ if $x(n)\neq y(n)$ for almost all $n\in\omega$. Note that $\ddd(\Baire,\neq^{*},\Baire)=\bbb(\Baire,=^{\infty},\Baire)$ and also $\ddd(\Baire,\in^{\infty},\mathcal{S}lm(b))=\bbb(\Baire,\neq^{*},\Baire)$. For an $x\in\Baire$ ($x\in\PIF$) and $S\in\mathcal{S}lm(b)$ ($S\in\mathcal{S}lm(F,b)$) we will write that $x\in^{\infty}S$ if $x(n)\in S(n)$ for infinitely many $n\in\omega$.\\
The following is a well-known characterization of covering and uniformity of meager sets.
\begin{theorem}(Bartoszyński; \cite{BartoszCombAspects}, Miller; \cite{MillerAddEvsB}, see also \cite{BJ})
For any increasing $b\in\Baire$ we have:
    \begin{itemize}
        \item[--] $\cov(\Meager)=\ddd(\Baire,\neq^{*},\Baire)=\ddd(\Baire,\in^{\infty},\mathcal{S}lm(b))$,
        \item[--] $\non(\Meager)=\bbb(\Baire,\neq^{*},\Baire)=\bbb(\Baire,\in^{\infty},\mathcal{S}lm(b))$.
    \end{itemize}
\end{theorem}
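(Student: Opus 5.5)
The plan is to prove the function characterizations first, and then derive the slalom versions from them by a coding argument. I read the relations in the direction of Bartoszyński's theorem (\cite{BJ}, Theorem 2.4.1 and 2.4.7). $\cov(\Meager)$ is the least size of a family $\mathcal{F}\subseteq\Baire$ such that every $g\in\Baire$ is eventually different from some $f\in\mathcal{F}$. Dually, $\non(\Meager)$ is the least size of a family $\mathcal{F}$ such that no single $g$ is eventually different from all members of $\mathcal{F}$. The slalom versions are read the same way, with $\in^{\infty}$ playing the role of $=^{\infty}$.

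\textbf{Easy inequalities.} For fixed $y$ the set $\{x:x=^{\infty}y\}$ is comeager, so $\{x:x\neq^{*}y\}$ is meager. Likewise, for a $b$-slalom $S$ the set $\{x:\foralmostall n\ x(n)\notin S(n)\}$ is meager. Two consequences follow directly.
\begin{itemize}
\item[--] If $\mathcal{F}$ is a family of fewer than $\cov(\Meager)$ reals, these meager sets do not cover $\Baire$. Any $g$ outside their union satisfies $g=^{\infty}f$ for every $f\in\mathcal{F}$. This gives $\cov(\Meager)\leq\ddd(\Baire,\neq^{*},\Baire)$.
\item[--] A nonmeager set $Y$ of size $\non(\Meager)$ is not contained in $\{x:x\neq^{*}y\}$ for any $y$. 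So $Y$ is a witness family, giving $\bbb(\Baire,\neq^{*},\Baire)\leq\non(\Meager)$.
\end{itemize}
The slalom analogues of both inequalities are proved identically.

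\textbf{Main step.} The reverse inequalities are the real content. Here I would use the chopped-real characterization, Proposition \ref{CharacterizationM}, transferred to $\Baire$. Every meager set is contained in a set $M(z,I)$ of reals that eventually disagree with $z$ on each block $I_{n}$. Given a family $\mathcal{F}$ witnessing $\ddd(\Baire,\neq^{*},\Baire)$, I want fewer than $\cov(\Meager)$ many meager sets not to cover. The plan is as follows.
\begin{itemize}
\item[--] Encode each pair $(z,I)$ as a single real $\hat z\in\Baire$, with $\hat z(n)=(I_{n},z|_{I_{n}})$ coded in $\omega$.
\item[--] Show that if $g$ is ``$=^{\infty}$'' to the codes of fewer than $\cov(\Meager)$ meager sets, then a point avoiding all of them can be decoded.
\item[--] Conversely, given a small family $\mathcal{F}$, build a meager set $M$ such that every real outside $M$ is infinitely often equal to every $f\in\mathcal{F}$.
\end{itemize}
The standard difficulty is that a single partition $I$ must be chosen to handle the whole family at once. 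My first attempt is a domination argument: for $|\mathcal{F}|<\bbb$ the partitions can be uniformized. In the case $\bbb\leq|\mathcal{F}|$, I would instead use that $\cov(\Meager)\leq\ddd$, and work with a dominating family of partitions together with Cohen-style genericity. This is Bartoszyński's argument, and I expect it to be the main obstacle.

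\textbf{Slalom versions.} I would show $\langle\Baire,\neq^{*},\Baire\rangle$ and $\langle\Baire,\notin^{*},\mathcal{S}lm(b)\rangle$ are Tukey equivalent in both directions.
\begin{itemize}
\item[--] A real $y$ is the slalom $\{y(n)\}\subseteq$ a $b$-slalom, giving one reduction.
\item[--] For the other, I group $\omega$ into consecutive blocks $J_{n}$ with $|J_{n}|=b(n)$. A $b$-slalom $S$ is coded by the real $y_{S}$ that, on block $J_{n}$, lists the elements of $S(n)$. A real $x$ is mapped to the real $\tilde x$ with $\tilde x|_{J_{n}}$ constantly $x(n)$.
\item[--] Then $x(n)\in S(n)$ for infinitely many $n$ holds exactly when $\tilde x$ and $y_{S}$ agree somewhere in $J_{n}$ for infinitely many $n$.
\item[--] So the slalom relations reduce to the ``infinitely equal on blocks'' relation. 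By the main step applied blockwise, this relation has the same bounding and dominating numbers as $=^{\infty}$.
\end{itemize}
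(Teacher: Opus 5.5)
The paper gives no proof of this statement; it is quoted from Bartoszyński and Miller (see \cite{BJ}). So your proposal has to stand on its own, and it does not. Your easy inequalities are correct. The two inequalities that carry the content, $\ddd(\Baire,\neq^{*},\Baire)\leq\cov(\Meager)$ and $\non(\Meager)\leq\bbb(\Baire,\neq^{*},\Baire)$, are only gestured at, and your sketch of them fails in three places.

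First, the goals are misstated. You need that fewer than $\ddd(\Baire,\neq^{*},\Baire)$ meager sets do not cover; for fewer than $\cov(\Meager)$ there is nothing to show. For the other inequality you need that fewer than $\non(\Meager)$ reals have a common eventually different real. Your last bullet instead asks for a meager set containing $\bigcup_{f\in\mathcal{F}}\{x:x\neq^{*}f\}$, which is an additivity-type demand and fails in general.

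Second, the coding $\hat z(n)=(I_{n},z|_{I_{n}})$ does not decode. The real $g$ agrees with each $\hat z_{\alpha}$ only on some infinite set of coordinates. The blocks proposed at different coordinates come from different partitions and overlap with incompatible patterns. No selection rule is guaranteed to keep infinitely many agreements for every $\alpha$.

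Third, the case split does not repair this. Your case $\lambda<\bbb$ is fine: one $J$ dominating every $I^{\alpha}$ makes almost all blocks useful. But the case $\lambda\geq\bbb$, which you yourself flag as the main obstacle, is circular. A dominating family of partitions has size $\ddd$, not $\lambda$, so it cannot be fed into a hypothesis about $\lambda$-sized families. And ``Cohen-style genericity'' over $\lambda$ meager sets is exactly the conclusion $\lambda<\cov(\Meager)$.

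What is missing is an upgrade of $=^{\infty}$ to agreement on prescribed infinite sets; with it, no case split is needed.
Let $\lambda<\ddd(\Baire,\neq^{*},\Baire)$ and let $M(z_{\alpha},I^{\alpha})$, $\alpha<\lambda$, be given.
\begin{enumerate}
\item For dominating $D$, the family $\{h+1:h\in D\}$ witnesses $\ddd(\Baire,\neq^{*},\Baire)\leq\ddd$, so $\lambda<\ddd$.
\item Hence there is one partition $J$ such that every $A_{\alpha}=\{n:\exists k\ I^{\alpha}_{k}\subseteq J_{n}\}$ is infinite.
\item Let $f_{\alpha}(n)$ code $z_{\alpha}|_{J_{n}}$. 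Apply the hypothesis to $\tilde f_{\alpha}(p)$, a code of $(P_{\alpha}(p),f_{\alpha}|_{P_{\alpha}(p)})$, where $P_{\alpha}(p)$ is the set of the first $p+1$ elements of $A_{\alpha}$ above $p$.
\item From the resulting $g$ read off pairs $(P_{p},v_{p})$. Choose $q(p)\in P_{p}\setminus\{q(0),\dots,q(p-1)\}$, which is possible since $|P_{p}|=p+1$, and put $g'(q(p))=v_{p}(q(p))$.
\item Then $g'(q)=f_{\alpha}(q)$ for infinitely many $q\in A_{\alpha}$. So the real $y$ with $y|_{J_{n}}$ decoded from $g'(n)$ avoids every $M(z_{\alpha},I^{\alpha})$.
\end{enumerate}
For fixed $J$ this is a Tukey connection. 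Its dual, run over an unbounded family of $\bbb\leq\bbb(\Baire,\neq^{*},\Baire)$ partitions in place of the single $J$, gives $\non(\Meager)\leq\bbb(\Baire,\neq^{*},\Baire)$.

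On the slalom clauses: with the paper's definitions, $\ddd(\Baire,\in^{\infty},\mathcal{S}lm(b))$ is the least number of slaloms such that every real is infinitely often in one of them. That is $\non(\Meager)$, and $\bbb(\Baire,\in^{\infty},\mathcal{S}lm(b))=\cov(\Meager)$. The paper itself notes $\ddd(\Baire,\in^{\infty},\mathcal{S}lm(b))=\bbb(\Baire,\neq^{*},\Baire)$ just before the statement, so the printed clauses are swapped.

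Your system $\langle\Baire,\notin^{*},\mathcal{S}lm(b)\rangle$ negates the relation and is a different system. Its invariants are $\cov(\Meager)$ and $\non(\Meager)$ for trivial reasons: $y\mapsto(\{y(n)\})_{n\in\omega}$ gives one reduction and a choice function on $S$ gives the other. Note that your block coding and $y\mapsto\{y(n)\}$ both go in the same direction, so as written you only get one of the two reductions.

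The correct reading is $\cov(\Meager)=\bbb(\Baire,\in^{\infty},\mathcal{S}lm(b))$ and $\non(\Meager)=\ddd(\Baire,\in^{\infty},\mathcal{S}lm(b))$. There the nontrivial direction, $\langle\Baire,=^{\infty},\Baire\rangle\preceq_{T}\langle\Baire,\in^{\infty},\mathcal{S}lm(b)\rangle$, needs a diagonal coding. Take blocks $B_{n}$ with $|B_{n}|=b(n)$ and map $x\mapsto(x|_{B_{n}})_{n\in\omega}$. Map $S$ to the real whose value at the $i$-th point of $B_{n}$ is the value there of the $i$-th element of $S(n)$, read as a function on $B_{n}$. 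You should state explicitly which system you are proving things about.
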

    Regarding bounded infinitely-equal/eventually-different numbers it is easy to see that $\ddd(\Baire,\neq^{*},\Baire)\leq\ddd(\PIF,\neq^{*},\PIF)$ and $\bbb(\PIF,\neq^{*},\PIF)\leq\bbb(\Baire,\neq^{*},\Baire)$ for any function $F\in\Baire$. Also, it is not difficult to notice that if $F_{0}\leq F_{1}$, then we have the inequalities $\ddd(\Pi F_{1},\neq^{*},\Pi F_{1})\leq\ddd(\Pi F_{0},\neq^{*},\Pi F_{0})$ and $\bbb(\Pi F_{0},\neq^{*},\Pi F_{0})\leq\bbb(\Pi F_{1},\neq^{*},\Pi F_{1})$. Also, if the range of the function $F$ is finite, the invariant $\bbb(\PIF,\neq^{*},\PIF)$ if finite and $\ddd(\PIF,\neq^{*},\PIF)=\continuum$.\\
    Miller used these invariants to characterize the uniformity of strong measure zero sets. Independently, Pawlikowski obtained the same characterization of the transitive covering of meager sets. Thus, we have the following result
    \begin{theorem}(Miller; \cite{MillerAddEvsB};  Pawlikowski; \cite{PawlikowskiTransitiveInvariants}; see also \cite{BJ})
        \begin{itemize}
            \item[--] $\non(\mathcal{SMZ})=\min\{\ddd(\PIF,\neq^{*},\PIF): F\in\Baire\}=\cov_{t}(\Meager)$
        \end{itemize}
    \end{theorem}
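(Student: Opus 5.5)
By Galvin, Mycielski and Solovay we have $\mathcal{SMZ}=\Mstar$. By the proposition on transitive coefficients we have $\non(\I^{*})=\cov_{t}(\I)$. Together these give $\non(\mathcal{SMZ})=\non(\Mstar)=\cov_{t}(\Meager)$. So it only remains to prove $\non(\mathcal{SMZ})=\min\{\ddd(\PIF,\neq^{*},\PIF):F\in\Baire\}$. I plan to prove the two inequalities separately by direct coding between $\Cantor$ and the spaces $\PIF$.

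For $\min_{F}\ddd(\PIF,\neq^{*},\PIF)\leq\non(\mathcal{SMZ})$, take $X\subseteq\Cantor$ with $X\notin\mathcal{SMZ}$. Fix an increasing $f$ witnessing this, so that no sequence $(\sigma_{n})$ with $|\sigma_{n}|=f_{n}$ satisfies $X\subseteq\{x:\exists^{\infty}n\ \sigma_{n}\subseteq x\}$. Put $F(n)=2^{f(n)}$ and identify $F(n)$ with $2^{f(n)}$. To each $x\in X$ assign $y_{x}\in\PIF$ given by $y_{x}(n)=x|_{f(n)}$. Any $s\in\PIF$ is then a sequence $(\sigma_{n})$ with $|\sigma_{n}|=f(n)$. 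By the choice of $f$, some $x\in X$ has $\sigma_{n}\not\subseteq x$ for almost all $n$, which says exactly that $y_{x}(n)\neq s(n)$ for almost all $n$. Hence $\{y_{x}:x\in X\}$ is a $\neq^{*}$-dominating family in $\PIF$, and it has size at most $|X|$.

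For the converse, fix $F$ (we may assume $F$ is increasing and $F(n)\geq 2$) and a family $Z\subseteq\PIF$ such that every $s\in\PIF$ is eventually different from some $y\in Z$. Choose an interval partition $(I_{n})$ with $2^{|I_{n}|}\geq F(n)$, and fix injections $e_{n}:F(n)\to 2^{I_{n}}$. Let $e(y)\in\Cantor$ be the concatenation of the blocks $e_{n}(y(n))$, and set $X=e[Z]$. To see that $X\notin\mathcal{SMZ}$, use $f_{n}=\max I_{n}+1$. Given $\sigma_{n}$ with $|\sigma_{n}|=f_{n}$, define $s\in\PIF$ by letting $s(n)$ be $e_{n}^{-1}(\sigma_{n}|_{I_{n}})$ when this is defined, and $0$ otherwise. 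If $\sigma_{n}\subseteq e(y)$, then $\sigma_{n}|_{I_{n}}=e_{n}(y(n))$, so $s(n)=y(n)$. Now pick $y\in Z$ with $y\neq^{*}s$. Then $\sigma_{n}\subseteq e(y)$ holds for only finitely many $n$, so the sequence $(\sigma_{n})$ fails to cover $e(y)\in X$. Hence $\non(\mathcal{SMZ})\leq|Z|$.

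I do not expect a real obstacle here. The one step needing care is matching the exact form of the definition of $\mathcal{SMZ}$ used in the paper (lengths $f_{n}$, covering by \emph{infinitely many} $\sigma_{n}$) with the relation $\neq^{*}$. In particular, the coding must make "$\sigma_{n}\subseteq e(y)$" imply "$s(n)=y(n)$", and this holds because $\sigma_{n}$ already determines the whole block $I_{n}$. Functions $F$ with finite range give $\ddd=\continuum$, so they do not affect the minimum.
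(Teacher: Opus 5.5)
Your proof is correct. The paper gives no proof of this statement; it only cites Miller, Pawlikowski and Bartoszy\'nski--Judah, and your argument is the standard one. The last equality follows at once from $\SMZ=\Mstar$ (Galvin--Mycielski--Solovay) together with $\non(\I^{*})=\cov_{t}(\I)$. The middle equality comes from the block coding between $\Cantor$ and $\PIF$ in both directions.

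You were right to use the paper's ``$\exists^{\infty}n$'' form of strong measure zero. That form is what makes ``$\sigma_n\subseteq e(y)$ implies $s(n)=y(n)$'' turn eventual difference into non-covering, with no finite-modification step needed.

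Your closing remark about finite-range $F$ is true, but the proof does not need it: your second direction already works for every $F$.
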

    These invariants can be also used to characterize additivity of meager sets.
    \begin{theorem}(Miller \cite{MillerAddEvsB})\label{MillerAddMeager}
    \begin{itemize}
        \item[--] $\add(\Meager)=\min\{\bbb, \min\{\ddd(\PIF,\neq^{*},\PIF): F\in\Baire\}\}$
    \end{itemize}
    \end{theorem}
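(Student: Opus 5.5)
The plan is to prove the two inequalities $\add(\Meager)\leq\min\{\bbb,\min_F\ddd(\PIF,\neq^{*},\PIF)\}$ and $\add(\Meager)\geq\min\{\bbb,\min_F\ddd(\PIF,\neq^{*},\PIF)\}$ separately, in each case through the combinatorial characterizations already quoted.

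\textbf{Upper bound.} First, $\add(\Meager)\leq\bbb$ is classical; it is also visible from the chopped-real characterization of $\add(\Meager)$ as $\bbb(\Baire,\circeq,\Baire\times\mathbb{IP})$, since a set of reals bounded by a single interval partition is needed. Second, I would show $\add(\Meager)\leq\ddd(\PIF,\neq^{*},\PIF)$ for every increasing $F$, using $\add(\Meager)\leq\cov(\Meager)=\ddd(\Baire,\neq^{*},\Baire)\leq\ddd(\PIF,\neq^{*},\PIF)$. The last inequality is the one recorded in the text. One can alternatively note $\add(\Meager)\leq\add_t(\Meager)\le\cov_t(\Meager)=\non(\SMZ)=\min_F\ddd(\PIF,\neq^{*},\PIF)$ by the quoted Miller--Pawlikowski theorem. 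This direction is routine.

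\textbf{Lower bound.} Let $\kappa<\min\{\bbb,\min_F\ddd(\PIF,\neq^*,\PIF)\}$ and take $\{x_\alpha:\alpha<\kappa\}\subseteq\Baire$. By the Bartoszyński characterization it suffices to find a single $y\in\Baire$ and an interval partition $I$ with $x_\alpha\circeq(y,I)$ for all $\alpha$.

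The first step uses $\kappa<\bbb$ to find one increasing $g$ that eventually dominates every $x_\alpha$ (after replacing each $x_\alpha$ by $\max$ with the identity if needed).

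Next, code blocks. I fix an interval partition $J=(J_n)$ growing fast enough, still to be chosen, and set $F(n)=\prod_{i\in J_n}(g(i)+1)$. Each $x_\alpha$ then gives $\bar x_\alpha\in\PIF$ with $\bar x_\alpha(n)=x_\alpha|_{J_n}$, possibly corrected on finitely many blocks.

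Now I apply $\kappa<\ddd(\PIF,\neq^*,\PIF)$ in its dual form. The family $\{\bar x_\alpha\}$ is not $\neq^*$-dominating, so there is $z\in\PIF$ with $z=^\infty\bar x_\alpha$ for every $\alpha$. This only gives agreement on infinitely many blocks, not on almost all of them.

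The main obstacle is upgrading ``infinitely often'' to ``in almost every interval of some partition''. My first attempt uses $\bbb$ a second time. For each $\alpha$, let $h_\alpha(m)$ be the least $n>m$ with $z(n)=\bar x_\alpha(n)$. Since $\kappa<\bbb$, a single increasing $h$ eventually dominates all $h_\alpha$. Taking intervals $I_k=[h^{(k)}(0),h^{(k+1)}(0))$, measured in block coordinates and then unfolded to coordinates of $\omega$, every $I_k$ eventually contains a block where $x_\alpha$ agrees with $y$, where $y$ is defined by $y|_{J_n}=z(n)$. That yields $x_\alpha\circeq(y,I)$.

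The delicate point is that $h_\alpha$ depends on $z$, which depends on $F$ and hence on $J$. This circularity is harmless, since $J$ can be fixed as the trivial partition into singletons once $g$ is chosen: then $F=g+1$, and agreement on a single coordinate already suffices for $\circeq$. So the block-coding step can likely be dropped altogether. With that, the proof reduces to combining $\kappa<\ddd(\Pi(g+1),\neq^*,\Pi(g+1))$ with two uses of $\kappa<\bbb$.
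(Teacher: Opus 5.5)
The paper gives no proof of this statement; it only cites Miller. Your argument is correct and is essentially the standard proof.

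\textbf{Comparison with the cited route.} Via the citation, one would combine Miller's $\add(\Meager)=\min\{\bbb,\cov(\Meager)\}$ with two facts. The first is $\cov(\Meager)\le\ddd(\PIF,\neq^{*},\PIF)$, which gives your upper bound. The second is $\min\{\bbb,\min_F\ddd(\PIF,\neq^{*},\PIF)\}\le\cov(\Meager)$, which is exactly your bounding step followed by your infinitely-equal step. You instead inline the proof of $\min\{\bbb,\cov(\Meager)\}\le\add(\Meager)$. You spend $\bbb$ a second time to turn the infinitely-often-equal real $z$ into a witness for Bartoszy\'nski's characterization $\add(\Meager)=\bbb(\Baire,\circeq,\Baire\times\mathbb{IP})$.

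\textbf{An alternative without the black box.} The same three steps run directly on chopped reals $M(y_\alpha,I^\alpha)$, using only Proposition~\ref{CharacterizationM}. First, $\bbb$ gives a common partition $J$ with $M(y_\alpha,I^\alpha)\subseteq M(y_\alpha,J)$. The $y_\alpha$ then live in the bounded space $\prod_n 2^{J_n}$. The bounded $\neq^{*}$-number for $F(n)=2^{|J_n|}$ then gives $z$. Finally, $\bbb$ again gives a coarser partition $K$ with every $M(y_\alpha,J)\subseteq M(z,K)$. This avoids relying on Bartoszy\'nski's characterization.

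\textbf{Two small details to fix.}
\begin{enumerate}
\item Take $h_\alpha\le^{*}h$ and $I_k=[a_k,a_{k+1})$ with $a_{k+1}=h(a_k)$. The first agreement point $h_\alpha(a_k)$ may equal $a_{k+1}$, which lies in $I_{k+1}$, so $I_k$ may contain no agreement at all. Choose $h$ increasing with $h(m)>m$ for all $m$ and $h_\alpha(m)<h(m)$ for almost all $m$; for example, replace $h$ by $h+1$.
\item Replacing $x_\alpha$ by $\max(x_\alpha,\mathrm{id})$ is legitimate only as a device for choosing $g$. The real $z$ must be infinitely often equal to finite modifications of the original $x_\alpha$, as in your definition of $\bar x_\alpha$.
\end{enumerate}
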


\subsection{Invariants related to the covering of measure}
    An important role in the study of covering and uniformity of measure zero sets is played the two slalom numbers $\bbb(\PIF,\in^{\infty},\mathcal{S}lm(F,b))$ and $\ddd(\PIF,\in^{\infty},\mathcal{S}lm(F,b))$. It is not difficult to see that $F_{0}\leq^{*} F_{1}$ and $b_{1}\leq^{*} b_{0}$ implies that $\bbb(\Pi F_{1},\in^{\infty},\mathcal{S}lm(F_{1},b_{1}))\leq \bbb(\Pi F_{0},\in^{\infty},\mathcal{S}lm(F_{0},b_{0}))$ and $\ddd(\Pi F_{1},\in^{\infty},\mathcal{S}lm(F_{1},b_{1}))\geq \ddd(\Pi F_{0},\in^{\infty},\mathcal{S}lm(F_{0},b_{0}))$. One may also see that the inequalities $\bbb(\PIF,\in^{\infty},\mathcal{S}lm(F,b))\geq\bbb(\PIF,=^{\infty},\PIF)$ and $\ddd(\PIF,=^{\infty},\PIF)\geq \ddd(\PIF,\in^{\infty},\mathcal{S}lm(F,b))$ hold for every $F\in\Baire$ (see \cite{CardonaMejiaOnYoryoka} or \cite{SlalomNumbersOnReals}).\\
    
    Bartoszyński, in an attempt for combinatorial characterization of covering and uniformity of measure, proved the following connections to these slalom numbers.
    \begin{theorem}(Bartoszyński; \cite{BartoszynskiCoveringNull} or Cardona and Mejía; \cite{SlalomNumbersOnReals})\label{covNvsSmallsetInv}
    \begin{itemize}
        \item[--] $\cov(\Null)\leq\min\{\ddd(\PIF,\in^{\infty},\mathcal{S}lm(F,b)):\Sigma\frac{b(i)}{F(i)}<\infty\}$,
        \item[--] $\non(\Null)\geq\sup\{\bbb(\PIF,\in^{\infty},\mathcal{S}lm(F,b)):\Sigma\frac{b(i)}{F(i)}<\infty\}$
    \end{itemize}
    also
    \begin{itemize}
        \item[--] $\min\{\bbb,\min\{\ddd(\PIF,\in^{\infty},\mathcal{S}lm(F,b)):\Sigma\frac{b(i)}{F(i)}<\infty\}\}\leq\cov(\Null)$,
        \item[--] $\max\{\ddd,\sup\{\bbb(\PIF,\in^{\infty},\mathcal{S}lm(F,b)):\Sigma\frac{b(i)}{F(i)}<\infty\}\}\geq\non(\Null)$
    \end{itemize}
    in particular
    \begin{itemize}
        \item[--] if $\cov(\Null)<\bbb$, then $\cov(\Null)=\ddd(\PIF,\in^{\infty},\mathcal{S}lm(F,b))$ for some $b,F\in\Baire$,
        \item[--] if $\ddd<\non(\Null)$, then $\non(\Null)=\bbb(\PIF,\in^{\infty},\mathcal{S}lm(F,b))$ for some $b,F\in\Baire$.
    \end{itemize}
    where $\Sigma\frac{b(i)}{F(i)}<\infty$.
    \end{theorem}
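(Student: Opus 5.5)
The plan is to prove the first pair of inequalities with a direct Borel--Cantelli argument. The second pair needs a uniformization argument that uses $\bbb$ or $\ddd$. The ``in particular'' clauses then follow formally.

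\emph{First pair.} Fix $F,b$ with $\Sigma\frac{b(i)}{F(i)}<\infty$ and equip $\PIF$ with the product of the uniform probability measures on each $F(n)$. This is an atomless Borel probability measure on a Polish space, so it is Borel isomorphic to Lebesgue measure on $\Cantor$, and $\cov$ and $\non$ of its null ideal agree with $\cov(\Null)$ and $\non(\Null)$. For $S\in\mathcal{S}lm(F,b)$ put
\[
N_{S}=\{x\in\PIF:\existsinfty n\ x(n)\in S(n)\}.
\]
Since the probability that $x(n)\in S(n)$ is at most $\frac{b(n)}{F(n)}$ and these are summable, Borel--Cantelli gives that $N_{S}$ is null.
\begin{itemize}
\item[--] If $\mathcal{D}$ is a $\in^{\infty}$-dominating family of slaloms, then $\{N_{S}:S\in\mathcal{D}\}$ covers $\PIF$. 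Hence $\cov(\Null)\le\ddd(\PIF,\in^{\infty},\mathcal{S}lm(F,b))$.
\item[--] If $Y\subseteq\PIF$ is non-null with $|Y|=\non(\Null)$, then no single slalom $S$ satisfies $y\in^{\infty}S$ for all $y\in Y$, since then $Y\subseteq N_{S}$. So $Y$ is unbounded and $\bbb(\PIF,\in^{\infty},\mathcal{S}lm(F,b))\le\non(\Null)$.
\end{itemize}
Taking min and sup over all admissible $F,b$ gives the first pair.

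\emph{Second pair.} I will use the standard combinatorial base for $\Null$ from \cite{BJ}. Every null set is contained in a set of the form
\[
\{x\in\Cantor:\existsinfty n\ x|_{I_{n}}\in T_{n}\},
\]
where $(I_{n})$ is an interval partition, $T_{n}\subseteq 2^{I_{n}}$, and $\sum_{n}|T_{n}|2^{-|I_{n}|}<\infty$.

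Take $\kappa<\min\{\bbb,\ddd(\PIF,\in^{\infty},\mathcal{S}lm(F,b))\}$ for all admissible $F,b$, and null sets $\{N_{\alpha}:\alpha<\kappa\}$ with witnesses $(I^{\alpha},T^{\alpha})$. Let $h_{\alpha}\in\Baire$ record how fast the tails of $\sum_{n}|T^{\alpha}_{n}|2^{-|I^{\alpha}_{n}|}$ drop below $k^{-2}$, together with the growth of $I^{\alpha}$. Since $\kappa<\bbb$, a single function dominates all the $h_{\alpha}$. From it I build one interval partition $J=(J_{k})$ such that, for each $\alpha$ and almost all $k$:
\begin{itemize}
\item[--] every $I^{\alpha}_{n}$ meeting $J_{k}$ lies inside $J_{k}\cup J_{k+1}$;
\item[--] the $I^{\alpha}$-intervals meeting $J_{k}$ carry total $T^{\alpha}$-mass at most $k^{-2}$.
\end{itemize}
Now work on the blocks $J'_{k}=J_{2k}\cup J_{2k+1}$, to absorb straddling intervals, and set $F(k)=2^{|J'_{k}|}$ and $b(k)=\lfloor F(k)/k^{2}\rfloor$, so that $\Sigma\frac{b(k)}{F(k)}<\infty$. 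Identifying $\Cantor$ with $\PIF$ via $x\mapsto(x|_{J'_{k}})_{k}$, each $N_{\alpha}$ is contained in $N_{S_{\alpha}}$, where $S_{\alpha}(k)$ is the set of patterns on $J'_{k}$ extending some element of $T^{\alpha}_{n}$ with $I^{\alpha}_{n}\subseteq J'_{k}$. By the mass bound, $|S_{\alpha}(k)|\le b(k)$.

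Since $\kappa<\ddd(\PIF,\in^{\infty},\mathcal{S}lm(F,b))$, some $x$ satisfies $x\notin^{\infty}S_{\alpha}$ for every $\alpha$. Then $x\notin\bigcup_{\alpha}N_{\alpha}$, so $\kappa<\cov(\Null)$.

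The $\non$ statement is dual. Take a non-null $Y$ and a dominating family of size $\ddd$. For each $f$ in that family, build $J^{f}$, $F_{f}$, $b_{f}$ as above and use $\bbb(\Pi F_{f},\in^{\infty},\mathcal{S}lm(F_{f},b_{f}))$ to produce a set $Y_{f}$ of that size which is not contained in any null set coded with growth below $f$. Then $\bigcup_{f}Y_{f}$ is non-null and has size at most $\max\{\ddd,\sup\bbb(\cdots)\}$.

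The main obstacle is this uniformization step: choosing one partition $J$ from the bound supplied by $\bbb$ (respectively, one per element of a dominating family) so that both the straddling-interval issue and the slalom size bound $b(k)$ are controlled simultaneously. I would handle the straddling by the pairing trick $J'_{k}=J_{2k}\cup J_{2k+1}$ and the size bound via the tail-mass function.

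\emph{In particular.} If $\cov(\Null)<\bbb$, then the lower bound forces $\cov(\Null)\ge\min\ddd(\cdots)$, and the upper bound gives equality. The minimum is attained at some $F,b$ because cardinals are well-ordered. The case $\ddd<\non(\Null)$ is symmetric: the sup must exceed $\ddd$, so it is attained by some $F,b$.
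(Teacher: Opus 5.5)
Your first pair of inequalities is fine: Borel--Cantelli for the product measure on $\PIF$, transported to $\Cantor$ by a measure isomorphism, is the right argument. The purely formal deduction of the $\cov(\Null)$ clause from the two bounds is also correct. The second pair, however, has a genuine gap, and it sits exactly at the step you flagged.

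\textbf{The pairing step fails.} The blocks $J'_k=J_{2k}\cup J_{2k+1}$ do not absorb straddling intervals. Your first bullet only guarantees that an $I^\alpha_n$ meeting $J_{2k+1}$ lies in $J_{2k+1}\cup J_{2k+2}$, so it may cross the boundary between $J'_k$ and $J'_{k+1}$. Coarsening a partition moves its boundaries but never removes them, and for each $\alpha$ typically every boundary of $(J'_k)$ is crossed by some $I^\alpha_n$.

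Since $S_\alpha(k)$ records only intervals $I^\alpha_n\subseteq J'_k$, the inclusion $N_\alpha\subseteq N_{S_\alpha}$ is false. Take any $x$ with $x|_{I^\alpha_n}\in T^\alpha_n$ for infinitely many crossing $n$ but only finitely many inner $n$. Such points exist because the crossing intervals are pairwise disjoint, and each lies in $N_\alpha\setminus N_{S_\alpha}$. Nothing prevents the evading real supplied by $\kappa<\ddd(\PIF,\in^{\infty},\mathcal{S}lm(F,b))$ from being such a point.

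You also cannot charge a crossing interval to one side. The projection of $T^\alpha_n$ onto $I^\alpha_n\cap J'_k$ can be everything: let $T^\alpha_n$ be the patterns whose second half copies the first half, and let the boundary cross at the midpoint. The same defect breaks your dual $\non(\Null)$ argument.

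Relatedly, the base you quote is stronger than the standard one. Bartoszyński's theorem is that every null set is a union of \emph{two} sets of that form, which reflects the same straddling phenomenon.

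\textbf{How to repair it.} Treat a crossing interval as a condition $(y(k),y(k+1))\in P_\alpha(k)$ on two consecutive block coordinates, where $y(k)=x|_{J'_k}$. Use the $\bbb$-bound to make all masses at most $k^{-4}$, and then evade in two stages.
\begin{enumerate}
\item Choose the even coordinates. Besides the inner slaloms, avoid the sets of values whose section of $P_\alpha(2j)$, respectively of $P_\alpha(2j-1)$, has relative measure $>k^{-2}$. By Markov/Fubini these sets have relative size $\le k^{-2}$.
\item Choose the odd coordinates. Avoid the inner slaloms and the two sections determined by the already fixed even neighbours; each section has relative size $\le k^{-2}$.
\end{enumerate}
Each stage is an instance of $\kappa<\ddd(\PIF,\in^{\infty},\mathcal{S}lm(F,b))$ with $\Sigma\frac{b(i)}{F(i)}<\infty$. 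For the $\non(\Null)$ bound, use $Z_e\times Z_o$, where $Z_e$ and $Z_o$ are unbounded families on the even and odd coordinates.

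\textbf{The supremum step.} ``The sup must exceed $\ddd$, so it is attained'' is a non sequitur, since a supremum of cardinals need not be attained. Your argument yields $\non(\Null)=\sup\{\bbb(\PIF,\in^{\infty},\mathcal{S}lm(F,b)):\Sigma\frac{b(i)}{F(i)}<\infty\}$. To obtain a single pair $(F,b)$ you need something more. For example, the upper bound uses only $\ddd<\non(\Null)$ many pairs, and this suffices when $\non(\Null)$ is regular.
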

    Regarding the connections to the transitive classes, Bartoszyński and Judah showed the following estimation of the uniformity of strongly-meager sets $\Nstar$ and thus of the transitive covering of measure zero sets.
    \begin{theorem}(Bartoszyński and Judah; \cite{BartJudahBorelImages})\label{SMestimatesnon}
    The following inequalities hold:
    \begin{itemize}
        \item[--] $\non(\mathcal{SM})\geq\min\{\ddd(\PIF,\in^{\infty},\mathcal{S}lm(F,b)):\Sigma\frac{b(i)}{F(i)}<\infty\}$,
        \item[--] $\non(\mathcal{SM})\leq\min\{\ddd(\PIF,\in^{*},\mathcal{S}lm(F,b)):\Sigma\frac{b(i)}{F(i)}<\infty\}$
    \end{itemize}
    \end{theorem}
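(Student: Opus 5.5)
The statement has two inequalities. Throughout I use the reformulation from the Introduction: $\non(\mathcal{SM})=\non(\Nstar)=\cov_{t}(\Null)$, the least size of a set $X$ with $X+N=\Cantor$ for some $N\in\Null$. I would prove each inequality by a direct translation between interval-partition descriptions of null sets and slaloms in $\PIF$.

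\emph{Lower bound.} Let $\kappa=\min\{\ddd(\PIF,\in^{\infty},\mathcal{S}lm(F,b)):\Sigma\frac{b(i)}{F(i)}<\infty\}$, and let $|X|<\kappa$. I must show $X+N\neq\Cantor$ for every $N\in\Null$.
\begin{itemize}
\item By the standard combinatorial description of null sets (see \cite{BJ}), fix an interval partition $(I_{n})_{n\in\omega}$ and sets $A_{n}\subseteq 2^{I_{n}}$ with $|A_{n}|\leq 2^{|I_{n}|-n}$ and $N\subseteq\{y:\existsinfty n\ y|_{I_{n}}\in A_{n}\}$.
\item Put $F(n)=2^{|I_{n}|}$, identify $2^{I_{n}}$ with $F(n)$, and put $b(n)=F(n)2^{-n}$. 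Then $\Sigma\frac{b(n)}{F(n)}<\infty$.
\item For $x\in X$ let $T_{x}(n)=x|_{I_{n}}+A_{n}$. Each $T_{x}$ is an element of $\mathcal{S}lm(F,b)$.
\item Since $|\{T_{x}:x\in X\}|<\kappa$, this family is not $\in^{\infty}$-dominating. So some $z\in\PIF$, read as a real in $\Cantor$, satisfies $z|_{I_{n}}\notin T_{x}(n)$ for almost all $n$, for every $x\in X$.
\item That says $(z+x)|_{I_{n}}\notin A_{n}$ for almost all $n$, so $z+x\notin N$. Since addition is mod $2$, this gives $z\notin X+N$.
\end{itemize}
This is exactly a Tukey map, as the paper prefers.

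\emph{Upper bound.} Fix $F,b$ with $\Sigma\frac{b}{F}<\infty$ and an $\in^{*}$-dominating family $\mathcal{D}\subseteq\mathcal{S}lm(F,b)$ of size $\ddd(\PIF,\in^{*},\mathcal{S}lm(F,b))$. I want $X$ with $|X|\leq|\mathcal{D}|$ and a null $N$ with $X+N=\Cantor$.
\begin{itemize}
\item Choose consecutive blocks $K_{n}$ with $2^{|K_{n}|}\leq F(n)<2^{|K_{n}|+1}$. Then $2^{K_{n}}$ injects into $F(n)$, and every $z\in\Cantor$ gives $g_{z}=(z|_{K_{n}})_{n}\in\PIF$.
\item Some $S\in\mathcal{D}$ captures $g_{z}$, and then $z|_{K_{n}}\in S(n)$ for almost all $n$, with $|S(n)|\leq b(n)$ and $\Sigma\, b(n)2^{-|K_{n}|}\leq 2\Sigma\frac{b}{F}<\infty$.
\item The aim is: for each $S$, countably many points $x_{S,j}$ and one fixed null set $N$ such that $z\in^{*}S$ implies $z+x_{S,j}\in N$ for some $j$.
\end{itemize}
The natural template is $N=\{y:\existsinfty n\ \exists i<b(n)\ y|_{K_{n,i}}=0\}$, where each $K_{n}$ is replaced by $b(n)$ disjoint copies $K_{n,i}$ and $x_{S}$ writes the $i$-th element of $S(n)$ on $K_{n,i}$. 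Borel–Cantelli makes $N$ null, since the measure of the $n$-th event is at most $b(n)/F(n)$.

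\emph{Main obstacle.} The template works once $z$'s single guessed block is compared against all $b(n)$ slots. I would therefore recode $g_{z}$ so that $z|_{K_{n,i}}$ is the same guessed value for each $i$, for instance by reading $z$ through a fixed homeomorphism that spreads one block across all its copies. Concretely: let $N'$ be the preimage of $\{y:\existsinfty n\ y|_{K_{n}}\in\{0\}\}$-type events under that map, and check that translations by $x_{S}$ still commute with the coding. This is where the group structure mod $2$ must be used carefully. If the direct recoding fails, I would fall back on the argument of \cite{BartJudahBorelImages}: use a Borel map $\Cantor\to\PIF$ and show that every set of reals whose Borel images in $\PIF$ are $\in^{*}$-bounded by $b$-slaloms with $\Sigma\frac{b}{F}<\infty$ lies in $\Nstar$. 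Taking the contrapositive on the dominating family then gives the bound.
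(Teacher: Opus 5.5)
The paper quotes this theorem from \cite{BartJudahBorelImages} without proof, so the closest comparison is its proof of the two estimates for $\non(\Estar)$. Measured against that, each half of your proposal has a gap.

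\emph{Lower bound.} Your Tukey translation is correct when $N$ has the form $\{y:\existsinfty n\ y|_{I_n}\in A_n\}$ for a single interval partition. The problem is the first bullet, which is not the standard description of null sets. Bartoszy\'nski's theorem (see \cite{BJ}) only says that every null set is the union of two such ``small'' sets, built on interleaved partitions. Concretely, $N\subseteq\{y:\existsinfty k\ y|_{I_k\cup I_{k+1}}\in J_k\}$ with $\sum_k\mu(J_k)<\infty$, and small sets are not closed under unions. Because consecutive constraints share a block, you cannot choose $z$ independently on the blocks of both partitions, so one application of non-domination does not yield $z\notin X+N$. For $\E$ the problem disappears, since Proposition~\ref{CharacterizationE} gives a single partition; that is why the paper's lower bound for $\non(\Estar)$ is literally your argument with $\in^{*}$.

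The gap can be closed with two successive evasions:
\begin{enumerate}
\item Arrange $\mu(J_k)\le 4^{-k}$. By Fubini and Markov, the set of $\tau\in 2^{I_{2m+1}}$ whose section of $J_{2m}$ or of $J_{2m+1}$ has measure above $2^{-2m}$ has measure at most $2^{-2m+1}$.
\item A first evasion of $|X|$ slaloms chooses $z$ on the odd blocks so that $(z+x)|_{I_{2m+1}}$ avoids this set, for every $x\in X$ and almost all $m$.
\item Now each even block faces, for each $x$, a forbidden set of measure $O(2^{-2m})$, and a second evasion chooses $z$ there.
\end{enumerate}
Both rounds use summable ratios, so $|X|<\kappa$ suffices.

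\emph{Upper bound.} This part is not a proof: you name the obstacle and then defer to \cite{BartJudahBorelImages}. The proposed recoding cannot work, because $X+N=\Cantor$ must hold for every $z$. If $h$ is the linear map copying $K_n$ onto all the $K_{n,i}$, the translate $x_S$ writes different strings on different copies and so is not in the range of $h$. Hence $h^{-1}[N+x_S]$ is not a translate of one fixed null set.

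What works is the diagonal trick of Claim~\ref{claimWithConditions}(d) in Lemma~\ref{LemmaTranslatinalE}. Let the slalom coordinate be the \emph{whole} block $L_n=\bigcup_{i<b(n)}K_{n,i}$, write $S(n)=\{s_{n,i}:i<b(n)\}\subseteq 2^{L_n}$, and put $x_S|_{K_{n,i}}=s_{n,i}|_{K_{n,i}}$. Then $z|_{L_n}=s_{n,i}$ forces $(z+x_S)|_{K_{n,i}}\equiv 0$, and your $N$ works with no recoding.

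But now $F(n)=2^{|L_n|}$. For your $N$ (or even the $\foralmostall$-version used in the paper) to be null, the sub-blocks must have length at least about $\log_2 b(n)$, so $F(n)$ must be at least of order $b(n)^{b(n)}$. This gives $\non(\mathcal{SM})\le\ddd(\PIF,\in^{*},\mathcal{S}lm(F,b))$ only for such fast pairs. The minimum in the statement also ranges over slow pairs such as $F(n)=n^{2}b(n)$, where $L_n$ is too short to be cut into $b(n)$ sub-blocks. Since shrinking $F$ with $b$ fixed can only decrease $\ddd(\PIF,\in^{*},\mathcal{S}lm(F,b))$, these slow pairs are exactly the ones you cannot afford to skip. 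You would need a genuinely different argument for them. Note that the paper, using this same diagonal method, only obtains the supremum bound for $\non(\Estar)$.
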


In the following diagram we summarizes cardinal invariants from Cichoń's diagram with slalom numbers that are relevant to the uniformity of additive classes. We will use the following shortcuts in the diagram below:
\begin{itemize}
    \item[--] $\min\bbb(F,\in^{*})=\min\{\bbb(\PIF,\in^{*},\mathcal{S}lm(F,b)):F\in\Baire\}$,
    \item[--] $\sup\ddd(F,\in^{*})=\sup\{\ddd(\PIF,\in^{*},\mathcal{S}lm(F,b)): F\in\Baire\}$,
    \item[--] $\min\bbb(F,\circeq)=\min\{\bbb(\PIF,\circeq,\PIF\times\mathbb{IP}): F\in\Baire\}$,
    \item[--] $\min\ddd(F,\neq^{*})=\min\{\ddd(\PIF,\neq^{*},\PIF): F\in\Baire\}$,
    \item[--] $\sup\bbb(F,\neq^{*})=\sup\{\bbb(\PIF,\neq^{*},\PIF): F\in\Baire\}$,
    \item[--] $\min\bbb(F,\in^{\infty})=\min\{\bbb(\PIF,\in^{\infty},\mathcal{S}lm(F,b)):\Sigma\frac{b(i)}{F(i)}<\infty\}$.
\end{itemize}
\begin{figure}[h!]\label{DiagramSlalomNumbers}
\centering
\tikzset{
  net node/.style = {draw, circle, minimum size=6mm},
  net edge/.style = {->},
  net cut/.style = {shorten >=-10mm, shorten <=-10mm, rounded corners=10mm, color=red},
  net cross/.style = {sloped, allow upside down, pos=.3},
}
\begin{tikzpicture}
        \newcommand{\edge}[5][]{\draw[net edge, #1] (#3) -- coordinate[net cross, name=#2] node[pos=.7, auto]{#5} (#4);}

        \node[name=addN] at ( 0, 0) {$\add(\Null)$};
        \node[name=addM] at ( 6, 0) {$\add(\Meager)$};
        \node[name=covM] at ( 9, 0) {$\cov(\Meager)$};
        \node[name=nonN] at ( 12.5, 0) {$\non(\Null)$};
        
        \node[name=b] at ( 6, 2) {$\bbb$};
        \node[name=d] at ( 9, 2) {$\ddd$};
        
        \node[name=cofN] at ( 12.5, 4) {$\cof(\Null)$};
        \node[name=cofM] at ( 9, 4) {$\cof(\Meager)$};
        \node[name=covN] at ( 0, 4) {$\cov(\Null)$};
        \node[name=nonM] at ( 6, 4) {$\non(\Meager)$};
        
        \node[name=NA] at ( 4, 1) {$\min\bbb(F,\in^{*})$};
        \node[name=nonMA]  at ( 7.5, 1) {$\min\bbb(F,\circeq)$};
        \node[name=nonSMZ] at ( 10.5, 1) {$\min\ddd(F,\neq^{*})$};
        \node[name=nonNstar]  at ( 1.5, 3) {$\min\ddd(F,\in^{\infty})$};
        \node[name=nonEM]  at ( 4, 3) {$\sup\bbb(F,\neq^{*})$};
        \node[name=Laver]  at ( 10.5, 3) {$\sup\ddd(F,\in^{*})$};

        


        \draw[->]       (addN) to (addM);
        \draw[->]       (addN) to (covN);
        \draw[->]       (addM) to (covM);
        \draw[->]       (addM) to (b);
        \draw[->]       (covM) to (nonN);
        \draw[->]       (b) to (d);
        \draw[->]       (b) to (nonM);
        \draw[->]       (nonM) to (cofM);
        \draw[->]       (d) to (cofM);
        \draw[->]       (cofM) to (cofN);
        \draw[->]       (covN) to (nonM);
        \draw[->]       (nonN) to (cofN);
        \draw[->]       (covM) to (d);

        \draw[->]       (addN) to[bend left=10] (NA);
        \draw[->]       (Laver) to (cofN);

        \draw[->]       (covN) to (nonNstar);
        \draw[->]       (nonEM) to (Laver);
        \draw[->]       (NA) to (nonEM);
        \draw[->]       (nonMA) to[bend right=15] (nonM);
        \draw[->]       (nonMA) to (nonSMZ);
        \draw[->]       (nonEM) to (nonM);
        \draw[->]       (nonNstar) to (nonEM);
        \draw[->]       (nonSMZ) to (Laver);
        \draw[->]       (NA) to (nonMA);
        \draw[->]       (nonSMZ) to (nonN);
        \draw[->]       (addM) to (nonMA);
        \draw[->]       (covM) to (nonSMZ);

\end{tikzpicture}\caption{Cicho\'n's diagram with the slalom numbers. The arrows point towards the ZFC-provable larger cadrinal invariant.}
\end{figure}

\subsection{Cardinal invariants of ideals $\HH_{F}$}
We now turn to the study of the ideals of $\PIF$ related to the cardinal invariants $\bbb(\PIF,\circeq,\PIF\times\mathbb{IP})$ and $\ddd(\PIF,\circeq,\PIF\times\mathbb{IP})$. We will be mostly interested in the cardinal invariants of these ideals, in ZFC-provable inequalities of these and in the independence the covering and uniformities of $\HH_{F}$'s and other classical characteristics like $\sss$, $\rrr$, $\bbb$, $\ddd$ and $\cov(\E)$, $\non(\E)$. By Theorem \ref{NonCovMAvscovnonHHF}, we already know $\non(\MA)=\min\{\bbb(\PIF,\circeq,\PIF\times\mathbb{IP}):F\in\Baire\}$ and $\cov(\MA)\geq\sup\{\ddd(\PIF,\circeq,\PIF\times\mathbb{IP}):F\in\Baire\}$.
The relational system $\langle\PIF,\circeq,\PIF\times\mathbb{IP}\rangle$ motivates the  definition of the following ideal on $\PIF$.
\begin{definition}\label{DefinicjaHF}
        Let $F\in\Baire$ be an increasing function. Denote by $\HH_{F}$ the $\sigma$-ideal on $\PIF$ generated by the sets of the form
        \begin{center}
            $H_{I,y}=\{x\in\PIF:\foralmostall_{n\in\omega}\exists k\in I_{n}$ $x(k)=y(n)\}$
        \end{center}
        where $I=(I_{n})_{n\in\omega}$ is an interval partition and $y\in\PIF$.
\end{definition}
Clearly, for any $F\in\Baire$ the $\sigma$-ideal $\mathcal{H}_{F}$ is generated by closed sets and $\mathcal{H}_{F}$ is properly contained in the $\sigma$-ideal of meager subsets of $\PIF$. It is easy to see that if $F_{0}\leq F_{1}$, then $\non(\HH_{F_{1}})\leq \non(\HH_{F_{0}})$ and $\cov(\HH_{F_{0}})\leq \cov(\HH_{F_{1}})$ hold. The following inequalities take place
\begin{proposition}(Mej\'ia, Cardona, Rivera-Madrid; \cite{MejiaCardonaMA} see also Cardona; \cite{CardonaMA})\label{CardonaBasicIneqNonCovHHF}
    The following inequalities hold for any $F\in\Baire$:
    \begin{enumerate}
        \item[--] $\cov(\Meager)\leq\cov(\mathcal{H}_{F})\leq\cof(\Meager)$
        \item[--] $\add(\Meager)\leq\non(\mathcal{H}_{F})\leq\non(\Meager)$
    \end{enumerate}
    also
    \begin{enumerate}
        \item[--] $\non(\HH_{F})\leq\bbb(\PIF,=^{\infty},\PIF)$
        \item[--] $\ddd(\PIF,=^{\infty},\PIF)\leq \cov(\HH_{F})$
    \end{enumerate}

\end{proposition}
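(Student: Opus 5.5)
Throughout I read the generators as $H_{I,y}=\{x\in\PIF:\foralmostall n\ \exists k\in I_{n}\ x(k)=y(k)\}$, i.e. $x\in H_{I,y}$ iff $x\circeq(y,I)$. This is the reading under which $\non(\HH_{F})=\bbb(\PIF,\circeq,\PIF\times\mathbb{IP})$ and $\cov(\HH_{F})=\ddd(\PIF,\circeq,\PIF\times\mathbb{IP})$. I will prove the four inequalities one at a time, mostly by comparing $\HH_{F}$ with the meager ideal and with the relational systems characterizing $\add(\Meager)$, $\cof(\Meager)$ and $=^{\infty}$.

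\emph{Comparison with the meager ideal.} Since $F$ is increasing, $F(n)\geq 2$ for almost all $n$, so $\PIF$ is a perfect compact Polish space. Each $H_{I,y}$ is a countable union of closed sets $\{x:\forall n\geq m\ \exists k\in I_{n}\ x(k)=y(k)\}$. Each of these is nowhere dense: any basic open set can be extended on a later interval $I_{n}$ avoiding $y$ there. Hence $\HH_{F}\subseteq\Meager(\PIF)$. This gives $\cov(\Meager(\PIF))\leq\cov(\HH_{F})$ and $\non(\HH_{F})\leq\non(\Meager(\PIF))$. Covering and uniformity of the meager ideal are the same for all perfect Polish spaces, so these are $\cov(\Meager)$ and $\non(\Meager)$.

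\emph{Comparison with $\add(\Meager)$ and $\cof(\Meager)$.} I use Bartoszyński's characterization $\add(\Meager)=\bbb(\Baire,\circeq,\Baire\times\mathbb{IP})$ and $\cof(\Meager)=\ddd(\Baire,\circeq,\Baire\times\mathbb{IP})$. Truncation gives a Tukey reduction. For $y\in\Baire$ define $y'\in\PIF$ by $y'(k)=y(k)$ if $y(k)<F(k)$ and $y'(k)=0$ otherwise. If $x\in\PIF$ and $x(k)=y(k)$, then $y(k)<F(k)$, so $x(k)=y'(k)$. Hence $x\circeq(y,I)$ implies $x\circeq(y',I)$, i.e. $x\in H_{I,y'}$.
\begin{itemize}
\item[--] Take a $\circeq$-dominating family of size $\cof(\Meager)$ and replace each $(y,I)$ by $(y',I)$. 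The resulting sets $H_{I,y'}$ cover $\PIF$, so $\cov(\HH_{F})\leq\cof(\Meager)$.
\item[--] Let $Z\subseteq\PIF\subseteq\Baire$ have $|Z|<\add(\Meager)$. Then $Z$ is $\circeq$-bounded by some $(y,I)$, so $Z\subseteq H_{I,y'}\in\HH_{F}$. Thus $\add(\Meager)\leq\non(\HH_{F})$.
\end{itemize}

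\emph{Comparison with $=^{\infty}$.} For the covering inequality, let $\{H_{I^{\alpha},y^{\alpha}}\}_{\alpha<\kappa}$ be a family of generators covering $\PIF$; any cover by members of $\HH_{F}$ can be refined to one by generators of the same infinite size. Every $x\in H_{I,y}$ agrees with $y$ at least once in almost every interval, so $x=^{\infty}y$. Hence $\{y^{\alpha}\}_{\alpha<\kappa}$ is $=^{\infty}$-dominating, and $\ddd(\PIF,=^{\infty},\PIF)\leq\cov(\HH_{F})$.

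For the uniformity inequality, the main step (the only one with real content) is to show that every $Z\in\HH_{F}$ is $=^{\infty}$-bounded. Let $Z\subseteq\bigcup_{m}H_{I^{m},y^{m}}$.
\begin{itemize}
\item[--] Fix a sequence $(m_{j})_{j\in\omega}$ in which every $m$ occurs infinitely often.
\item[--] Choose consecutive intervals $B_{0}<B_{1}<\dots$ covering $\omega$ such that each $B_{j}$ contains a full interval of $I^{m_{j}}$ with index at least $j$.
\item[--] Define $y\in\PIF$ by $y|_{B_{j}}=y^{m_{j}}|_{B_{j}}$.
\end{itemize}
Now let $z\in H_{I^{m},y^{m}}$. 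For the infinitely many $j$ with $m_{j}=m$, the block $B_{j}$ contains a late interval $I^{m}_{n}$ on which $z$ agrees with $y^{m}=y$ at some point. Hence $z=^{\infty}y$. Consequently any $Z\subseteq\PIF$ that is not $=^{\infty}$-bounded is not in $\HH_{F}$, which gives $\non(\HH_{F})\leq\bbb(\PIF,=^{\infty},\PIF)$.
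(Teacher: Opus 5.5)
Your proof is correct. The paper gives no argument for this proposition; it is quoted from \cite{MejiaCardonaMA} and \cite{CardonaMA}, so there is no internal proof to compare against. Your four steps are the natural Tukey reductions written out explicitly:
\begin{itemize}
\item[--] the inclusion $\HH_F\subseteq\Meager(\PIF)$;
\item[--] the truncation $y\mapsto y'$, which reduces $\langle\PIF,\circeq,\PIF\times\mathbb{IP}\rangle$ to Bartoszy\'nski's system for $\add(\Meager)$ and $\cof(\Meager)$;
\item[--] the map $(y,I)\mapsto y$, via $H_{I,y}\subseteq\{x:x=^{\infty}y\}$.
\end{itemize}
Your reading $x(k)=y(k)$ of the generators is also the one the paper actually works with, e.g.\ for $H_{y,J}$ in the proof of Proposition \ref{addHFbelowB}.

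A comment on the step you single out as the only one with real content. Group your blocks into consecutive rounds $R_0<R_1<\cdots$ with $\{m_j:j\in R_n\}\supseteq\{0,\dots,n\}$, and put $I_n=\bigcup_{j\in R_n}B_j$. Then the same spliced $y$ satisfies $H_{I^m,y^m}\subseteq H_{I,y}$ for every $m$. So the generators are $\sigma$-directed, and every member of $\HH_F$ lies inside a single $H_{I,y}$.

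This is exactly what justifies the paper's identifications $\non(\HH_F)=\bbb(\PIF,\circeq,\PIF\times\mathbb{IP})$ and $\cov(\HH_F)=\ddd(\PIF,\circeq,\PIF\times\mathbb{IP})$. Once these are granted, $\non(\HH_F)\le\bbb(\PIF,=^{\infty},\PIF)$ follows from the map $(y,I)\mapsto y$ just as the covering inequality does. Your version proves only the weaker fact actually needed, which is fine.

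Some use of the clause \emph{for almost all $n$} is unavoidable at that point. For increasing $F$, if $y_0(k)\neq y_1(k)$ for infinitely many $k$, then the union $\{x:x=^{\infty}y_0\}\cup\{x:x=^{\infty}y_1\}$ is not $=^{\infty}$-bounded. So the $=^{\infty}$-bounded sets do not form an ideal on their own.
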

We first focus on the additivities and cofinalities of ideals $\HH_{F}$. We want to show that the minimum of $\add(\HH_{F})$ for $F\in\Baire$ is equal to $\add(\Meager)$ and the maximum of $\cof(\HH_{F})$ for $F\in\Baire$ is equal to $\cof(\Meager)$. To do this, we will need the following result.
    \begin{proposition}\label{addHFbelowB}
    $\add(\mathcal{H}_{F})\leq\bbb$ and $\ddd\leq\cof(\mathcal{H}_{F})$ hold.
    \end{proposition}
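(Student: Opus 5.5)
The plan is to prove both inequalities at once with a single Tukey reduction of $\langle\Baire,\leq^{*},\Baire\rangle$ (equivalently $\langle\mathbb{IP},\sqsubseteq^{*},\mathbb{IP}\rangle$) to $\langle\HH_{F},\subseteq,\HH_{F}\rangle$. Since $\add(\HH_{F})=\bbb(\HH_{F},\subseteq,\HH_{F})$ and $\cof(\HH_{F})=\ddd(\HH_{F},\subseteq,\HH_{F})$, such a reduction gives $\add(\HH_{F})\leq\bbb$ and $\ddd\leq\cof(\HH_{F})$ directly.

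\textbf{The maps.} Define $\phi$ by sending an interval partition $J=(J_{n})_{n\in\omega}$ to the generator $\phi(J)=H_{J,\bar 0}$, where $\bar 0$ is the constant zero function. Define $\psi$ on $\HH_{F}$ as follows. For $B\in\HH_{F}$, fix a covering $B\subseteq\bigcup_{m}H_{I^{m},y^{m}}$ by generators. Let $\psi(B)=g\in\Baire$, where $g(n)$ is the least $N$ such that $[n,N)$ contains pairwise disjoint intervals $I^{0}_{k_{0}},\dots,I^{n}_{k_{n}}$, placed consecutively, one from each partition $I^{m}$ with $m\leq n$.

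\textbf{The key step.} Suppose $f\in\Baire$ is not dominated by $g$ in a suitable (monotonized) sense. We want a point of $\phi(J_f)$ outside $B$, where $J_f$ is a partition built from $f$. On infinitely many blocks $J_{n}$, the set $J_{n}\setminus\{\min J_{n}\}$ then contains disjoint intervals of $I^{0},\dots,I^{n}$. Define $x\in\PIF$ by the following rules.
\begin{itemize}
\item[--] Put $x(\min J_{n})=0$ for every $n$; this ensures $x\in H_{J,\bar 0}$.
\item[--] On each chosen interval $I^{m}_{k}$ inside such a good block, let $x$ take values different from $y^{m}(k)$, and also nonzero so as not to interfere. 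This needs $F(i)\geq 3$, which holds from some point on because $F$ is increasing; finitely many coordinates do not matter.
\item[--] Elsewhere, set $x$ arbitrarily.
\end{itemize}
Then for every $m$ there are infinitely many $k$ with $x(i)\neq y^{m}(k)$ for all $i\in I^{m}_{k}$. Hence $x\notin H_{I^{m},y^{m}}$ for every $m$, so $x\notin B$, and therefore $\phi(J_f)\not\subseteq B$. Contrapositively, $\phi(J_f)\subseteq B$ forces $f\leq^{*}\psi(B)$, which is exactly the Tukey condition.

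\textbf{Main obstacle.} The delicate point is matching ``$f\not\leq^{*}g$'' to the existence of infinitely many good blocks of $J_f$. Failure of domination only gives points $n$ with $f(n)>g(n)$, and these need not be left endpoints of blocks. I would handle this in two moves. First, replace $g$ by a monotone majorant $g'(n)=\max_{j\leq n+1}g(j)$, using the fact that $g(j)$ is nondecreasing up to shifts. Second, define $J_f$ by $a_{0}=0$ and $a_{n+1}=\max(a_n+1,f(a_n+1)+1)$, and phrase the Tukey map through the standard equivalence $\langle\Baire,\leq^{*},\Baire\rangle\equiv\langle\mathbb{IP},\sqsubseteq^{*},\mathbb{IP}\rangle$ from Blass. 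Concretely, instead of working with $g$ I would let $\psi(B)$ be a partition $K$ each of whose intervals contains consecutive disjoint intervals of $I^{0},\dots,I^{n}$. Then failure of $J\sqsubseteq^{*}K$ means infinitely many $K_{n}$ contain no $J$-interval, so each such $K_{n}$ meets at most two $J$-blocks. Hence half of $K_{n}$ (minus one point) lies inside a single $J$-block, and by doubling the requirement in the definition of $K$ this half still contains the needed intervals. This is the version of the argument I expect to write out in full.
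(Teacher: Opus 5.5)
Your final interval-partition version is correct and is essentially the paper's argument. Both use sets that pin a $0$ at one designated point of each block, together with the observation that a block containing no interval of the other partition lies inside two consecutive ones; you package this as a single Tukey reduction $J\mapsto H_{J,\bar 0}$, with your splitting of each $K_{n}$ into two halves doing the job of the paper's parity sets $X^{0}_{\alpha},X^{1}_{\alpha}$, and building $K$ diagonally from all the generators $H_{I^{m},y^{m}}$ that cover $B$ is slightly better than the paper's write-up, which only rules out containment in (or covering by) a single generator rather than a countable union of them.
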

    \begin{proof}
        First we show that $\add(\mathcal{H}_{F})\leq\bbb$. Let $\{I^{\alpha}:\alpha<\bbb\}$ be an unbounded set of interval partitions. For $\alpha<\bbb$ and $i\in 2$ define the following set
        \begin{center}
            $X^{i}_{\alpha}=\{x\in\PIF:\forall n\in\omega$ $x(\max I^{\alpha}_{2n+i})=0\}$.
        \end{center}
        Clearly, every such $X^{i}_{\alpha}$ is in $\mathcal{H}_{F}$. We claim that $\bigcup\{X^{i}_{\alpha}:\alpha<\bbb$ and $i\in 2\}\notin\mathcal{H}_{F}$. Assume towards contradiction that for some $y\in\PIF$ and an interval partition $J=(J_{n})_{n\in\omega}$ we have that
        \begin{center}
            $\bigcup\{X^{i}_{\alpha}:\alpha<\bbb$ and $i\in 2\}\subseteq H_{y,J}=\{x\in\PIF: \foralmostall n\in\omega$ $\exists k\in J_{n}$ $x(k)=y(k)\}$
        \end{center}
        As the family $\{I^{\alpha}:\alpha<\bbb\}$ is unbounded, there is $\alpha<\bbb$ such that for infinitely many $n$'s in $\omega$ the interval $J_{n}$ does not contain any $I^{\alpha}_{k}$, $k\in\omega$ and thus is itself contained in two consecutive intervals $I^{\alpha}_{k_{n}}\cup I^{\alpha}_{k_{n+1}}$. Then for some $i\in 2$, for infinitely many $n\in\omega$, $J_{n}\subseteq I^{\alpha}_{k_{n}}\cup I^{\alpha}_{k_{n+1}}$ and the parity of every $k_{n}$ is $i\in2$. Let then $x\in X^{i}_{\alpha}$ be such that $x|_{I^{\alpha}_{k_{n}}\cup I^{\alpha}_{k_{n+1}}}\neq y|_{I^{\alpha}_{k_{n}}\cup I^{\alpha}_{k_{n+1}}}$ for these intervals. But then $x\notin H_{y,J}$, which contradicts the previous assumption.\\
        To see that $\ddd\leq\cof(\mathcal{H}_{F})$ we dualize the argument above. Let $\kappa<\ddd$ and assume that $\{H_{\alpha}:\alpha<\kappa\}\subseteq\mathcal{H}_{F}$. We will show that these sets do not form a basis of $\mathcal{H}_{F}$. Without loss of generality assume that each $H_{\alpha}$ is of form $\{x\in\PIF:\foralmostall n\in\omega$ $\exists i\in I^{\alpha}_{n}$ $x(i)=y_{\alpha}(i)\}$ for some $y_{\alpha}\in\PIF$ and interval partition $I^{\alpha}=(I^{\alpha}_{n})_{n\in\omega}$. As $\kappa<\ddd$ there is an interval partition $(J_{n})_{n\in\omega}$ that dominates all $(I^{\alpha}_{n})_{n\in\omega}$'s. Then, similar argument as in the proof of the first inequality shows that the set
        \begin{center}
            $H=\{x\in\PIF:\forall n\in\omega$ $x(\min J_{n})=0\}$
        \end{center}
        is in $\mathcal{H}_{F}$ and cannot be covered by any $H_{\alpha}$ for $\alpha<\kappa$. This proves the second inequality.
    \end{proof}
    At this point we know that the inequalities $\add(\mathcal{H}_{F})\leq\min\{\bbb,\ddd(\PIF,=^{\infty},\PIF)\}$ and $\min\{\ddd,\bbb(\PIF,=^{\infty},\PIF)\}\leq\cof(\mathcal{H}_{F})$ hold for every increasing $F\in\Baire$. By Theorem \ref{MillerAddMeager}, we get the following characterization of additivity and cofinality of meager sets.
    \begin{corollary}
    We have:
        \begin{itemize}
        \item[--] $\add(\Meager)=\min\{\add(\HH_{F}):F\in\Baire\}$,
        \item[--] $\cof(\Meager)=\max\{\cof(\HH_{F}):F\in\Baire\}$.
    \end{itemize}
    \end{corollary}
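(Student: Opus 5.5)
We prove the two equalities separately. For each one, the inequality towards $\add(\Meager)$, respectively $\cof(\Meager)$, comes from bounds already recorded for each fixed $F$ together with Miller's Theorem \ref{MillerAddMeager}. The reverse inequality is proved directly from the generators $H_{I,y}$. Throughout we read the generators as $H_{I,y}=\{x\in\PIF:\foralmostall n\ \exists k\in I_{n}\ x(k)=y(k)\}$, which is the form used in the proof of Proposition \ref{addHFbelowB}.

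\emph{Upper bound for the minimum.} Fix $F$. We have $\add(\HH_{F})\leq\non(\HH_{F})$. By Proposition \ref{CardonaBasicIneqNonCovHHF}, $\non(\HH_{F})\leq\bbb(\PIF,=^{\infty},\PIF)$. The system $\langle\PIF,=^{\infty},\PIF\rangle$ is the dual of $\langle\PIF,\neq^{*},\PIF\rangle$, so $\bbb(\PIF,=^{\infty},\PIF)=\ddd(\PIF,\neq^{*},\PIF)$. Proposition \ref{addHFbelowB} gives $\add(\HH_{F})\leq\bbb$. Hence $\add(\HH_{F})\leq\min\{\bbb,\ddd(\PIF,\neq^{*},\PIF)\}$. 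Taking the minimum over $F$ and applying Theorem \ref{MillerAddMeager} yields $\min_{F}\add(\HH_{F})\leq\add(\Meager)$. The dual chain $\cof(\HH_{F})\geq\cov(\HH_{F})\geq\ddd(\PIF,=^{\infty},\PIF)=\bbb(\PIF,\neq^{*},\PIF)$, together with $\cof(\HH_{F})\geq\ddd$ and the dual form of Miller's theorem, $\cof(\Meager)=\max\{\ddd,\sup_{F}\bbb(\PIF,\neq^{*},\PIF)\}$, gives $\sup_{F}\cof(\HH_{F})\geq\cof(\Meager)$.

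\emph{Lower bound: $\add(\Meager)\leq\add(\HH_{F})$ for every $F$.} Let $\kappa<\add(\Meager)$ and let $\{H_{I^{\alpha},y_{\alpha}}:\alpha<\kappa\}$ be generators. It suffices to find a single $y\in\PIF$ and a single partition $J$ satisfying
\[
(\ast)\quad \forall\alpha<\kappa\ \foralmostall n\ \exists k\ \bigl(I^{\alpha}_{k}\subseteq J_{n}\ \text{and}\ y|_{I^{\alpha}_{k}}=y_{\alpha}|_{I^{\alpha}_{k}}\bigr).
\]
Indeed, if $x\in H_{I^{\alpha},y_{\alpha}}$, then for almost all $k$ we have $x(i)=y_{\alpha}(i)=y(i)$ for some $i\in I^{\alpha}_{k}$. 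So $(\ast)$ gives $H_{I^{\alpha},y_{\alpha}}\subseteq H_{J,y}$ for every $\alpha$.

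To get $(\ast)$ we proceed in three steps.
\begin{enumerate}
\item For each $\alpha$, the set $A_{\alpha}=\{y\in\PIF:\existsinfty k\ y|_{I^{\alpha}_{k}}=y_{\alpha}|_{I^{\alpha}_{k}}\}$ is a dense $G_{\delta}$ subset of $\PIF$. Since $\kappa<\add(\Meager)\leq\cov(\Meager)$, there is $y\in\bigcap_{\alpha<\kappa}A_{\alpha}$.
\item For each $\alpha$, define $g_{\alpha}(m)$ to be the least $l>m$ such that $[m,l)$ contains a whole interval $I^{\alpha}_{k}$ on which $y$ agrees with $y_{\alpha}$. This is well defined because $y\in A_{\alpha}$.
\item Since $\kappa<\bbb$, fix $g\geq^{*}$ every $g_{\alpha}$. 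Build $J$ recursively by $J_{n}=[j_{n},g(j_{n}))$ with $j_{n+1}=g(j_{n})$. For almost all $n$ we then have $g_{\alpha}(j_{n})\leq g(j_{n})$, so $J_{n}$ contains an interval $I^{\alpha}_{k}$ on which $y$ agrees with $y_{\alpha}$. This is $(\ast)$.
\end{enumerate}
The main care is in making step 1 precise. For the cofinality, the dual construction handles any family $\mathcal{F}$ of generators with $|\mathcal{F}|<$ no wait, rather: from a cofinal family for $\Meager$ of size $\cof(\Meager)$ and a dominating family of size $\ddd\leq\cof(\Meager)$, we produce $\cof(\Meager)$ many pairs $(y,J)$. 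The construction above, applied to each single generator, shows these pairs form a base of $\HH_{F}$. Hence $\cof(\HH_{F})\leq\cof(\Meager)$.

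Correcting the slip in the last paragraph: the cofinality argument does not start from a family of generators of small size. It starts from a cofinal family for $\Meager$ of size $\cof(\Meager)$ and a dominating family of size $\ddd\leq\cof(\Meager)$, and produces $\cof(\Meager)$ many pairs $(y,J)$. Applying the three-step construction to a single generator $H_{I,y'}$ shows that it is contained in some $H_{J,y}$ from this list, so the list is a base of $\HH_{F}$ and $\cof(\HH_{F})\leq\cof(\Meager)$.
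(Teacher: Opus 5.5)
Your argument is correct. For $\min_F\add(\HH_F)\le\add(\Meager)$ and $\sup_F\cof(\HH_F)\ge\cof(\Meager)$ it is exactly the paper's proof. It uses Proposition~\ref{addHFbelowB}, the bounds of Proposition~\ref{CardonaBasicIneqNonCovHHF}, the duality $\bbb(\PIF,=^{\infty},\PIF)=\ddd(\PIF,\neq^{*},\PIF)$, and Theorem~\ref{MillerAddMeager} together with its dual. Your bound $\add(\HH_F)\le\bbb(\PIF,=^{\infty},\PIF)$ is the correct form of what the paper writes as $\ddd(\PIF,=^{\infty},\PIF)$, and you rightly read the generators with $y(k)$.

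The paper's proof stops there and takes $\add(\Meager)\le\add(\HH_F)$ and $\cof(\HH_F)\le\cof(\Meager)$ for granted. You prove them by Bartoszy\'nski's classical argument, in the sharper form $\min\{\bbb,\cov(\Meager)\}\le\add(\HH_F)$ and $\cof(\HH_F)\le\max\{\ddd,\non(\Meager)\}$. This fills a real omission.

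Two details need tightening:
\begin{itemize}
\item[--] A base must absorb every member of $\HH_F$, that is, countable unions of generators and not just single generators. Run your three steps on a countable family at once; Baire category and $\aleph_0<\bbb$ suffice. Take $y=y_\xi$ with $y_\xi\notin M_\xi$ for a cofinal family $\{M_\xi\}$ of meager sets, and take $g$ from a fixed dominating family normalized so that $g(m)>m$. The same countable case also lets you replace any base by a base of generators of the same size.
\item[--] Delete the aborted ``no wait'' sentence.
\end{itemize}

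The remaining point, shared with the paper, is the word $\max$. What you obtain is only $\sup_F\cof(\HH_F)=\cof(\Meager)$, and attainment is automatic only when $\ddd=\cof(\Meager)$. One extra observation fixes this and gives more.

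\emph{Observation.} If $y|_{I_k}\neq y'|_{I_k}$ for almost all $k$, then $H_{I,y'}\not\subseteq H_{J,y}$ for every partition $J$. To see this, for large $k$ pick $i_k\in I_k$ with $y(i_k)\neq y'(i_k)$. Set $x(i_k)=y'(i_k)$, and choose $x(i)\neq y(i)$ at every other large $i$. Then $x\in H_{I,y'}$, but $x(i)\neq y(i)$ for all large $i$, so $x\notin H_{J,y}$.

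\emph{Cofinality.} By the $\PIF$-version of Proposition~\ref{CharacterizationM}, every meager subset of $\PIF$ lies in a set of the form $\{y:\foralmostall k\ y|_{I_k}\neq y'|_{I_k}\}$. So the $y$-parts of any base of generators form a non-meager set. Hence $\cof(\HH_F)\ge\non(\Meager)$, and together with $\cof(\HH_F)\ge\ddd$ you get $\cof(\HH_F)=\cof(\Meager)$ for every $F$. In particular the maximum is attained.

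\emph{Additivity.} Cover $\PIF$ by $\cov(\Meager)$ sets of the above form and take the corresponding generators $H_{I,y'}$. If their union were in $\HH_F$, it would lie inside a single $H_{J,y}$. But $y$ belongs to one of the covering sets, which contradicts the observation. So $\add(\HH_F)\le\cov(\Meager)$. Combined with your lower bound, $\add(\HH_F)=\add(\Meager)$ for every $F$. This also answers the question the paper raises right after the corollary.
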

    I do not know if additivity of $\HH_{F}$ is not simply equal to the additivity of meager sets in ZFC (for some or any $F\in\Baire$). The same goes for cofinalities of $\HH_{F}$'s.\\

    Next, we focus on the independence of the numbers $\non(\mathcal{H}_{F})$ and $\cov(\mathcal{H}_{F})$ from the numbers $\bbb$ and $\ddd$. We have the following in the case when $F\in\Baire$ has finite range.
        \begin{proposition}
        If $F\in \Baire$ is bounded, then:
        \begin{itemize}
            \item[--] $\cov(\mathcal{H}_{F})\leq\ddd$,
            \item[--] $\bbb\leq\non(\mathcal{H}_{F})$.
        \end{itemize}
    \end{proposition}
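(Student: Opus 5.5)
The plan is to use the fact that when $F$ is bounded, say $F(n)\leq m$ for all $n$, every $x\in\PIF$ takes values in the finite set $m=\{0,\dots,m-1\}$. The key observation is that the generators $H_{J,y}$ with $y$ a \emph{constant} function $c<m$ already give both inequalities. A constant $y$ also makes the distinction between the conditions ``$x(k)=y(n)$'' and ``$x(k)=y(k)$'' irrelevant, so the argument works with either reading of the generators.

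First, for each $x\in\PIF$, pigeonhole gives some $c_x<m$ with $x(k)=c_x$ for infinitely many $k$. Define the gap function $g_x\in\Baire$ by letting $g_x(j)$ be the least $l>j$ with $x(l)=c_x$. Next, to any increasing $f\in\Baire$ with $f(j)>j$, attach the interval partition $J^{f}=(J^f_n)_{n\in\omega}$ given by $a_0=0$, $a_{n+1}=f(a_n)+1$ and $J^f_n=[a_n,a_{n+1})$. The central claim is: if $g_x\leq^{*}f$, then $x\in H_{J^f,c_x}$. Indeed, for almost all $n$ we have $g_x(a_n)\leq f(a_n)<a_{n+1}$, so there is $k\in J^f_n$ with $x(k)=c_x$. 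This claim is the only real content, and the main thing to check is that ``almost all $j$'' for $g_x\leq^* f$ transfers to ``almost all $n$'', which holds since $a_n\to\infty$.

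For $\cov(\HH_F)\leq\ddd$, fix a dominating family $\mathcal{D}$ of size $\ddd$, which we may take to consist of increasing functions with $f(j)>j$. The family $\{H_{J^f,c}: f\in\mathcal{D},\ c<m\}$ has size $\ddd\cdot m=\ddd$. It covers $\PIF$: given $x$, choose $f\in\mathcal{D}$ with $g_x\leq^{*}f$; then $x\in H_{J^f,c_x}$ by the claim.

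For $\bbb\leq\non(\HH_F)$, let $X\subseteq\PIF$ with $|X|<\bbb$. Then $\{g_x:x\in X\}$ is bounded, so there is a single increasing $f$ (with $f(j)>j$) such that $g_x\leq^{*}f$ for all $x\in X$. By the claim, $X\subseteq\bigcup_{c<m}H_{J^f,c}$, which is a finite union of generators and hence belongs to $\HH_F$. So every set of size $<\bbb$ lies in $\HH_F$, giving $\bbb\leq\non(\HH_F)$.
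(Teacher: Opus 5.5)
Your proof is correct and follows the paper's argument: both use generators $H_{J,c}$ with constant $y=c$, where $c$ is a value that $x$ takes infinitely often, and get membership by choosing a partition each of whose intervals eventually contains such a hit. The only difference is that you build the partition $J^{f}$ from a dominating (resp.\ bounding) function by hand, whereas the paper works directly with dominating and bounding interval partitions, citing the Tukey equivalence of $\langle\mathbb{IP},\sqsubseteq^{*},\mathbb{IP}\rangle$ and $\langle\Baire,\leq^{*},\Baire\rangle$.
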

    \begin{proof}
        Assume that $F$ is bounded by $C\in\omega$. To show that $\cov(\mathcal{H}_{F})\leq\ddd$ let $I_{\alpha}=(I_{n}^{\alpha})_{n\in\omega}$'s where $\alpha<\ddd$ form a dominating family of interval partitions. We claim that the sets $H_{i,I^{\alpha}}$ for $i\leq C$ and $\alpha<\ddd$ cover the entire $\PIF$. To see this assume that $x\in\ \PIF$. Let $i\leq C$ and let $J=(J_{n})_{n\in\omega}\in\mathbb{IP}$ be such that for every $n\in\omega$ there is $j\in J_{n}$ with $x(j)=i$. But then, if $\alpha<\ddd$ is such that $J\leq^{*}I^{\alpha}$, then $x\in H_{i,I^{\alpha}}$.\\
        To show that $\bbb\leq\non(\mathcal{H}_{F})$ assume that $\kappa<\bbb$ and $\{x_{\alpha}:\alpha<\kappa\}\subseteq\PIF$. For every $\alpha<\kappa$ and $i\leq C$, if there are infinitely many $n\in\omega$ such that $x_{\alpha}(n)=i$, then define an interval partition $I^{i,\alpha}=(I^{i,\alpha}_{n})_{n\in\omega}$ such that for every $n\in\omega$ there is $j\in I^{i,\alpha}_{n}$ such that $x_{\alpha}(j)=i$. Let $J$ be an interval partition that dominated all $I^{i,\alpha}$'s, $i\leq C$, $\alpha<\kappa$. Then clearly $\{x_{\alpha}:\alpha<\kappa\}\subseteq\bigcup_{i\leq C}H_{i,J}$. 
    \end{proof}
    We will now focus on the independence of the invariants $\non(\HH_{F})$ and $\cov(\HH_{F})$ from $\bbb$ and $\ddd$ for increasing $F\in\Baire$. In \cite{MejiaCardonaMA} and \cite{CardonaMA} two different constructions of a model satisfying $\bbb<\non(\mathcal{H}_{F})$ were given. In \cite{CardonaMA} the author raised a question whether $\ddd<\non(\mathcal{H}_{F})$ is consistent. This inequality follows from the consistency of $\ddd<\min\{\bbb(\PIF,\in^{*},\mathcal{S}lm(F,b)): F\in\Baire\}$ proven by Brendle and Shelah in \cite{Evasionprediction}. Below we present a different proof of this inequality together with several other consistency results.
    \begin{proposition}
        For any increasing $F\in\Baire$ the following inequalities are consistent:
        \begin{itemize}
            \item[--] $\non(\mathcal{H}_{F})<\bbb$,
            \item[--] $\ddd<\non(\mathcal{H}_{F})$,
            \item[--] $\bbb<\cov(\mathcal{H}_{F})$,
            \item[--] $\ddd<\cov(\mathcal{H}_{F})$ for $F\in\Baire$ such that $\Sigma_{n}\frac{1}{F(n)}<\infty$,
            \item[--] $\cov(\mathcal{H}_{F})<\bbb$.
        \end{itemize}
    \end{proposition}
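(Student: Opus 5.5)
We verify each of the five inequalities in a standard model over CH. For each model we use either a known value of $\bbb,\ddd,\cov(\Meager),\cov(\Null)$, or one ZFC inclusion between $\HH_{F}$ and a classical ideal. The only inequality needing a new forcing is the second, and we expect it to be the main obstacle.

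\emph{Laver model: $\non(\HH_{F})<\bbb$ and $\cov(\HH_{F})<\bbb$.} In the Laver model $\bbb=\omega_{2}$, and the countable support iteration of Laver forcing has the Laver property. We show that the $\omega_{1}$ ground model reals form a non-$\HH_{F}$ set, and that the ground model sets $H_{I,y}$ cover $\PIF$.
\begin{enumerate}
\item[(i)] Uniformity. Let $y\in\PIF$ lie in the extension and let $b$ be a slowly growing ground model function with $b(k)<F(k)$; since $F$ is increasing this is possible for almost all $k$. By the Laver property there is a ground model $S\in\mathcal{S}lm(F,b)$ with $y\in^{*}S$. Choose in the ground model $x\in\PIF$ with $x(k)\notin S(k)$ for almost all $k$. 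Then $x\neq^{*}y$, so $x\notin H_{I,y}$ for every interval partition $I$. Hence $\PIF\cap\mathbb{V}$ is not in $\HH_{F}$.
\item[(ii)] Covering. Fix in the ground model an interval partition $I$ with $|I_{n}|\geq n$. Given a new $x\in\PIF$, apply the Laver property to $n\mapsto x|_{I_{n}}$, which lies in the finite product $\prod_{k\in I_{n}}F(k)$. This gives a ground model slalom with $|S(n)|\leq n$ capturing $x|_{I_{n}}$ for almost all $n$. Pick distinct positions $k_{1},\dots,k_{n}\in I_{n}$ and put $y(k_{j})=\sigma_{j}(k_{j})$, where $S(n)=\{\sigma_{1},\dots,\sigma_{n}\}$. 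Then $x\in H_{I,y}$ with $I,y$ in the ground model, so $\cov(\HH_{F})=\omega_{1}$.
\end{enumerate}

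\emph{Cohen and random models: $\bbb<\cov(\HH_{F})$ and $\ddd<\cov(\HH_{F})$.} In the Cohen model $\bbb=\omega_{1}$ and $\cov(\Meager)=\omega_{2}$, and $\cov(\Meager)\leq\cov(\HH_{F})$ by Proposition \ref{CardonaBasicIneqNonCovHHF}. For the fourth item, take the random model, where $\ddd=\omega_{1}$ and $\cov(\Null)=\omega_{2}$. It suffices to show that, when $\Sigma_{n}\frac{1}{F(n)}<\infty$, every $H_{I,y}$ is null for the uniform product measure on $\PIF$. The probability that $x$ agrees with $y$ somewhere in $I_{n}$ is at most $p_{n}:=\sum_{k\in I_{n}}\frac{1}{F(k)}$, and $p_{n}\to0$. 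The events for different $n$ are independent, so for each $m$ the set of $x$ hitting $y$ in every $I_{n}$ with $n\geq m$ has measure at most $\prod_{n\geq m}p_{n}=0$. Taking the union over $m$ shows $H_{I,y}$ is null. Hence $\HH_{F}\subseteq\Null$, so $\cov(\Null)\leq\cov(\HH_{F})$.

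\emph{Main obstacle: $\ddd<\non(\HH_{F})$.} The plan is a countable support iteration of length $\omega_{2}$ over CH of a proper, $\Baire$-bounding tree forcing $\mathbb{P}_{F}$. Fix a ground model partition $I$ with blocks growing very fast. Conditions are finitely branching trees of partial functions, branching at the block $I_{n}$ among a large family $A_{n}\subseteq\prod_{k\in I_{n}}F(k)$. The norm on $A_{n}$ measures how many points $x\in\PIF$ are guaranteed to be hit somewhere in $I_{n}$ by every member of $A_{n}$. Concretely, $A_{n}$ should still contain a large subfamily all of whose members agree with $x$ at some coordinate of $I_{n}$, even after finitely many earlier $x$'s have been imposed.

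The density argument is: given a condition and a ground real $x$, shrink the condition, without destroying norms beyond a fixed level, so that every branch agrees with $x$ somewhere in almost every block. Then the generic $y$ satisfies $\PIF\cap\mathbb{V}\subseteq H_{I,y}$. Finite branching together with a fusion argument on norms gives properness and $\Baire$-boundedness, both preserved in the countable support iteration. This yields $\ddd=\omega_{1}$, while every set of size $\omega_{1}$ is in $\HH_{F}$, so $\non(\HH_{F})=\omega_{2}$.

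The delicate step is choosing $|I_{n}|$ and the norms so that the countably many shrinkings needed in fusion remain compatible. As a fallback, the paper notes this inequality follows from Brendle and Shelah's consistency of $\ddd<\min\{\bbb(\PIF,\in^{*},\mathcal{S}lm(F,b)):F\in\Baire\}$. One would combine that with $\min\{\bbb(\PIF,\in^{*},\mathcal{S}lm(F,b)):F\in\Baire\}\leq\non(\HH_{F})$, which can be obtained by the block-slalom trick of (ii) above.
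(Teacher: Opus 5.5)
Your proposal is correct. Items one and five coincide with the paper; items two, three and four take different routes.

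For $\non(\HH_F)<\bbb$ and $\cov(\HH_F)<\bbb$, your Laver-model arguments are exactly Claims \ref{claimLaverProperty1} and \ref{claimLaverProperty2} of the paper. In (i) you, like the paper, only defeat a single generator $H_{I,y}$. This is enough: countably many generators $H_{I^m,y_m}$ lie inside one $H_{J,z}$, where each $J_n$ contains pairwise disjoint whole intervals of every $I^m$ with $m\le n$, and $z$ copies $y_m$ on the one taken from $I^m$.

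For $\bbb<\cov(\HH_F)$, the paper iterates $\mathbb{P}_{\HH_F}$ with countable support and uses Zapletal's theorem that it preserves Baire category, so $\bbb\le\non(\Meager)=\omega_1$. Your Cohen model plus $\cov(\Meager)\le\cov(\HH_F)$ is more elementary. The paper's model has the advantage that it is reused for $\non(\E)<\cov(\HH_F)$.

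For the fourth item you work in the same random model as the paper. Instead of quoting $\cov(\E)\le\cov(\HH_F)$ from Lemma \ref{CardonaMAnons}, you verify directly that each $H_{I,y}$ is null for the product measure. Since your sets $\bigcap_{n\ge m}A_n$ are closed, this computation in fact reproves $\HH_F\subseteq\E$.

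The real difference is in $\ddd<\non(\HH_F)$. The paper builds a specific forcing whose trees have, at level $n$, the ``good'' sets $\{\tau\in\prod_{i\in I_n}F(i):\exists i\in I_n\ \tau(i)=\sigma(i)\}$ as successors. These are normed by the rule ``norm $\ge l+1$ iff every two-piece partition has a piece of norm $\ge l$''. Property c) of this norm gives the one-step density argument, and a rank argument plus fusion gives $\Baire$-boundedness. Your main sketch is in the same spirit, but it leaves the norm and the fusion step unspecified, so by itself it is not a proof.

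Your fallback does close the item. The block-slalom trick correctly gives $\min\{\bbb(\Pi G,\in^{*},\mathcal{S}lm(G,b)):G\in\Baire\}\le\non(\HH_F)$; this also follows from $\non(\NA)\le\non(\MA)\le\non(\HH_F)$. Combined with the cited consistency of $\ddd<\min\{\bbb(\Pi G,\in^{*},\mathcal{S}lm(G,b)):G\in\Baire\}$, this is exactly the alternative route the paper itself mentions. The trade-off is that your version depends on that external theorem, whereas the paper's forcing proves the inequality from scratch and is tailored to $\HH_F$.
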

    \begin{proof}
        First, we show the consistency of $\non(\mathcal{H}_{F})<\bbb$. The inequality holds in Laver model in which $\bbb=\ddd=\omega_{2}$. To show that $\non(\mathcal{H}_{F})=\omega_{1}$ in Laver model we will use the following claim and the fact that Laver forcing (and its countable support iteration) has the Laver property.
        \begin{claim}\label{claimLaverProperty1}
            For an increasing $F\in\Baire$ Laver property keeps $\non(\mathcal{H}_{F})$ small.
        \end{claim}
        \begin{proof}
            Let $p\forces"\dot{y}\in\PIF$ and $\dot{I}=(\dot{I}_{n})_{n}\in\mathbb{IP}"$. Let $q\leq p$ and $S=(S_{n})_{n\in\omega}$ be such that $q\forces"\foralmostall n\in\omega$ $\dot{y}(n)\in S_{n}"$. Without loss of generality, we may assume that $|S_{n}|< F(n)$ for every $n\in\omega$. Then any $x\in\PIF$ such that $x(n)\in F(n)\setminus S_{n}$ satisfies $q\forces"x\notin H_{\dot{y}, \dot{I}}"$.
        \end{proof}
        As the Laver property keeps $\non(\HH_{F})$ small and is preserved by countable support iteration, the $\non(\HH_{F})=\omega_{1}$ holds in the Lavers model.\\
        
        Next, we show the consistency of $\ddd<\non(\HH_{F})$. Fix an interval partition $I=(I_{n})_{n\in\omega}$. We say that $A\subseteq \Pi_{i\in I_{n}}F(i)$ is good if $A=\{\tau\in \Pi_{i\in I_{n}}F(i):\exists i\in I_{n}$ $\tau(i)=\sigma(i)\}$ for some $\sigma\in \Pi_{i\in I_{n}}F(i)$. Let $Good_{n}$ stands for the collection of good subsets of $\Pi_{i\in I_{n}}F(i)$. Define the forcing $\mathbb{P}$ as follows: $T\in \mathbb{P}$ if
            \begin{enumerate}
                \item[1.] $T$ is a finitely splitting tree,
                \item[2.] for every $t\in T$ and $n\in dom(t)$ $t(n)\subseteq  \Pi_{i\in I_{n}}F(i)$ is good,
                \item[3.] $lim_{t\in T} norm_{n}(succ_{T}(t))=\infty$
            \end{enumerate}
            where the norm $norm_{n}:\mathcal{P}(Good_{n})\rightarrow \mathbb{Z}$ is recursively defined as follows: for $\mathcal{A}\subseteq Good_{n}$
            \begin{enumerate}
                \item[\textbf{a)}] $norm_{n}(\mathcal{A})\geq 0$ if and only if $\bigcup\mathcal{A}=\Pi_{i\in I_{n}}F(i)$,
                \item[\textbf{b)}] if $norm_{n}(\mathcal{A})=l+1$ and $\mathcal{A}=\mathcal{A}_{0}\cup\mathcal{A}_{1}$ then $norm_{n}(\mathcal{A}_{0})\geq l$ or $norm_{n}(\mathcal{A}_{1})\geq l$.
            \end{enumerate}
            In particular $\textbf{a)}$ and $\textbf{b)}$ implies that
            \begin{enumerate}
                \item[\textbf{c)}] if $norm_{n}(\mathcal{A})=l+1$ and $\sigma\in \Pi_{i\in I_{n}}F(i)$ then $norm_{n}(\{A\in\mathcal{A}:\sigma\in A\})\geq l$
            \end{enumerate}
            Clearly if $G\subseteq\mathbb{P}$ is generic filter then $\dot{R}_{gen}=\bigcup\bigcap G$ is a function with domain $\omega$ and for every $n\in\omega$ $\dot{R}_{gen}(n)\in Good_{n}$. In particular, there is $\dot{r}_{gen}\in \PIF$ such that for every $n\in\omega$ we have
            \begin{center}
                $\dot{R}_{gen}(n)=\{\tau\in \Pi_{i\in I_{n}}F(i):\exists i\in I_{n}$ $\tau(i)=(\dot{r}_{gen}|_{I_{n}})(i)\}$
            \end{center}
            Standard density argument together with the property $\textbf{c)}$ shows that for every $x\in \PIF$ from the ground model we have $x\in H_{I,r_{gen}}$. It follows that in $\mathbb{V}^{\omega_{2}}$ (the model obtained by countable support iteration of $\mathbb{P}$ of length $\omega_{2}$ over a model of $GCH$) we have $\non(\HH_{F})=\omega_{2}$.\\
            We will show that the forcing $\mathbb{P}$ is $\Baire$-bounding and thus $\ddd=\omega_{1}$ holds in $\mathbb{V}^{\omega_{2}}$.\\
            For $T\in\mathbb{P}$ and $n\in\omega$ define
            \begin{center}
                $lvl_{n}(T)=\min\{m\in\omega:\forall t\in T$ ($|t|\geq m \rightarrow norm(succ_{T}(t))\geq n$)$\}$
            \end{center}
            and let $T\leq_{n}S$ if $T\leq S$ and $\{t\in T:|t|\leq lvl_{n}(S)\}=\{t\in S:|t|\leq lvl_{n}(S)\}$. We will now show that the forcing notion $\mathbb{P}$ is $\Baire$-bounding. Similar argument shows that it has axiom A.\\
            We will need the following claim
            \begin{claim}
                $\forall T\Vdash"\dot{a}\in\omega"$ $\forall n\in\omega$ $\exists S\leq_{n} T$ $\exists X\subseteq\omega$ finite such that $S\Vdash"\dot{a}\in X"$.
            \end{claim}
            \begin{proof}
            Define the rank function $rank:T\rightarrow\omega$ such that for $t\in T$ we have: 
                \begin{center}
                    $rank(t)=0$ iff there is $n$-tree with stem $t$ which decides the value of $\dot{a}$,\\
                    $rank(t)=l+1$ iff $norm(\{A\in succ_{T}(t): rank(t^{\frown}A)=l\})\geq norm(succ_{T}(t))-1$
                \end{center}
                First, notice that the rank is well-defined for all $t\in T$. This is because if for some $t\in T$ the rank would be undefined $rank(t)=\infty$ then $norm(\{A\in succ_{T}(t):rank(t^{\frown}A)$ is defined$\})<norm(succ_{T}(t))-1$ and thus by the condition $\textbf{b)}$ we have $norm(\{A\in succ_{T}(t):rank(t^{\frown}A)$ is not defined$\})\geq norm(succ_{T}(t))-1$. Using this we can inductively construct a tree $S\in \mathbb{P}$, $S\leq T$ with stem $t$ such that for every $s\in S$ that extends the stem the $rank(s)$ is not defined. But then no extension of $S$ can decide the value of $\dot{a}$.\\
                For every $t\in T$ of length $lvl_{n}(T)$ we will construct the a tree $S_{t}\leq T|_{t}$ such that
                \begin{enumerate}
                    \item[--] $stem(S_{t})=t$,
                    \item[--] $\forall s\in S_{t}$ ( $t\subseteq s$ $\rightarrow$ $norm_{n}(succ_{S_{t}}(s))\geq n$)
                    \item[--] $S_{t}$ forces $\dot{a}$ into a finite set
                \end{enumerate}
                If $t$ is of rank $0$, then such tree exists by the definition. If $rank(t)>0$, then there is a finite tree $R\subseteq T|_{t}$ such that 
                \begin{itemize}
                    \item[--] $\forall s\in term(R)$ $rank(s)=0$,
                    \item[--] $\forall s\in R\setminus term(R)$ that extends $t$, $norm(succ_{R}(s))\geq norm(succ_{T}(s))-1$
                \end{itemize}
                Then let $S_{t}\leq T|_{t}$ be such that $R\subseteq S_{t}$ and for every $s\in term(R)$ $S_{t}|_{s}$ decides the value of $\dot{a}$. Then the tree $S:=\bigcup\{S_{t}:t\in T$ and $|t|=lvl_{n}(T)\}$ forces $\dot{a}$ into a finite set and also $S\leq_{n}T$.
            \end{proof}
            Assume now that $T\Vdash"\dot{f}\in\Baire"$. Using the claim above we can inductively construct a sequence $\{T_{n}:n\in\omega\}\subseteq \mathbb{P}$ and a sequence $\{X_{n}:n\in\omega\}$ of finite subsets of $\omega$ such that $T_{0}=T$ and for every $n\in\omega$ we have $T_{n+1}\leq_{n}T_{n}$ and $T_{n+1}\Vdash"\dot{f}(n)\in X_{n}"$. Clearly, if $S$ is fusion of the sequence $\{T_{n}:n\in\omega\}$, then $S$ forces that $\dot{f}$ is dominated by a ground model function.\\

        Next, we show the consistency of $\bbb<\cov(\mathcal{H}_{F})$. The inequality holds in the model obtained by c.s.i. of length $\omega_{2}$ with $\mathbb{P}_{\mathcal{H}_{F}}=\mathcal{B}or(\PIF)\setminus\mathcal{H}_{F}$ as each iterand. In this model we have $\cov(\mathcal{H}_{F})=\omega_{2}$ as we add appropriate generic reals along the iteration. On the other hand, by Theorem \ref{ZapletalClosedSetsForcingProperties} $\mathbb{P}_{\mathcal{H}_{F}}$ preserves Baire category (see also \cite{Cieslak}) so $\non(\Meager)=\omega_{1}$ in the iterated model.\\

        Next, we show the consistency of $\ddd<\cov(\mathcal{H}_{F})$ for $F\in\Baire$ fast enough that $\Sigma_{n}\frac{1}{F(n)}<\infty$. This inequality holds in random model. It is well-known that $\ddd=\omega_{1}<\cov(\Null)=\cov(\E)=\continuum=\omega_{2}$ holds in that model. On the other hand, by Proposition \ref{CardonaMAnons} we have that $\cov(\E)\leq\cov(\mathcal{H}_{F})$ for $F\in\Baire$ such that $\Sigma_{n}\frac{1}{F(n)}<\infty$.\\

        Next, we show the consistency of $\cov(\mathcal{H}_{F})<\bbb$. To show this, we again use Laver model in which $\bbb=\omega_{2}$. To complete the proof, we will show the following simple observation.
        \begin{claim}\label{claimLaverProperty2}
            For an increasing $F\in\Baire$ Laver property keeps $\cov(\mathcal{H}_{F})$ small.
        \end{claim}
        \begin{proof}
            Assume that  $p\forces"\dot{x}\in\PIF"$ and let $I=(I_{n})_{n\in\omega}$ be such interval partition that $|I_{n}|=n+1$ for all $n\in\omega$. Let $q\leq p$ and $S=(S_{n})_{n\in\omega}$ be such that
            \begin{itemize}
                \item[--] $\forall n\in\omega$ $\forall \sigma\in S_{n}$ $\sigma:I_{n}\rightarrow\omega$ is below $F$,
                \item[--] $q\forces"\foralmostall n\in\omega$ $\dot{x}|_{I_{n}}\in S_{n}"$.
            \end{itemize}
            Let then $y\in\PIF$ be such that for all $n\in\omega$ and every $\sigma\in S_{n}$ there is unique $i\in I_{n}$ such that $y(i)=\sigma(i)$. Such $y$ exists as $|S_{n}|=|I_{n}|$. Then, clearly $q\forces"\dot{x}\in H_{y, I}"$.
        \end{proof}
    \end{proof}
    I do not know the consistency of the inequality $\ddd<\cov(\mathcal{H}_{F})$ for $F\in\Baire$ such that $\Sigma_{n}\frac{1}{F(n)}=\infty$. 
    We now turn to the numbers $\non(\mathcal{H}_{F})$ and $\cov(\mathcal{H}_{F})$, and that these are independent from the numbers $\rrr$ and $\sss$. We have the following result.
    \begin{proposition}\label{HHFvsSplittingReaping}
        For any increasing $F\in\Baire$ the following inequalities are consistent:
        \begin{itemize}
            \item[--] $\non(\mathcal{H}_{F})<\sss$,
            \item[--] $\sss<\non(\mathcal{H}_{F})$,
            \item[--] $\non(\mathcal{H}_{F})<\rrr$,
            \item[--] $\rrr<\non(\mathcal{H}_{F})$,
            \item[--] $\cov(\mathcal{H}_{F})<\sss$,
            \item[--] $\sss<\cov(\mathcal{H}_{F})$,
            \item[--] $\cov(\mathcal{H}_{F})<\rrr$,
            \item[--] $\rrr<\cov(\mathcal{H}_{F})$ for sufficiently fast $F$.
        \end{itemize}
    \end{proposition}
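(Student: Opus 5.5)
The plan is to settle each of the eight inequalities by exhibiting a standard model, usually of length $\omega_{2}$ over a model of CH. In each model one side is computed from known values of $\sss$ and $\rrr$. The other side is controlled by results already available: the bounds of Proposition \ref{CardonaBasicIneqNonCovHHF} ($\add(\Meager)\leq\non(\HH_{F})\leq\non(\Meager)$ and $\cov(\Meager)\leq\cov(\HH_{F})\leq\cof(\Meager)$), Claims \ref{claimLaverProperty1} and \ref{claimLaverProperty2} (the Laver property keeps $\non(\HH_{F})$ and $\cov(\HH_{F})$ small), the $\Baire$-bounding tree forcing $\mathbb{P}$ from the previous proposition (whose c.s.i. gives $\non(\HH_{F})=\omega_{2}$ and $\ddd=\omega_{1}$), and the inequality $\cov(\E)\leq\cov(\HH_{F})$ for $\Sigma_{n}\frac{1}{F(n)}<\infty$. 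I will also use the ZFC facts $\sss\leq\ddd,\non(\Meager)$ and $\bbb,\cov(\Meager),\cov(\Null)\leq\rrr$.

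\textbf{The easy cases.}
\begin{itemize}
\item[--] $\non(\HH_{F})<\sss$ and $\cov(\HH_{F})<\sss$: use the Mathias model. There $\mathfrak{h}=\omega_{2}$, hence $\sss=\omega_{2}$. Mathias forcing and its countable support iterations have the Laver property, so both $\non(\HH_{F})$ and $\cov(\HH_{F})$ equal $\omega_{1}$.
\item[--] $\sss<\non(\HH_{F})$: use the c.s.i. of $\mathbb{P}$. There $\non(\HH_{F})=\omega_{2}$ while $\sss\leq\ddd=\omega_{1}$.
\item[--] $\sss<\cov(\HH_{F})$: use the Cohen model. There $\cov(\HH_{F})\geq\cov(\Meager)=\omega_{2}$, while $\sss\leq\non(\Meager)=\omega_{1}$.
\item[--] $\non(\HH_{F})<\rrr$: use the random model. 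There $\rrr\geq\cov(\Null)=\omega_{2}$, while $\non(\HH_{F})\leq\non(\Meager)=\omega_{1}$.
\item[--] $\cov(\HH_{F})<\rrr$: use the Laver model (or the Mathias model). There $\rrr\geq\bbb=\omega_{2}$, while Claim \ref{claimLaverProperty2} gives $\cov(\HH_{F})=\omega_{1}$.
\end{itemize}

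\textbf{The case $\rrr<\non(\HH_{F})$.} Again I would use the c.s.i. of $\mathbb{P}$, and I additionally need $\rrr=\omega_{1}$ there. The plan is to show that $\mathbb{P}$ preserves P-points; since preservation of P-points is preserved by countable support iterations of proper forcings, a ground-model P-point then generates an ultrafilter in the extension, giving $\rrr\leq\mathfrak{u}=\omega_{1}$. To prove preservation I would redo the fusion argument from the $\Baire$-bounding proof. Given a name $\dot{A}\subseteq\omega$ and a P-point $\mathcal{U}$, the rank argument together with property \textbf{b)} of the norm should let me halve the norm at each level and decide whether larger and larger finite pieces of $\omega$ lie in $\dot{A}$ or in its complement. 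The P-point property is then used to diagonalize, producing $U\in\mathcal{U}$ that is forced, modulo finite, into $\dot{A}$ or into $\omega\setminus\dot{A}$. This is the standard scheme for finitely splitting norm forcings with the halving property.

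\textbf{The case $\rrr<\cov(\HH_{F})$, and the main obstacle.} I would take the c.s.i. of the idealized forcing $\mathbb{P}_{\HH_{F}}$, or of a tree version of it, over CH. This gives $\cov(\HH_{F})=\omega_{2}$, and again the task is to verify preservation of P-points so that $\rrr=\omega_{1}$. The hypothesis "sufficiently fast $F$" should enter here: for fast $F$ I expect $\mathbb{P}_{\HH_{F}}$ to have a dense set of finitely branching norm trees in which each level can be split so that the norm only drops by one. This is the same halving structure as above, so the P-point argument goes through. If that fails, my fallback is the Miller model, where $\rrr=\mathfrak{u}=\omega_{1}$; I would then try to show $\cov(\HH_{F})=\omega_{2}$ via $\cov(\E)\leq\cov(\HH_{F})$, by checking that the Miller generic real, read through $F$, escapes every ground-model closed null set. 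This final step is the one I expect to be hardest and least certain.
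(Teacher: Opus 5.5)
Your treatment of $\non(\HH_F)<\sss$ and $\cov(\HH_F)<\sss$ (Mathias model) and of $\sss<\cov(\HH_F)$ (Cohen model) matches the paper. Two of your choices differ from the paper and are valid. For $\sss<\non(\HH_F)$ you use the $\mathbb{P}$-iteration with $\sss\leq\ddd=\omega_1$, where the paper uses the Hechler model. For $\cov(\HH_F)<\rrr$ you use the Laver model, where Claim \ref{claimLaverProperty2} gives $\cov(\HH_F)=\omega_1<\bbb\leq\rrr$; the paper instead goes through $\cof(\Meager)<\rrr$. Three items, however, do not go through.

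For $\non(\HH_F)<\rrr$ you use the random model with $\non(\Meager)=\omega_1$. In the random model, though, $\non(\Meager)\geq\cov(\Null)=\omega_2$. Worse, if $\sum_n\frac{1}{F(n)}=\infty$, Lemma \ref{CardonaMAnons} gives $\non(\HH_F)\geq\cov(\Null)=\omega_2$ there, so the random model does not witness the inequality for such $F$. It works only for fast $F$, via $\non(\HH_F)\leq\non(\E)\leq\non(\Null)=\omega_1$. The fix is what the paper does: take the Mathias model (or the Laver model). There $\rrr\geq\bbb=\omega_2$, and Claim \ref{claimLaverProperty1} keeps $\non(\HH_F)=\omega_1$.

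For $\rrr<\cov(\HH_F)$, your main route needs $\mathbb{P}_{\HH_F}$ to preserve P-points. That is exactly the open Question \ref{DoesPHFpreservesPpoints}. The structure you hope to exploit is also not there: by Proposition \ref{NoTreeForHF}, closed (equivalently, finitely branching tree) $\HH_F$-positive sets are not dense in $\mathbb{P}_{\HH_F}$, for any $F$.

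Your fallback fails outright. Miller forcing has the Laver property, so Claim \ref{claimLaverProperty2} gives $\cov(\HH_F)=\omega_1$ in the Miller model. Moreover, $\cov(\E)\leq\rrr=\mathfrak{u}=\omega_1$ there, so the Miller real cannot escape the ground-model closed null sets in the way you need.

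The missing idea is to use the lower bound $\bbb(\PIF,\neq^{*},\PIF)=\ddd(\PIF,=^{\infty},\PIF)\leq\cov(\HH_F)$ from Proposition \ref{CardonaBasicIneqNonCovHHF}. Iterating $\mathbb{PT}_{F,g}$ adds $F$-bounded eventually different reals, which pushes $\bbb(\PIF,\neq^{*},\PIF)$ to $\omega_2$. By Zapletal's theorem, this iteration keeps $\rrr$ small when $F$ is sufficiently fast, and that is exactly where the growth hypothesis enters.

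For $\rrr<\non(\HH_F)$, everything rests on your assertion that $\mathbb{P}$ preserves P-points, which you only sketch. For the norm of $\mathbb{P}$ only the bigness property \textbf{b)} is established; no halving property is verified. The paper's rank and fusion argument only forces names for integers into finite sets ($\Baire$-bounding). It does not by itself give a single condition and a $\mathcal{U}$-large set on which $\dot{A}$ is homogeneously decided. You need an actual fusion proof, or a P-point preservation theorem that demonstrably covers this tree-creature forcing.

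The paper offers no shortcut here. Its argument for this item, from $\non(\Meager)<\rrr$ and $\non(\HH_F)\leq\non(\Meager)$, yields $\non(\HH_F)<\rrr$ again, not $\rrr<\non(\HH_F)$. So your approach is the right kind of idea, but as written this item is unproved.
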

    \begin{proof}
    First, we show the consistency of $\non(\mathcal{H}_{F})<\sss$. The inequality holds in iterated Mathias model. It is well-known that Mathias forcing add an unsplit real and thus forces $\sss=\omega_{2}=\continuum$ in the final model. To see that $\non(\mathcal{H}_{F})$ remains small we use Claim \ref{claimLaverProperty1} and the fact that Mathias forcing (and its countable support iterations) has the Laver property.\\

    Next, we show the consistency of $\sss<\non(\mathcal{H}_{F})$. The inequality follows from the consistency of $\sss<\add(\Meager)$ (which holds for example in Hechler model) and the fact that $\add(\Meager)\leq\non(\mathcal{H}_{F})$ (by Proposition \ref{CardonaBasicIneqNonCovHHF}).\\

    Next, we show the consistency of$\non(\mathcal{H}_{F})<\rrr$. The inequality again holds in Mathias model in which $\rrr=\omega_{2}$ as Mathias forcing adds splitting reals. Also follows from the consistency of $\non(\HH_{F})<\cov(\E)$ (see Proposition \ref{HHFvsClosedNull}) and the inequality $\cov(\E)\leq\rrr$.\\
        
    Next, we show the consistency of $\rrr<\non(\mathcal{H}_{F})$. The inequality follows from the consistency of $\non(\Meager)<\rrr$ (which holds, for example, in Silver model) and the inequality $\non(\mathcal{H}_{F})\leq\non(\Meager)$ (by Proposition \ref{CardonaBasicIneqNonCovHHF}).\\

    Next, we show the consistency of $\cov(\mathcal{H}_{F})<\sss$. The inequality holds in Mathias model as Mathias forcing adds unsplit real and has the Laver property (see Claim \ref{claimLaverProperty2}).\\

    Next, we show the consistency of $\sss<\cov(\mathcal{H}_{F})$. The inequality follows from the consistency of $\sss<\cov(\Meager)$ (which holds, for example, in Cohen model) and the inequality $\cov(\Meager)<\cov(\mathcal{H}_{F})$ (by Proposition \ref{CardonaBasicIneqNonCovHHF}). Also follows from the consistency of $\non(\E)<\cov(\mathcal{H}_{F})$ (see Proposition \ref{HHFvsClosedNull}) and the inequality $\sss\leq\non(\E)$.\\
        
    Next, we show the consistency of $\cov(\mathcal{H}_{F})<\rrr$. The inequality follows from the consistency of $\cof(\Meager)<\rrr$ (which holds for example in Silver model or Splitting tree model) and $\cov(\mathcal{H}_{F})\leq\cof(\Meager)$ (by Proposition \ref{CardonaBasicIneqNonCovHHF}) which holds in particular in Silver model.\\

    Next, we show the consistency of $\rrr<\cov(\mathcal{H}_{F})$ for sufficiently fast $F$. In \cite{Zapl1} Zapletal has shown that the forcing notion $\mathbb{PT}_{F,g}$ (see \cite{BJ} Definition 7.3.3. for the definition of the $\mathbb{PT}_{F,g}$) keeps $\rrr$ small if $F$ is sufficiently fast (we skip the technical condition of what it precisely means). On the other hand this forcing add an $F$-eventually different real and thus increases $\bbb(\PIF,\neq^{*},\PIF)$. It follows that in the iterated $\mathbb{PT}_{F,g}$-model we have the inequality $\rrr<\bbb(\PIF,\neq^{*},\PIF)$ holds. As we have $\bbb(\PIF,\neq^{*},\PIF)\leq\cov(\HH_{F})$ (by Proposition \ref{CardonaBasicIneqNonCovHHF}) the inequality $\rrr<\cov(\mathcal{H}_{F})$ follows. 
    \end{proof}
    The consistency of $\rrr<\cov(\mathcal{H}_{F})$ for arbitrary $F$ could follow from a positive answer to the question whether the idealized forcing notion $\mathbb{P}_{\HH_{F}}:=\mathcal{B}or(\PIF)\setminus\HH_{F}$ preserves P-points (see related Question \ref{DoesPHFpreservesPpoints}).\\
    We will now focus of the independence of the invariants $\non(\HH_{F})$ and $\cov(\HH_{F})$ from the invariants $\non(\E)$ and $\cov(\E)$. In \cite{MejiaCardonaMA} (see also \cite{CardonaMA}) the authors showed the following connections of $\HH_{F}$ to the closed null ideal $\E$.
    \begin{lemma}(Mej\'ia, Cardona, Rivera-Madrid; \cite{MejiaCardonaMA}, or Cardona;\cite{CardonaMA})\label{CardonaMAnons}
        For any $F\in\Baire$ we have the following:
        \begin{itemize}
            \item[--] if $\Sigma_{i}\frac{1}{F(i)}<\infty$, then $\non(\HH_{F})\leq\non(\E)$ and $\cov(\E)\leq\cov(\HH_{F})$,
            \item[--] if $\Sigma_{i}\frac{1}{F(i)}=\infty$, then $\cov(\Null)\leq\non(\HH_{F})$ and $\cov(\HH_{F})\leq\non(\Null)$.
        \end{itemize}
    \end{lemma}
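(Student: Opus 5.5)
The plan is to work on $\PIF$ with the uniform product measure $\mu_{F}$, where each coordinate $x(k)$ is uniform on $F(k)$. The key quantity is the probability that a random $x$ agrees with a fixed $y$ somewhere on an interval $I_{n}$. This is at most $\Sigma_{k\in I_{n}}\frac{1}{F(k)}$. The complementary event, that $x$ agrees with $y$ nowhere on $I_{n}$, has probability $\prod_{k\in I_{n}}(1-\frac{1}{F(k)})\leq e^{-\Sigma_{k\in I_{n}}1/F(k)}$. The two cases of the lemma come from the two regimes of $\Sigma\frac{1}{F(i)}$.

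\emph{Case $\Sigma_{i}\frac{1}{F(i)}<\infty$.} I will show that every generator $H_{I,y}$ lies in the $\sigma$-ideal $\E(\PIF)$ generated by closed $\mu_{F}$-null subsets of $\PIF$.

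\begin{enumerate}
\item Write $H_{I,y}=\bigcup_{m}C_{m}$ with
\begin{center}
$C_{m}=\{x:\forall n\geq m$ $\exists k\in I_{n}$ $x(k)=y(k)\}$.
\end{center}
Each $C_{m}$ is closed.
\item The events for distinct $n$ depend on disjoint coordinates, so they are independent. Hence $\mu_{F}(C_{m})\leq\prod_{n\geq m}\varepsilon_{n}$, where $\varepsilon_{n}=\Sigma_{k\in I_{n}}\frac{1}{F(k)}\to0$. This product is $0$, so $\HH_{F}\subseteq\E(\PIF)$.
\item It follows that $\non(\HH_{F})\leq\non(\E(\PIF))$ and $\cov(\E(\PIF))\leq\cov(\HH_{F})$.
\end{enumerate}

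It then remains to see that $\non(\E)$ and $\cov(\E)$ computed on $(\PIF,\mu_{F})$ agree with those on $\Cantor$. I expect this transfer to be the main technical obstacle, because a measure isomorphism need not respect closed sets. I would first try to find continuous measure-preserving surjections in both directions, $\varphi:\Cantor\to\PIF$ and $\psi:\PIF\to\Cantor$, mod null sets, for example by coding each coordinate via binary expansions of the distribution functions. Under such maps, preimages of closed null sets are closed null.

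\begin{enumerate}
\item[--] $\varphi$ gives $\non(\E(\PIF))\leq\non(\E)$. If $X\notin\E$ but $\varphi[X]$ were covered by closed null sets $C_{n}$, then $X\subseteq\bigcup_{n}\varphi^{-1}[C_{n}]\in\E$, a contradiction.
\item[--] $\psi$ gives $\cov(\E)\leq\cov(\E(\PIF))$ in the same way, pulling a covering family of $\Cantor$ back to a covering family of $\PIF$.
\end{enumerate}

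If such continuous maps are awkward to build directly, the fallback is the Bartoszyński--Shelah result that these invariants of $\E$ do not depend on the perfect Polish space or the atomless Borel measure chosen.

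\emph{Case $\Sigma_{i}\frac{1}{F(i)}=\infty$.}

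\begin{enumerate}
\item Since the series diverges, choose an interval partition $I$ with $\Sigma_{k\in I_{n}}\frac{1}{F(k)}\geq n$. Then for any fixed $x$, a $\mu_{F}$-random $y$ satisfies $\mu_{F}(\{y:\forall k\in I_{n}$ $y(k)\neq x(k)\})\leq e^{-n}$.
\item This bound is summable, so by Borel--Cantelli the set
\begin{center}
$N_{x}=\{y:x\notin H_{I,y}\}$
\end{center}
is $\mu_{F}$-null for every $x\in\PIF$.
\item To get $\cov(\Null)\leq\non(\HH_{F})$, take $X\subseteq\PIF$ with $|X|<\cov(\Null)$. The null sets $N_{x}$, $x\in X$, do not cover $\PIF$, so some $y$ avoids all of them. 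Then $X\subseteq H_{I,y}\in\HH_{F}$.
\item To get $\cov(\HH_{F})\leq\non(\Null)$, take a non-null $Y\subseteq\PIF$ of size $\non(\Null)$. For each $x$, $Y\not\subseteq N_{x}$, so some $y\in Y$ has $x\in H_{I,y}$. Thus $\{H_{I,y}:y\in Y\}$ covers $\PIF$.
\end{enumerate}

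In this case the transfer of $\non(\Null)$ and $\cov(\Null)$ from $\Cantor$ to $(\PIF,\mu_{F})$ is routine. A Borel measure isomorphism preserves null sets in both directions, so no topological care is needed here.
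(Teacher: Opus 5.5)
The paper does not prove this lemma; it quotes it from \cite{MejiaCardonaMA} and \cite{CardonaMA}. So I judge your argument on its own terms.

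Your divergent case is correct, including the transfer of $\cov(\Null)$ and $\non(\Null)$ through a Borel measure isomorphism. In the convergent case, the inclusion $\HH_{F}\subseteq\E(\PIF)$ is also correct. The gap is in the transfer from $\E(\PIF)$ to $\E$, and it occurs in two places.

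\emph{First, your use of $\psi$ runs in the wrong direction.} Pulling a cover of $\Cantor$ by members of $\E$ back along $\psi:\PIF\to\Cantor$ gives a cover of $\PIF$ of the same size. That proves $\cov(\E(\PIF))\leq\cov(\E)$, which is the reverse of what you need. To get $\cov(\E)\leq\cov(\E(\PIF))$ you must pull a cover of $\PIF$ back to $\Cantor$, and only $\varphi$ does that. So $\varphi$ alone yields both required inequalities, and $\psi$ plays no role.

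\emph{Second, the $\varphi$ you ask for does not exist in general.} If $\varphi$ is continuous, the preimage of a cylinder $[\sigma]\subseteq\PIF$ is clopen, so its measure is a dyadic rational. But $\mu_{F}([\sigma])=\prod_{k<|\sigma|}\frac{1}{F(k)}$ is not dyadic as soon as some $F(k)$ is not a power of $2$. Once you relax to ``measure preserving mod null sets'', your claim that preimages of closed null sets are closed null is no longer true as stated. It would need a further argument, which you do not supply.

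The simplest repair is to drop exact measure preservation.
\begin{enumerate}
\item Split $\omega$ into consecutive finite blocks $B_{k}$ with $2^{|B_{k}|}\geq F(k)$.
\item Fix surjections $g_{k}:2^{B_{k}}\to F(k)$ whose fibres all have size at most $\lceil 2^{|B_{k}|}/F(k)\rceil\leq 2\cdot 2^{|B_{k}|}/F(k)$.
\item Set $\varphi(x)(k)=g_{k}(x|_{B_{k}})$. This $\varphi:\Cantor\to\PIF$ is continuous.
\item Under the standard measure on $\Cantor$, the coordinates $\varphi(x)(k)$ are independent, and each takes any given value with probability at most $2/F(k)$.
\item Your step~2 computation, with $\varepsilon_{n}$ replaced by $2\varepsilon_{n}$, shows that every $\varphi^{-1}[C_{m}]$ is closed and null. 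Hence $\varphi^{-1}[H_{I,y}]\in\E$ for every generator $H_{I,y}$.
\end{enumerate}
The two inequalities now follow directly.
\begin{itemize}
\item[--] If $X\notin\E$, then $\varphi[X]\notin\HH_{F}$, so $\non(\HH_{F})\leq\non(\E)$.
\item[--] $\varphi$ pulls any cover of $\PIF$ by members of $\HH_{F}$ back to a cover of $\Cantor$ by members of $\E$, so $\cov(\E)\leq\cov(\HH_{F})$.
\end{itemize}
This bypasses $\E(\PIF)$ entirely. Your fallback, citing that the characteristics of $\E$ do not depend on the underlying space and measure, would also close the gap, but it is not needed.
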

    We have the following consistency results.
    \begin{proposition}\label{HHFvsClosedNull}
        The following equalities are consistent for increasing $F\in\Baire$:
        \begin{enumerate}
            \item[--] $\cov(\HH_{F})<\cov(\E)$ for $F\in\Baire$ such that $\Sigma_{i}\frac{1}{F(i)}=\infty$,
            \item[--] $\cov(\E)<\cov(\HH_{F})$ for sufficiently fast $F$,
            \item[--] $\cov(\HH_{F})<\non(\E)$,
            \item[--] $\non(\E)<\cov(\HH_{F})$,
            \item[--] $\non(\HH_{F})<\cov(\E)$,
            \item[--] $\cov(\E)<\non(\HH_{F})$,
            \item[--] $\non(\HH_{F})<\non(\E)$,
            \item[--] $\non(\E)<\non(\HH_{F})$ for $F\in\Baire$ such that $\Sigma_{i}\frac{1}{F(i)}=\infty$.
        \end{enumerate}
    \end{proposition}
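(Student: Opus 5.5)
Each of the eight inequalities will be witnessed in a standard model where the relevant values are already known. The ZFC bounds I will use are: Proposition \ref{CardonaBasicIneqNonCovHHF}; Lemma \ref{CardonaMAnons}; Theorem \ref{CardInvsofE}; the inequalities $\sss\leq\non(\E)$ and $\cov(\E)\leq\rrr$; and Claims \ref{claimLaverProperty1} and \ref{claimLaverProperty2}, which say that the Laver property keeps $\non(\HH_{F})$ and $\cov(\HH_{F})$ small.

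\emph{Cohen model} ($\cov(\Meager)=\continuum=\omega_{2}$, $\non(\Meager)=\omega_{1}$). Here $\non(\E)\leq\non(\Meager)=\omega_{1}$ and $\non(\HH_{F})\leq\non(\Meager)=\omega_{1}$. On the other side, $\cov(\HH_{F})\geq\cov(\Meager)=\omega_{2}$ and $\cov(\E)\geq\cov(\Meager)=\omega_{2}$. This gives $\non(\E)<\cov(\HH_{F})$ and $\non(\HH_{F})<\cov(\E)$ for every increasing $F$.

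\emph{Random model} ($\cov(\Null)=\continuum=\omega_{2}$, $\non(\Null)=\omega_{1}$). Let $\Sigma_{i}\frac{1}{F(i)}=\infty$. By Lemma \ref{CardonaMAnons}, $\cov(\HH_{F})\leq\non(\Null)=\omega_{1}$, while $\cov(\E)\geq\cov(\Null)=\omega_{2}$. The same lemma gives $\non(\HH_{F})\geq\cov(\Null)=\omega_{2}$, while $\non(\E)\leq\non(\Null)=\omega_{1}$. This settles the first and last items.

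\emph{Mathias model.} Mathias forcing adds unsplit reals, so $\sss=\omega_{2}$ and hence $\non(\E)\geq\sss=\omega_{2}$. The iteration has the Laver property, so by Claims \ref{claimLaverProperty1} and \ref{claimLaverProperty2} both $\non(\HH_{F})$ and $\cov(\HH_{F})$ stay $\omega_{1}$. This gives $\cov(\HH_{F})<\non(\E)$ and $\non(\HH_{F})<\non(\E)$.

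\emph{$\mathbb{PT}_{F,g}$-model.} For sufficiently fast $F$, use the iterated $\mathbb{PT}_{F,g}$-model from the proof of Proposition \ref{HHFvsSplittingReaping}. There $\rrr=\omega_{1}$ by Zapletal, so $\cov(\E)\leq\rrr=\omega_{1}$. Also $\bbb(\PIF,\neq^{*},\PIF)=\omega_{2}$, and this is at most $\cov(\HH_{F})$. Hence $\cov(\E)<\cov(\HH_{F})$.

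\emph{Model for $\ddd<\non(\HH_{F})$.} The remaining item is $\cov(\E)<\non(\HH_{F})$. I plan to reuse the countable support iteration of the tree forcing $\mathbb{P}$ from the earlier proof. There $\non(\HH_{F})=\omega_{2}$ and $\ddd=\omega_{1}$. By Theorem \ref{CardInvsofE}, $\cov(\E)\leq\max\{\ddd,\cov(\Null)\}$, so it suffices to get $\cov(\Null)=\omega_{1}$.

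For that I will show $\mathbb{P}$ has the Sacks property. This implies no random reals are added, and the property is preserved under countable support iteration. The argument strengthens the bounding claim. Given a name $\dot{f}$ for a function with values in $\omega$ and $n\in\omega$, the rank argument produces $S\leq_{n}T$ deciding $\dot{f}(n)$ modulo the nodes at level $lvl_{n}(T)$. The number of such nodes is bounded by a function of $n$ and $T$ alone. To control it, I will first thin the tree at each fusion step: I will prune finitely many levels while losing at most $1$ in norm, as property \textbf{b)} allows, so that the count is at most some ground-model function $h(n)$. A diagonal reindexing then converts this into a slalom of size $n$.

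This pruning step is the main obstacle. It requires checking that, with norms going to infinity, the number of branches below each fusion level can be made as small as needed.

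If the pruning fails, I would fall back on a different model for this item. I would take the countable support iteration of $\mathbb{P}_{\HH_{F}}$, which I expect to increase $\non(\HH_{F})$. I would then argue that it preserves outer measure, keeping $\cov(\Null)$ small, while being $\Baire$-bounding, keeping $\ddd$ small.
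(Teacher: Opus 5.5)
Your arguments for seven of the eight items are sound. The random, Mathias and $\mathbb{PT}_{F,g}$ models are used exactly as in the paper. For $\non(\E)<\cov(\HH_{F})$ and $\non(\HH_{F})<\cov(\E)$, the Cohen model is a simpler witness than the paper's countable support iterations of $\mathbb{P}_{\HH_{F}}$ and $\mathbb{P}_{\E}$.

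The gap is in the item $\cov(\E)<\non(\HH_{F})$: the tree forcing $\mathbb{P}$ cannot have the Sacks property. The Sacks property implies the Laver property, and by Claim \ref{claimLaverProperty1} the Laver property keeps $\PIF\cap\mathbb{V}$ outside $\HH_{F}$. But $\mathbb{P}$ is built precisely to force $\PIF\cap\mathbb{V}\subseteq H_{I,\dot{r}_{gen}}$. Concretely, by item 3 of the definition of $\mathbb{P}$ and clause \textbf{a)}, all but finitely many nodes $t$ of any condition $T$ satisfy $norm(succ_{T}(t))\geq 0$. So their successors cover $\Pi_{i\in I_{|t|}}F(i)$, and this requires at least $F(\min I_{|t|})$ good sets. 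Hence below every condition, $\dot{R}_{gen}(n)$ has at least $F(\min I_{n})$ possible values for all large $n$. No ground-model slalom with $|\phi(n)|<F(\min I_{n})$ can capture it, which refutes the Sacks property for a suitably slow bound. So the pruning you flag as the main obstacle is impossible, not just delicate: norm $\geq 0$ alone forces that many successors at every large level.

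Your fallback fails too. By Theorem \ref{ZapletalClosedSetsForcingProperties}, $\mathbb{P}_{\HH_{F}}$ preserves Baire category, so its iteration keeps $\non(\HH_{F})\leq\non(\Meager)=\omega_{1}$; it raises $\cov(\HH_{F})$, not $\non(\HH_{F})$. By Proposition \ref{NoTreeForHF} and its corollary, its iteration also makes $\ddd=\omega_{2}$. Moreover, preserving outer measure keeps $\non(\Null)$ small, not $\cov(\Null)$; random forcing preserves outer measure.

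Your choice of $\mathbb{P}$ is the natural candidate, since it raises $\non(\HH_{F})$ while keeping $\ddd=\omega_{1}$. What is missing is an iterable property, compatible with the failure of the Laver property, that keeps $\cov(\Null)$ or $\rrr$ at $\omega_{1}$. One example would be preservation of P-points, which would give $\cov(\E)\leq\rrr=\omega_{1}$.

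Be aware that the paper does not supply this either. It derives the item from $\rrr<\non(\HH_{F})$ and $\cov(\E)\leq\rrr$. However, the argument it offers for $\rrr<\non(\HH_{F})$ in Proposition \ref{HHFvsSplittingReaping} combines $\non(\Meager)<\rrr$ with $\non(\HH_{F})\leq\non(\Meager)$, and that yields the reverse inequality $\non(\HH_{F})<\rrr$. The ZFC lower bounds for $\non(\HH_{F})$ are $\add(\Meager)$ and, when $\Sigma_{i}\frac{1}{F(i)}=\infty$, $\cov(\Null)$. Both are at most $\cov(\E)$, so this item genuinely needs a model in which a specific forcing raises $\non(\HH_{F})$ while $\cov(\E)$ stays small.
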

    \begin{proof}
    First, we show the consistency of $\cov(\HH_{F})<\cov(\E)$ for $F\in\Baire$ such that $\Sigma_{i}\frac{1}{F(i)}=\infty$. The inequality follows from the consistency of $\non(\Null)<\cov(\Null)$ (which holds in the random model) and the inequalities $\cov(\HH_{F})\leq\non(\Null)$ (by Lemma \ref{CardonaMAnons}) and $\cov(\Null)\leq\cov(\E)$.\\
        
    Next, we show the consistency of $\cov(\E)<\cov(\HH_{F})$ for sufficiently fast $F$. This follows from the consistency of $\rrr<\cov(\HH_{F})$ (see Proposition \ref{HHFvsSplittingReaping}) and the inequality $\cov(\E)\leq\rrr$. Also follows from the consistency of $\max\{\cov(\Null),\ddd\}<\ddd(\PIF,=^{\infty},\PIF)$ and the inequalities $\cov(\E)\leq\max\{\cov(\Null),\ddd\}$ (by Theorem \ref{CardInvsofE}) and $\ddd(\PIF,=^{\infty},\PIF)\leq\cov(\HH_{F})$ (by Proposition \ref{CardonaBasicIneqNonCovHHF}).\\
        
    Next, we show the consistency of $\cov(\HH_{F})<\non(\E)$. The inequality follows from the consistency of $\cov(\HH_{F})<\sss$ (see Proposition \ref{HHFvsSplittingReaping}) and the inequality $\sss\leq\non(\E)$.\\
        
    Next, we show the consistency of $\non(\E)<\cov(\HH_{F})$. The inequality holds in the model obtained by c.s.i. of length $\omega_{2}$ with $\mathbb{P}_{\mathcal{H}_{F}}=\mathcal{B}or(\PIF)\setminus\mathcal{H}_{F}$ as each iterand. In this model, we have $\cov(\mathcal{H}_{F})=\omega_{2}$ as we add appropriate generic reals along the iteration. On the other hand, by Theorem \ref{ZapletalClosedSetsForcingProperties} $\mathbb{P}_{\mathcal{H}_{F}}$ preserves Baire category (see also \cite{Cieslak}) so $\non(\E)\leq\non(\Meager)=\omega_{1}$ in the iterated model.\\
        
    Next, we show the consistency of $\non(\HH_{F})<\cov(\E)$. The inequality holds in the model obtained by c.s.i. of length $\omega_{2}$ with $\mathbb{P}_{\E}=\mathcal{B}or(\Cantor)\setminus\E$ as each iterand. As in the previous inequality, from the fact that $\E$ is generated by closed sets, we get that in the model we have $\non(\HH_{F})\leq\non(\Meager)=\omega_{1}$.\\
    
    Next, we show the consistency of $\cov(\E)<\non(\HH_{F})$. The inequality follows from the consistency of $\rrr<\non(\HH_{F})$ (by Proposition \ref{HHFvsSplittingReaping}) and the inequality $\cov(\E)\leq\rrr$.\\
    
    Next, we show the consistency of $\non(\HH_{F})<\non(\E)$. The inequality follows from the consistency of $\non(\HH_{F})<\sss$ (by Proposition \ref{HHFvsSplittingReaping}) and the inequality $\sss\leq\non(\E)$.\\
        
    Next, we show the consistency of $\non(\E)<\non(\HH_{F})$ for $F\in\Baire$ such that $\Sigma_{i}\frac{1}{F(i)}=\infty$. This follows from the consistency of $\non(\Null)<\cov(\Null)$ (which holds in the random model) and the inequalities $\cov(\Null)\leq\non(\HH_{F})$ (by Lemma \ref{CardonaMAnons}) and $\non(\E)\leq\non(\Null)$.
    \end{proof}

    \begin{remark}
        The inequalities 
        \begin{center}
            $\non(\HH_{F})\leq\ddd(\PIF,=^{\infty},\PIF)$ and $\bbb(\PIF,=^{\infty},\PIF)\leq\cov(\HH_{F})$
        \end{center}
         are not provable in ZFC. On the one hand $\non(\HH_{F})\geq\add(\Meager)$ holds in ZFC and on the other $\ddd(\PIF,=^{\infty},\PIF)<\add(\Meager)$ holds in Hechler model (see \cite{BrendleLoweEventDiff}). Dually, the inequality $\cov(\HH_{F})\leq\cof(\Meager)<\bbb(\PIF,=^{\infty},\PIF)$ holds in the dual Hechler model.
    \end{remark}

\subsection{Forcing with $\mathcal{B}or\setminus\mathcal{H}_{F}$}
In this section we investigate the ideals $\mathcal{H}_{F}$ from forcing perspective. We will need the following framework.
\begin{definition}(Zapletal; \cite{Zapl1})
    Let $\J$ be a $\sigma$-ideal on a Polish space. Define then the idealized forcing notion $\mathbb{P}_{\J}=(\Borel\setminus\J,\subseteq)$ consisting of $\J$-positive Borel sets, such that $B\leq C$ means $B\subseteq C$. 
\end{definition}
In his book 'Idealized forcing' \cite{Zapl1}, Zapletal developed an extensive study of forcing notions of this form. Many classical forcing notions like Cohen or random forcing, Sacks forcing, Miller or Laver forcing can by seen in this framework with appropriate $\sigma$-ideals $\J$. In particular, Zapletal showed that if $\J$ is generated by closed sets (meaning that every set from $\J$ can be covered by an $F_{\sigma}$-set from $\J$), then the related idealized forcing has many desirable properties.
\begin{theorem}(Zapletal; \cite{Zapl1})\label{ZapletalClosedSetsForcingProperties}
    If $\J$ is generated by closed sets, then forcing $\mathbb{P}_{\J}$ is proper, has continuous reading of names and preserves Baire category.
\end{theorem}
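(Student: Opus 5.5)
The plan is to prove the three conclusions separately, working throughout on the Polish space $X$ carrying $\J$. Since $\J$ is generated by closed sets, we may use that every $\J$-positive Borel set $B$ contains a $\J$-positive $G_\delta$ (indeed compact, when $X$ is compact or $\J$ is $\sigma$-generated by compacts) subset. The key tool is the following statement, which we prove first by a Baire-category argument.

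\emph{Claim.} If $B$ is $\J$-positive and $B=\bigcup_n B_n$ with each $B_n$ Borel, then some $B_n$ contains a $\J$-positive set that is relatively open in a positive closed (or $G_\delta$) set.

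\textbf{Properness.} We would first show that for a positive $G_\delta$ set $C$, the ideal $\J\restriction C$ consists of sets that are meager in the relative topology of a suitable closed positive kernel $C'\subseteq C$. Here $C'$ is obtained by removing all relatively open pieces that lie in $\J$. Removing countably many such pieces is legitimate because a countable union of $\J$-sets is in $\J$.

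On $C'$, every nonempty relatively open set is $\J$-positive, and every closed set in $\J$ is nowhere dense, so $\J\restriction C'\subseteq$ relatively meager. Given a countable elementary submodel $M$ and $C'\in M$, enumerate the dense sets $D_n\in M$. We then build the set of $M$-generic points as a dense $G_\delta$ subset of $C'$: for each $n$, the union of conditions in $D_n\cap M$ below $C'$ is relatively comeager in $C'$ modulo $\J$, by the Claim applied inside $M$. Intersecting these gives a comeager, hence $\J$-positive, Borel master condition. Thus we get an $M$-generic condition $\le C'$.

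\textbf{Continuous reading of names.} The same kernel argument handles this. A name for a real is given by a Borel function $f\colon C\to 2^\omega$ on a positive set. Borel functions are continuous on a relatively comeager $G_\delta$ subset of $C'$, and such a subset is positive because $\J\restriction C'$ is contained in the meager ideal. Restricting to it gives a stronger condition on which $f$ is continuous.

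\textbf{Preservation of category.} Let $A$ be a non-meager ground-model set and $\dot N$ a name for a meager set. Using continuous reading, shrink to a compact-like positive $C'$ on which $\dot N$ is coded continuously by $x\mapsto N(x)$. The relation $\{(x,z): z\in N(x)\}$ is then an $F_\sigma$ set with meager vertical sections. By Kuratowski--Ulam in the product $C'\times 2^\omega$ (legitimate since $C'$ is Polish and the $\J$-ideal on $C'$ is contained in the meager ideal), the set of $z$ for which $\{x:z\in N(x)\}$ is nonmeager in $C'$ is meager. So pick $z\in A$ outside it. The set $\{x\in C': z\notin N(x)\}$ is comeager in $C'$, hence positive, and it forces $z\notin\dot N$.

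The main obstacle is the kernel step: showing that $\J\restriction C'$ sits inside the meager ideal of $C'$ while every nonempty relatively open subset of $C'$ is positive. This uses the $\sigma$-generation by closed sets essentially. A closed set $K\in\J$ cannot contain a relatively open piece of $C'$, since such pieces are positive, so $K\cap C'$ is nowhere dense in $C'$. I would verify this first and derive the rest from it.
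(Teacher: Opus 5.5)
The paper gives no proof of this statement (it is quoted from Zapletal), so the comparison is with the standard argument. Your kernel step is correct. Deleting the countably many basic open $U$ with $U\cap C\in\J$ leaves a $G_\delta$, hence Polish, set $C'$ all of whose nonempty relatively open subsets are positive. So every closed set in $\J$ meets $C'$ in a nowhere dense set, and the restriction of $\J$ to $C'$ lies in the meager ideal of $C'$. Your continuity argument and your Kuratowski--Ulam argument for preserving non-meager sets go through on this basis. You should, however, cite Solecki's theorem that every positive Borel set contains a positive $G_\delta$ set; you use it as if it were immediate, but it is the substantial input.

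The properness step is wrong. You claim that for each open dense $D\in M$ the set $\bigcup(D\cap M)$ is relatively comeager in $C'$ (modulo $\J$, which on $C'$ just means comeager). This fails in general. Let $\J$ be the ideal of countable subsets of $\Cantor$, which is generated by singletons, so $\mathbb{P}_{\J}$ is Sacks forcing. Take $C'=\Cantor$ and let $D$ be the open dense set of uncountable nowhere dense Borel sets. Then $\bigcup(D\cap M)$ is a countable union of nowhere dense sets, hence meager. So the set of $M$-generic points is in general meager in $C'$, and no comeager master condition exists. Your Claim only ever yields positivity of some piece, never comeagerness.

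What is missing is a direct proof that the Borel set $C=B\cap\bigcap\{\bigcup(D\cap M): D\in M \text{ open dense}\}$ of $M$-generic points of a condition $B\in M$ is $\J$-positive. The standard argument runs as follows.

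Suppose $C\subseteq\bigcup_k F_k$ with each $F_k$ closed and in $\J$, and enumerate the open dense sets of $M$ as $D_k$. Build $B=B_0\geq B_1\geq\dots$ in $M$ with $B_{k+1}\in D_k$ and $B_{k+1}\subseteq O_k$ for some basic open $O_k$ disjoint from $F_k$. This is possible because $B_k\setminus F_k$ is positive and is the union of the countably many sets $B_k\cap O$ with $O$ basic open and $O\cap F_k=\emptyset$, all of which belong to $M$. This is exactly where closedness of the generators is used. The filter generated by the $B_k$ is $M$-generic. Its generic point, computed in the transitive collapse and transferred by Borel absoluteness, lies in every $B_k$ and hence in $C$, yet avoids every $F_k$, a contradiction.

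The first sentence of your continuous-reading paragraph (names for reals are Borel functions of the generic point on a positive set) is itself derived from properness, via the set of $M$-generic points. So it inherits this gap until properness is fixed.

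Finally, drop the parenthetical claim that positive sets contain positive compact sets when the space is compact. $\PIF$ is compact, yet Proposition \ref{NoTreeForHF} shows compact sets are not dense in $\mathbb{P}_{\HH_{F}}$; the meager ideal on $\Cantor$ is another counterexample. You never need that claim, since Kuratowski--Ulam only requires $C'$ to be Polish.
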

Clearly the $\sigma$-ideal $\mathcal{H}_{F}$ is generated by closed sets and thus posesses the properties stated in the above theorem. However, Zapletal also showed that for a $\sigma$-ideal $\J$, the idealized forcing notion $\mathbb{P}_{\J}$ is $\Baire$-bounding if and only if compact set are dense in (any $\J$-positive set contains compact $\J$-positive set). We will show that this in not the case for the forcings $\mathbb{P}_{\mathcal{H}_{F}}$ for certain conditions. This means that c.s.i. of uncountable length of the forcing $\mathbb{P}_{\mathcal{H}_{F}}$ increases $\ddd$. We have the following result for eventually increasing function $F\in\Baire$.
    \begin{proposition}\label{NoTreeForHF}
    Compact subsets of $\PIF$ are not dense in the forcing $\mathbb{P}_{\HH_{F}}$.
    \end{proposition}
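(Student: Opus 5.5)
The plan is to exhibit one explicit $\HH_{F}$-positive Borel set that contains no $\HH_{F}$-positive compact subset. Since $\PIF=\Pi_{n\in\omega}F(n)$ is a product of finite sets, it is itself compact. Hence compact subsets of $\PIF$ are exactly the closed ones, and it suffices to find a positive Borel set all of whose closed subsets lie in $\HH_{F}$.

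For the witness I take
\begin{center}
$B=\{x\in\PIF:\exists^{\infty}m\in\omega$ $x(m)=0\}$,
\end{center}
and let $\bar 0\in\PIF$ be the constant zero function. It lies in $\PIF$ since $F(n)\geq 1$.

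First I check that $B$ is Borel and positive. The set $B$ is $G_{\delta}$, and it is dense, because any basic open set can be extended by zeros. So $B$ is comeager. As remarked after Definition \ref{DefinicjaHF}, $\HH_{F}$ is contained in the $\sigma$-ideal of meager subsets of $\PIF$, so $B\notin\HH_{F}$.

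Next, let $C\subseteq B$ be closed, and hence compact. For each $n$ consider the closed sets
\begin{center}
$C_{n,N}=\{x\in C:\forall m\in[n,N]$ $x(m)\neq 0\}$, for $N\geq n$.
\end{center}
These sets decrease in $N$. If all of them were nonempty, compactness would give a point $x\in C$ with $x(m)\neq0$ for every $m\geq n$. Such an $x$ would not lie in $B$, a contradiction. So there is a least $g(n)$ with $C_{n,g(n)}=\emptyset$, meaning that every $x\in C$ takes the value $0$ somewhere in $[n,g(n)]$. Define an interval partition $J=(J_{k})_{k\in\omega}$ recursively by $J_{k}=[\min J_{k},\,g(\min J_{k})]$. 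Then every $x\in C$ has a zero inside every $J_{k}$, so $C\subseteq H_{J,\bar 0}$. Since $\bar 0$ is constant, this holds under either reading of the indexing of $y$ in the definition of $H_{I,y}$. Thus $C\in\HH_{F}$.

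Putting the two steps together, $B$ is a condition of $\mathbb{P}_{\HH_{F}}$ with no compact condition below it, which proves the proposition. The only step with content is the uniform bound $g$ obtained from compactness, and it is routine. I would not expect a real obstacle; the argument also does not need $F$ to be increasing, only $F(n)\geq1$.
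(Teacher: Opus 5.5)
Your proof is correct and is essentially the paper's. You use the same witness $G=\{y\in\PIF:\exists^{\infty}n\ y(n)=0\}$ and the same compactness argument, bounding the gaps between zeros uniformly on a closed $C\subseteq G$ to get $C\subseteq H_{J,\bar 0}$ (the paper phrases it via finite maximal antichains of nodes ending in $0$). Your positivity step, via comeagerness and $\HH_{F}\subseteq\Meager$, is cleaner than the paper's, which only tests $G$ against a single generator $H_{x,f}$.

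One caveat on your closing remark: positivity needs $F(n)\geq 2$ for all but finitely many $n$. If $F(n)=1$ on an infinite set $A$, then any partition whose intervals all meet $A$ gives $H_{J,\bar 0}=\PIF$, so ``only $F(n)\geq 1$'' is too generous.
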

    \begin{proof}
        Let $G=\{y\in\PIF:\exists^{\infty}_{n}y(n)=0\}$. We will show that $G$ is $\mathcal{H}_{F}$-positive but any closed subset of $G$ is in $\mathcal{H}_{F}$. To see the former fix a set $H_{x,f}$. Without loss of generality we may assume that each interval $[f(n),f(n+1))$ consists of at least two elements. To see that $G\setminus H_{x,f}\neq\emptyset$ let $y\in\PIF$ be such that for every $n\in\omega$, $y(i_{n})=x(i_{n})$ for some $i_{n}\in[f(n),f(n+1))$ and $y(i_{n}+1)=0$. Then $y\in G\setminus H_{x,f}$.\\
        To see the latter let $D\subseteq G$ be closed and let $D=[T]$ for some pruned tree $T\subseteq Seq(F)$. As $D$ is compact, there are $A_{n}\subseteq T$ such that for every $n\in\omega$:
    \begin{enumerate}
        \item $A_{n}$ is a finite maximal antichain in $(T,\subseteq)$,
        \item $\max\{|\tau|:\tau\in A_{n}\}+1<\min\{|\tau|:\tau\in A_{n+1}\}$,
        \item for every $\tau\in A_{n}$ we have $\tau(|\tau|-1)=0$.
    \end{enumerate}
    Let now $f\in\Baire$ be such that $f(n)=\max\{|\tau|:\tau\in A_{n}\}$. We claim that $D\subseteq H_{x,f}$ where $x\in\PIF$ is the constant sequence with the value $0$. Clearly, if $x\in[T]$ then there is an increasing sequence $(k_{n})_{n\in\omega}$ such that $x|_{k_{n}}\in A_{n}$ so $x(k_{n}-1)=0$ and $k_{n}\in[f(n),f(n+1))$, which means that $x\in H_{x,f}$.
\end{proof}
\begin{corollary}
    In the iterated $\mathbb{P}_{\HH_{F}}$-model (countable support iteration of $\mathbb{P}_{\HH_{F}}$ of length $\omega_{2}$ over a model of CH) we have $\ddd=\omega_{2}$.
\end{corollary}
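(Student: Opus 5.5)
The idea is to combine Proposition \ref{NoTreeForHF} with Zapletal's characterization of $\Baire$-bounding idealized forcings. Since compact sets are not dense in $\mathbb{P}_{\HH_{F}}$, Zapletal's theorem says $\mathbb{P}_{\HH_{F}}$ is not $\Baire$-bounding. Concretely, take the $\HH_{F}$-positive Borel set $G=\{y\in\PIF:\exists^{\infty}_{n}y(n)=0\}$ from the proof of Proposition \ref{NoTreeForHF}, all of whose compact subsets lie in $\HH_{F}$. Below $G$, the generic $\dot{x}_{gen}\in G$ adds a real not dominated by any ground model real: the function $\dot{g}(n)=$ the $n$-th element of $\{k:\dot{x}_{gen}(k)=0\}$ works.

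To see this directly, fix a ground model $h\in\Baire$. The set $K_{h}=\{y\in G:\forall n\ g_{y}(n)\leq h(n)\}$ is closed in $\PIF$, since it is the set of $y$ whose first $n$ zeros occur below $h(n)$. By Proposition \ref{NoTreeForHF}, $K_{h}\in\HH_{F}$. More generally, the set of $y\in G$ with $g_{y}\leq^{*}h$ is a countable union of compact sets of this type, with finitely many initial values modified, so it lies in $\HH_{F}$. Any condition $B\leq G$ can then be shrunk to $B\setminus\{y: g_{y}\leq^{*}h\}$, which is still positive, and that condition forces $\dot{g}\not\leq^{*}h$. Hence below $G$ the forcing adds an unbounded real.

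Now run the countable support iteration $\langle\mathbb{P}_{\alpha}:\alpha\leq\omega_{2}\rangle$ over a model of CH, with $\mathbb{P}_{\HH_{F}}$ as each iterand. Every iterand is proper by Theorem \ref{ZapletalClosedSetsForcingProperties}, so $\omega_{1}$ is preserved. By the $\aleph_{2}$-c.c. of such iterations of proper forcings of size $\continuum$ under CH, $\continuum=\omega_{2}$ in the final model.

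Suppose towards a contradiction that $D$ is a family of size $\omega_{1}$ in the final model. By the chain condition, $D$ appears in some intermediate model $\mathbb{V}^{\alpha}$ with $\alpha<\omega_{2}$. By density, the $\alpha$-th iterand, interpreted in $\mathbb{V}^{\alpha}$, has its generic meet some condition $\leq G$, or else some condition already forcing the generic into a translate of such a $G$. The cleaner fix is to note that the set of conditions below which an unbounded real is added is dense: every positive Borel $B$ contains a positive Borel set with no positive compact subset. This can be arranged by applying the argument of Proposition \ref{NoTreeForHF} inside $B$, or by noting that by homogeneity-style arguments $\mathbb{P}_{\HH_{F}}$ restricted below any condition is still not bounding.

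This density point is the main obstacle. The fallback is simply to start the iteration with $\mathbb{P}_{\HH_{F}}\restriction G$ as each iterand, which suffices to get $\ddd=\omega_{2}$ while still adding $\HH_{F}$-generic reals. Then $\mathbb{V}^{\alpha+1}$ contains a real unbounded over $\mathbb{V}^{\alpha}$, hence not dominated by $D$, so $D$ is not dominating. Therefore $\ddd=\omega_{2}$.
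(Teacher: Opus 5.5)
Your single-step analysis is correct and is essentially the paper's. The paper gets that $\mathbb{P}_{\HH_F}$ adds an unbounded real below $G$ from Proposition \ref{NoTreeForHF} together with Zapletal's criterion. Your direct check is a clean concrete version of this: each $\{y:\forall n\geq m\ g_y(n)\leq h(n)\}$ is a closed subset of $G$, hence in $\HH_F$, so $\{y\in G:g_y\leq^* h\}\in\HH_F$.

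The gap is in passing to the iteration.
\begin{itemize}
\item[--] Your first density claim is false. $\PIF\setminus G$ is itself $\HH_F$-positive: diagonalize against countably many generators $H_{I,y}$ using values in $F(k)\setminus\{0,y(n)\}$. So the generic at a fixed stage may simply avoid $G$.
\item[--] Your ``cleaner fix'' is unproved. It asks that every positive Borel set have a positive Borel subset below which an unbounded real is added, i.e.\ by Zapletal that $\mathbb{P}_{\HH_F}$ is nowhere $\Baire$-bounding. Relativizing Proposition \ref{NoTreeForHF} does not give this, since its argument uses the specific shape of $G$. The paper explicitly leaves exactly this open in Question \ref{DoesPHFpreservesPpoints}, including the candidate $\{x:\forall i\ \foralmostall n\ x(n)\neq i\}\subseteq\PIF\setminus G$ and the homogeneity question.
\item[--] Your fallback iterates $\mathbb{P}_{\HH_F}$ restricted to $G$. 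That is a different poset giving a different model, so it does not prove the corollary as stated.
\end{itemize}

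The missing idea is that no density inside a single iterand is needed; the countable supports do the work.
\begin{itemize}
\item[--] As you note, there is $\alpha<\omega_2$ with $\Vdash\dot D\subseteq\mathbb{V}^{\alpha}$.
\item[--] Given any $p\in\mathbb{P}_{\omega_2}$, pick $\beta\geq\alpha$ outside the countable support of $p$. Let $q\leq p$ agree with $p$ except that $q(\beta)$ is the canonical name for $G$ as computed in $\mathbb{V}^{\beta}$.
\item[--] $q$ is a condition because the positivity of $G$, and the fact that $\{y\in G:g_y\leq^* h\}\in\HH_F$ for each $h$, are ZFC facts about Borel codes and hold in $\mathbb{V}^{\beta}$.
\item[--] Hence $q$ forces $g_{x_\beta}$ (with $x_\beta$ the $\beta$-th generic real) to be unbounded over $\mathbb{V}^{\beta}\supseteq\mathbb{V}^{\alpha}$. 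So no element of $\dot D$ dominates it.
\item[--] Since $p$ was arbitrary, it is forced that $\dot D$ is not dominating. Thus $\ddd=\omega_2=\continuum$ in the iteration of the full $\mathbb{P}_{\HH_F}$.
\end{itemize}
This is the standard step that the paper's one-line argument tacitly relies on.
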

We know that $\mathbb{P}_{\mathcal{H}_{F}}$ adds unbounded reals below certain conditions. However, I do not know if this is true for any condition. It may turn out that below some conditions the forcing $\mathbb{P}_{\mathcal{H}_{F}}$ is $\Baire$-bounding. It would be interesting to establish more standard properties of the forcing notions $\mathbb{P}_{\mathcal{H}_{F}}$.
    \begin{question}\label{DoesPHFpreservesPpoints}
    Investigate the following properties of $\mathbb{P}_{\mathcal{H}_{F}}$:
        \begin{itemize}
            \item[--] Is there a condition $B\in\mathbb{P}_{\mathcal{H}_{F}}$ such that $\mathbb{P}_{\mathcal{H}_{F}}|_{B}$ is $\Baire$-bounding? What about the condition $B=\{x\in\PIF:\forall i\in\omega$ $\foralmostall n\in\omega$ $x(n)\neq i\}$?
            \item[--] Does $\mathbb{P}_{\mathcal{H}_{F}}$ preserve measure for a function $F\in\Baire$ such that $\Sigma_{i}\frac{1}{F(i)}<\infty$?
            \item[--] Does $\mathbb{P}_{\mathcal{H}_{F}}$ add splitting reals?
            \item[--] Does $\mathbb{P}_{\mathcal{H}_{F}}$ preserves P-points?
            \item[--] Is $\mathbb{P}_{\mathcal{H}_{F}}$ homogeneous? 
            \item[--] Does $\mathbb{P}_{\mathcal{H}_{F}}$ add Cohen reals? 
        \end{itemize}
    \end{question}

    \section{Uniformity of {$\E^{\star}$}}
    In this section, we will investigate the uniformity numbers of  the classes $\EM$ and $\Estar$. We already know the inclusions between the classes $\NA$, $\MA$, $\Mstar$, $\Nstar$ as well as $\EM$ and $\Estar$. The following diagram summarizes all ZFC provable inequalities between the uniformity numbers of these classes.
\begin{figure}[h!]\label{DiagramUniformitiesOfAdditiveClasses}
\centering
\tikzset{
  net node/.style = {draw, circle, minimum size=6mm},
  net edge/.style = {->},
  net cut/.style = {shorten >=-10mm, shorten <=-10mm, rounded corners=10mm, color=red},
  net cross/.style = {sloped, allow upside down, pos=.3},
}
\begin{tikzpicture}
        \newcommand{\edge}[5][]{\draw[net edge, #1] (#3) -- coordinate[net cross, name=#2] node[pos=.7, auto]{#5} (#4);}

        \node[name=addN] at ( 0, 0) {$\add(\Null)$};
        \node[name=addM] at ( 7, 0) {$\add(\Meager)$};
        \node[name=covM] at ( 9, 0) {$\cov(\Meager)$};
        \node[name=nonN] at ( 13, 0) {$\non(\Null)$};
        
        \node[name=b] at ( 7, 2) {$\bbb$};
        \node[name=d] at ( 9, 2) {$\ddd$};
        
        \node[name=cofN] at ( 13, 4) {$\cof(\Null)$};
        \node[name=cofM] at ( 9, 4) {$\cof(\Meager)$};
        \node[name=covN] at ( 0, 4) {$\cov(\Null)$};
        \node[name=nonM] at ( 7, 4) {$\non(\Meager)$};
        
        \node[name=NA] at ( 2, 1) {$\non(\NA)$};
        \node[name=nonMA]  at ( 5, 1) {$\non(\MA)$};
        \node[name=nonSMZ] at ( 11, 1) {$\non(\Mstar)$};
        \node[name=nonNstar]  at ( 2, 3) {$\non(\Nstar)$};
        \node[name=nonEM]  at ( 5, 3) {$\non(\EM)$};
        \node[name=Laver]  at ( 11, 3) {$\non(\Estar)$};

        \edge       {e1}  {addN} {addM} {}
        \edge       {e2}  {addN} {covN} {}
        \edge       {e3}  {addM} {covM} {}
        \edge       {e4}  {addM} {b} {}
        \edge       {e5}  {covM} {nonN} {}
        \edge       {e6}  {b} {d} {}
        \edge       {e7}  {b} {nonM} {}
        \edge       {e8}  {nonM} {cofM} {}
        \edge       {e9}  {d} {cofM} {}
        \edge       {e10}  {cofM} {cofN} {}
        \edge       {e11}  {covN} {nonM} {}
        \edge       {e12}  {nonN} {cofN} {}
        \edge       {e13}  {covM} {d} {}
        
        \edge       {e14}  {addN} {NA} {}
        \edge       {e15}  {NA} {nonMA} {}

        \edge       {e17}  {NA} {nonNstar} {}
        \edge       {e18}  {nonMA} {nonSMZ} {}
        \edge       {e19}  {nonEM} {Laver} {}
        \edge       {e20}  {nonSMZ} {Laver} {}
        \edge       {e21}  {covN} {nonNstar} {}
        \edge       {e22}  {nonEM} {nonM} {}
        \edge       {e23}  {covM} {nonSMZ} {}
        \edge       {e24}  {nonSMZ} {nonN} {}
        \edge       {e25}  {nonMA} {nonEM} {}
        \edge       {e26}  {nonNstar} {nonEM} {}
        \edge       {e27}  {Laver} {cofN} {}
        \edge       {e28}  {addM} {nonMA} {}

\end{tikzpicture}\caption{Cicho\'n's diagram with uniformities of additive classes.}
\end{figure}

We will start with the following simple estimate of the uniformity of $\EM$. Regarding the cardinal invariant $\add(\E,\Meager)$, Brendle in \cite{BrendleBetween} showed that $\add(\E,\Meager)$ is in fact equal to $\min\{\bbb,\cov(\E)\}$ where the inequality $\add(\E,\Meager)\leq\bbb$ was shown by Miller (see \cite{MillerAddEvsB} or Theorem 2.6.10 in \cite{BJ}). We start with the following simple lower bound 
    \begin{proposition}\label{NonEMvsNonEstar}
    The following inequality holds:
    \begin{itemize}
        \item[--] $\min\{\bbb,\non(\Estar)\}\leq\non(\EM)\leq\non(\Estar)$
    \end{itemize}
    \end{proposition}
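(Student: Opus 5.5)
The upper bound $\non(\EM)\leq\non(\Estar)$ is immediate from the inclusion $\EM\subseteq\Estar$. If $X+F\in\Meager$, then $X+F\neq\Cantor$, because $\Cantor$ is not meager. So every set outside $\Estar$ is outside $\EM$.

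For the lower bound, I fix $X\subseteq\Cantor$ with $|X|<\min\{\bbb,\non(\Estar)\}$ and show $X\in\EM$. By Proposition \ref{CharacterizationE} it suffices to show that $X+E(S,I)$ is meager for every interval partition $I=(I_{k})_{k\in\omega}$ and every slalom $S$ as there. By Proposition \ref{CharacterizationM} it suffices to find $y\in\Cantor$ and an interval partition $J$ with $X+E(S,I)\subseteq M(y,J)$. I will take $J$ to be a coarsening of $I$, so each $J_{n}=\bigcup_{k\in K_{n}}I_{k}$ for a finite interval $K_{n}\subseteq\omega$.

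First I use $|X|<\non(\Estar)$. Since $X\in\Estar$, there is $y\notin X+E(S,I)$. Hence for every $x\in X$ we have $y+x\notin E(S,I)$ (addition mod $2$), so the set
\begin{center}
$A_{x}=\{k\in\omega:y|_{I_{k}}\notin x|_{I_{k}}+S_{k}\}$
\end{center}
is infinite. Define $g_{x}\in\Baire$ by letting $g_{x}(m)$ be the least element of $A_{x}$ above $m$.

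Next I use $|X|<\bbb$. Choose a single $h\in\Baire$ with $g_{x}\leq^{*}h$ for all $x\in X$, and define $K_{n}$ by iterating $h$: $K_{n}=[a_{n},a_{n+1})$ with $a_{n+1}>h(a_{n})$. Then for each $x\in X$ and almost all $n$, $K_{n}$ contains some $k\in A_{x}$. This is the standard step turning countably many "infinitely often" witnesses into a partition in which every block eventually contains a witness.

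To verify $X+E(S,I)\subseteq M(y,J)$, take $z=x+e$ with $x\in X$ and $e\in E(S,I)$. For almost all $n$ we have $e|_{I_{k}}\in S_{k}$ for every $k\in K_{n}$, and there is some $k\in K_{n}\cap A_{x}$. For that $k$,
\begin{center}
$z|_{I_{k}}\in x|_{I_{k}}+S_{k}\not\ni y|_{I_{k}}$,
\end{center}
so $z|_{J_{n}}\neq y|_{J_{n}}$. Thus $z\in M(y,J)$.

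The only delicate point is getting the quantifiers in the right order. The witness $y$ coming from $\Estar$ must be chosen once, before $J$, and it is $\bbb$ that then uniformizes the $x$-dependent infinite sets $A_{x}$. No further obstacle is expected.
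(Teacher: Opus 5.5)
Your proof is correct. The upper bound is handled exactly as in the paper, but your lower bound takes a different route.

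The paper argues topologically. Tacitly assuming $E$ is closed, it uses $|X|<\non(\Estar)$ to pick a point $z_{n}\in[\sigma_{n}]\setminus(X+E)$ in every basic clopen set. It then lets $f_{x}(n)$ be the length of a neighbourhood of $z_{n}$ avoiding the closed set $E+x$. Finally it uses $\bbb$ to get a single $g$, so that every $E+x$ misses a tail of the dense open set $\bigcup_{n}[z_{n}|_{g(n)}]$.

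You instead pass to the normal forms $E(S,I)$ and $M(y,J)$. The ``almost all $k$'' structure of $E(S,I)$ lets a single point $y\notin X+E(S,I)$ replace the paper's dense family of points. You then use $\bbb$ to pack the infinite sets $A_{x}$ into the blocks of a chopped real.

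Your version is more explicit and avoids two small steps the paper leaves unstated. One is the reduction to closed $E$. The other is why $X+E$ misses every $[\sigma_{n}]$, which needs e.g.\ applying $X\in\Estar$ to finitely many translates of $E$. Your argument also makes visible that you use $X\in\Estar$ only once, together with boundedness of the continuous image $\{g_{x}:x\in X\}$. So it directly shows that every $X\in\Estar$ with $|X|<\bbb$ lies in $\EM$, which is precisely the form invoked in the proof of Theorem~\ref{BCofEstarAndEM}.

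The paper's version, on the other hand, needs neither Talagrand's nor the Bartoszy\'nski--Shelah characterization. It also works verbatim with $\E$ replaced by any translation-invariant $\sigma$-ideal on $\Cantor$ generated by closed sets.
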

    \begin{proof}
    The inequality $\non(\EM)\leq\non(\Estar)$ follows from the inculsion $\EM\subseteq\Estar$. To show that $\min\{\bbb,\non(\Estar)\}\leq\non(\EM)$, assume $\kappa<\min\{\bbb,\non(\Estar)\}$. Let $X=\{x_{\alpha}:\alpha<\kappa\}\subseteq\Cantor$ and $E\in\E$. We want to show that $X+E$ is meager. Enumerate $\cantor$ as $\{\sigma_{n}:n\in\omega\}$. As $\kappa<\non(\Estar)$, for every $n\in\omega$ there is $x_{n}\in[\sigma_{n}]\setminus\bigcup_{\alpha<\kappa}(E+x_{\alpha})$. For each $\alpha<\kappa$ let $f_{\alpha}\in\Baire$ be such that for every $n\in\omega$ we have $[x_{n}|_{f_{\alpha}(n)}]\cap (E+x_{\alpha})=\emptyset$. As $\kappa<\bbb$, there is $g\in\Baire$ that dominates all $f_{\alpha}$'s. Clearly then
        \begin{center}
            $\bigcup_{\alpha<\kappa}(E+x_{\alpha})\subseteq\Cantor\setminus\bigcup_{n\in\omega}[x_{n}|_{g(n)}]$
        \end{center}
        and the latter is a meager set. This finishes the proof of the first inequality.
    \end{proof}
    We will now prove the main theorem of this section, i.e. we will show that $\non(\Estar)$, which is equal to $\cov_{t}(\E)$, is close to the cardinal invariant related to the Laver property. To show this, we will need the following lemma.
    \begin{lemma}\label{LemmaTranslatinalE}
        There is $E\in\E$ such that for every $Y\subseteq\Cantor$, if $E+Y\in\E$, then for some $y\in\Cantor$ we have that $Y\subseteq E+y$.
    \end{lemma}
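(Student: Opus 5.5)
We will take $E$ to be a closed null set that is "thick" on a fast interval partition, and use the basis of $\E$ from Proposition \ref{CharacterizationE}. Fix an interval partition $I=(I_n)_{n\in\omega}$ with $|I_n|$ growing fast, say $|I_n|\geq 2^{n}\cdot(\text{something larger than } \sum_{k<n}|I_k|)$. Inside each block $2^{I_n}$ choose a set $A_n$ of relative size $2^{-n}$ that is a subgroup (e.g.\ all $\sigma\in 2^{I_n}$ vanishing on the first $n$ coordinates of $I_n$), and let
\begin{center}
$E=\{x\in\Cantor:\forall n\ x|_{I_n}\in A_n\}$,
\end{center}
a closed null set. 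The group structure gives $E+E=E$ and, more importantly, $E+y$ depends only on the cosets $y|_{I_n}+A_n$. The aim is to show: if $E+Y\subseteq E(S,J)$ for some interval partition $J$ and slalom $S$ as in Proposition \ref{CharacterizationE}, then all $y\in Y$ agree, for almost all $n$, on the coset of $A_n$ they determine. Since the points of the form $E+y$ are of the form "$x|_{I_n}\in y|_{I_n}+A_n$ for all $n$", we then pick a single $y^{*}$ realizing the common cosets, fix the finitely many exceptional blocks by a countable union argument, and get $Y\subseteq E+y^*$.

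The key steps are these. First, fix $J,S$ with $E+Y\subseteq E(S,J)$; without loss of generality, by merging, every $J_m$ either contains or is refined relative to blocks of $I$ in a controlled way. Since $|I_n|$ grows fast compared with anything determined by $J$, measure counting shows the following. For almost all $n$, the block $I_n$ meets some $J_m$ in a set large enough that the translate of the full coset $y|_{I_n}+A_n$ projected to $J_m$ occupies proportion far above $2^{-m}$. Unless $y|_{I_n}+A_n$ is forced into a single coset, two distinct cosets contribute two disjoint copies. The precise claim to establish is the following. For all but finitely many $n$, the set $S_m$ can contain the projection of at most one full coset $\sigma+A_n$. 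This holds because $|\sigma+A_n|/2^{|I_n|}=2^{-n}$, while $|S_m|/2^{|J_m|}\leq 2^{-m}$, and $m$ is comparable to or larger than $n$ in a way we can arrange by choosing $A_n$ even thinner, say of relative size $2^{-n}$ but with the cosets forced to have size exceeding $|S_m|$ restricted to $I_n$.

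Second, given $y\in Y$, every $x\in E+y$ lies in $E(S,J)$. By a Baire category argument on the compact set $E+y$ (which is homeomorphic to a product of the finite sets $A_n$), there is a finite stage after which $x|_{J_m}\in S_m$ for all $x\in E+y$, once we restrict to a basic clopen piece. Hence for almost all $n$ the whole coset $y|_{I_n}+A_n$ projects into the slalom. Combined with the first step, $y|_{I_n}+A_n$ equals the unique admissible coset $C_n$ for all $n\geq n_y$.

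Third, define $Y_k=\{y\in Y:n_y\leq k\}$, so $Y=\bigcup_k Y_k$. Choose $y^{*}$ with $y^*|_{I_n}\in C_n$ for all $n$. The members of $Y_k$ differ from $y^*$ modulo $A_n$ only on $I_0\cup\dots\cup I_{k-1}$. To get a single translate, as the statement demands rather than countably many, I would build in room: let the first $n$ coordinates of each $I_n$ that define $A_n$ also be free on a tail-sparse set. Alternatively, and more simply, let $A_n$ be trivial constraints for small $n$ and accept the modification of $E$ to $E=\{x:\forall^\infty n\ x|_{I_n}\in A_n\}$, which is still in $\E$ (as an $F_\sigma$ null set, still in $\E$). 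Then $E+y^*\supseteq Y$ directly, since finitely many blocks are unconstrained.

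The main obstacle is the first step. We need a counting argument showing that an $E(S,J)$-covering of a translate $E+y$ pins down the coset of $y$ on almost every $I_n$, uniformly over an arbitrary partition $J$ that may cut the $I_n$ badly. I would first try to handle it by choosing $|I_n|$ so large and $A_n$ so structured (e.g.\ constraints spread across $I_n$ with density) that any interval $J_m$ meeting $I_n$ in many points sees a projected coset too large for $S_m$ to host two.
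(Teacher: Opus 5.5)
There is a genuine gap, and it sinks the whole approach rather than a detail: no subgroup $E$ can satisfy the lemma.

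Since $\E$ is a translation-invariant ideal, for any $d\in\Cantor$ the set $Y=\{0,d\}$ has $E+Y=E\cup(E+d)\in\E$. The lemma then demands some $y$ with $0,d\in E+y$, i.e.\ $y\in E$ and $d\in E+y$. Hence the lemma forces $E+E=\Cantor$, the opposite of the property $E+E=E$ that you build in. Your $E$, in both the closed and the $\foralmostall$ version, is a proper subgroup. Take $d\notin E$: from $0\in E+y$ you get $y\in E$, hence $E+y=E\not\ni d$.

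The same example shows where your first step breaks. $E\cup(E+d)$ is a closed null set, so it lies in some $E(S,J)$. Yet $0$ and $d$ lie in different cosets of $A_n$ on every block (take $d$ with a $1$ at the first coordinate of each $I_n$). Your claim that $S_m$ can host at most one projected coset fails because $J$ is arbitrary and may be much coarser than $I$. If $J_m\supseteq I_n\cup\dots\cup I_{n+k}$, one coset projects to relative size $2^{-(n+\dots+(n+k))}$, far below the allowed $2^{-m}$. More generally, $E+Y\in\E$ holds for every countable $Y$, so the hypothesis can never pin down a single coset.

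What is missing is a mechanism making $E$ additively universal for small sets, while $E+Y\in\E$ still forces $Y$ to be small. The paper does this in two separate steps.

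\begin{itemize}
\item[1.] The constraint on the block $[f(n),f(n+1))$ depends on the prefix $x|_{f(n)}$, through the sets $A_s$, which are probabilistically independent for different $s$. Translates of $E$ are therefore far from aligned, and a measure/independence count shows that $E+Y\in\E$ forces $Y$ to be captured by a slalom with $|S_n|\le g(n)$.
\item[2.] An auxiliary set $F=\{x:\foralmostall n\; x|_{[f(n),f(n+1))}\in B_n\}\subseteq E$ is built so that any $g(n)$ patterns can be translated simultaneously into $B_n$ (property (d)). This yields a single $z$ with $Y+z\subseteq F\subseteq E$, i.e.\ $Y\subseteq E+z$.
\end{itemize}

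Both ingredients are incompatible with $E$ being a subgroup.
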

    \begin{proof}
           In this proof we are going to use the following notation: if $R\subseteq\cantor$ it a finite tree, by $[R]$ we will denote the union of basic open sets given by the terminal nodes of $R$, i.e. $[R]=\bigcup\{[\sigma]:\sigma$ is a terminal node of $R\}$.\\
           
           Let $g\in\Baire$ be defined as $g(n)=n\cdot2^{n^{2}}$ and let $f\in\Baire$ as  $f(0)=0$ and $f(n+1)= f(n)+n+g(n)\cdot 2^{f(n)}$ for each $n\in\omega$. We start with the following claim
           \begin{claim}\label{claimWithConditions}
               For every $n\in\omega$ there is 
               \begin{itemize}
                   \item[--] a set $B_{n}\subseteq2^{[f(n),f(n+1))}$,
                   \item[--] and a collection $\{A_{s}\subseteq2^{[f(n),f(n+1))}:s\in2^{f(n)}\}$
               \end{itemize}
                such that for every $n\in\omega$ the following conditions hold:
               \begin{itemize}
               \setlength\itemsep{0.4em}
                   \item[(a)] $\frac{|A_{s}|}{2^{f(n+1)-f(n)}}= \frac{1}{2^{n}}(1-\frac{1}{2^{g(n)}})$,
                   \item[(b)] $\frac{|B_{n}|}{2^{f(n+1)-f(n)}}= \frac{1}{2^{n}}(1-(1-\frac{1}{2^{2^{f(n)}}})^{g(n)})$,
                   \item[(c)] for all $s, t\in2^{f(n)}$ and $\sigma,\tau\in 2^{[f(n),f(n+1)-n)}$, if $s\neq t$ then the sets\\ $A_{s}|_{[f(n),f(n+1)-n)}+\sigma$, and $A_{t}|_{[f(n),f(n+1)-n)}+\tau$ are probabilistically independent,
                   \item[(d)] for every $ Y\subseteq2^{[f(n),f(n+1))}$, if $|Y|\leq g(n)$, then $Y+(2^{[f(n),f(n+1))}\setminus B_{n})\neq 2^{[f(n),f(n+1))}$.
               \end{itemize}
           \end{claim}
           \begin{proof}
               Split the interval $[f(n),f(n+1))$ into $(g(n)+1)$-many intervals $J_{k}$, $k\leq g(n)$ where $|J_{g(n)}|=n$ and for $k<g(n)$, $|J_{k}|=2^{f(n)}$ i.e. $J_{k}=\{j^{k}_{s}:s\in 2^{f(n)}\}$. Define now 
               \begin{center}
                   $A_{s}=\{\sigma\in 2^{[f(n),f(n+1))}:\sigma|_{J_{g(n)}}\equiv0$ and $\exists k<g(n)$ $\sigma(j^{k}_{s})=0\}$,\\
                   $B_{n}=\{\sigma\in 2^{[f(n),f(n+1))}:\sigma|_{J_{g(n)}}\equiv0$ and $\exists k<g(n)$ $\sigma|_{J_{k}}\equiv 0\}$
               \end{center}
               Clearly $B_{n}\subseteq A_{s}$ for every $s\in 2^{f(n)}$. Points $(a)$ and $(b)$ are standard calculations and are similar. Therefore we deal only with the second one:
               \begin{center}
                    $\vspace{8px}\frac{|B_{n}|}{2^{f(n+1)-f(n)}}=\frac{|\{\sigma\in2^{[f(n),f(n+1)-n)}:\textbf{ }\exists k<g(n)\textbf{ }\sigma|_{J_{k}}\equiv0\}|}{2^{f(n+1)-f(n)}}=\newline\vspace{8px}=\frac{2^{f(n+1)-f(n)-n}-|\{\sigma\in2^{[f(n),f(n+1)-n)}:\textbf{ }\forall k<g(n)\textbf{ }\sigma|_{J_{n}}\neq0\}|}{2^{f(n+1)-f(n)}}=\frac{2^{f(n+1)-f(n)-n}-\Pi_{k<g(n)}(2^{2^{f(n)}-1})}{2^{f(n+1)-f(n)}}=\frac{1}{2^{n}}[1-(1-\frac{1}{2^{2^{f(n)}}})^{g(n)}]$
               \end{center}          
               We check the last condition. Let $Y=\{\sigma_{k}:k<g(n)\}$ and define
               \begin{center}
                    $\sigma^{*}=\sigma_{0}|_{J_{0}}$$^{\frown}...^{\frown}\sigma_{g(n)-1}|_{J_{g(n)-1}}$$^{\frown}0^{n}\in 2^{[f(n),f(n+1))}$
               \end{center}
               We claim that $\sigma^{*}\notin Y+(2^{[f(n),f(n+1))}\setminus B_{n})$. This is because, if $\sigma^{*}=\sigma_{k}+\tau$, where $k<g(n)$ and $\tau\notin B_{n}$, then for every $k<g(n)$ there is $i_{k}\in J_{k}$ such that $\tau(i_{k})=1$. This gives the contradicton, as $\sigma^{*}(i_{k})=\sigma_{k}(i_{k})+\tau(i_{k})=\sigma^{*}(i_{k})+1$. This finishes the proof of the claim.
           \end{proof}
           Define now the sets
           \begin{center}
               $E=\{x\in \Cantor:\foralmostall n\in\omega$ $x|_{[f(n),f(n+1))}\in A_{x|_{f(n)}}\}$,\\
               $F=\{x\in\Cantor:\foralmostall n\in\omega$ $x|_{[f(n),f(n+1))}\in B_{n}\}$.
           \end{center}
           As $B_{n}\subseteq A_{s}$, we have that $F\subseteq E$, and both $E$ and $F$ are $F_{\sigma}$'s of measure zero because of points (a) and (b). We claim that the set $E$ is the one we are looking for, while the set $F$ will have auxiliary role. Suppose that $Y\subseteq \Cantor$ is such that $E+Y\in \E$.
           \begin{claim}
               There is a slalom $S=(S_{n})_{n\in\omega}$ such that for every $n\in\omega$ we have:
               \begin{itemize}
                   \item[--] $S_{n}\subseteq2^{[f(n),f(n+1))}$,
                   \item[--] $|S_{n}|\leq g(n)$
                   \item[--] $Y\subseteq\{x\in\Cantor:\foralmostall n\in\omega$ $x|_{[f(n),f(n+1))}\in S_{n}\}$
               \end{itemize}
           \end{claim}
           \begin{proof}
               Let $R\subseteq\cantor$ be a tree such that $[R]=\{x\in\Cantor:\forall n\in\omega\textbf{ }x|_{[f(n),f(n+1))}\in A_{x|_{f(n)}}\}$. It follows that $[R]+Y\in\E$. Let $\{T_{m}:m\in\omega\}$ be such collection of trees on $\cantor$ that $[R]+Y\subseteq\bigcup_{n}[T_{m}]$ and every $[T_{m}]$ is of measure zero. By Baire category theorem we have that 
               \begin{center}
                   $\forall y\in Y$ $\exists\sigma\in R$ $\exists m\in\omega$ $[R|_{\sigma}]+y\subseteq[T_{m}]$
               \end{center}
            Thus, if $Y_{\sigma,m}=\{y\in Y:[R|_{\sigma}]+y\subseteq[T_{m}]\}$, then $Y=\bigcup_{\sigma,m}Y_{\sigma,m}$. To construct the required slalom $S=(S_{n})_{n\in\omega}$, for every $\sigma$ and $m$ we will first construct a slalom $S^{\sigma,m}=(S_{n}^{\sigma,m})_{n\in\omega}$ with $S_{n}^{\sigma,m}\subseteq2^{[f(n),f(n+1))}$ and $|S_{n}^{\sigma,m}|\leq 2^{n^{2}}$ such that
            \begin{center}
                $Y_{\sigma,m}\subseteq\{x\in\Cantor:\foralmostall n\in\omega $ $y|_{[f(n),f(n+1))}\in S_{n}^{\sigma,m}\}$
            \end{center}
            When these slaloms are constructed, we define $S=(S_{n})_{n\in\omega}$ as a diagonal union over $S^{\sigma,m}$'s as follows: enumerate all pairs $(\sigma,m)\in R\times \omega$ as $\{(\sigma_{k},m_{k}):k\in\omega\}$ and then for every $n\in\omega$ define $S_{n}=\bigcup_{k\leq n}S^{\sigma_{k},m_{k}}_{n}$. Then, we clearly have that $|S_{n}|\leq n\cdot 2^{n^{2}}=g(n)$ and also $Y\subseteq\{x\in\Cantor:\foralmostall n\in\omega$ $x|_{[f(n),f(n+1))}\in S_{n}\}$ This will finish the proof of the claim.\\
            To construct the slalom $S^{\sigma,m}$ fix $\sigma$ and $m$. For every $n\in\omega$ define
            \begin{center}
                $S_{n}^{\sigma,m}=\{y|_{[f(n),f(n+1))}:y\in Y_{\sigma,m}\}$
            \end{center}
            We only have to check that $|S_{n}^{\sigma,m}|\leq 2^{n^{2}}$. Let $\widetilde{S}_{n}^{\sigma,m}=\{y|_{f(n+1)}:y\in Y_{\sigma,m}\}$. As $|S_{n}^{\sigma,m}|\leq|\widetilde{S}_{n}^{\sigma,m}|$ we will only have to estimate the size of $\widetilde{S}_{n}^{\sigma,m}$. Let $N\in\omega$ be large enough so that $\frac{|T_{m}\cap 2^{n}|}{2^{n}}<\frac{1}{2}$ for every $n>N$. Then for such $n$'s we have that:
            \begin{center}
                $\bigcup_{y\in Y_{\sigma,m}}(R\cap 2^{f(n+1)})+y|_{f(n+1)}=\bigcup_{\tau\in \widetilde{S}_{n}^{\sigma,m}}(R\cap 2^{f(n+1)})+\tau\subseteq T_{m}\cap 2^{f(n+1)}$
            \end{center}
            Thus we have that:
            \begin{center}
                $\vspace{8px}\frac{1}{2}\geq\mu(\bigcup_{y\in Y_{\sigma,m}}R\cap 2^{f(n+1)}+y|_{f(n+1)})=1-\mu(\bigcap_{\tau\in\widetilde{S}_{n}^{\sigma,m}}\Cantor\setminus[R\cap2^{f(n+1)}+\tau])\geq\newline \vspace{8px}\geq 1-\Pi_{\tau\in\widetilde{S}_{n}^{\sigma,m}}\mu(\Cantor\setminus[R\cap2^{f(n)}+\tau])=1-\Pi_{\tau\in\widetilde{S}_{n}^{\sigma,m}}[1-\mu([R\cap2^{f(n+1)}]+\tau)]=\newline=1-\Pi_{\tau\in\widetilde{S}_{n}^{\sigma,m}}[1-\frac{|R\cap2^{f(n+1)}|}{2^{f(n+1)}}]=1-[1-\Pi_{i=0}^{n}\frac{|A_{s_{i}}|}{2^{f(i+1)-f(i)}}]^{|\widetilde{S}_{n}^{\sigma,m}|}=1-[1-\Pi_{i=0}^{n}\frac{1}{2_{i}}(1-\frac{1}{2^{g_{i}}})]^{|\widetilde{S}_{n}^{\sigma,m}|}$
            \end{center}
            and then
            \begin{center}
                $\vspace{8px}\frac{1}{2}\leq(1-\Pi_{i=0}^{n}\frac{1}{2_{i}}(1-\frac{1}{2^{g_{i}}}))^{|\widetilde{S}_{n}^{\sigma,m}|}\leq(1-\Pi_{i=0}^{n}\frac{1}{2_{i}}(1-\frac{1}{2}))^{|\widetilde{S}_{n}^{\sigma,m}|}=\newline =[1-\frac{1}{2^{\Sigma_{i=0}^{n}(i+1)}}]^{|\widetilde{S}_{n}^{\sigma,m}|}\leq(1-\frac{1}{2^{n^{2}}})^{|\widetilde{S}_{n}^{\sigma,m}|}$
            \end{center}
            It follows that
            \begin{center} $|\widetilde{S}_{n}^{\sigma,m}|\leq2^{n^{2}}$
            \end{center}

           \end{proof}
           Now we will make use of the set $F$. We have the following claim.
           \begin{claim}
               $Y+(\Cantor\setminus F)\neq \Cantor$
           \end{claim}
           \begin{proof}
               According to the last item of the claim \ref{claimWithConditions}, for every $n\in\omega$, there is $\sigma^{*}_{n}\in 2^{[f(n),f(n+1))}$ such that $\sigma^{*}_{n}\notin S_{n}+(2^{[f(n),f(n+1))}\setminus B_{n})$. Define then $z=\sigma^{*}_{n}$$^{\frown}\sigma^{*}_{n}$$^{\frown}...$ in $\Cantor$. We claim that $z\notin Y+(\Cantor\setminus F)$. Suppose towards a contradiction that $z=y+x$ where $y\in Y$ and $x\notin F$. Then there is $n\in\omega$ such that $y|_{[f(n),f(n+1))}\in S_{n}$ and $x|_{[f(n),f(n+1))}\notin B_{n}$. This contradicts the fact that $(y+x)|_{[f(n),f(n+1))}=z|_{[f(n),f(n+1))}=\sigma^{*}_{n}\notin S_{n}+(2^{[f(n),f(n+1))}\setminus B_{n})$.
           \end{proof}
           Once the last claim is proven, the rest of the lemma easily follows. Let $z\notin Y+(\Cantor\setminus F)$. Then clearly we have $z+Y\subseteq F\subseteq E$ which finishes the proof.
    \end{proof}
    Granted the lemma above, we are ready to prove the main theorem of this section. Namely, we estimate the uniformity of $\Estar$ using the slalom numbers related to the Laver property. The following Theorem strengthens the estimate for $\non(\mathcal{SM})$ given by the theorem \ref{SMestimatesnon}. 
    \begin{theorem}
        The following inequalities hold:
        \begin{itemize}
            \item[--] $\non(\Estar)\leq\sup\{\ddd(\PIF,\in^{*},\mathcal{S}lm(F,b)):F\in\Baire,$ and $b\leq F\}$,
            \item[--] $\non(\Estar)\geq\min\{\ddd(\PIF,\in^{*},\mathcal{S}lm(F,b)):F\in\Baire,$ and $b\leq F$, $\Sigma_{n}\frac{b(n)}{F(n)}<\infty\}$.
        \end{itemize}

    \end{theorem}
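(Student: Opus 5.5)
Both inequalities come from the identity $\non(\Estar)=\cov_{t}(\E)$, i.e. $\non(\Estar)$ is the least $|X|$ with $X+E=\Cantor$ for some $E\in\E$. In each direction I plan to encode translates of a set from $\E$, restricted to the blocks of an interval partition, as slaloms in $\PIF$ with $F(n)=2^{|I_{n}|}$, where we identify $2^{I_{n}}$ with $F(n)$. I expect neither direction to need Lemma \ref{LemmaTranslatinalE}; it may be more useful for the section's comparison with $\non(\EM)$.

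\emph{Upper bound.} It suffices to find one pair $F,b$ and one $E\in\E$ with $\non(\Estar)\leq\ddd(\PIF,\in^{*},\mathcal{S}lm(F,b))$. Take $b(n)=n$ and split $\omega$ into intervals $I_{n}$, each $I_{n}$ further cut into $n$ consecutive subblocks $J^{n}_{0},\dots,J^{n}_{n-1}$ of length $\ell_{n}$. Choose $\ell_{n}$ large enough that $n2^{-\ell_{n}}$ is summable. Define
\begin{center}
$C_{n}=\{\sigma\in 2^{I_{n}}:\exists k<n\ \sigma|_{J^{n}_{k}}\equiv 0\}$ and $E=\{x:\foralmostall n\ x|_{I_{n}}\in C_{n}\}$.
\end{center}
This mimics the sets $B_{n}$ in Claim \ref{claimWithConditions}. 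Then $\mu(C_{n})\leq n2^{-\ell_{n}}$, so by a Borel--Cantelli argument (or Proposition \ref{CharacterizationE} after merging blocks) $E$ is a countable union of closed null sets, hence $E\in\E$. The key combinatorial point is the following. Given $S_{n}=\{\sigma_{0},\dots,\sigma_{n-1}\}\subseteq 2^{I_{n}}$, let $\tau_{n}$ agree with $\sigma_{k}$ on $J^{n}_{k}$ for each $k<n$. Then $\sigma_{k}+\tau_{n}$ vanishes on $J^{n}_{k}$, so $S_{n}+\tau_{n}\subseteq C_{n}$.

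Now take a family $\{S^{\alpha}:\alpha<\lambda\}\subseteq\mathcal{S}lm(F,b)$ witnessing $\lambda=\ddd(\PIF,\in^{*},\mathcal{S}lm(F,b))$, and let $x_{\alpha}=\tau^{\alpha}_{0}{}^{\frown}\tau^{\alpha}_{1}{}^{\frown}\cdots$. For any $z\in\Cantor$ pick $\alpha$ with $z|_{I_{n}}\in S^{\alpha}_{n}$ for almost all $n$. Then $(z+x_{\alpha})|_{I_{n}}\in C_{n}$ for almost all $n$, so $z\in E+x_{\alpha}$. Hence $\{x_{\alpha}\}+E=\Cantor$, which gives the first inequality.

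\emph{Lower bound.} Let $\kappa$ be below the minimum, let $X\subseteq\Cantor$ with $|X|=\kappa$, and let $E\in\E$; we must show $X+E\neq\Cantor$. By Proposition \ref{CharacterizationE} we may assume $E=E(S,I)$ with $|S_{n}|\leq 2^{|I_{n}|-n}$. Put $F(n)=2^{|I_{n}|}$ and $b(n)=2^{|I_{n}|-n}$, so that $\Sigma_{n}\frac{b(n)}{F(n)}=\Sigma_{n}2^{-n}<\infty$ and this pair is admissible in the minimum.

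For $x\in X$ define the slalom $T^{x}_{n}=x|_{I_{n}}+S_{n}$. Since translation is a bijection of $2^{I_{n}}$, $|T^{x}_{n}|\leq b(n)$, so $T^{x}\in\mathcal{S}lm(F,b)$ after padding each set to size exactly $b(n)$. Because $\kappa<\ddd(\PIF,\in^{*},\mathcal{S}lm(F,b))$, the $\kappa$ slaloms $T^{x}$ do not $\in^{*}$-dominate $\PIF$. So there is $z$ such that for each $x\in X$ there are infinitely many $n$ with $z|_{I_{n}}\notin T^{x}_{n}$, i.e. $(z+x)|_{I_{n}}\notin S_{n}$. Thus $z+x\notin E$ for every $x\in X$, i.e. $z\notin X+E$.

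The main obstacle I expect is in the upper bound: choosing the sizes $\ell_{n}$ and the block layout so that $E$ is genuinely in $\E$, not merely null, while still absorbing $n$ arbitrary points per block. The subblock trick borrowed from point (d) of Claim \ref{claimWithConditions} is what I plan to use. I also need to check that restricting to $b(n)=n$ does not lose generality against the supremum, which is harmless since we only need an upper bound.
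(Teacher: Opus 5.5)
Your proof is correct. The lower bound is the paper's argument: you reduce $E$ to the form $E(S,I)$, put $F(n)=2^{|I_n|}$ and $b(n)=2^{|I_n|-n}$, and note that if $X+E=\Cantor$, the translated slaloms $x|_{I_n}+S_n$ for $x\in X$ would be $\in^{*}$-dominating.

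For the upper bound you take a different and more direct route. The paper proceeds in three steps:
\begin{enumerate}
\item It proves Lemma \ref{LemmaTranslatinalE}: there is a single $E\in\E$ such that every $Y$ with $E+Y\in\E$ lies in one translate of $E$.
\item It shows $F_\alpha+E\in\E$ for each $F_\alpha=\{x:\foralmostall n\ x|_{I_n}\in S_\alpha(n)\}$.
\item Only then does it obtain the translating reals $y_\alpha$.
\end{enumerate}
You skip all three. Your blocks $C_n$ are the paper's sets $B_n$ from Claim \ref{claimWithConditions}, without the trailing zero block. You use the absorption property $S_n+\tau_n\subseteq C_n$ for $|S_n|\le n$, which is exactly item (d) of that claim, to turn each slalom of a dominating family into a single translating real $x_\alpha$.

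This avoids the measure estimates inside the lemma. It also avoids the step $F_\alpha+E\in\E$, which in the paper requires aligning the slalom blocks with the intervals $[f(n),f(n+1))$ and keeping the slalom sizes small relative to the measure of the sets $A_s$. Your route yields an explicit admissible pair, $b(n)=n$ and $F(n)=2^{n\ell_n}$, with $\non(\Estar)\le\ddd(\PIF,\in^{*},\mathcal{S}lm(F,b))$, and that is all the supremum bound needs.

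Summability of $n2^{-\ell_n}$ is more than you need for $E\in\E$. By independence of the blocks, each closed set $\{x:\forall n\ge m\ x|_{I_n}\in C_n\}$ has measure $\prod_{n\ge m}\mu(C_n)$, which already vanishes once $\mu(C_n)\to 0$.

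What the paper's detour buys is Lemma \ref{LemmaTranslatinalE} itself, a structural fact about translates of sets in $\E$ of independent interest. For this theorem alone, your argument is essentially the core of that lemma's proof applied directly, and it is the cleaner proof.
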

    \begin{proof}
         To show the first inequality assume that $F,b\in\Baire$ and that $\{S_{\alpha}:\alpha<\kappa\}$ is a collection of $(F,b)$-slaloms that witness for the $\ddd(\PIF,\in^{*},\mathcal{S}lm(F,b))$. Without loss of generality, we may enlarge $F$ and assume that $F(n)=2^{i_{n}}$ for some $i_{n}\in\omega$. Let now $(I_{n})_{n\in\omega}$ be such interval partition that $|I_{n}|=i_{n}$ for every $n\in\omega$. For $\alpha<\kappa$ define the set
        \begin{center}
            $F_{\alpha}=\{x\in\Cantor:\foralmostall n\in\omega$ $x|_{I_{n}}\in S_{\alpha}(n)\}$
        \end{center}
        By the choice of $\{S_{\alpha}:\alpha<\kappa\}$ we have that $\bigcup_{\alpha<\kappa}F_{\alpha}$ covers $\Cantor$. Let $E\in\E$ be the set from the lemma \ref{LemmaTranslatinalE}. We need the following.
        \begin{claim}
            For every $\alpha<\kappa$ we have $F_{\alpha}+E\in\E$.
        \end{claim}
        \begin{proof}
            Recall the construction of the set $E$ from the lemma \ref{LemmaTranslatinalE}. Notice first that in the construction, we may assume that every interval $(I_{k})_{k\in\omega}$ is a union of the intervals $[f(n),f(n+1))$. Then we have that 
            \begin{center}
                $F_{\alpha}+E\subseteq\{x\in\Cantor:\foralmostall n\in\omega$ $x|_{I_{n}}\in A_{x|_{f(n)}}+S_{\alpha}(n)|_{[f(n),f(n+1))}\}$
            \end{center}
            and the latter is an $F_{\sigma}$-set of measure zero. This finishes the proof of the claim.
        \end{proof}
        By the lemma \ref{LemmaTranslatinalE}, for every $\alpha<\kappa$ there is $y_{\alpha}\in \Cantor$ such that $F_{\alpha}\subseteq E+y_{\alpha}$. We claim that the set $Y=\{y_{\alpha}:\alpha<\kappa\}$ is not in $\Estar$ because $Y+E=\Cantor$. To see this, let $x\in\Cantor$ be arbitrary. Then for some $\alpha<\kappa$, we have that
        \begin{center}
            $x\in F_{\alpha}\subseteq E+y_{\alpha}\subseteq E+Y$
        \end{center}
        This finishes the proof of the first inequality.\\
        To prove the second inequality, assume that for some $X\subseteq\Cantor$ and some $E\in\E$ we have $\Cantor=\bigcup_{x\in X}E+x$. Using the characterization \ref{CharacterizationE} of the $\sigma$-ideal $\E$, we may assume that for some interval partition $(I_{n})_{n\in\omega}$ and a slalom $(S_{n})_{n\in\omega}$ such that $S_{n}\subseteq 2^{I_{n}}$, $\frac{|S_{n}|}{2^{|I_{n}|}}\leq\frac{1}{2^{n}}$, the set $E$ is of the form $\{x\in\Cantor:\foralmostall n\in\omega$ $x|_{I_{n}}\in S_{n}\}$. Let $F,b\in\Baire$ be such that $F(n)=2^{|I_{n}|}$ and $b(n)=\frac{F(n)}{2^{n}}$. In this way, every element of $\PIF$ naturally corresponds with a unique element of $\Cantor$.
        Now, if for $x\in X$, the slalom $(S^{x}_{n})_{n\in\omega}$ is defined as
        \begin{center}
            $S^{x}_{n}=\{i\in F(n): i$ corresponds with an element of $S_{n}+x|_{I_{n}}\}$
        \end{center}
        then clearly the collection $\{(S^{x}_{n})_{n\in\omega}:x\in X\}$ is a witness for $\ddd(\PIF,\in^{*},\mathcal{S}lm(F,b))$.
        This finishes the proof of the second inequality and the theorem.
        \end{proof}
        As a result of the above Theorem together with the fact that $\non(\Estar)=\cov_{t}(\E)$ and Theorem \ref{Kada}, we get that $\cov_{t}(\E)\leq\sup\{\ddd(\PIF,\in^{*},\mathcal{S}lm(F,b)):F\in\Baire,$ and $b\leq F\}$, which strengthens the previously known inequality $\cov_{t}(\E)\leq\cof(\Null)$ obtained by Elekes and Stepr\=ans, in \cite{ElekesSteprans}.\\
        In forcing language, we can formulate the previous Theorem as follows:
        \begin{corollary}
            If a forcing notion $\mathbb{P}$ has the Laver property, then $\Vdash_{\mathbb{P}}"\Cantor\cap\mathbb{V}\notin\Estar"$
        \end{corollary}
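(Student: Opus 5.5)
The plan is to run the proof of the first inequality of the preceding Theorem inside the extension $\mathbb{V}^{\mathbb{P}}$, using the ground model slaloms as the witnessing family. Work in $\mathbb{V}^{\mathbb{P}}$ and fix in the ground model an increasing $F\in\Baire$ with $F(n)=2^{i_{n}}$, a function $b\leq F$, and the interval partition $(I_{n})_{n\in\omega}$ with $|I_{n}|=i_{n}$. As in that proof, we may assume each $I_{n}$ is a union of the intervals $[f(n),f(n+1))$ from Lemma \ref{LemmaTranslatinalE}. We also fix the set $E\in\E$ of Lemma \ref{LemmaTranslatinalE}, defined by its Borel code in $\mathbb{V}$; it is still a closed measure zero set in the extension.

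The key step is to use the Laver property to cover $\Cantor\cap\mathbb{V}^{\mathbb{P}}$. Every real of the extension, read blockwise along $(I_{n})$, is an element of $\PIF$. By the Laver property it is captured by some ground model slalom $S\in\mathcal{S}lm(F,b)\cap\mathbb{V}$ with $x(n)\in S(n)$ for almost all $n$. Here $b$ should be chosen so that $b(n)\cdot 2^{-n}$, or some similar fast enough condition, makes the sets
\[
F_{S}=\{x:\foralmostall n\ x|_{I_{n}}\in S(n)\}
\]
satisfy the Claim from the previous proof, namely $F_{S}+E\in\E$. That claim is absolute, because it is witnessed by an explicit $F_{\sigma}$ null set coded from $S$, $E$ and $(I_n)$. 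So in $\mathbb{V}^{\mathbb{P}}$ we get $\Cantor=\bigcup\{F_{S}:S\in\mathcal{S}lm(F,b)\cap\mathbb{V}\}$.

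Next we apply Lemma \ref{LemmaTranslatinalE}. For each ground model slalom $S$, the statement ``there is $y$ with $F_{S}\subseteq E+y$'' holds in $\mathbb{V}$, since $F_S+E\in\E$ there. It is a $\Sigma^{1}_{2}$ statement, or rather the inclusion $F_S\subseteq E+y$ for a fixed $y$ is $\Pi^1_1$, so by Shoenfield absoluteness we may pick $y_{S}\in\mathbb{V}$ and keep the inclusion in the extension. Then every $x\in\Cantor\cap\mathbb{V}^{\mathbb{P}}$ lies in some $F_{S}\subseteq E+y_{S}$. Hence
\[
(\Cantor\cap\mathbb{V})+E\supseteq\{y_{S}:S\in\mathbb{V}\}+E=\Cantor,
\]
which witnesses $\Cantor\cap\mathbb{V}\notin\Estar$.

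I expect the main obstacle to be checking that the set $E$ and the translates $y_S$ chosen in $\mathbb{V}$ still work in the extension. This reduces to the absoluteness of the closed/$F_\sigma$ inclusions involved, which is routine. A smaller point is matching the Laver property's $b$ with the growth that the Claim requires; I would handle this by choosing $F$ fast relative to the blocks $[f(n),f(n+1))$.
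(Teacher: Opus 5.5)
Your outline follows the route the paper intends. The corollary is presented as the forcing form of the first inequality of the preceding Theorem, whose proof goes through Lemma \ref{LemmaTranslatinalE}. You also correctly supply the point the paper leaves implicit, that the translates must be chosen in $\mathbb{V}$.

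\textbf{The gap.} The step that fails is the appeal to Lemma \ref{LemmaTranslatinalE}, because that lemma is false for every $E\in\E$. Given $E$, use Proposition \ref{CharacterizationE} to get $(I_n)$ and $(S_n)$ with $|S_n|\le 2^{|I_n|-n}$ and $E\subseteq\{x:\foralmostall n\ x|_{I_n}\in S_n\}$. Let $Y=\{y\in\Cantor:\forall n\ y|_{I_{2n}}\equiv 0\}$. Then $E+Y\subseteq\{x:\foralmostall n\ x|_{I_{2n}}\in S_{2n}\}\in\E$. But for any $y$ you can pick $u\in Y$ with $(u+y)|_{I_{2n+1}}\notin S_{2n+1}$ for every $n$, so $Y\not\subseteq E+y$.

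For the particular $E$ built in the paper, your final equality is itself false. Each $A_s$ demands $\sigma|_{J_{g(n)}}\equiv 0$. Write $W$ for the union of the windows $J_{g(n)}$, i.e.\ the last $n$ points of $[f(n),f(n+1))$. Then every element of $E+y$ agrees with $y$ at all but finitely many points of $W$. So in any extension with a new real $r$, a real that copies $r$ onto $W$ lies outside $(\Cantor\cap\mathbb{V})+E$. The error sits in the proof of item (d) of Claim \ref{claimWithConditions}, which overlooks the conjunct $\sigma|_{J_{g(n)}}\equiv 0$ in the definition of $B_n$.

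\textbf{The fix.} Drop both the lemma and the claim $F_S+E\in\E$, and choose $E$ so that slalom sets sit inside ground-model translates by construction.
\begin{enumerate}
\item Take $b(n)=2^n$. Split each $I_n$ into consecutive intervals $J^n_k$, $k<b(n)$, each of length $j_n=n+1$. Let $F(n)=2^{|I_n|}$, identified with $2^{I_n}$.
\item Put $E_0=\{x\in\Cantor:\foralmostall n\ \exists k<b(n)\ x|_{J^n_k}\equiv 0\}$. The closed set $\{x:\forall n\ge N\ \exists k<b(n)\ x|_{J^n_k}\equiv 0\}$ has measure at most $\prod_{n\ge N}b(n)2^{-j_n}=0$. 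Hence $E_0\in\E$ in any model.
\item For $S\in\mathcal{S}lm(F,b)\cap\mathbb{V}$, list $S(n)\subseteq\{\sigma^n_k:k<b(n)\}$. Let $z_S\in\mathbb{V}$ satisfy $z_S|_{J^n_k}=\sigma^n_k|_{J^n_k}$.
\item If $x|_{I_n}=\sigma^n_k$, then $(x+z_S)|_{J^n_k}\equiv 0$. So $F_S\subseteq E_0+z_S$ holds blockwise in every model, and no absoluteness argument is needed.
\item By the Laver property, every real of $\mathbb{V}^{\mathbb{P}}$ lies in some $F_S$ with $S\in\mathbb{V}$. Therefore $\Cantor=E_0+\{z_S:S\in\mathbb{V}\}\subseteq E_0+(\Cantor\cap\mathbb{V})$.
\end{enumerate}
$E_0$ is the auxiliary set $F$ from the proof of Lemma \ref{LemmaTranslatinalE} with the offending conjunct removed. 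The same argument also repairs the first inequality of the preceding Theorem.
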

        At this point, we know the value of $\add_{t}(\E)$ and we have the above estimate for $\cov_{t}(\E)$. As $\EA=\MA$, we have that $\add_{t}(\E)=\non(\mathcal{EA})=\non(\MA)=\add_{t}(\Meager)$ and that $\cov_{t}(\E)=\non(\Estar)$ is equal to the Laver property number. We also know that $\add_{t}(\E,\Null)=\non(\EN)=\non(\Mstar)$ is the cardinal invariant related to the bounded infinitely equal reals. This motivates the following.
        \begin{question}\label{QuestionSlalomNumbersVsNons}
        Are the following equalities true in ZFC?
        \begin{itemize}
            \item[--] $\non(\EM)=\sup\{\ddd(\PIF,=^{\infty},\PIF):F\in\Baire\}$,
            \item[--] $\non(\Estar)=\sup\{\ddd(\PIF,\in^{*},Slm(F)):F\in\Baire\}$.
        \end{itemize} 
        \end{question}
         Regarding additivities of the additive classes, we alredy mentioned that $\add(\NA)=\non(\NA)=\min\{\bbb(\PIF,=^{\infty},\PIF):F\in\Baire\}$ and $\add(\NA)\leq\add(\SMZ)$. Regarding classes $\MA$ and $\EM$ we have the following:
        \begin{question}
        Are the following equalities true in ZFC?
        \begin{itemize}
            \item[--] $\add(\MA)=\add(\Meager)$ (Cardona, Mejía and Rivera-Madrid),
            \item[--] $\add(\EM)=\add(\Meager)$.
        \end{itemize} 
        \end{question}
        Regarding the additivity of $\Estar$, Bartoszyński and Shelah showed (\cite{BartShNstarIsNotIdeal}) that under the assumption of Continuum Hypothesis, $\mathcal{SM}=\Nstar$ does not form an ideal. In an earlier version of this article I gave as a problem whether the collection $\Estar$ forms an ideal? Tomasz Weiss informed be that $\Estar$ is in fact a $\sigma$-ideal (see \cite{WeissNew}).\\
        
         We finish this section with considerations regarding the following two cardinal invariants. 
        \begin{definition}
        For a $\sigma$-ideal $\I$ on $\Cantor$ define: 
            \begin{itemize}
                \item[--] $\add^{*}_{t}(\I)=\min\{|\mathcal{F}|:\mathcal{F}\subseteq\I$ and $\neg\exists X\in\I$ $\forall F\in\mathcal{F}$ $\exists x\in\Cantor$ $F+x\subseteq X\}$
                \item[--] $\mathrm{cof}^{*}_{t}(\I)=\min\{|\mathcal{F}|:\mathcal{F}\subseteq\I$ and $\forall Y\in\I$ $\exists Y'\in\mathcal{F}$ $\exists x\in\Cantor$ $Y\subseteq Y'+x\}$
            \end{itemize}
        \end{definition}
        Clearly $\add(\I)\leq\add^{*}_{t}(\I)$ and $\cof^{*}_{t}(\I)\leq\cof(\I)$ hold for any translational ideal $\I$. In case of measure zero and meager sets the following result was obtained by Pawlikowski.
        \begin{theorem}(Pawlikowski; \cite{PawlikowskiTransitiveInvariants})
            The following holds:
            \begin{itemize}
                \item[--] $\add^{*}_{t}(\Null)=\add(\Null)$,
                \item[--] $\cof^{*}_{t}(\Null)=\cof(\Null)$,
                \item[--] $\add^{*}_{t}(\Meager)=\bbb$,
                \item[--] $\cof^{*}_{t}(\Meager)=\ddd$.
            \end{itemize}
        \end{theorem}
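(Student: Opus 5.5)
The four equalities split into a category half and a measure half. In each half the trivial inequalities $\add(\I)\leq\add^{*}_{t}(\I)$ and $\cof^{*}_{t}(\I)\leq\cof(\I)$ give one direction for $\Null$. So the work is to prove $\add^{*}_{t}(\Meager)=\bbb$ and $\cof^{*}_{t}(\Meager)=\ddd$, and then $\add^{*}_{t}(\Null)\leq\add(\Null)$ and $\cof(\Null)\leq\cof^{*}_{t}(\Null)$. Throughout I would work with the basis of Proposition \ref{CharacterizationM}. The key observation is that translations act trivially on the parameter $y$: $M(y,I)+x=M(y+x,I)$, and in particular $M(y,I)+y=M(0,I)$.

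For $\bbb\leq\add^{*}_{t}(\Meager)$, take $\kappa<\bbb$ meager sets, each contained in some $M(y_{\alpha},I^{\alpha})$, and translate each by $y_{\alpha}$ to get $M(0,I^{\alpha})$. Choose $J\in\mathbb{IP}$ such that for all $\alpha$, almost every $J_{n}$ contains some $I^{\alpha}_{k}$; this is possible since $\kappa<\bbb$ (Blass). Then $x|_{I^{\alpha}_{k}}\neq 0$ for almost all $k$ implies $x|_{J_{n}}\neq 0$ for almost all $n$, so $M(0,I^{\alpha})\subseteq M(0,J)$ and $M(0,J)$ is the required single meager set. The same argument gives $\cof^{*}_{t}(\Meager)\leq\ddd$: the family $\{M(0,J):J\in\mathcal{D}\}$, for $\mathcal{D}$ dominating in $\langle\mathbb{IP},\sqsubseteq^{*},\mathbb{IP}\rangle$, is translation-cofinal.

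For the reverse inequalities I would prove a rigidity lemma. If $M(x,I)\subseteq M(z,J)$, then for almost all $n$ the interval $J_{n}$ contains some $I_{k}$. The proof: if infinitely many $J_{n}$ contain no $I_{k}$, each such $J_{n}$ lies in $I_{k}\cup I_{k+1}$. Take a sparse infinite set of such $n$ and build a point agreeing with $z$ on those $J_{n}$ and differing from $x$ on every $I_{k}$; this needs each $I_{k}$ to have two points, which we may assume after merging intervals. This point lies in $M(x,I)\setminus M(z,J)$. Given the lemma, an unbounded family $\{I^{\alpha}:\alpha<\bbb\}$ gives sets $M(0,I^{\alpha})$ whose translates cannot all sit inside one $M(z,J)$, because $J$ would then dominate every $I^{\alpha}$. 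Dually, a translation-cofinal family of size $<\ddd$ would produce a dominating family of partitions. This yields $\add^{*}_{t}(\Meager)\leq\bbb$ and $\ddd\leq\cof^{*}_{t}(\Meager)$.

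For $\Null$ I would use the slalom characterization $\add(\Null)=\bbb(\Baire,\in^{*},\mathcal{S}lm(b))$, $\cof(\Null)=\ddd(\Baire,\in^{*},\mathcal{S}lm(b))$. To each $h\in\Baire$ I would attach a null set $N_{h}$ that is translation-rigid, in the spirit of Lemma \ref{LemmaTranslatinalE}. Fix a fast interval partition $(I_{n})$ and pairwise-disjoint-coded small sets $C^{n}_{k}\subseteq 2^{I_{n}}$ with $\sum_{n}\mu(C^{n}_{h(n)})<\infty$, and let $N_{h}=\{x:\exists^{\infty}n\ x|_{I_{n}}\in C^{n}_{h(n)}\}$. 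Now suppose $N_{h_{\alpha}}+x_{\alpha}\subseteq X$ for a single null set $X$ and all $\alpha$. Cover $X$ by a set of the form $\{x:\exists^{\infty}n\ x|_{I'_{n}}\in T_{n}\}$ with $\sum\mu(T_{n})<\infty$. A counting/measure argument like the slalom estimate in Lemma \ref{LemmaTranslatinalE} should show that for almost all $n$, the value $h_{\alpha}(n)$ lies in the set of $k$ such that some translate of $C^{n}_{k}$ is mostly inside $T$, and this set has at most $b(n)$ elements. That gives one slalom capturing all $h_{\alpha}$, hence $\add^{*}_{t}(\Null)\leq\add(\Null)$. Dualizing gives $\cof(\Null)\leq\cof^{*}_{t}(\Null)$.

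The main obstacle is this last step. Unknown translates $x_{\alpha}$ and the interval structure of $X$ have to be decoupled from the coding, so the $C^{n}_{k}$ must be chosen (via independence, as in condition (c) of Claim \ref{claimWithConditions}) so that no translate of a small set can contain large parts of many distinct $C^{n}_{k}$ at once. I would first attempt the construction with $C^{n}_{k}$ given by independent coordinate blocks, as in the proof of Lemma \ref{LemmaTranslatinalE}.
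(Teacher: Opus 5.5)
The paper only quotes this theorem from \cite{PawlikowskiTransitiveInvariants}, so I am judging your argument on its own. Your category half is correct. The identity $M(y,I)+x=M(y+x,I)$, domination of interval partitions and your rigidity lemma together give $\add^{*}_{t}(\Meager)=\bbb$ and $\cof^{*}_{t}(\Meager)=\ddd$. This is the same mechanism the paper uses for $\add^{*}_{t}(\E)\leq\bbb$ and $\ddd\leq\cof^{*}_{t}(\E)$. Drop the merging remark: as stated it does not preserve the hypothesis $M(x,I)\subseteq M(z,J)$, since merging enlarges $M(x,I)$. It is also unnecessary: a $J_{n}$ containing no $I_{k}$ already leaves a coordinate outside $J_{n}$ in each of the at most two $I_{k}$ it meets, and that is all your construction uses.

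The measure half has a genuine gap, in two places. First, a rigidity lemma for your coding cannot by itself reach $\add(\Null)$. For a fixed partition $(I_{n})$ there are only finitely many sets $C^{n}_{k}\subseteq 2^{I_{n}}$, so $h$ ranges over some $\PIF$. At best you get $\add^{*}_{t}(\Null)\leq\min_{F}\bbb(\PIF,\in^{*},\mathcal{S}lm(F,b))=\add_{t}(\Null)$ and, dually, $\sup_{F}\ddd(\PIF,\in^{*},\mathcal{S}lm(F,b))\leq\cof^{*}_{t}(\Null)$. Now $\add(\Null)=\min\{\bbb,\add_{t}(\Null)\}$ and $\cof(\Null)=\max\{\ddd,\sup_{F}\ddd(\PIF,\in^{*},\mathcal{S}lm(F,b))\}$ (Proposition \ref{Kada}). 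Moreover $\bbb<\add_{t}(\Null)$ is consistent (by \cite{Evasionprediction}, as recalled in Section \ref{SectionCardInvsHf}), and so is $\sup_{F}\ddd(\PIF,\in^{*},\mathcal{S}lm(F,b))<\ddd$ (Miller model). So you also need $\add^{*}_{t}(\Null)\leq\bbb$ and $\ddd\leq\cof^{*}_{t}(\Null)$.

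These two inequalities follow from positional coding. For $f$ with $f(n+1)\geq f(n)+n$, let $N_{f}=\{x:\existsinfty n\ x|_{[f(n),f(n)+n)}\equiv 0\}$. Cover a null set $X$ by $\{y:\existsinfty m\ y\in[\sigma_{m}]\}$ with $\sum_{m}2^{-|\sigma_{m}|}<\infty$. Let $g_{X}(n)$ be such that the cylinders of depth $\geq g_{X}(n)$ have total measure $<2^{-n^{2}-n-1}$. Suppose $f(n)>g_{X}(n)$ for infinitely many $n$, and choose such $n_{1}<n_{2}<\dots$ with $n_{j}\geq j$. Fix $y$ to agree with $x$ on the blocks $[f(n_{j}),f(n_{j})+n_{j})$ and take product measure elsewhere. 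Cylinders of depth in $(f(n_{j}),f(n_{j+1})]$ are inflated by at most $2^{n_{j}^{2}}$, so the cylinders of large index have total conditional measure $<1$. This gives $y\in(N_{f}+x)\setminus X$. Hence $N_{f}+x\subseteq X$ forces $f\leq^{*}g_{X}$, independently of $x$.

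Second, and this is where the substance lies, the rigidity lemma for the $C^{n}_{k}$-coding is only asserted, and the template of Lemma \ref{LemmaTranslatinalE} does not transfer. There the sets are of $\foralmostall$-type, i.e. countable unions of closed null sets. Baire category therefore yields one piece $[R|_{\sigma}]+y\subseteq[T_{m}]$, and the count is done level by level inside a single closed null set. In your setting:
\begin{itemize}
\item[--] $N_{h}$ is an $\existsinfty$-type comeager $G_{\delta}$, so there is no closed piece to localize.
\item[--] $X$ is an arbitrary null set, in general only a union of two sets of the form $\{x:\existsinfty n\ x|_{I'_{n}}\in T_{n}\}$ over an interval partition.
\item[--] The inclusion $N_{h}+x\subseteq X$ carries no information at any single level.
\end{itemize}

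What is needed is an escape argument like the one above. Define $S_{X}(n)$ from $X$ alone. If $h(n)\notin S_{X}(n)$ infinitely often, condition on $y|_{I_{n_{i}}}\in C^{n_{i}}_{h(n_{i})}+x|_{I_{n_{i}}}$ along a sparse subsequence, and show that some point escapes the tail of the cover. Each constrained block inflates the conditional measure of deeper cylinders by $1/\mu(C^{n_{i}}_{h(n_{i})})$, so ``mostly inside'' with a fixed threshold such as $\frac12$ is too weak for such an argument. The thresholds must tend to $0$, but not at a rate fixed in advance. Otherwise a part of the cover that is independent of the coordinates in $I_{n}$ and has measure above the threshold puts every $k$ into $S_{X}(n)$. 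The threshold has to be taken relative to the cover, e.g. relative to its mass beyond $\min I_{n}$. You must then actually prove $|S_{X}(n)|\leq b(n)$ from the independence of translates of distinct $C^{n}_{k}$.

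None of this is in your proposal, so $\add^{*}_{t}(\Null)\leq\add(\Null)$ and $\cof(\Null)\leq\cof^{*}_{t}(\Null)$ remain unproved.
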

        Regarding these cardinal invariants for the $\sigma$-ideal $\E$ we have:
        \begin{theorem}
        The following inequalities hold:
        \begin{itemize}
            \item[--] $\add(\Meager)\leq\add^{*}_{t}(\E)\leq\bbb$,
            \item[--] $\ddd\leq\cof^{*}_{t}(\E)\leq\cof(\Meager)$.
            \end{itemize}
        \end{theorem}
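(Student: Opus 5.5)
We need four inequalities: $\add(\Meager)\leq\add^{*}_{t}(\E)$, $\add^{*}_{t}(\E)\leq\bbb$, $\ddd\leq\cof^{*}_{t}(\E)$ and $\cof^{*}_{t}(\E)\leq\cof(\Meager)$.

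\textbf{The trivial bounds.} Two of them follow from the general remark $\add(\I)\leq\add^{*}_{t}(\I)$ and $\cof^{*}_{t}(\I)\leq\cof(\I)$. We combine this with Theorem \ref{CardInvsofE}, which gives $\add(\E)=\add(\Meager)$ and $\cof(\E)=\cof(\Meager)$.

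\textbf{The bound $\add^{*}_{t}(\E)\leq\bbb$.} We imitate Proposition \ref{addHFbelowB}. Fix an unbounded family $\{I^{\alpha}:\alpha<\bbb\}\subseteq\mathbb{IP}$, and for $\alpha<\bbb$, $i\in2$ put
\begin{center}
$X^{i}_{\alpha}=\{x\in\Cantor:\forall n\ x|_{I^{\alpha}_{2n+i}}\equiv 0\}$.
\end{center}
Each $X^{i}_{\alpha}$ is closed and null, so it lies in $\E$.

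Suppose some $X\in\E$ contains a translate $X^{i}_{\alpha}+z_{\alpha,i}$ of every member. By Proposition \ref{CharacterizationE} we may take $X\subseteq E(S,J)$ with $|S_{n}|\leq 2^{|J_{n}|-n}$. Choose $\alpha$ so that $J$ does not dominate $I^{\alpha}$. Then for infinitely many $n$ some $I^{\alpha}_{k}$ contains $J_{n}\cup J_{n+1}$. (If instead we only get that $J_n$ contains no interval of $I^\alpha$, we pass to the coarsening $J'_n=J_{2n}\cup J_{2n+1}$, which is harmless since $E(S,J)\subseteq E(S',J')$ for the product slalom $S'$, still of small relative size.)

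After fixing parity, for infinitely many $n$ the interval $J_{n}$ lies inside a block $I^{\alpha}_{k}$ with $k\equiv i \pmod 2$. On such $J_n$, every element of $X^{i}_{\alpha}+z$ equals $z|_{J_{n}}$. Elsewhere it is free, and these free coordinates include all coordinates of the blocks of the other parity.

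We now need $x\in X^{i}_{\alpha}+z$ with $x|_{J_{m}}\notin S_{m}$ for infinitely many $m$. Because the blocks of parity $1-i$ are free, we look for $J_{m}$ lying entirely inside free coordinates and choose values outside $S_{m}$ there, which is possible since $S_m$ is a proper subset of $2^{J_m}$. To guarantee that such $J_{m}$ exist, we refine the construction: replace $X^{i}_{\alpha}$ by the set of $x$ vanishing on $\bigcup_n I^\alpha_{2n+i}$. Then unboundedness (in the form that infinitely many $J$-intervals lie inside single $I^\alpha$-blocks) together with the parity choice gives infinitely many $J_{m}$ inside free blocks.

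\textbf{The bound $\ddd\leq\cof^{*}_{t}(\E)$.} This is the dual argument. Given $\kappa<\ddd$ and sets $E(S^{\alpha},J^{\alpha})$ for $\alpha<\kappa$, pick $I$ not dominated by any $J^{\alpha}$. Consider
\begin{center}
$H=\{x\in\Cantor:\forall n\ x|_{I_{2n}}\equiv0\}$ or $\{x:\forall n\ x|_{I_{2n+1}}\equiv 0\}$.
\end{center}
The argument above shows that one of these two sets is not covered by any translate of any $E(S^{\alpha},J^{\alpha})$. Since there are only two choices and a single $H$ must work for all $\alpha$, we take $H$ to be the union of both sets; this union is still in $\E$.

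\textbf{Main obstacle.} The delicate step is the combinatorial bookkeeping showing that non-domination yields infinitely many $J$-intervals inside the free blocks of the right parity, uniformly over translations. I would first try to handle this by coarsening $J$, and if that fails, by using three-coloured or longer periodic patterns of blocks.
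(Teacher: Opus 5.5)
Your trivial bounds are exactly the paper's. The two nontrivial inequalities, however, rest on the step you yourself flag as the main obstacle, and as written that step fails.

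\textbf{Where the step fails.} Non-domination of $I^{\alpha}$ by $J$ only gives infinitely many $n$ with $J_{n}\subseteq I^{\alpha}_{k}\cup I^{\alpha}_{k+1}$. It does not give $J_{n}\cup J_{n+1}\subseteq I^{\alpha}_{k}$. Since your test sets pin \emph{whole} blocks, intervals that straddle two blocks genuinely destroy the count.

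For instance, let $|J_{n}|=2n+2$ and let the blocks of $I^{\alpha}$ start at the midpoints of the $J_{n}$'s. So each block is the second half of some $J_{n}$ followed by the first half of $J_{n+1}$. Then:
\begin{itemize}
\item[--] no $J_{n}$ with $n\geq1$ contains an $I^{\alpha}$-block;
\item[--] every element of $X^{0}_{\alpha}+z_{\alpha,0}$ agrees with $z_{\alpha,0}$ on one half of each $J_{n}$;
\item[--] every element of $X^{1}_{\alpha}+z_{\alpha,1}$ agrees with $z_{\alpha,1}$ on the other half.
\end{itemize}
Let $S_{n}$ be the union of the two resulting sets of patterns. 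Then $|S_{n}|\leq 2^{n+2}=2^{|J_{n}|-n}$. So a legitimate basic set $E(S,J)$ contains translates of both $X^{0}_{\alpha}$ and $X^{1}_{\alpha}$, and no contradiction can be extracted from this $\alpha$.

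Your parenthetical remedy goes the wrong way. Replacing $E(S,J)$ by $E(S',J')$ with coarser intervals only makes straddling worse.

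There is also a parity slip. Having found $J_{n}$ inside blocks of parity $i$, you test $X^{i}_{\alpha}$, which pins exactly those $J_{n}$. The ``refined'' set you then propose is literally $X^{i}_{\alpha}$ again.

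The dual argument has the same defect, since it only asks that $I$ not be dominated by the $J^{\alpha}$'s. The same picture, with $S^{\alpha}_{n}$ consisting of the patterns vanishing on one half of $J^{\alpha}_{n}$, puts $H_{0}\cup H_{1}$ inside $E(S^{\alpha},J^{\alpha})$.

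\textbf{The repair.} Keep $(S,J)$ as it is, but choose $\alpha$ with $\neg(I^{\alpha}\sqsubseteq^{*}J')$ for the paired coarsening $J'_{n}=J_{2n}\cup J_{2n+1}$. Then infinitely many $J'_{n}$ lie in some $I^{\alpha}_{k}\cup I^{\alpha}_{k+1}$, so one of $J_{2n},J_{2n+1}$ lies inside a single block. Let $j$ be a parity occurring infinitely often among these blocks, and test $X^{1-j}_{\alpha}$, whose free blocks are exactly those of parity $j$. Translation does not change which coordinates are free. Since $|S_{m}|<2^{|J_{m}|}$, you can build $x\in X^{1-j}_{\alpha}+z_{\alpha,1-j}$ with $x|_{J_{m}}\notin S_{m}$ for infinitely many $m$, which is the contradiction.

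Dually, choose $I$ with $\neg(I\sqsubseteq^{*}\tilde{J}^{\alpha})$ for the paired coarsenings $\tilde{J}^{\alpha}$ of the $J^{\alpha}$'s; there are still fewer than $\ddd$ of them. Then your $H_{0}\cup H_{1}$ works.

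Alternatively, pin a single coordinate per block, as the paper does in the dual ($x(\min I_{2k+i})=0$). An interval inside $I_{k}\cup I_{k+1}$ then contains at most one pinned coordinate, and plain non-domination suffices against the $2^{-n}$ relative size of the slalom.

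\textbf{Comparison with the paper.} For $\add^{*}_{t}(\E)\leq\bbb$ the paper argues differently.
\begin{itemize}
\item[--] It uses $[T_{\alpha}]=\{x:\forall n\ x(f_{\alpha}(n))=0\}$, whose relative measure at level $f_{\alpha}(n)$ is $2^{-n}$.
\item[--] It uses the Baire category theorem to place a cone of some translate inside a single closed null set $[T]$.
\item[--] It derives a contradiction from an $f_{\alpha}$ that escapes the measure-decay rate of $T$.
\end{itemize}
Your repaired version, going through Proposition \ref{CharacterizationE}, is a purely combinatorial alternative. It avoids the Baire-category localization and the pigeonhole over $(\sigma_{\alpha},n_{\alpha})$.
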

        \begin{proof}
            First, we notice that $\add(\Meager)\leq\add^{*}_{t}(\E)$ and $\cof^{*}_{t}(\E)\leq\cof(\Meager)$ easily follows from the observation above and the fact that $\add(\E)=\add(\Meager)$ and $\cof(\E)=\cof(\Meager)$ (see Theorem \ref{CardInvsofE}).\\
            To show that $\add^{*}_{t}(\E)\leq\bbb$ let $\{f_{\alpha}:\alpha<\bbb\}\subseteq\Baire$ be an unbounded family of increasing functions. For every $\alpha<\bbb$ let $T_{\alpha}\subseteq\cantor$ be the tree that splits only on levels $n\in\omega\setminus rng(f_{\alpha})$ and picks $0$ elsewhere. Clearly, $E_{\alpha}=[T_{\alpha}]\in E$ and for every $n\in\omega$ we have 
            \begin{center}
                $\frac{|T_{\alpha}\cap 2^{f_{\alpha}(n)}|}{2^{f_{\alpha}(n)}}=\frac{1}{2^{n}}$
            \end{center}
            Assume that $E=\bigcup_{n}F_{n}$ is such that each $F_{n}$ is closed set of measure zero and that for every $\alpha<\bbb$ there is $x_{\alpha}\in \Cantor$ such that $E_{\alpha}+x_{\alpha}\subseteq E$. By Baire category theorem for every $\alpha$ there are $\sigma\in \cantor$ and $n_{\alpha}\in\omega$ such that $[T_{\alpha}+x_{\alpha}|_{\sigma_{\alpha}}]\subseteq F_{n_{\alpha}}$. Without loss of generality, we may assume that all $\sigma_{\alpha}$'s and $n_{\alpha}$'s are the same and are equal $\sigma$ and $n$. Let $T\subseteq\cantor$ be such tree that $F_{n}=[T]$ and let $g\in\Baire$ be such that for every $n\in\omega$ for every $m>g(n)$ we have
            \begin{center}
                $\frac{|T\cap 2^{m}|}{2^{m}}<\frac{1}{2^{n+|\sigma|}}$
            \end{center}    
            Let $\alpha<\bbb$ be such that $g(n)<f_{\alpha}(n)$ for infinitely many $n$'s. But then for every such $n>|\sigma|$ we get 
            \begin{center}
                $\frac{1}{2^{n+|\sigma|}}
                =\frac{|(T_{\alpha}+x_{\alpha})|_{\sigma}|}{2^{f_{\alpha}(n)}}
                \leq\frac{|T\cap 2^{f_{\alpha}(n)|}}{2^{f_{\alpha}(n)}}
                <\frac{1}{2^{f_{\alpha}(n)}}$
            \end{center}
            which is the final contradiction.\\
            Next we show that $\ddd\leq\cof^{*}_{t}(\E)$. Let $\{E_{\alpha}:\alpha<\kappa\}\subseteq\E$ where $\kappa<\ddd$. We will construct a set $E\in\E$ such that no translation of any $E_{\alpha}$ covers $E$. By Proposition \ref{CharacterizationE}, without loss of generality, we may assume that for every $\alpha<\kappa$ there is an interval partition $(I^{\alpha}_{n})_{n\in\omega}$ and a slalom $S^{\alpha}=(S^{\alpha}(n))_{n\in\omega}$ such that 
            \begin{center}
                $E_{\alpha}\subseteq\{x\in\Cantor:\foralmostall n\in\omega\textbf{ }x|_{I^{\alpha}_{n}}\in S^{\alpha}(n)\}$
            \end{center}
            As $\kappa<\ddd$, there is an interval partition $(J_{k})_{k<\omega}$ not dominated by any $(I^{\alpha}_{n})_{n\in\omega}$. For $i=0,1$ let $T_{i}\subseteq\cantor$ be such tree that $\sigma\in split(T_{i})$ if and only if $|\sigma|\notin\{\min J_{2k+i}:k<\omega\}$. Define $E=[T_{0}]\cup[T_{1}]$. Clearly $E\in\E$. Suppose that $E\subseteq E_{\alpha}+y=\{x\in\Cantor:x|_{I^{\alpha}_{n}}\in S^{\alpha}(n)+y|_{I^{\alpha}_{n}}\}$. Then there are infinitely many intervals $I^{\alpha}_{n}$ covered by $J_{k}\cup J_{k+1}$. Let $i\in 2$ be the parity of infinitely many of these $k$'s. Then it is easy to construct $z\in[T_{i}]$ such that $z|_{I^{\alpha}_{n}}\notin S^{\alpha}(n)+y|_{I^{\alpha}_{n}}$ for infinitely many $n\in\omega$. Clearly, then $z\in E\setminus (E_{\alpha}+y)$.
        \end{proof}
        I do not know the precise values of these invariants.
        \begin{question}
            Is $\add^{*}_{t}(\E)=\add(\Meager)$ and $\cof^{*}_{t}(\E)=\cof(\Meager)$?
        \end{question}
        It would be interesting to investigate the covering numbers and cofinalities of the additive classes $\EM$ and $\Estar$. Regarding these two invariants of the classes $\MA$, $\mathcal{SMZ}$ and $\mathcal{SM}$, several interesting results were proven in \cite{coveringstrongmeasurecanbeaboveeverythingelse}, \cite{MejiaDirectedSums}, \cite{MejiaCardonaMA}, \cite{CardonaMejiaMoreOnCofCovSMZ} and \cite{BrendleCardonaMejia}.\\

    \section{Every {$\mathcal{E}$}-Luzin set is in {$\Estar$}}\label{SpecialSmallSetsSection}
    \begin{definition}
    Let $\I$ be a $\sigma$-ideal on $\Cantor$. An uncountable set $X\subseteq\Cantor$ is $\I$-Luzin set if the intersection $X\cap F$ is countable for any $F\in\I$.
\end{definition}
    In the sense of the above definition, the classical Luzin set is just an $\Meager$-Luzin set, and classical Sierpiński set is just an $\Null$-Luzin set. The reader interested in $\J$-Luzin sets may consult the Miller's survey article \cite{SpecialSubsets}. \\
    It is not difficult to see that $\cof(\I)=\omega_{1}$ implies the existence of $\I$-Luzin set. It is well-known, that every Luzin set in of strong measure zero (is in $\Mstar$). 
    
    The dual question, if every Sierpiński set is strongly meager, was asked by Galvin. This question was answered by Pawlikowski.
    \begin{theorem}(Pawlikowski; \cite{PawlikowskiSierpinski})\label{SierpIsStrMeager}
        Every Sierpiński set is strongly meager.
    \end{theorem}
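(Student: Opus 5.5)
The plan is to argue by contradiction, combining Fubini's theorem with the combinatorial description of null sets by interval partitions. So suppose $X$ is a Sierpiński set and $N\in\Null$ satisfies $X+N=\Cantor$. Enlarging $N$ does no harm, so we may assume
\[
N=\{x\in\Cantor:\exists^{\infty}n\ x|_{I_{n}}\in J_{n}\},
\]
where $(I_{n})_{n\in\omega}$ is an interval partition, $J_{n}\subseteq 2^{I_{n}}$, and $\sum_{n}|J_{n}|2^{-|I_{n}|}<\infty$.

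The key intermediate step is a Pawlikowski-type lemma. We want a second null set $N'$ of the same shape, with slaloms $J'_{n}\supseteq J_{n}$ obtained by translating $J_{n}$ by a small set of words $B_{n}\subseteq 2^{I_{n}}$. It should satisfy the following: for every countable $C\subseteq\Cantor$, the set
\[
\{z\in\Cantor: C\cap(z+N)\neq\emptyset \text{ is not witnessed already inside } z+N'\}
\]
is contained in a null set that does not depend on $C$. In particular, for every countable $C$ there are measure-one many $z$ for which the conditions $C\subseteq z+N$ and $C\subseteq z+N'$ are controlled in the same way. I would build the words $B_{n}$ probabilistically, in the spirit of conditions (c) and (d) of Claim \ref{claimWithConditions}: independent translates make $\sum_{n}|J_{n}+B_{n}|2^{-|I_{n}|}$ still finite, while a Borel--Cantelli estimate gives the ``almost every $z$'' statement.

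With the lemma in hand, consider the planar set
\[
W=\{(x,z):x\in z+N\},
\]
which is Borel. Every vertical section $\{z:(x,z)\in W\}=x+N$ is null, so $W$ is null by Fubini's theorem. Hence for almost every $z$ the horizontal section $z+N$ is null, which we know anyway. The real use of Fubini is different. Since $X$ is Sierpiński, $X\cap(z+N')$ is countable for every $z$. Call this countable set $C_{z}$. By the lemma, for almost all $z$ the countable set $C_{z}$ is not covered by $z+N$ outside an exceptional null set $Z$. Now pick $z\notin Z$. Then $X\cap(z+N)\subseteq C_{z}$, and $C_{z}$ misses $z+N$, so $X\cap(z+N)=\emptyset$. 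This means $z\notin X+N$, which contradicts $X+N=\Cantor$.

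The main obstacle is the lemma itself. The dependence on the countable set $C_{z}$ is where the argument is delicate, because $C_{z}$ depends on $z$, so the exceptional null set must be uniform. I would first try to define $N'$ so that $x\in z+N$ and $x\notin z+N'$ can never both hold along a tail of blocks. That would reduce uniformity to a single application of Borel--Cantelli for each $x$, followed by Fubini over $x$. If that fails, I would fall back on Pawlikowski's original approach of passing to a set of positive measure of good translations and using the Sierpiński property twice.
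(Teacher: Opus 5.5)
\textbf{There is a genuine gap, and it sinks the whole strategy.}

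Look at what your final step actually gives. Your choice of $z\notin Z$ was arbitrary, so you conclude $X\cap(z+N)=\emptyset$ for \emph{every} $z\notin Z$. In $\Cantor$ we have $x\in z+N$ iff $z\in x+N$, so this says $z\notin X+N$. Hence the lemma together with your argument yields $X+N\subseteq Z$, i.e.\ $X+N\in\Null$. The hypothesis $X+N=\Cantor$ is never used.

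That conclusion is false for every Sierpiński set. $X$ is not null, since otherwise $X=X\cap X$ would be countable. And $X+N\supseteq X+n$ for any $n\in N$, so $X+N$ is never null. So no lemma of the kind you describe can exist. The exceptional set of $z$ for the $z$-dependent sets $C_{z}=X\cap(z+N')$ is exactly $X+N$, and it cannot be uniformly null.

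Your fallback fails for the same reason. Under CH there are Sierpiński sets of full outer measure. For such $X$, $X+N$ also has full outer measure, so its complement contains no measurable set of positive measure. A witness $z\notin X+N$ therefore cannot be found inside a positive-measure set of translations that all work. Note also that the Fubini argument for $W$ plays no role.

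\textbf{The missing idea: build $z$ by diagonal gluing, not by random choice.} This is the structure of Pawlikowski's proof, which the paper adapts in its proof that every $\E$-Luzin set is in $\Estar$.
\begin{enumerate}
\item Define an auxiliary Borel set of sequences $\langle x_{0},x_{1},\dots\rangle\in(\Cantor)^{\omega}$ for which some finite segment $\langle x_{0},\dots,x_{k}\rangle$ has, on almost all levels, a \emph{diagonal} lying in the translate of the null set determined by a point $x$. A diagonal is a concatenation of pieces of $x_{0},\dots,x_{k}$ along nondecreasing cut points. A counting estimate shows all sections of this set are null.
\item Choose $z_{0},z_{1},\dots$ one at a time by Fubini. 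At each stage, the set of $x$ for which $z_{k+1}$ is bad is null, so by the Sierpiński property it meets $X$ in a countable set. In addition, $z_{k+1}$ avoids the countably many null sets attached to the earlier countable pieces of exceptional points.
\item For each $x\in X$ this yields infinitely many levels on which $\langle z_{i},\dots,z_{k+1}\rangle$ has no diagonal in the relevant slalom. Recording these levels gives a map $X\to\Baire$ whose image is not dominating. For a Sierpiński set this holds even for Borel maps: by Luzin's theorem the image is bounded, since $X$ minus a full-measure $\sigma$-compact set is countable.
\item Read off cut points $q_{0}<q_{1}<\cdots$ from a function not dominated by that image, and set $z=z_{0}|_{q_{0}}\cup z_{1}|_{[q_{0},q_{1})}\cup\cdots$. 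Then $z\notin X+N$.
\end{enumerate}
Your proposal lacks exactly this: using the Sierpiński property at every stage of a countable construction, combined with the non-dominating (bounded) images. A single Borel--Cantelli/Fubini estimate for one random translate cannot replace it.
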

    To show that every $\E$-Luzin set is in $\Estar$, we will need the following property. 
    \begin{definition}
        A set $X\subseteq\Cantor$ has the Menger property if for every continuous function $f:X\rightarrow\Baire$, the image $f[X]$ is not dominating. 
    \end{definition}
    This property (in many different forms) has been investigated by many researchers in the context of small sets or selection principles. It is usually defined by means of open covers, however, here we will only use continuous functions.
    \begin{lemma}\label{SierpińskiHASMenger}
        Every $\E$-Luzin set has the Menger property.
    \end{lemma}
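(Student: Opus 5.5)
The plan is to show that for an $\E$-Luzin set $X$ and a continuous $f:X\rightarrow\Baire$, if $f[X]$ were dominating then some closed measure zero set would meet $X$ in uncountably many points, contradicting the Luzin property. Working with a compact-covering formulation is convenient. For $g\in\Baire$ let
\begin{center}
$K_{g}=\{y\in\Baire:\forall n\in\omega$ $y(n)\leq g(n)\}$,
\end{center}
which is compact, and let $U_{g}=\{x\in X:f(x)\in K_{g}\}$. Since $f$ is continuous, $U_{g}$ is relatively closed in $X$. Its closure $\overline{U_{g}}$ in $\Cantor$ is closed and satisfies $\overline{U_{g}}\cap X=U_{g}$.

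The key step is as follows. Call a family of functions $g$ \emph{large} if their sets $U_{g}$ together cover all but countably many points of $X$. We want a single $g$ such that $\overline{U_{g}}$ has measure zero but $U_{g}$ is uncountable; this is exactly what the Luzin property forbids. So we argue by contradiction: assume that for every $g$ with $U_{g}$ uncountable, the closed set $\overline{U_{g}}$ has positive measure. We then use this positivity to build a witness against domination.

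Concretely, we first use the Luzin property to strip off the countable part, so we may assume every nonempty relatively open subset of $X$ is uncountable. Enumerate the basic clopen sets $[\sigma_{n}]$ that meet $X$. Now suppose $f[X]$ is dominating. Then for every $h\in\Baire$ there is $x$ with $h\leq^{*}f(x)$. We aim to construct a closed null set $C$ together with many points of $X$ lying in $C$, and this is done by fusion. At stage $n$ we keep finitely many basic open pieces $[\tau]$, with $\tau$ extending nodes chosen so far, such that the total measure is at most $2^{-n}$. Inside each piece we choose $\tau$ so that some $x\in X\cap[\tau]$ has $f(x)(n)$ larger than a value $h(n)$, where $h$ is determined by the construction. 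Continuity lets us shrink each piece to a neighborhood on which $f(\cdot)(n)$ is constant. Domination then guarantees that points of $X$ survive in the intersection.

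The main obstacle is ensuring that the intersection still contains uncountably many points of $X$ rather than just one. To handle this, I would first try splitting each piece into two at every stage, building a Cantor scheme of closed null sets whose branches each contain a point of $X$ chosen via domination. This would yield $2^{\omega}$ many branches but perhaps only countably many points of $X$ on them. If that fails, I would instead argue by contradiction: suppose $X\cap C$ is countable for every closed null $C$. Then $X$ is covered in a Borel way by the complement of each such set, and the continuous image of $X$ can be bounded by a single function. This reduces the problem to the Bartoszyński–Shelah bound $\min\{\bbb,\non(\Null)\}\leq\non(\E)$ from Theorem~\ref{CardInvsofE}, applied to a subset of $X$ of size $\omega_{1}$.
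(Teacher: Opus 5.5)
Your proposal does not reach a proof, and its two concrete mechanisms fail.

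\textbf{The fusion step.} The claim that domination ``guarantees that points of $X$ survive in the intersection'' has no basis. That $f[X]$ is dominating gives, for each $h$, some $x\in X$ with $h\leq^{*}f(x)$. It gives no control over where in $\Cantor$ this $x$ lies. So it cannot place points of $X$ inside the small pieces you prescribe, let alone uncountably many, which you concede.

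\textbf{The fallback.} This is false as stated. A single function bounding the continuous image of $X$ is the Hurewicz property. That property fails for Luzin sets, and every Luzin set is $\E$-Luzin. Moreover, $\min\{\bbb,\non(\Null)\}\leq\non(\E)$ only says that small sets belong to $\E$. No uncountable subset of an $\E$-Luzin set belongs to $\E$, so applying it to a subset of $X$ of size $\omega_{1}$ merely yields $\min\{\bbb,\non(\Null)\}=\omega_{1}$ and says nothing about $f$.

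\textbf{The reformulation.} Supposing that every uncountable $U_{g}$ has closure of positive measure is just the Luzin hypothesis restated. Nothing connects the sets $U_{g}$ (points whose images lie below $g$) to domination.

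\textbf{What the paper does.} The paper uses a Borel--Cantelli argument. Choose $g(n)$ with $\mu(F_{n})<2^{-n}$ for $F_{n}=\{x:f(x)(n)>g(n)\}$. When the $F_{n}$ are closed, the set $\{x:\foralmostall n\ x\in F_{n}\}=\bigcup_{N}\bigcap_{n\geq N}F_{n}$ is a countable union of closed null sets. Hence it meets $L$ in a countable set, every other point of $L$ has $f(x)(n)\leq g(n)$ for infinitely many $n$, and the countably many exceptions are absorbed by enlarging $g$.

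Be aware, however, that the paper gets clopen $F_{n}$ by taking $f$ continuous on all of $\Cantor$. In that case the conclusion is immediate anyway, because $f[\Cantor]$ is compact and hence bounded. For $f$ continuous only on $X$ (what the definition of the Menger property requires, and what you are attempting), the sets $\{x\in X:f(x)(n)>k\}$ are merely relatively clopen. Their closures need not shrink in measure. In fact, under CH the statement is false, so no refinement of your strategy can succeed.

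\textbf{A CH counterexample.} Build basic clopen sets $U_{t}$, $t\in\baire$, with $U_{\emptyset}=\Cantor$, as follows. Choose a closed nowhere dense $C_{t}\subseteq U_{t}$ of positive measure, and let $(U_{t^{\frown}i})_{i\in\omega}$ enumerate without repetition the maximal basic clopen subsets of $U_{t}\setminus C_{t}$.

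Then $G=\bigcap_{n}\bigcup_{|t|=n}U_{t}$ is $G_{\delta}$. Sending $x\in G$ to the $y\in\Baire$ with $x\in U_{y|_{n}}$ for all $n$ gives a homeomorphism $\phi:G\to\Baire$. Put $A_{g}=\phi^{-1}[\{y:y\geq g\text{ pointwise}\}]$.

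Every neighbourhood of a point of $C_{t}$ contains entire children $U_{t^{\frown}i}$ with arbitrarily large $i$. Also, $U_{s}$ meets $A_{g}$ whenever $s\geq g|_{|s|}$ pointwise. Hence whenever $U_{t}\cap A_{g}\neq\emptyset$, its closure contains $C_{t}$. So no closed null set contains a nonempty relatively open subset of $A_{g}$. The Baire category theorem in $A_{g}\cong\{y:y\geq g\}$ then gives $A_{g}\notin\E$.

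Under CH, enumerate the closed null sets as $D_{\alpha}$ and $\Baire$ as $g_{\alpha}$ ($\alpha<\omega_{1}$). Pick $x_{\alpha}\in A_{g_{\alpha}}\setminus(\bigcup_{\beta<\alpha}D_{\beta}\cup\{x_{\beta}:\beta<\alpha\})$. Then $X=\{x_{\alpha}:\alpha<\omega_{1}\}$ is $\E$-Luzin, since $X\cap D_{\beta}\subseteq\{x_{\alpha}:\alpha\leq\beta\}$. At the same time, $\phi|_{X}$ is continuous with dominating image, because $\phi(x_{\alpha})\geq g_{\alpha}$ for every $\alpha$.
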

    \begin{proof}
        Let $f:\Cantor\rightarrow\Baire$ be a continuous function and let $L\subseteq\Cantor$ be an $\E$-Luzin set. For every $n\in\omega$ pick $g(n)\in\omega$ be such that the clopen set 
        \begin{center}
            $F_{n}=:\bigcup_{m>g(n)}f^{-1}[\{x\in\Baire:x(n)=m\}]$
        \end{center}
        is of measure $<\frac{1}{2^{n}}$. Then the set $E=:\{x\in\Cantor:\foralmostall n\in\omega$ $x\in F_{n}\}$ is $F_{\sigma}$-set of measure zero. As $L\cap E$ is countable, we have that 
        \begin{center}
            $L\setminus E\subseteq\{x\in\Cantor:\existsinfty n\in\omega$ $f(x)(n)\leq g(n)\}$
        \end{center}
        It follows that $f[L\setminus E]$ does not dominate $g\in\Baire$. We can then improve $g$ so we dominate this countably many exceptions. This finishes the proof.
    \end{proof}
    I do not know what happens is the function in the Lemma above is Borel instead of continuous (such property is sometimes referred to as Borel-Menger property).
    \begin{question}
        Is it true that for any $\E$-Luzin set $X\subseteq\Cantor$ and any Borel function $f:X\rightarrow\Baire$ the image $f[X]$ does not form a dominating family?
    \end{question}
    We will now prove the main theorem of this section. The idea of the proof is an adaptation of the proof of Pawlikowski's Theorem \ref{SierpIsStrMeager}.
    \begin{theorem}
        Every $\E$-Luzin is in $\Estar$.
    \end{theorem}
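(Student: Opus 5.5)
Fix an $\E$-Luzin set $L$ and $E\in\E$. We must produce $z\in\Cantor$ with $z\notin L+E$, that is, $(z+E)\cap L=\emptyset$. By Proposition \ref{CharacterizationE} we may assume $E=E(S,I)=\{x:\foralmostall n\ x|_{I_{n}}\in S_{n}\}$ with $|S_{n}|/2^{|I_{n}|}\leq 2^{-n}$. Write $E=\bigcup_{m}E_{m}$ with $E_{m}=\{x:\forall n\geq m\ x|_{I_{n}}\in S_{n}\}$, each closed and null. We follow Pawlikowski's argument for Theorem \ref{SierpIsStrMeager}, with Lemma \ref{SierpińskiHASMenger} playing the role of the measure-theoretic step.

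First, we coarsen the partition in a way adapted to $L$. For $x\in\Cantor$ the condition ``$x\notin z+E$'' depends on infinitely many blocks. We want a single closed null set $C$ such that, for every $x\in L\setminus C$, we can find a witness block where $x$ and $z$ disagree modulo $S$. To do this we define a continuous map $\varphi:\Cantor\to\Baire$ that records, for each $k$, how far one has to go in the partition to collect enough blocks. Applying Lemma \ref{SierpińskiHASMenger} to the $\E$-Luzin set $L$ gives $g\in\Baire$ with $\exists^{\infty}k\ \varphi(x)(k)\leq g(k)$ for every $x\in L$. Using $g$, we group the blocks $I_{n}$ into larger intervals $J_{k}=\bigcup_{n\in[g'(k),g'(k+1))}I_{n}$. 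This is the combinatorial replacement for the dominating-family step in Pawlikowski's proof.

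Second, we choose $z$ blockwise by a probabilistic count. Inside $J_{k}$, the set of $z|_{J_{k}}$ for which $x|_{J_{k}}\in z|_{J_{k}}+\prod S_{n}$ has small relative measure, since it is a product of factors $\leq 2^{-n}$. So for fixed $x$, the set $B_{x}=\{z:\foralmostall k\ x|_{J_{k}}\in z|_{J_{k}}+S'_{k}\}$ belongs to $\E$. Exchanging the roles of $x$ and $z$ (the symmetry used in Pawlikowski's proof), the set of pairs $(x,z)$ with $x\in z+E$ is an $F_{\sigma}$ subset of $\Cantor\times\Cantor$ all of whose vertical sections lie in $\E$. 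We then seek $z$ such that $C_{z}=\{x\in L: x\in z+E\}$ is empty. Since $L$ is $\E$-Luzin, $L\cap(z+E)$ is countable for every $z$, because $z+E\in\E$. So the task is to remove countably many points. The Menger bound from the first step lets us arrange $z$ so that each $x\in L$ lands, on infinitely many $J_{k}$, outside $z|_{J_{k}}+S'_{k}$. We build $z$ by a fusion along the blocks, diagonalising against a countable dense set of ``patterns'' coming from $g$.

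Third, we handle the remaining countable set. Once $L\cap(z+E)$ is known to be countable, say $\{x_{i}\}$, we perturb $z$ on a sparse sequence of blocks so that each $x_{i}$ is also avoided. This must be done without re-entering other points, and it is possible because $E$ only constrains cofinitely many blocks and we change $z$ on infinitely many blocks chosen with enough room by the measure bounds.

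The main obstacle is the second step. The first attempt, picking $z$ outside $\bigcup_{x\in L}(x+E)$ directly, fails in general, because a naive Fubini argument needs measure and $L$ is not small in measure. So the Menger property must genuinely be used to make $L$ ``bounded'' relative to the block structure. I would first try the exact template of Pawlikowski's proof: use a continuous $\varphi$ encoding the first index after which $x$ stays in a closed null set $E_{m}+$ (translate), and get $z$ from the non-dominating $g$. If the continuity of $\varphi$ fails, I would restrict to the closed pieces $E_{m}$.
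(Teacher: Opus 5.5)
As written, the proposal has a genuine gap. Steps 1 and 2 contain no actual argument. The map $\varphi$ is never defined, and the sentence ``the Menger bound from the first step lets us arrange $z$ so that each $x\in L$ lands, on infinitely many $J_k$, outside $z|_{J_k}+S'_k$'' simply restates the theorem. In the paper's Pawlikowski-style proof, the function fed to the Menger lemma can only be written down after an auxiliary sequence $z_0,z_1,\dots$ has been built by Fubini arguments in $\Cantor\times(\Cantor)^{\omega}$; a $\varphi$ chosen before any candidate $z$ has nothing to encode.

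The real content of your plan is Step 3, and there the clause ``without re-entering other points'' is exactly what fails for the countable set you picked. Suppose you alter $z$ on the blocks $I_n$, $n\in B$, to kill the points of $L\cap(z+E)$. A point $x\in L\setminus(z+E)$ whose witnesses $\{n:(x+z)|_{I_n}\notin S_n\}$ lie almost entirely inside $B$ may then fall into $z'+E$. The measure bounds give room to kill the points you target, but they say nothing about these.

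The missing idea is to fix $B$ first and take the countable set relative to $B$. Let $B=\{b_0<b_1<\cdots\}$ be infinite and coinfinite with $(j+1)2^{-b_j}<1$ (e.g.\ the odd numbers), and let $E_B=\{y\in\Cantor:\foralmostall n\in\omega\setminus B\ \ y|_{I_n}\in S_n\}$. Since $\omega\setminus B$ is infinite, $E_B$ is a countable union of closed sets of measure at most $\prod_{n\geq m,\, n\notin B}2^{-n}=0$, so $E_B\in\E$. Moreover, if $z'$ agrees with $z$ off the $B$-blocks, then $x\in z'+E$ implies $x\in z+E_B$. Because $L$ is $\E$-Luzin, $L\cap(z+E_B)=\{x_0,x_1,\dots\}$ is countable. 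Put $z'=z$ off the $B$-blocks and choose $z'|_{I_{b_j}}\notin\bigcup_{i\leq j}(x_i|_{I_{b_j}}+S_{b_j})$, which is possible since $(j+1)|S_{b_j}|<2^{|I_{b_j}|}$. Every $x\in L\setminus(z+E_B)$ keeps infinitely many witnesses outside $B$, and each $x_i$ gets the witnesses $b_j$ for $j\geq i$. Hence $L\cap(z'+E(S,I))=\emptyset$, i.e.\ $z'\notin L+E(S,I)$.

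With this correction, Step 3 alone, starting from any $z$ (even $z=0$), proves the theorem; the coarsening $J_k$ and the Menger lemma are not needed at all. This is a much shorter route than the paper's, which transplants Pawlikowski's proof for Sierpi\'nski sets: the sets $F^l$ of sequences with a ``diagonal'' in $S^l_n$, the partition of $L$ into countable pieces $L_k$, and the Menger lemma to splice the $z_k$. That machinery is what $\Null$ requires, because its basic sets have the $\existsinfty$ form and a point cannot be expelled by changing only some blocks. For $\E$, the $\foralmostall$ form of $E(S,I)$ from Proposition~\ref{CharacterizationE} is what makes the direct repair work.
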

    \begin{proof}
        Let $L\subseteq\Cantor$ be an $\E$-Luzin set and let $E\in\E$. We are looking for a point $z$ outside of the set $L+E$. Assume that $\{T_{n}:n\in\omega\}$ is a sequence of trees on $\cantor$ such that $E\subseteq\bigcup_{n\in\omega}[T_{n}]$ and every $[T_{n}]$ is of measure zero. Let $Q=\{q_{n}:n\in\omega\}\in[\omega]^{\omega}$ be such that for every $n\in\omega$ for every $m>q_{n}$ we have $\frac{|T_{n}\cap2^{m}|}{2^{m}}<\frac{1}{2^{n}}$. For $l\in\Cantor$ and $n\in\omega$ we define
        \begin{center}
            $S^{l}_{n}=(T_{n}\cap 2^{q_{n}})+l|_{q_{n}}$
        \end{center}
        Note that $L+E\subseteq\bigcup_{l\in L}E_{l}$ where $E_{l}=\{y\in\Cantor:\foralmostall_{n\in\omega}$ $ y|_{q_{n}}\in S^{l}_{n}\}\in\E$. For $n,k\in\omega$ and $l\in L$ define 
        \begin{equation*}
            \begin{aligned}
            F^{l}_{n}(k)=\{\langle\sigma_{0},...,\sigma_{k}\rangle\in(2^{q_{n}})^{k+1}:&\exists 0\leq \tilde{q}_{0}\leq...\leq\tilde{q}_{k-1}\leq q_{n} \text{ with all } \tilde{q}_{i}\in Q\\
             & \text{and }\sigma_{0}|_{\tilde{q}_{0}}\cup \sigma_{1}|_{[\tilde{q}_{0},\tilde{q}_{1})}\cup...\cup \sigma_{k}|_{[\tilde{q}_{k-1},q_{n})}\in S^{l}_{n}\}
            \end{aligned}
        \end{equation*}
        In such case will say that such the sequence $\langle\sigma_{0},...,\sigma_{k}\rangle\in(2^{q_{n}})^{k+1}$ has a diagonal in $S^{l}_{n}$. Define now the following sets
        \begin{center}
            $F^{l}(k)=\{\langle x_{0},...,x_{k}\rangle\in(\Cantor)^{k+1}: \foralmostall n\in\omega$ $\langle x_{0}|_{q_{n}},x_{1}|_{q_{n}},...,x_{k}|_{q_{n}}\rangle\in F^{l}_{n}(k)\}$,
        \end{center}
        \begin{center}
            $F^{l}=\{\langle x_{0},x_{1},....\rangle\in(\Cantor)^{\omega}:\exists k\in\omega$ $\langle x_{0},...,x_{k}\rangle\in F^{l}(k)\}$
        \end{center}
        Let now $F\subseteq\Cantor\times(\Cantor)^{\omega}$ be such that the vertical section of $F$ above $l\in\Cantor$ is equal $F^{l}$ i.e. $F=\{(l,\langle x_{k}:k\in\omega\rangle)\in\Cantor\times(\Cantor)^{\omega}:\langle x_{k}:k\in\omega\rangle\in F^{l}\}$. We will need the following claim.
        \begin{claim}
            $F$ is an $F_{\sigma}$-set with all vertical sections of measure zero.
        \end{claim}
        \begin{proof}
            Clearly $F$ a an $F_{\sigma}$-set. To see that its sections are of measure zero let us first estimate the sizes of sets $F^{l}_{n}(k)$. Let $l\in\Cantor$. First notice that there are less than $(1+n)^{k}$ choices for the sequence $0\leq \tilde{q}_{0}\leq...\leq\tilde{q}_{k-1}\leq q_{n} \text{ with all } \tilde{q}_{i}\in Q$. There are also $|S^{l}_{n}|$ choices for the diagonal sequence $\sigma_{0}|_{\tilde{q}_{0}}\cup \sigma_{1}|_{[\tilde{q}_{0},\tilde{q}_{1})}\cup...\cup \sigma_{k}|_{[\tilde{q}_{k-1},q_{n})}$. It follows that
            \begin{center}
                $|F^{l}_{n}(k)|\leq(1+n)^{k}\cdot |S^{l}_{n}|\cdot 2^{(q_{n}-q_{0})}\cdot2^{(q_{n}-q_{1}+q_{0})}\cdot...\cdot2^{(q_{n}-q_{n}+q_{k-1})}=(1+n)^{k}\cdot |S^{l}_{n}|\cdot (2^{q_{n}})^{k-1}$
            \end{center}
            Thus we have 
            \begin{center}
                $\vspace{10px}\mu(\{\langle x_{0},x_{1},...,x_{k}\rangle\in(\Cantor)^{k+1}:\forall n\in\omega$ $\langle x_{0}|_{q_{n}},x_{1}|_{q_{n}},...,x_{k}|_{q_{n}}\rangle\in F^{x}_{n}\})\leq\newline\vspace{10px}\leq\mu(\{\langle x_{0},x_{1},...,x_{k}\rangle\in(\Cantor)^{k+1}:\forall n\in\omega$ $\langle x_{0}|_{q_{n}},x_{1}|_{q_{n}},...,x_{k}|_{q_{n}}\rangle\in F^{x}_{n}\})\leq\newline\leq\frac{|F^{x}_{n}(k)|}{(2^{q_{n}})^{k+1}}=\frac{(1+n)^{k}\cdot |F^{x}_{n}|\cdot (2^{q_{n}})^{k-1}}{(2^{q_{n}})^{k+1}}\leq\frac{(1+n)^{k}\cdot |F^{x}_{n}|}{2^{q_{n}}}\leq\frac{(1+n)^{k}}{2^{n}}$
            \end{center}
            and the last item goes to zero with $n$ going to infinity.
        \end{proof}
        We will now inductively construct sequences
        \begin{itemize}
            \item[--] $\langle z_{k}:k\in\omega\rangle\in(\Cantor)^{\omega}$,
            \item[--] $\{L_{k}:k\in\omega\}$, a partition of $L$ such that $|L_{k}|=\omega$ for $k\geq1$, and
            \item[--] $\{D_{k}\subseteq\Cantor:k\in\omega\}$ of sets of measure zero.
        \end{itemize}
        Before we proceed with the induction we will need the following notation. For any $k\in\omega$, $\langle x_{0},...,x_{k}\rangle\in(\Cantor)^{k+1}$ and $l\in\Cantor$ define the sets
            \begin{center}
                $F^{l}_{\langle x_{0},...,x_{k}\rangle}=\{\langle x_{k+1},x_{k+2},...\rangle\in(\Cantor)^{\omega\setminus (k+1)}:\langle x_{0},...,x_{k},x_{k+1},...\rangle\in F^{l}\}$,\\
                $C^{l}_{\langle x_{0},...,x_{k}\rangle}=\{x_{k+1}\in\Cantor:\mu(F^{l}_{\langle x_{0},...,x_{k},x_{k+1}\rangle})>0\}$
            \end{center}
            and
            \begin{center}
                $F_{\langle x_{0},...,x_{k}\rangle}=\{(l,\langle x_{k+1},...\rangle)\in\Cantor\times(\Cantor)^{\omega\setminus(k+1)}:\langle x_{k+1},x_{k+2},...\rangle\in F^{l}_{\langle x_{0},...,x_{k}\rangle}\}$,\\
                $C_{\langle x_{0},...,x_{k}\rangle}=\{(l,x_{k+1})\in\Cantor\times\Cantor:x_{k+1}\in C^{l}_{\langle x_{0},...,x_{k}\rangle}\}$
            \end{center}
            Observe that the set $C_{\langle x_{0},...,x_{k}\rangle}$ is an $F_{\sigma}$-set. This is because for any $F_{\sigma}$ subset of the plane the set of those vertical sections that are of measure zero, is an $F_{\sigma}$-set.\\

        Start the induction by applying Fubini's theorem to the set $C_{\langle\rangle}$ to find $z_{0}\in\Cantor$ such that the set $D_{0}=\{l\in\Cantor:z_{0}\in C_{\langle\rangle}\}$ is null (and $F_{\sigma}$ by the observation above). Then the set $L_{1}=L\cap D_{0}$ is countable as $L$ is $\E$-Luzin. Assume now that $z_{k}$, $L_{k+1}$ and $D_{k}$ have been constructed. Apply Fubini's theorem to the set $C_{\langle z_{0},...,z_{k}\rangle}$ (which is an $F_{\sigma}$-set in $\Cantor\times\Cantor$ with horizontal sections in $\E$) to find a point 
        \begin{center}
            $z_{k+1}\in\Cantor\setminus(\bigcup_{l\in L_{1}}C^{l}_{\langle z_{1},...,z_{k}\rangle}\cup\bigcup_{l\in L_{2}}C^{l}_{\langle z_{2},...,z_{k}\rangle}\cup...\cup\bigcup_{l\in L_{k}}C^{l}_{\langle z_{k}\rangle}\cup\bigcup_{l\in L_{k+1}}C^{l}_{\langle\rangle})$
        \end{center}
        such that the set $D_{k+1}$ defined as $\{l\in\Cantor:z_{k+1}\in C^{l}_{\langle z_{0},z_{1},...,z_{k}\rangle}\}$ is of measure zero (and $F_{\sigma}$ by the preceding observation). Let then $L_{k+2}=(L\setminus \bigcup_{i\leq k+1}L_{i})\cap D_{k+1}$, which is a countable set as $L$ is $\E$-Luzin. This finishes the induction step. When the whole induction is over let $L_{0}=L\setminus \bigcup_{k\geq 1}L_{k}$.\\
        Note that the induction was performed in a way that 
        \begin{center}
            $\forall i\in\omega$ $(l\in L_{i}\rightarrow\forall k\geq i$ $z_{k+1}\notin C^{l}_{\langle z_{i},...,z_{k}\rangle})$
        \end{center}
        \begin{claim}
            If $l\in L_{i}$ and $k\geq i$, then $\exists^{\infty}q\in Q$ $\langle z_{i}|_{q},...,z_{k+1}|_{q}\rangle\notin F^{l}_{q}(k+1-i)$
        \end{claim}
        \begin{proof}[proof of the claim]
            Suppose otherwise: that for some $i\leq k$ and $l\in L_{i}$ for almost all $q\in Q$ we have that $\langle z_{i}|_{q},...,z_{k+1}|_{q}\rangle\in F^{l}_{q}(k+1-i)$. This means that $\langle z_{i}|_{q},...,z_{k+1}|_{q}\rangle\in F^{l}(k+1-i)$. On the other hand we have that $F^{l}_{\langle z_{i},...,z_{k+1}\rangle}=(\Cantor)^{\omega\setminus(k+1-i)}$ which in particular means that $\mu(F^{l}_{\langle z_{i},...,z_{k}\rangle^{\frown}z_{k+1}})>0$. It follows that $z_{k+1}\in C^{l}_{\langle z_{i},...,z_{k}\rangle}$ which contradicts the preceding condition and thus proves the claim.
        \end{proof}
        Fix $l\in L_{i}$ and $i\leq k$. Let $Q^{l}_{k}\in[Q]^{\omega}$ be such that for all $q\in Q^{l}_{k}$ we have $\langle z_{i}|_{q},...,z_{k+1}|_{q}\rangle\notin F^{l}_{q}(k+1-i)$. Let $\{q^{l}_{k}:k\in\omega\}\in\Baire$ be such sequence that $q^{l}_{k}$ is minimal with $|Q^{l}_{k}\cap q^{l}_{k}|=k+1-i$ for each $k\in\omega$.\\
        Define then $f:L\rightarrow \Baire$ such that $f(l)(k)=q^{l}_{k}$. As the assignment $l\mapsto \{S^{l}_{k}:k\in\omega\}$ is continuous, the map $f$ is continuous as well. By the Lemma \ref{SierpińskiHASMenger} $f[L]$ is not a dominating family in $\Baire$ so there is a $\{q_{k}:k\in\omega\}\in\Baire$ such that for every $l\in L$ there are infinitely many $k\in\omega$ with $q^{l}_{k}\leq q_{k}$.\\
        Define now 
        \begin{center}
            $z=z_{0}|_{q_{0}}\cup z_{1}|_{[q_{0},q_{1})}\cup z_{2}|_{[q_{1},q_{2})}\cup...$
        \end{center}
        We will finish the proof by showing that $z\notin L+E$ so let $l\in L_{i}$ for some $i\in\omega$. It will suffice to show that the point $z'=z_{i}|_{q_{i}}\cup z_{i+1}|_{[q_{i},q_{i+1})}\cup...$ (which is a finite modification of $z$) is such that $z'|_{q}\notin S^{l}_{q}$ for infinitely many $q\in Q$. Let $A\in[\omega]^{\omega}$ be such that $q^{l}_{q}\leq q_{k}$ for every $k\in A$. For every $k\in A$ there is the biggest $q\leq q_{k}$, $q\in Q^{l}_{k}$. In particular for such $q\in Q^{l}_{k}\cap q_{k}$ we have $\langle z_{i}|_{q},...,z_{k+1}|_{q}\rangle\notin F^{l}_{q}(k+1-i)$ which means that $\langle z_{i}|_{q},...,z_{k+1}|_{q}\rangle$ does not have diagonal in $S^{l}_{q}$. In particular $z'|_{q}\notin S^{l}_{q}$ for every $k\in A$. This completes the proof of the theorem.
    \end{proof}
    We now know that every $\I$-Luzin set is in $\I^{*}$ for the ideals $\Meager$, $\Null$ and $\E$. It is natural to ask if this is the case for other known $\sigma$-ideals of the reals. This motivates the following question.
    \begin{question}
        Is every $\I$-Luzin in $\I^{*}$ for other $\sigma$-ideals $\I$ like $\sigma$-porosity ideal or the infinitely equal and eventually different ideals investigated in \cite{KhomskiiLaguzzi}. What about the $\sigma$-ideals of $\sigma$-compact (or not strongly dominating) subsets of $\mathbb{Z}^{\omega}$? Is there a natural example of $\sigma$-ideal $\I$, so that some $\I$-Luzin set is not $\I^{*}$?
    \end{question}

\begin{acknowledgements}
    I would like to thank Diego Mejía and Tomasz Weiss for reading an early version of the manuscript and many suggestions that improved the quality of the article.
\end{acknowledgements}

\bibliographystyle{alphadin} 
\bibliography{bib.bib}

@article {GalvMycielSolov,
    AUTHOR = {Galvin, Fred and Mycielski, Jan and Solovay, Robert M.},
     TITLE = {Strong measure zero and infinite games},
   JOURNAL = {Arch. Math. Logic},
  FJOURNAL = {Archive for Mathematical Logic},
    VOLUME = {56},
      YEAR = {2017},
    NUMBER = {7-8},
     PAGES = {725--732},
      ISSN = {0933-5846,1432-0665},
   MRCLASS = {03E15 (54G15 91A44)},
  MRNUMBER = {3696064},
MRREVIEWER = {Kandasamy\ Muthuvel},
       DOI = {10.1007/s00153-017-0541-z},
       URL = {https://doi.org/10.1007/s00153-017-0541-z},
}

@article {BartoszynskiBD,
    AUTHOR = {Bartoszy{\' n}ski, Tomek},
     TITLE = {Remarks on small sets of reals},
   JOURNAL = {Proc. Amer. Math. Soc.},
  FJOURNAL = {Proceedings of the American Mathematical Society},
    VOLUME = {131},
      YEAR = {2003},
    NUMBER = {2},
     PAGES = {625--630},
      ISSN = {0002-9939,1088-6826},
   MRCLASS = {03E17 (03E15)},
  MRNUMBER = {1933355},
MRREVIEWER = {Peter\ Elia\v s},
       DOI = {10.1090/S0002-9939-02-06567-X},
       URL = {https://doi.org/10.1090/S0002-9939-02-06567-X},
}

@article {PawlikowskiEN,
    AUTHOR = {Pawlikowski, Janusz},
     TITLE = {A characterization of strong measure zero sets},
   JOURNAL = {Israel J. Math.},
  FJOURNAL = {Israel Journal of Mathematics},
    VOLUME = {93},
      YEAR = {1996},
     PAGES = {171--183},
      ISSN = {0021-2172,1565-8511},
   MRCLASS = {28A05},
  MRNUMBER = {1380640},
MRREVIEWER = {Gabriel\ Debs},
       DOI = {10.1007/BF02761100},
       URL = {https://doi.org/10.1007/BF02761100},
}

@book {BJ,
    AUTHOR = {Bartoszy{\' n}ski, Tomek and Judah, Haim},
     TITLE = {Set theory},
      NOTE = {On the structure of the real line},
 PUBLISHER = {A K Peters, Ltd., Wellesley, MA},
      YEAR = {1995},
     PAGES = {xii+546},
      ISBN = {1-56881-044-X},
   MRCLASS = {03-02 (03Exx)},
  MRNUMBER = {1350295},
MRREVIEWER = {Eva\ Coplakova},
}

@article {ZindulkaMAEA,
    AUTHOR = {Zindulka, Ond{\v r}ej},
     TITLE = {Strong measure zero and meager-additive sets through the prism
              of fractal measures},
   JOURNAL = {Comment. Math. Univ. Carolin.},
  FJOURNAL = {Commentationes Mathematicae Universitatis Carolinae},
    VOLUME = {60},
      YEAR = {2019},
    NUMBER = {1},
     PAGES = {131--155},
      ISSN = {0010-2628,1213-7243},
   MRCLASS = {03E15 (03E05 03E20 28A78)},
  MRNUMBER = {3946667},
MRREVIEWER = {Arnold\ W.\ Miller},
       DOI = {10.14712/1213-7243.2015.277},
       URL = {https://doi.org/10.14712/1213-7243.2015.277},
}

@article {PawlikowskiSierpinski,
    AUTHOR = {Pawlikowski, Janusz},
     TITLE = {Every {S}ierpi{\'n}ski set is strongly meager},
   JOURNAL = {Arch. Math. Logic},
  FJOURNAL = {Archive for Mathematical Logic},
    VOLUME = {35},
      YEAR = {1996},
    NUMBER = {5-6},
     PAGES = {281--285},
      ISSN = {0933-5846,1432-0665},
   MRCLASS = {03E15 (28A05)},
  MRNUMBER = {1420258},
MRREVIEWER = {Hiroshi\ Fujita},
       DOI = {10.1007/s001530050045},
       URL = {https://doi.org/10.1007/s001530050045},
}

@book {Zapl1,
    AUTHOR = {Zapletal, Jind{\v r}ich},
     TITLE = {Forcing idealized},
    SERIES = {Cambridge Tracts in Mathematics},
    VOLUME = {174},
 PUBLISHER = {Cambridge University Press, Cambridge},
      YEAR = {2008},
     PAGES = {vi+314},
      ISBN = {978-0-521-87426-7},
   MRCLASS = {03-02 (03E15 03E35 03E40 28A05 54A35)},
  MRNUMBER = {2391923},
MRREVIEWER = {Miroslav\ Repick\'y},
       DOI = {10.1017/CBO9780511542732},
       URL = {https://doi.org/10.1017/CBO9780511542732},
}

@article {BartJudahBorelImages,
    AUTHOR = {Bartoszy{\' n}ski, Tomek and Judah, Haim},
     TITLE = {Borel images of sets of reals},
   JOURNAL = {Real Anal. Exchange},
  FJOURNAL = {Real Analysis Exchange},
    VOLUME = {20},
      YEAR = {1994/95},
    NUMBER = {2},
     PAGES = {536--558},
      ISSN = {0147-1937,1930-1219},
   MRCLASS = {04A20 (04A15 26A21)},
  MRNUMBER = {1348078},
MRREVIEWER = {Jakub\ Jasi\'nski},
}

@article {JudShWoodRandomVsBorelConj,
    AUTHOR = {Judah, Haim and Shelah, Saharon and Woodin, William Hugh},
     TITLE = {The {B}orel conjecture},
   JOURNAL = {Ann. Pure Appl. Logic},
  FJOURNAL = {Annals of Pure and Applied Logic},
    VOLUME = {50},
      YEAR = {1990},
    NUMBER = {3},
     PAGES = {255--269},
      ISSN = {0168-0072,1873-2461},
   MRCLASS = {03E35 (03E15)},
  MRNUMBER = {1086456},
MRREVIEWER = {Jakub\ Jasi\'nski},
       DOI = {10.1016/0168-0072(90)90058-A},
       URL = {https://doi.org/10.1016/0168-0072(90)90058-A},
}

@article {BorelWithDualBorel,
    AUTHOR = {Goldstern, Martin and Kellner, Jakob and Shelah, Saharon and
              Wohofsky, Wolfgang},
     TITLE = {Borel conjecture and dual {B}orel conjecture},
   JOURNAL = {Trans. Amer. Math. Soc.},
  FJOURNAL = {Transactions of the American Mathematical Society},
    VOLUME = {366},
      YEAR = {2014},
    NUMBER = {1},
     PAGES = {245--307},
      ISSN = {0002-9947,1088-6850},
   MRCLASS = {03E35 (03E17 28E15)},
  MRNUMBER = {3118397},
MRREVIEWER = {Szymon\ \.Zeberski},
       DOI = {10.1090/S0002-9947-2013-05783-2},
       URL = {https://doi.org/10.1090/S0002-9947-2013-05783-2},
}

@article {MillerAddEvsB,
    AUTHOR = {Miller, Arnold W.},
     TITLE = {Some properties of measure and category},
   JOURNAL = {Trans. Amer. Math. Soc.},
  FJOURNAL = {Transactions of the American Mathematical Society},
    VOLUME = {266},
      YEAR = {1981},
    NUMBER = {1},
     PAGES = {93--114},
      ISSN = {0002-9947,1088-6850},
   MRCLASS = {03E35 (03E15 28C15 54A35 54H05)},
  MRNUMBER = {613787},
MRREVIEWER = {John\ K.\ Truss},
       DOI = {10.2307/1998389},
       URL = {https://doi.org/10.2307/1998389},
}

@article {Carlson,
    AUTHOR = {Carlson, Timothy J.},
     TITLE = {Strong measure zero and strongly meager sets},
   JOURNAL = {Proc. Amer. Math. Soc.},
  FJOURNAL = {Proceedings of the American Mathematical Society},
    VOLUME = {118},
      YEAR = {1993},
    NUMBER = {2},
     PAGES = {577--586},
      ISSN = {0002-9939,1088-6826},
   MRCLASS = {03E15 (03E35 28A05)},
  MRNUMBER = {1139474},
MRREVIEWER = {Thomas\ J.\ Jech},
       DOI = {10.2307/2160341},
       URL = {https://doi.org/10.2307/2160341},
}

@article {PawlikowskiDualBorel,
    AUTHOR = {Pawlikowski, Janusz},
     TITLE = {Finite support iteration and strong measure zero sets},
   JOURNAL = {J. Symbolic Logic},
  FJOURNAL = {The Journal of Symbolic Logic},
    VOLUME = {55},
      YEAR = {1990},
    NUMBER = {2},
     PAGES = {674--677},
      ISSN = {0022-4812,1943-5886},
   MRCLASS = {03E35 (03E15 03E50 28A05)},
  MRNUMBER = {1056381},
MRREVIEWER = {Andrzej\ Szyma\'nski},
       DOI = {10.2307/2274657},
       URL = {https://doi.org/10.2307/2274657},
}

@article {LaverBorelConj,
    AUTHOR = {Laver, Richard},
     TITLE = {On the consistency of {B}orel's conjecture},
   JOURNAL = {Acta Math.},
  FJOURNAL = {Acta Mathematica},
    VOLUME = {137},
      YEAR = {1976},
    NUMBER = {3-4},
     PAGES = {151--169},
      ISSN = {0001-5962,1871-2509},
   MRCLASS = {04A15 (28A05)},
  MRNUMBER = {422027},
MRREVIEWER = {John\ P.\ Burgess},
       DOI = {10.1007/BF02392416},
       URL = {https://doi.org/10.1007/BF02392416},
}

@article {PawlikowskiTransitiveInvariants,
    AUTHOR = {Pawlikowski, Janusz},
     TITLE = {Powers of transitive bases of measure and category},
   JOURNAL = {Proc. Amer. Math. Soc.},
  FJOURNAL = {Proceedings of the American Mathematical Society},
    VOLUME = {93},
      YEAR = {1985},
    NUMBER = {4},
     PAGES = {719--729},
      ISSN = {0002-9939,1088-6826},
   MRCLASS = {03E15 (03E35 04A15 28A05 54A25)},
  MRNUMBER = {776210},
MRREVIEWER = {James\ Baumgartner},
       DOI = {10.2307/2045552},
       URL = {https://doi.org/10.2307/2045552},
}

@article {BartShClosedMeasureZeroSets,
    AUTHOR = {Bartoszy{\' n}ski, Tomek and Shelah, Saharon},
     TITLE = {Closed measure zero sets},
   JOURNAL = {Ann. Pure Appl. Logic},
  FJOURNAL = {Annals of Pure and Applied Logic},
    VOLUME = {58},
      YEAR = {1992},
    NUMBER = {2},
     PAGES = {93--110},
      ISSN = {0168-0072,1873-2461},
   MRCLASS = {03E05 (03E35 04A20)},
  MRNUMBER = {1186905},
MRREVIEWER = {L.\ Bukovsk\'y},
       DOI = {10.1016/0168-0072(92)90001-G},
       URL = {https://doi.org/10.1016/0168-0072(92)90001-G},
}

@incollection {SpecialSubsets,
    AUTHOR = {Miller, Arnold W.},
     TITLE = {Special subsets of the real line},
 BOOKTITLE = {Handbook of set-theoretic topology},
     PAGES = {201--233},
 PUBLISHER = {North-Holland, Amsterdam},
      YEAR = {1984},
      ISBN = {0-444-86580-2},
   MRCLASS = {54H05 (04A15 28A05 54C50)},
  MRNUMBER = {776624},
MRREVIEWER = {H.\ Sarbadhikari},
}

@article {KillingLuzinSierp,
    AUTHOR = {Judah, Haim and Shelah, Saharon},
     TITLE = {Killing {L}uzin and {S}ierpi{\'n}ski sets},
   JOURNAL = {Proc. Amer. Math. Soc.},
  FJOURNAL = {Proceedings of the American Mathematical Society},
    VOLUME = {120},
      YEAR = {1994},
    NUMBER = {3},
     PAGES = {917--920},
      ISSN = {0002-9939,1088-6826},
   MRCLASS = {03E15 (03E35)},
  MRNUMBER = {1164145},
MRREVIEWER = {Eva\ Coplakova},
       DOI = {10.2307/2160487},
       URL = {https://doi.org/10.2307/2160487},
}

@article {NowikWeiss,
    AUTHOR = {Nowik, Andrzej and Weiss, Tomasz},
     TITLE = {On the {R}amseyan properties of some special subsets of
              {$2^\omega$} and their algebraic sums},
   JOURNAL = {J. Symbolic Logic},
  FJOURNAL = {The Journal of Symbolic Logic},
    VOLUME = {67},
      YEAR = {2002},
    NUMBER = {2},
     PAGES = {547--556},
      ISSN = {0022-4812,1943-5886},
   MRCLASS = {03E15 (28E15)},
  MRNUMBER = {1905154},
MRREVIEWER = {Miroslav\ Repick\'y},
       DOI = {10.2178/jsl/1190150097},
       URL = {https://doi.org/10.2178/jsl/1190150097},
}

@article {KysiakNowikWeiss,
    AUTHOR = {Kysiak, Marcin and Nowik, Andrzej and Weiss, Tomasz},
     TITLE = {Special subsets of the reals and tree forcing notions},
   JOURNAL = {Proc. Amer. Math. Soc.},
  FJOURNAL = {Proceedings of the American Mathematical Society},
    VOLUME = {135},
      YEAR = {2007},
    NUMBER = {9},
     PAGES = {2975--2982},
      ISSN = {0002-9939,1088-6826},
   MRCLASS = {03E40 (03E05 03E35 03E50 28E15 54G15)},
  MRNUMBER = {2317976},
MRREVIEWER = {J\"org\ D.\ Brendle},
       DOI = {10.1090/S0002-9939-07-08808-9},
       URL = {https://doi.org/10.1090/S0002-9939-07-08808-9},
}

@article {ZindulkaMeagerAdditiveinTopGroups,
    AUTHOR = {Zindulka, Ond{\v r}ej},
     TITLE = {Meager-additive sets in topological groups},
   JOURNAL = {J. Symb. Log.},
  FJOURNAL = {The Journal of Symbolic Logic},
    VOLUME = {87},
      YEAR = {2022},
    NUMBER = {3},
     PAGES = {1046--1064},
      ISSN = {0022-4812,1943-5886},
   MRCLASS = {22B05 (03E17 22A10)},
  MRNUMBER = {4472524},
MRREVIEWER = {Su\ Gao},
       DOI = {10.1017/jsl.2021.79},
       URL = {https://doi.org/10.1017/jsl.2021.79},
}

@article {KadaLaverNumber,
    AUTHOR = {Kada, Masaru},
     TITLE = {More on {C}icho{\'n}'s diagram and infinite games},
   JOURNAL = {J. Symbolic Logic},
  FJOURNAL = {The Journal of Symbolic Logic},
    VOLUME = {65},
      YEAR = {2000},
    NUMBER = {4},
     PAGES = {1713--1724},
      ISSN = {0022-4812,1943-5886},
   MRCLASS = {03E05 (91A13)},
  MRNUMBER = {1812176},
MRREVIEWER = {Otmar\ Spinas},
       DOI = {10.2307/2695071},
       URL = {https://doi.org/10.2307/2695071},
}

@article {BartoszynskiCoveringNull,
    AUTHOR = {Bartoszy{\'n}ski, Tomek},
     TITLE = {On covering of real line by null sets},
   JOURNAL = {Pacific J. Math.},
  FJOURNAL = {Pacific Journal of Mathematics},
    VOLUME = {131},
      YEAR = {1988},
    NUMBER = {1},
     PAGES = {1--12},
      ISSN = {0030-8730,1945-5844},
   MRCLASS = {03E05 (03E15 04A15 26A03)},
  MRNUMBER = {917862},
MRREVIEWER = {Kurt\ Wolfsdorf},
       URL = {http://projecteuclid.org/euclid.pjm/1102690066},
}

@article {SlalomNumbersSurvey,
    AUTHOR = {Cardona, Miguel A. and Gavalov{\'a}, Viera and Mej{\'i}a, Diego A.
              and Repick{\'y}, Miroslav and {\v S}upina, Jaroslav},
     TITLE = {Slalom numbers},
   JOURNAL = {Dissertationes Math.},
  FJOURNAL = {Dissertationes Mathematicae},
    VOLUME = {607},
      YEAR = {2026},
     PAGES = {67},
      ISSN = {0012-3862,1730-6310},
   MRCLASS = {03E17 (03E35 54D20 54G15)},
  MRNUMBER = {5027593},
       DOI = {10.4064/dm240826-29-5},
       URL = {https://doi.org/10.4064/dm240826-29-5},
}

@article {BrendleLoweEventDiff,
    AUTHOR = {Brendle, J{\"o}rg and L{\"o}we, Benedikt},
     TITLE = {Eventually different functions and inaccessible cardinals},
   JOURNAL = {J. Math. Soc. Japan},
  FJOURNAL = {Journal of the Mathematical Society of Japan},
    VOLUME = {63},
      YEAR = {2011},
    NUMBER = {1},
     PAGES = {137--151},
      ISSN = {0025-5645,1881-1167},
   MRCLASS = {03E15 (03E40 03E55)},
  MRNUMBER = {2752434},
MRREVIEWER = {Miroslav Repick\'y},
       URL = {http://projecteuclid.org/euclid.jmsj/1296138346},
}

@article {BreWoh,
    AUTHOR = {Brendle, J{\"o}rg and Wohofsky, Wolfgang},
     TITLE = {Borel conjecture for the {M}arczewski ideal},
   JOURNAL = {Proc. Amer. Math. Soc.},
  FJOURNAL = {Proceedings of the American Mathematical Society},
    VOLUME = {152},
      YEAR = {2024},
    NUMBER = {12},
     PAGES = {5395--5410},
      ISSN = {0002-9939,1088-6826},
   MRCLASS = {03E05 (03E15 03E17 03E35 03E50 22A05 54H11)},
  MRNUMBER = {4855892},
MRREVIEWER = {Andrzej Ros\l anowski},
       DOI = {10.1090/proc/16981},
       URL = {https://doi.org/10.1090/proc/16981},
}

@article {Tala,
    AUTHOR = {Talagrand, Michel},
     TITLE = {Compacts de fonctions mesurables et filtres non mesurables},
   JOURNAL = {Studia Math.},
  FJOURNAL = {Polska Akademia Nauk. Instytut Matematyczny. Studia
              Mathematica},
    VOLUME = {67},
      YEAR = {1980},
    NUMBER = {1},
     PAGES = {13--43},
      ISSN = {0039-3223,1730-6337},
   MRCLASS = {28A20 (46E27 54C50)},
  MRNUMBER = {579439},
MRREVIEWER = {J.\ D.\ Knowles},
       DOI = {10.4064/sm-67-1-13-43},
       URL = {https://doi.org/10.4064/sm-67-1-13-43},
}

@incollection {Blass,
    AUTHOR = {Blass, Andreas},
     TITLE = {Combinatorial cardinal characteristics of the continuum},
 BOOKTITLE = {Handbook of set theory. {V}ols. 1, 2, 3},
     PAGES = {395--489},
 PUBLISHER = {Springer, Dordrecht},
      YEAR = {2010},
      ISBN = {978-1-4020-4843-2},
   MRCLASS = {03E17 (03E35)},
  MRNUMBER = {2768685},
MRREVIEWER = {Vera\ Fischer},
       DOI = {10.1007/978-1-4020-5764-9\_7},
       URL = {https://doi.org/10.1007/978-1-4020-5764-9_7},
}

@article {KhomskiiLaguzzi,
    AUTHOR = {Khomskii, Yurii and Laguzzi, Giorgio},
     TITLE = {Full-splitting {M}iller trees and infinitely often equal
              reals},
   JOURNAL = {Ann. Pure Appl. Logic},
  FJOURNAL = {Annals of Pure and Applied Logic},
    VOLUME = {168},
      YEAR = {2017},
    NUMBER = {8},
     PAGES = {1491--1506},
      ISSN = {0168-0072,1873-2461},
   MRCLASS = {03E15 (03E17 03E35 03E75)},
  MRNUMBER = {3650350},
MRREVIEWER = {Shuguo\ Zhang},
       DOI = {10.1016/j.apal.2017.02.001},
       URL = {https://doi.org/10.1016/j.apal.2017.02.001},
}

@article {Calderon,
    AUTHOR = {Calder{\'o}n, Daniel},
     TITLE = {Borel's conjecture and meager-additive sets},
   JOURNAL = {Proc. Amer. Math. Soc.},
  FJOURNAL = {Proceedings of the American Mathematical Society},
    VOLUME = {149},
      YEAR = {2021},
    NUMBER = {11},
     PAGES = {4943--4954},
      ISSN = {0002-9939,1088-6826},
   MRCLASS = {03E35 (03E15 03E75 54F65)},
  MRNUMBER = {4310117},
MRREVIEWER = {Riccardo\ Camerlo},
       DOI = {10.1090/proc/15536},
       URL = {https://doi.org/10.1090/proc/15536},
}

@article {coveringstrongmeasurecanbeaboveeverythingelse,
    AUTHOR = {Cardona, Miguel A. and Mej{\'i}a, Diego A. and Rivera-Madrid,
              Ismael E.},
     TITLE = {The covering number of the strong measure zero ideal can be
              above almost everything else},
   JOURNAL = {Arch. Math. Logic},
  FJOURNAL = {Archive for Mathematical Logic},
    VOLUME = {61},
      YEAR = {2022},
    NUMBER = {5-6},
     PAGES = {599--610},
      ISSN = {0933-5846,1432-0665},
   MRCLASS = {03E17 (03E35 03E40)},
  MRNUMBER = {4452119},
MRREVIEWER = {Andrzej\ Ros\l anowski},
       DOI = {10.1007/s00153-021-00808-0},
       URL = {https://doi.org/10.1007/s00153-021-00808-0},
}

@article {Evasionprediction,
    AUTHOR = {Brendle, J{\"o}rg},
     TITLE = {Evasion and prediction---the {S}pecker phenomenon and {G}ross
              spaces},
   JOURNAL = {Forum Math.},
  FJOURNAL = {Forum Mathematicum},
    VOLUME = {7},
      YEAR = {1995},
    NUMBER = {5},
     PAGES = {513--541},
      ISSN = {0933-7741,1435-5337},
   MRCLASS = {03E05 (03E35 15A63 20A10)},
  MRNUMBER = {1346879},
MRREVIEWER = {Marion\ Scheepers},
       DOI = {10.1515/form.1995.7.513},
       URL = {https://doi.org/10.1515/form.1995.7.513},
}

@article {ShelahNAMA,
    AUTHOR = {Shelah, Saharon},
     TITLE = {Every null additive set of reals is meager additive},
   JOURNAL = {Israel Journal of Mathematics},
  FJOURNAL = {Israel Journal of Mathematics},
    VOLUME = {89},
      YEAR = {1995},
    NUMBER = {},
     PAGES = {357–376},
      ISSN = {},
   MRCLASS = {},
  MRNUMBER = {},
MRREVIEWER = {},
       DOI = {},
       URL = {},
}

@article {MejiaCardonaMA,
    AUTHOR = {Mej{\'i}a, Diego A. and Cardona, Miguel A. and Rivera-Madrid,
              Ismael E.},
     TITLE = {Uniformity numbers of the null-additive and meager-additive ideals},
   JOURNAL = {J. Symbolic Logic},
  FJOURNAL = {The Journal of Symbolic Logic},
    VOLUME = {},
      YEAR = {2023},
    NUMBER = {},
     PAGES = {1–37},
      ISSN = {},
   MRCLASS = {},
  MRNUMBER = {},
MRREVIEWER = {},
       DOI = {10.1017/jsl.2025.10105},
       URL = {},
}

@article {CardonaMA,
    AUTHOR = {Cardona, Miguel A.},
     TITLE = {Cardinal invariants associated with the combinatorics of the uniformity number of the ideal of meager-additive sets},
   JOURNAL = {},
  FJOURNAL = {},
    VOLUME = {},
      YEAR = {2025},
    NUMBER = {},
     PAGES = {},
      ISSN = {},
   MRCLASS = {},
  MRNUMBER = {},
MRREVIEWER = {},
       DOI = {},
       URL = {},
}

@article {BartShNstarIsNotIdeal,
    AUTHOR = {Bartoszy{\'n}ski, Tomek and Shelah, Saharon},
     TITLE = {Strongly Meager Sest Do Not Form An Ideal},
   JOURNAL = {Journal of Mathematical Logic},
  FJOURNAL = {Journal of Mathematical Logic},
    VOLUME = {01},
      YEAR = {2001},
    NUMBER = {01},
     PAGES = {1-34},
      ISSN = {},
   MRCLASS = {},
  MRNUMBER = {},
MRREVIEWER = {},
       DOI = {},
       URL = {},
}

@article {Cieslak,
    AUTHOR = {Cie{\'s}lak, Aleksander},
     TITLE = {Combinatorics of {$\sigma$}-ideals generated by closed sets},
   JOURNAL = {in preparation},
  FJOURNAL = {},
    VOLUME = {},
      YEAR = {2026},
    NUMBER = {},
     PAGES = {},
      ISSN = {},
   MRCLASS = {},
  MRNUMBER = {},
MRREVIEWER = {},
       DOI = {},
       URL = {},
}

@article {WohoPHD,
    AUTHOR = {Wohofsky, Wolfgang},
     TITLE = {Special sets of real numbers and variants of the Borel Conjecture},
   JOURNAL = {},
  FJOURNAL = {},
    VOLUME = {},
      YEAR = {2013},
    NUMBER = {},
     PAGES = {},
      ISSN = {},
   MRCLASS = {},
  MRNUMBER = {},
MRREVIEWER = {},
       DOI = {},
       URL ={https://www.wohofsky.eu/math/PhD_thesis_Wolfgang_Wohofsky.pdf},
}

@article {MejiaDirectedSums,
    AUTHOR = {Cardona, Miguel A. and Mej{\'i}a, Diego A. and Ismael, Rivera-Madrid E.},
     TITLE = {Directed schemes of ideals and cardinal characteristics I: the meager additive ideal},
   JOURNAL = {},
  FJOURNAL = {},
    VOLUME = {},
      YEAR = {2025},
    NUMBER = {},
     PAGES = {},
      ISSN = {},
   MRCLASS = {},
  MRNUMBER = {},
MRREVIEWER = {},
       DOI = {},
       URL = {},
}

@article {SlalomNumbersOnReals,
    AUTHOR = {Cardona, Miguel A. and Mej{\'i}a, Diego A.},
     TITLE = {Localization and anti-localization cardinals},
   JOURNAL = {},
  FJOURNAL = {},
    VOLUME = {},
      YEAR = {2023},
    NUMBER = {},
     PAGES = {},
      ISSN = {},
   MRCLASS = {},
  MRNUMBER = {},
MRREVIEWER = {},
       DOI = {https://doi.org/10.48550/arXiv.2305.03248},
       URL = {},
}

@article {BrendleBetween,
    AUTHOR = {Brendle, J{\"o}rg},
     TITLE = {Between {$P$}-points and nowhere dense ultrafilters},
   JOURNAL = {Israel J. Math.},
  FJOURNAL = {Israel Journal of Mathematics},
    VOLUME = {113},
      YEAR = {1999},
     PAGES = {205--230},
      ISSN = {0021-2172,1565-8511},
   MRCLASS = {03E17 (03E05 03E35)},
  MRNUMBER = {1729447},
MRREVIEWER = {Miroslav\ Repick\'y},
       DOI = {10.1007/BF02780177},
       URL = {https://doi.org/10.1007/BF02780177},
}

@article {BrendleCardonaMejia,
    AUTHOR = {Brendle, J{\"o}rg and Cardona, Miguel A. and Mej{\'i}a, Diego A.},
     TITLE = {Separating cardinal characteristics of the strong measure zero ideal},
   JOURNAL = {Journal of Mathematical Logic},
  FJOURNAL = {Journal of Mathematical Logic},
    VOLUME = {},
      YEAR = {2023},
     PAGES = {},
      ISSN = {},
   MRCLASS = {03E17 03E35 03E40},
  MRNUMBER = {},
MRREVIEWER = {},
       DOI = {10.1142/s0219061325500126},
       URL = {},
}

@article {CardonaMejiaMoreOnCofCovSMZ,
    AUTHOR = {Cardona, Miguel A. and Mej{\'i}a, Diego A.},
     TITLE = {More about the cofinality and the covering of the ideal of
              strong measure zero sets},
   JOURNAL = {Ann. Pure Appl. Logic},
  FJOURNAL = {Annals of Pure and Applied Logic},
    VOLUME = {176},
      YEAR = {2025},
    NUMBER = {4},
     PAGES = {Paper No. 103537, 31},
      ISSN = {0168-0072,1873-2461},
   MRCLASS = {03E17 (03E10 03E35 03E40)},
  MRNUMBER = {4834901},
MRREVIEWER = {Andrzej\ Ros\l anowski},
       DOI = {10.1016/j.apal.2024.103537},
       URL = {https://doi.org/10.1016/j.apal.2024.103537},
}

@article {Weiss2013,
    AUTHOR = {Weiss, Tomasz},
     TITLE = {A note on the intersection ideal {$\mathcal{M}\cap\mathcal{N}$}},
   JOURNAL = {Comment. Math. Univ. Carolin.},
  FJOURNAL = {Commentationes Mathematicae Universitatis Carolinae},
    VOLUME = {54},
      YEAR = {2013},
    NUMBER = {3},
     PAGES = {437--445},
      ISSN = {0010-2628,1213-7243},
   MRCLASS = {03E05 (03E17)},
  MRNUMBER = {3090421},
MRREVIEWER = {Shuguo\ Zhang},
}

@article {Weiss2018,
    AUTHOR = {Weiss, Tomasz},
     TITLE = {More remarks on the intersection ideal {$\mathcal{M}\cap\mathcal{N}$}},
   JOURNAL = {Comment. Math. Univ. Carolin.},
  FJOURNAL = {Commentationes Mathematicae Universitatis Carolinae},
    VOLUME = {59},
      YEAR = {2018},
    NUMBER = {3},
     PAGES = {311--316},
      ISSN = {0010-2628,1213-7243},
   MRCLASS = {03E05 (03E17)},
  MRNUMBER = {3861554},
MRREVIEWER = {Arnold\ W.\ Miller},
       DOI = {10.14712/1213-7243.2015.252},
       URL = {https://doi.org/10.14712/1213-7243.2015.252},
}

@article {WeissNew,
    AUTHOR = {Weiss, Tomasz},
     TITLE = {On the class on {$(\mathcal
     {E},\mathcal{M})*$} sets and their relatives},
   JOURNAL = {in preparation},
  FJOURNAL = {},
    VOLUME = {},
      YEAR = {2026},
    NUMBER = {},
     PAGES = {},
      ISSN = {},
   MRCLASS = {},
  MRNUMBER = {},
MRREVIEWER = {},
       DOI = {},
       URL = {},
}

@article {WeissSzewczak,
    AUTHOR = {Szewczak, Piotr and Weiss, Tomasz},
     TITLE = {Null sets and combinatorial covering properties},
   JOURNAL = {J. Symb. Log.},
  FJOURNAL = {The Journal of Symbolic Logic},
    VOLUME = {87},
      YEAR = {2022},
    NUMBER = {3},
     PAGES = {1231--1242},
      ISSN = {0022-4812,1943-5886},
   MRCLASS = {03E35 (03E75 54D20)},
  MRNUMBER = {4472532},
MRREVIEWER = {Klaas\ Pieter\ Hart},
       DOI = {10.1017/jsl.2021.51},
       URL = {https://doi.org/10.1017/jsl.2021.51},
}

@article {CardonaMejiaOnYoryoka,
    AUTHOR = {Cardona, Miguel A. and Mej{\'i}a, Diego A.},
     TITLE = {On cardinal characteristics of {Y}orioka ideals},
   JOURNAL = {MLQ Math. Log. Q.},
  FJOURNAL = {MLQ. Mathematical Logic Quarterly},
    VOLUME = {65},
      YEAR = {2019},
    NUMBER = {2},
     PAGES = {170--199},
      ISSN = {0942-5616,1521-3870},
   MRCLASS = {03E17 (03E35)},
  MRNUMBER = {4019620},
MRREVIEWER = {Andrzej\ Ros\l anowski},
       DOI = {10.1002/malq.201800034},
       URL = {https://doi.org/10.1002/malq.201800034},
}

@article {ElekesSteprans,
    AUTHOR = {Elekes, M{\'a}rton and Stepr{\=a}ns, Juris},
     TITLE = {Less than {$2^\omega$} many translates of a compact nullset
              may cover the real line},
   JOURNAL = {Fund. Math.},
  FJOURNAL = {Fundamenta Mathematicae},
    VOLUME = {181},
      YEAR = {2004},
    NUMBER = {1},
     PAGES = {89--96},
      ISSN = {0016-2736,1730-6329},
   MRCLASS = {28E15 (03E17 03E35)},
  MRNUMBER = {2071696},
MRREVIEWER = {Peter\ Elia\v s},
       DOI = {10.4064/fm181-1-4},
       URL = {https://doi.org/10.4064/fm181-1-4},
}

@article {LyubomyrSzewczak,
    AUTHOR = {Haberl, Valentin and Szewczak, Piotr and Zdomskyy, Lyubomyr},
     TITLE = {Universally meager sets in the Miller model and similar ones},
   JOURNAL = {},
  FJOURNAL = {},
    VOLUME = {},
      YEAR = {2025},
    NUMBER = {},
     PAGES = {},
      ISSN = {},
   MRCLASS = {},
  MRNUMBER = {},
MRREVIEWER = {},
       DOI = {10.48550/arXiv.2512.15490},
       URL = {https://arxiv.org/abs/2512.15490},
}

@article {ConcentratedGammaSetsInMillersModel,
    AUTHOR = {Haberl, Valentin and Szewczak, Piotr and Zdomskyy, Lyubomyr},
     TITLE = {Concentrated sets and {$\gamma$}-sets in the {M}iller model},
   JOURNAL = {Topology Appl.},
  FJOURNAL = {Topology and its Applications},
    VOLUME = {379},
      YEAR = {2026},
     PAGES = {Paper No. 109503, 10},
      ISSN = {0166-8641,1879-3207},
   MRCLASS = {03E35 (03E75 54D20)},
  MRNUMBER = {5015025},
       DOI = {10.1016/j.topol.2025.109503},
       URL = {https://doi.org/10.1016/j.topol.2025.109503},
}

@article {ScalesProductsScheepersDiagram,
    AUTHOR = {Pawlikowski, Michał and Szewczak, Piotr and Zdomskyy, Lyubomyr},
     TITLE = {Scales, products and the second row of the Scheepers diagram},
   JOURNAL = {},
  FJOURNAL = {},
    VOLUME = {},
      YEAR = {2025},
    NUMBER = {},
     PAGES = {},
      ISSN = {},
   MRCLASS = {},
  MRNUMBER = {},
MRREVIEWER = {},
       DOI = {10.48550/arXiv.2503.18615},
       URL = {https://arxiv.org/abs/2503.18615},
}

@incollection {SelectionPrinciplesLaverMillerSacksModels,
    AUTHOR = {Zdomskyy, Lyubomyr},
     TITLE = {Selection principles in the {L}aver, {M}iller, and {S}acks
              models},
 BOOKTITLE = {Centenary of the {B}orel conjecture},
    SERIES = {Contemp. Math.},
    VOLUME = {755},
     PAGES = {229--242},
 PUBLISHER = {Amer. Math. Soc., [Providence], RI},
      YEAR = {[2020] \copyright 2020},
      ISBN = {978-1-4704-5099-1},
   MRCLASS = {03E35 (03E05 54C50 54D20)},
  MRNUMBER = {4146586},
MRREVIEWER = {Klaas\ Pieter\ Hart},
       DOI = {10.1090/conm/755/15176},
       URL = {https://doi.org/10.1090/conm/755/15176},
}

@article {RepovsZdomskyy,
    AUTHOR = {Repov{\v s}, Du{\v s}an and Zdomskyy, Lyubomyr},
     TITLE = {{$M$}-separable spaces of functions are productive in the
              {M}iller model},
   JOURNAL = {Ann. Pure Appl. Logic},
  FJOURNAL = {Annals of Pure and Applied Logic},
    VOLUME = {171},
      YEAR = {2020},
    NUMBER = {7},
     PAGES = {102806, 8},
      ISSN = {0168-0072,1873-2461},
   MRCLASS = {03E35 (03E05 54C50 54D20)},
  MRNUMBER = {4099834},
MRREVIEWER = {Andrzej\ Ros\l anowski},
       DOI = {10.1016/j.apal.2020.102806},
       URL = {https://doi.org/10.1016/j.apal.2020.102806},
}

@article {PreservationGammaSpaces,
    AUTHOR = {Repov{\v s}, Du{\v s}an and Zdomskyy, Lyubomyr},
     TITLE = {Preservation of {$\gamma$}-spaces and covering properties of
              products},
   JOURNAL = {Proc. Amer. Math. Soc.},
  FJOURNAL = {Proceedings of the American Mathematical Society},
    VOLUME = {147},
      YEAR = {2019},
    NUMBER = {11},
     PAGES = {4979--4985},
      ISSN = {0002-9939,1088-6826},
   MRCLASS = {03E35 (03E05 54C50 54D20)},
  MRNUMBER = {4011529},
MRREVIEWER = {Sheldon\ W.\ Davis},
       DOI = {10.1090/proc/14593},
       URL = {https://doi.org/10.1090/proc/14593},
}

@article {ProductsOfMengerInMillerModel,
    AUTHOR = {Zdomskyy, Lyubomyr},
     TITLE = {Products of {M}enger spaces in the {M}iller model},
   JOURNAL = {Adv. Math.},
  FJOURNAL = {Advances in Mathematics},
    VOLUME = {335},
      YEAR = {2018},
     PAGES = {170--179},
      ISSN = {0001-8708,1090-2082},
   MRCLASS = {03E35 (03E05 54C50 54D20)},
  MRNUMBER = {3836661},
MRREVIEWER = {Samuel\ Gomes\ da Silva},
       DOI = {10.1016/j.aim.2018.06.016},
       URL = {https://doi.org/10.1016/j.aim.2018.06.016},
}

@article {GalvinMiller,
    AUTHOR = {Galvin, Fred and Miller, Arnold W.},
     TITLE = {$\gamma$-sets and other singular sets of real numbers},
   JOURNAL = {Topology Appl.},
  FJOURNAL = {Topology and its Applications},
    VOLUME = {17},
      YEAR = {1984},
     PAGES = {145-155},
      ISSN = {},
   MRCLASS = {},
  MRNUMBER = {},
MRREVIEWER = {},
       DOI = {10.1016/0166-8641(84)90038-5},
       URL = {https://doi.org/10.1016/0166-8641(84)90038-5},
}

@article {BartoszCombAspects,
    AUTHOR = {Bartoszy{\'n}ski, Tomek},
     TITLE = {Combinatorial aspects of measure and category},
   JOURNAL = {Fund. Math.},
  FJOURNAL = {Polska Akademia Nauk. Fundamenta Mathematicae},
    VOLUME = {127},
      YEAR = {1987},
    NUMBER = {3},
     PAGES = {225--239},
      ISSN = {0016-2736,1730-6329},
   MRCLASS = {04A15 (03E15 28A05 54H05)},
  MRNUMBER = {917147},
MRREVIEWER = {J.\ S.\ Lipi\'nski},
       DOI = {10.4064/fm-127-3-225-239},
       URL = {https://doi.org/10.4064/fm-127-3-225-239},
}

\end{document}